\newtheorem{theorem}{Theorem}[section]
\newtheorem{definition}{Definition}[section]
\newtheorem{proposition}[theorem]{Proposition}
\newtheorem{lemma}[theorem]{Lemma}
\newtheorem{corollary}[theorem]{Corollary}
\newtheorem{claim}[theorem]{Claim}
\newtheorem{remark}[theorem]{Remark}
\newtheorem{theoremalph}{Theorem}
 \def\BB{{\mathbb B}} 
\def\DD{{\mathbb D}}
 \def\NN{{\mathbb N}}
\def\UU{{\mathbb U}}  
 \def\ZZ{{\mathbb Z}}
\def \F{\EuScript{F}}
\def \B{\mathcal{B}}
\def \M{\mathcal{M}}
\def\M {{\mathcal M}}
\def\F {{\mathcal F}}
\def\B {{\mathcal B}}
\def\P{\mathcal P}
\def\U{{\mathcal U}}
\def\V{{\mathcal V}}
\def\W{{\mathcal W}}
\begin{document}

\title[Mostly expanding and mostly contracting centers]{Statistical stability for diffeomorphisms with mostly expanding and mostly contracting centers}
\author[Zeya Mi]{Zeya Mi}

\address{School of Mathematics and Statistics,
Nanjing University of Information Science and Technology, Nanjing 210044, Jiangsu, P.R. China.}
\email{\href{mailto:mizeya@163.com}{mizeya@163.com}}

\author[Yongluo Cao]{Yongluo Cao}

\address{Department of Mathematics, Shanghai key laboratory of PMMP, East China Normal University, Shanghai 200062, P.R. China.}
\address{Department of Mathematics, Center for Dynamical Systems and Differential Equations, Soochow University, Suzhou 215006, Jiangsu, P.R. China.}

\email{\href{mailto:ylcao@suda.edu.cn}{ylcao@suda.edu.cn}}
%\urladdr{\href{http://math.rice.edu/~zg2/}{http://math.rice.edu/$\sim$zg2/}}

%\urladdr{\href{http://math.rice.edu/~hk7/}{http://math.rice.edu/$\sim$hk7/}}

\thanks{Z.Mi\ was supported by NSFC 11801278 and The Startup Foundation for Introducing Talent of NUIST(Grant No. 2017r070)}
\thanks{Y.Cao\ was supported by NSFC (11771317,11790274).}

\date{\today}

\keywords{Partial hyperbolicity, Physical measures, SRB measures, Lyapunov exponent, Statistical stability}
\subjclass[2000]{37D30, 37D25, 37C40}

\begin{abstract}
For partially hyperbolic diffeomorphisms with mostly expanding and mostly contracting centers, we establish a topological structure, called \emph{skeleton}--a set consisting of finitely many hyperbolic periodic points with maximal cardinality for which there exist no heteroclinic intersections. We build the one-to-one corresponding between periodic points in \emph{any} skeleton and physical measures. By making perturbations on skeletons, we 
study the continuity of physical measures with respect to dynamics under $C^1$-topology. 
\end{abstract}

\maketitle

%\tableofcontents

\section{Introduction}
Given a $C^1$ diffeomorphism $f$ on a smooth compact Riemannian manifold $M$, an invariant measure $\mu$ is called a \emph{physical measure} if it is physical observable \cite{ER}, in the sense that the set of its \emph{basin}
$$
B(\mu,f)=\Big\{x\in M: \frac{1}{n}\sum_{i=0}^{n-1}\delta_{f^ix}\xrightarrow{weak^*} \mu ~\textrm{as} ~n \to +\infty\Big\}
$$
has positive Lebesgue measure. Physical measures were discovered by Sinai, Ruelle and Bowen \cite{Sin72, Rue76, Bow75, BoR75}  in 1970's for uniformly hyperbolic systems, which can reflect the chaotic properties of dynamical systems from different viewpoints, such as entropies, Lyapunov exponents and equilibrium states (see \cite{pa,y02}). 
%In this work, for $C^{1+}$ diffeomorphism, we say that an invariant measure is an SRB measure if it has positive Lyapunov exponents and its conditional measures along Pesin unstable manifolds are absolutely continuous with respect to Lebesgue measures on these manifolds.

When the existence of physical measures is established, one expects to describe how the physical measures bifurcate as dynamics changes, which known as the subject of \emph{statistical stability}. Uniformly hyperbolic systems are well-known to be statistically stable \cite{ru97}. In recent decades, much progress has been made on the statistical stability for systems with certain weak forms of hyperbolicity (see e.g. \cite{A0,AV02,A04,cv,AV17}).
In this work, we mainly study the statistical stability for a rich family of diffeomorphisms beyond uniform hyperbolicity. We complete this picture by finding a geometrical combinatorial structure, which is called skeleton. By establishing the one-to-one corresponding between physical measures and elements of the skeleton, we explore the statistical properties and the stability for physical measures via skeletons in geometrical viewpoint.

\smallskip
A diffeomorphism $f$ is \emph{partially hyperbolic} if there exists a $Df$-invariant splitting $TM=E\oplus F$ of the tangent bundle such that
\begin{itemize}
\item $TM=E\oplus F$ is a \emph{dominated splitting}: $\|Df|_{E(x)}\|\cdot \|Df^{-1}|_{F(f(x))}\|<1/2$ for every $x\in M$, rewrite $TM=E\oplus_{\succ} F$ to indicate this domination;
\smallskip
\item either $E$ is uniformly expanding or $F$ is uniformly contracting.
\end{itemize}
Throughout, we write $E^u$ and $E^s$ to indicate that they are uniformly expanding and uniformly contracting invariant sub-bundles, respectively.
%Physical measures have been largely studied for partially hyperbolic diffeomorphisms in recent decades (e.g. \cite{ABV00,bv,CY05, bdpp, AV15, MCY17, CMY18}). 
For $C^{1+}$ partially hyperbolic diffeomorphisms\footnote{We say that $f$ is a $C^{1+}$ diffeomorphism if it is $C^1$ and $Df$ is H\"{o}lder continuous.} exhibiting $E^u$,
Pesin and Sinai \cite{ps82} introduced the notion of \emph{Gibbs $u$-states}, an invariant measure $\mu$ is a Gibbs $u$-state when its conditional measures along \emph{strong} unstable manifolds are absolutely continuous with respect to Lebesgue measures along these manifolds. From \cite{L84}, we know that $\mu$ is a Gibbs $u$-state if and only if it satisfies the following entropy formula
\begin{equation}\label{guu}
h_{\mu}(f,\F^u)=\int \log |{\rm det}Df|_{E^u}|d\mu,
\end{equation}
where $h_{\mu}(f,\mathcal{F}^u)$ denotes the partial entropy along strong unstable foliation $\F^u$ tangent to $E^u$(see Definition \ref{dgu}).
It is worth noting that the above entropy formula can be defined for $C^1$ partially hyperbolic diffeomorphisms. Thus, as proposed by Croviser-Yang-Zhang \cite{CYZ18}, in this paper we call $\mu$ a Gibbs $u$-state for $f$ if (\ref{guu}) holds for $\mu$. 
%We recall that $\mu$ is said to be an \emph{SRB measure} of a $C^{1+\alpha}$ diffeomorphism if it has positive Lyapunov exponents almost everywhere; and its disintegration along Pesin unstable manifolds are absolutely continuous w.r.t. Lebesgue measures along these manifolds (see \cite{bp02} for Pesin theory). 
%We have the following well known facts: 
%\begin{itemize}
%\item for partially hyperbolic diffeomorphism exhibiting $E^u$, any SRB measure is a Gibbs $u$-states;
%\item every physical measure is a Gibbs $u$-states; 
%\item every ergodic hyperbolic SRB measure is a physical measure. 
%\end{itemize}
The set of Gibbs $u$-states plays a crucial role in the study of physical measures for partially hyperbolic diffeomorphisms (see e.g. \cite{bv,bdpp,CY05,VY13, AV15, AV17, CMY18}). 

\smallskip
In this paper, we shall mainly investigate the topological and measure-theoretic behaviors for an open class of partially hyperbolic diffeomorphisms
introduced in \cite{MCY17}.
Let $PH^1_{EC}(M)$ be the set of $C^1$ partially hyperbolic diffeomorphisms on $M$ satisfying that for every $f$ in it, there exists a partially hyperbolic splitting $TM=E^u\oplus_{\succ} E^{cu}\oplus_{\succ} E^{cs}$ for which
\begin{itemize}
\smallskip
\item\label{P1} $E^{cu}$ is \emph{mostly expanding}: $\mu$ has only positive Lyapunov exponents along $E^{cu}$ for every Gibbs $u$-state $\mu$;
\smallskip
\item\label{P2} $E^{cs}$ is \emph{mostly contracting}: $\mu$ has only negative Lyapunov exponents along $E^{cs}$ for every Gibbs $u$-state $\mu$.
\end{itemize}
%{\color{red}Unless otherwise stated, we say $f\in \mathcal{PH}^{1}_{\mathcal{EC}}(M)$ with partially hyperbolic splitting $TM=E^u\oplus E^{cu}\oplus E^{cs}$, meaning that the partially hyperbolic splitting must satisfy properties (P\ref{P1}) and (P\ref{P2}).}
%In order to study SRB measures, one needs diffeomorphisms in $\mathcal{PH}^1_{\mathcal{EC}}(M)$ to have higher regularity. 
Denote by $PH^{1+}_{EC}(M)$ the set of diffeomorphisms in $PH^1_{EC}(M)$ of class $C^{1+}$.
Let us list some properties for this kind of partially hyperbolic diffeomorphisms:
\begin{itemize}
\item Following the result of \cite[Theorem B]{yjg16}, we know that $PH^1_{EC}(M)$ is an open subset of $C^1$ diffeomorphisms.
\smallskip
\item $PH^{1+}_{EC}(M)$ contains both the diffeomorphisms with mostly expanding center and diffeomorphisms with mostly contracting center.
%\footnote{By using the Gibbs $u$-states in $C^1$ type, one can also define diffeomorphism with mostly expanding centers and diffeomorphisms with mostly contracting centers. However, some striking characters of these diffeomorphism will be lost},
Both of them have been studied from different aspects of interests recently (see e.g.\cite{bv, d00, cas02, cas04, And10, VY13, AV15, yjg16, dvy16,yjg19}).
\smallskip
\item By \cite[Theorem A]{MCY17}, any diffeomorphism of $PH^{1+}_{EC}(M)$ has finitely many ergodic physical measures, which has the basin covering property\footnote{The union of the basins of all physical measures has a full Lebesgue measure.}. 
%\footnote{Which means that Lebesgue almost every point of $M$ belongs to some basin of physical measures}.
\end{itemize}

\smallskip
%\subsection{Skeletons and physical measures}
In \cite{dvy16}, to study the structure of physical measures for diffeomorphisms with mostly contracting center, inspired by \cite{h11}, Dolgopyat-Viana-Yang introduced a topological structure, which is known as skeleton. More recently, Yang \cite{yjg19} slightly change the original definition of skeleton for diffeomorphisms with mostly expanding center. 
%Before \cite{yjg19}, P. Mehdipour-A. Tahzibi \cite{MT16} studied the number of SRB measures for endomorphisms by analyzing skeletons.
By constructing skeletons, they studied certain geometric structures of physical measures, which include the number for basins and supports of physical measures. Moreover, the study on stability of skeletons allow one can describe how physical measures bifurcate as the diffeomorphism changes under $C^1$-topology. 
The question naturally arises whether one can find skeletons for diffeomorphisms in $PH^{1}_{EC}(M)$ and use them to study the physical measures in geometrical viewpoint?

For diffeomorphisms in $PH^{1}_{EC}(M)$, due to the mixture behavior occurs on centers, we propose a new definition of the skeleton which is different from those in both \cite{dvy16} and \cite{yjg19}.

\begin{definition}\label{S}
Let $f$ be a diffeomorphism in $PH^{1}_{EC}(M)$ with dominated splitting $TM=E^u\oplus_{\succ} E^{cu}\oplus_{\succ} E^{cs}$. A set $\mathcal{S}$ consisting of finitely many hyperbolic periodic points of stable index ${\rm dim}E^{cs}$ is called a skeleton of $f$ if
\begin{enumerate}[(S1)]
\item\label{S1} for any $C^1$ disk $\gamma$ transverse to $E^{cs}$\footnote{Meaning that $T_x\gamma\oplus E^{cs}(x)=T_xM$ for every $x\in \gamma$.}, there exists $p\in \mathcal{S}$ such that 
$$
\gamma \pitchfork W^s({\rm Orb}(p,f))\neq \emptyset;
$$
\item\label{S2} $W^s({\rm Orb}(p,f))\pitchfork W^u({\rm Orb}(q,f))=\emptyset$ whenever $p,q\in \mathcal{S}$ with $p\neq q$.
\end{enumerate}
In addition, we say that $\mathcal{S}$ is a pre-skeleton of $f$ if it is a collection of finitely many hyperbolic periodic points of stable index ${\rm dim}E^{cs}$ and satisfies condition (S\ref{S1}).
\end{definition}

The main difference between the present definition and those in \cite{dvy16,yjg19} occurs on condition (S\ref{S1}). In mostly contracting case \cite{dvy16}, instead of (S\ref{S1}), there requires that every strong unstable manifold must intersect some stable manifold of periodic orbits of skeleton. The mostly expanding \cite{yjg19} case requires the denseness of the union of stable manifolds of periodic orbits from skeleton, which is weaker than condition (S\ref{S1}). 
%However, one can check that our definition of skeletons coincides with those in \cite{dvy16,yjg19} for diffeomorphisms with mostly contracting center and with mostly expanding center, respectively.}

In this paper, we show the existence of skeletons for diffeomorphisms in $PH^1_{EC}(M)$, and obtain a one-to-one correspondence between elements of a skeleton and physical measures for diffeomorphisms in $PH^{1+}_{EC}(M)$. 
Given a hyperbolic periodic point $p$ of a diffeomorphism $f$, recall that the \emph{homoclinic class} $H(p,f)$ of $p$ is the closure of the set of transversal intersection points between the stable and unstable manifolds of ${\rm Orb}(p,f)$.

\begin{theoremalph}\label{TheoB}
Every $f\in PH^1_{EC}(M)$ admits skeletons.
Moreover, if $f\in PH^{1+}_{EC}(M)$, denoted by $\P(f)=\{\mu_1,\cdots,\mu_{k}\}$ the finitely many physical measures of $f$. Then for any skeleton $\mathcal{S}(f)=\{p_1,\cdots, p_{\ell}\}$ of $f$, there exists a bijective map $i\mapsto j(i)$ such that for every physical measure $\mu_i\in \P(f)$ and the corresponding $p_{j(i)}\in \mathcal{S}(f)$, we have
$$
{\rm supp}(\mu_{i})=\overline{W^u({\rm Orb}(p_{j(i)},f))}=H(p_{j(i)},f).
$$
In particular, we have $k=\ell$, i.e., the number of physical measures of $f$ is equal to the cardinality of its skeleton.
\end{theoremalph}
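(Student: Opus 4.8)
The plan is to first produce skeletons for every $f\in PH^1_{EC}(M)$, then, when $f\in PH^{1+}_{EC}(M)$, build a canonical skeleton attached to the physical measures and show that every skeleton is ``isomorphic'' to it. For the existence: a $C^1$ disk $\gamma$ with $T_x\gamma\oplus E^{cs}(x)=T_xM$ on $\gamma$ necessarily has dimension $\dim E^u+\dim E^{cu}$, and by the domination $E^u\oplus_\succ E^{cu}\oplus_\succ E^{cs}$ the iterates $f^n(\gamma)$ eventually contain sub-disks $\Delta$ of this same dimension with tangent spaces in an arbitrarily thin cone around $E^u\oplus E^{cu}$ and inner radius $\ge\delta_0$ for a uniform $\delta_0>0$ (graph transform, valid in $C^1$). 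Such a $\Delta$ is automatically transverse to any submanifold tangent to a thin cone around $E^{cs}$, in particular to the stable manifold of a hyperbolic periodic point of stable index $\dim E^{cs}$; since transversal intersections pull back under $f^{-n}$ and $W^s(\mathrm{Orb}(p,f))$ is $f$-invariant, it suffices to show that every such $\delta_0$-disk $\Delta$ meets $W^s(\mathrm{Orb}(p,f))$ for some $p$ in a \emph{finite} family of such periodic points. The Cesàro averages $\tfrac1n\sum_{k=0}^{n-1}f^k_*\lambda$ of the normalized volume $\lambda$ on $\Delta$ accumulate on Gibbs $u$-states (in the $C^1$ category this is the semicontinuity-of-partial-entropy argument of \cite{CYZ18} applied to the formula (\ref{guu})); by ``mostly expanding''/``mostly contracting'' any such Gibbs $u$-state $\mu$ is hyperbolic with unstable bundle $E^u\oplus E^{cu}$, and the $C^1$ perturbation and closing techniques for this class (cf.\ \cite{yjg16,MCY17}) furnish, near $\mathrm{supp}(\mu)$, a hyperbolic periodic point $p$ of stable index $\dim E^{cs}$ with a local stable disk of uniform size, which a forward iterate of $\Delta$ must cross transversally. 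Since the space of $\delta_0$-disks is compact and ``crossing $W^s(\mathrm{Orb}(p,f))$ transversally'' is open, finitely many $p$ suffice, producing a finite pre-skeleton $K$. To obtain a skeleton, prune $K$: if (S\ref{S2}) fails, say $W^u(\mathrm{Orb}(q,f))\pitchfork W^s(\mathrm{Orb}(p,f))\neq\emptyset$ for $p\neq q$ in $K$, the inclination lemma makes $W^s(\mathrm{Orb}(p,f))$ accumulate in $C^1$ on $W^s_{\mathrm{loc}}(\mathrm{Orb}(q,f))$, so any disk transverse to $E^{cs}$ meeting $W^s(\mathrm{Orb}(q,f))$ also meets $W^s(\mathrm{Orb}(p,f))$; hence $K\setminus\{q\}$ is still a pre-skeleton, and finitely many such removals produce a skeleton (the process cannot exhaust $K$, since (S\ref{S1}) fails for the empty set).

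Now let $f\in PH^{1+}_{EC}(M)$ and $\P(f)=\{\mu_1,\dots,\mu_k\}$. Each $\mu_i$ is an ergodic Gibbs $u$-state (physical measures of partially hyperbolic diffeomorphisms with uniformly expanding $E^u$ are Gibbs $u$-states; see \cite{MCY17}), hence hyperbolic with stable dimension $\dim E^{cs}$. Here $C^{1+}$ enters through Katok's closing lemma, which provides a hyperbolic periodic point $q_i\in\mathrm{supp}(\mu_i)$ of stable index $\dim E^{cs}$ with $H(q_i,f)\subseteq\mathrm{supp}(\mu_i)$. Since $\mathrm{supp}(\mu_i)$ is closed, $f$-invariant and saturated by strong unstable leaves (the support of a Gibbs $u$-state contains the whole strong unstable leaf through each of its points) and contains $q_i$, we get $\overline{W^u(\mathrm{Orb}(q_i,f))}\subseteq\mathrm{supp}(\mu_i)$; the reverse inclusion — the $u$-minimality of $\mathrm{supp}(\mu_i)$, equivalently the uniqueness of the Gibbs $u$-state supported in $H(q_i,f)$ — follows from transitivity of $f$ on the homoclinic class, and yields $\mathrm{supp}(\mu_i)=\overline{W^u(\mathrm{Orb}(q_i,f))}=H(q_i,f)$. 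If $W^s(\mathrm{Orb}(q_i,f))\pitchfork W^u(\mathrm{Orb}(q_j,f))\neq\emptyset$ with $i\neq j$, then by the inclination lemma $\mathrm{supp}(\mu_i)=\overline{W^u(\mathrm{Orb}(q_i,f))}\subseteq\overline{W^u(\mathrm{Orb}(q_j,f))}=\mathrm{supp}(\mu_j)$, and uniqueness of the Gibbs $u$-state in $H(q_j,f)$ forces $\mu_i=\mu_j$, a contradiction; thus $\Q:=\{q_1,\dots,q_k\}$ satisfies (S\ref{S2}). It satisfies (S\ref{S1}) by the equidistribution argument of the first paragraph applied with $\Q$ in place of $K$ (a disk transverse to $E^{cs}$ iterates to a $\delta_0$-disk whose Cesàro limits are convex combinations of the $\mu_i$, so it must transversally cross $W^s(\mathrm{Orb}(q_i,f))$ for some charged $\mu_i$). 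Hence $\Q$ is a skeleton, canonically bijective to $\P(f)$ via $q_i\leftrightarrow\mu_i$, with the asserted description of $\mathrm{supp}(\mu_i)$.

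Finally, let $\mathcal S(f)=\{p_1,\dots,p_\ell\}$ be an arbitrary skeleton. Applying (S\ref{S1}) of $\mathcal S(f)$ to a local unstable disk of $q_i$ (of dimension $\dim E^u+\dim E^{cu}$, hence transverse to $E^{cs}$) gives an index $j=j(i)$ with $W^u(\mathrm{Orb}(q_i,f))\pitchfork W^s(\mathrm{Orb}(p_{j},f))\neq\emptyset$; applying (S\ref{S1}) of $\Q$ to a local unstable disk of $p_{j}$ gives $i'$ with $W^u(\mathrm{Orb}(p_{j},f))\pitchfork W^s(\mathrm{Orb}(q_{i'},f))\neq\emptyset$. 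Chaining these two heteroclinic intersections through the inclination lemma — using the persistence of a transversal intersection with the fixed submanifold $W^s(\mathrm{Orb}(q_{i'},f))$ — produces $W^u(\mathrm{Orb}(q_i,f))\pitchfork W^s(\mathrm{Orb}(q_{i'},f))\neq\emptyset$, so (S\ref{S2}) of $\Q$ forces $i'=i$; the symmetric chaining together with (S\ref{S2}) of $\mathcal S(f)$ shows that $j(i)$ is the unique index with $W^u(\mathrm{Orb}(q_i,f))\pitchfork W^s(\mathrm{Orb}(p_{j(i)},f))\neq\emptyset$ and that $i\mapsto j(i)$ is a bijection; in particular $k=\ell$. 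For each $i$ we now have simultaneously $W^u(\mathrm{Orb}(q_i,f))\pitchfork W^s(\mathrm{Orb}(p_{j(i)},f))\neq\emptyset$ and $W^u(\mathrm{Orb}(p_{j(i)},f))\pitchfork W^s(\mathrm{Orb}(q_i,f))\neq\emptyset$, i.e.\ $q_i$ and $p_{j(i)}$ are homoclinically related, whence $H(p_{j(i)},f)=H(q_i,f)=\mathrm{supp}(\mu_i)$; and since $p_{j(i)}\in\mathrm{supp}(\mu_i)$ and $\mathrm{supp}(\mu_i)$ is closed, $f$-invariant and saturated by strong unstable leaves, $\overline{W^u(\mathrm{Orb}(p_{j(i)},f))}\subseteq\mathrm{supp}(\mu_i)$, which combined with the always-valid inclusion $H(p_{j(i)},f)\subseteq\overline{W^u(\mathrm{Orb}(p_{j(i)},f))}$ gives $\mathrm{supp}(\mu_i)=\overline{W^u(\mathrm{Orb}(p_{j(i)},f))}=H(p_{j(i)},f)$, as required.

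I expect the main obstacle to lie in the first paragraph: producing a \emph{finite} pre-skeleton, which hinges on a uniform lower bound for the size of the periodic stable manifolds that $\delta_0$-disks equidistributing to a Gibbs $u$-state must transversally cross. In the $C^{1+}$ setting this follows from bounded distortion and Pesin/Katok theory, but in the $C^1$ setting it must be replaced by the $C^1$ theory of Gibbs $u$-states (semicontinuity of partial entropy) together with $C^1$ closing and connecting arguments; the other delicate input is the $u$-minimality of $\mathrm{supp}(\mu_i)$ in the second paragraph (equivalently, uniqueness of the Gibbs $u$-state on a homoclinic class). Granting these, the third paragraph is a purely combinatorial application of the inclination lemma.
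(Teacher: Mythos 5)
Your overall architecture (existence of a finite pre-skeleton, pruning to a skeleton, a canonical skeleton attached to the physical measures, then a purely combinatorial matching via the inclination lemma) parallels the paper, and your third paragraph is essentially the paper's Lemma \ref{ske1}/\ref{ske2}. But two steps you rely on are genuine gaps, and they are exactly where the paper's technical work lives. First, the claim that iterates of a disk $\gamma$ transverse to $E^{cs}$ eventually contain sub-disks of \emph{uniform} inner radius $\delta_0$ tangent to a thin cone around $E^u\oplus E^{cu}$ ``by graph transform, valid in $C^1$'' is false as stated: domination only controls the direction of $T(f^n\gamma)$, and $E^{cu}$ is not uniformly expanded, so nothing prevents the disk from staying small in the $E^{cu}$ directions. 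Uniform growth at definite times is obtained in the paper only after proving that Lebesgue-a.e.\ point of $\gamma$ has all its Cesàro limits in the Gibbs $cu$-space (Theorem \ref{ttd}, which rests on the entropy/volume estimate of Lemma \ref{E} and upper semicontinuity of entropy, Theorem \ref{uppp}), deducing non-uniform expansion along $E^u\oplus E^{cu}$ with a positive density of Pliss hyperbolic times (Proposition \ref{uv}), and then using backward contraction at hyperbolic times (Lemma \ref{bcd}) \emph{simultaneously} with visits to the $\rho$-balls around the shadowing-produced periodic points (Theorem \ref{n11}); your ``a forward iterate of $\Delta$ must cross transversally'' compresses precisely this quantitative synchronization, and your closing remark identifies only half of it (the uniform size of periodic stable manifolds), not the uniform expansion of the disk at the visiting times. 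Note also that the paper needs to pass to an iterate $f^N\in PH^1_{C}(M)$ (Theorem \ref{utv}) to get the uniform constants entering Proposition \ref{uv}, and then has to do nontrivial bookkeeping (Proposition \ref{kyp}, Claim \ref{claim}) to return to $f$; your argument silently assumes such uniformity for $f$ itself.

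Second, in the $C^{1+}$ part you assert that $\mathrm{supp}(\mu_i)=\overline{W^u(\mathrm{Orb}(q_i,f))}=H(q_i,f)$ — the ``$u$-minimality of the support, equivalently uniqueness of the Gibbs $u$-state on $H(q_i,f)$'' — ``follows from transitivity of $f$ on the homoclinic class''. It does not: transitivity of a homoclinic class gives neither $u$-minimality nor uniqueness of Gibbs $u$-states on it, and this inclusion $\mathrm{supp}(\mu_i)\subset\overline{W^u(\mathrm{Orb}(p,f))}$ is one of the main points of the theorem. The paper proves it (Proposition \ref{G}) by taking a typical unstable disk $D_i\subset\mathrm{supp}(\mu_i)$ whose Lebesgue-a.e.\ point lies in $B(\mu_i,f)$ (Lemma \ref{ggh}), pushing it by the inclination lemma across a cylinder around the skeleton point, and invoking absolute continuity of the stable lamination to conclude that $W^u_{loc}(p)$ carries a positive Lebesgue measure set of basin points, whence $\mathrm{supp}(\mu_i)\subset\overline{W^u(\mathrm{Orb}(p,f))}$; the same basin-disjointness mechanism (not uniqueness of Gibbs $u$-states on a class) is what rules out heteroclinic intersections between the $q_i$'s and makes your canonical family $\Q$ satisfy (S\ref{S2}). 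As it stands, both your verification of (S\ref{S2}) for $\Q$ and your identification of $\mathrm{supp}(\mu_i)$ rest on this unproved uniqueness claim, so the second paragraph of your proposal needs to be replaced by an argument of the above type.
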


Similar results were obtained for both mostly contracting case \cite[Theorem A]{dvy16} and mostly expanding case \cite[Theorem A]{yjg19}. Let us emphasize that the approaches of both works depend heavily on the uniform behaviors on either stable manifolds or unstable manifolds tangent to strong stable or unstable sub-bundles. In contrast to \cite{dvy16,yjg19}, for diffeomorphisms in $PH^1_{EC}(M)$, we have neither $E^{cu}=E^u$ nor $E^{cs}=E^{s}$, thus both $E^{cu}$ and $E^{cs}$ have non-uniformly behaviors. Therefore we use a different approach in the proof of Theorem \ref{TheoB}. 
The following ingredients play central roles in the construction of skeletons.
\begin{itemize}
\item Hyperbolic periodic points in skeletons will be created near Pesin blocks admitting large weight associated to Gibbs $u$-states.  
\smallskip
\item We show that the forward orbit of typical point enters to small neighborhoods of periodic points obtained above at \emph{infinitely many} hyperbolic times.
\end{itemize}

We mention that during the proof of Theorem \ref{TheoB}, by utilizing structures of Gibbs $cu$-states established in this paper, we provide a new proof on the existence and basin covering property of physical measures, which is different from the
original argument \cite{MCY17}.
%Among which, we would like to mention that for the mostly expanding case \cite{yjg19}, the existence of skeletons relies on \cite[Proposition 2.2]{yjg19}, which asserts that for Lebesgue almost every point of $x$, any limit measure of $\omega_{\M}(x,f)$ is a Gibbs $cu$-state. Theorem \ref{TheoA} guarantees that this full Lebesgue measure set can be found in any disk transverse to $E^{cs}$, with this improvement, one can construct skeletons with transversal property as in (S\ref{S1}), not only with denseness property on the union of stable manifolds. Observe that this denseness property on skeleton is inadequate in constructing one-to-one property of skeletons and physical measures, mainly due to the lack of uniformity on size of stable manifolds in our systems.
%
Let us remark that both of \cite{dvy16} and \cite{yjg19} stress that each closure of stable manifold of periodic orbits of skeleton coincides with the basin or essential basin of some physical measure. We do not know if it is possible to establish this property for diffeomorphisms in $PH^{1+}_{EC}(M)$. 

The idea of using skeletons or homoclinic classes to analyze typical measures, such as SRB measures, physical measures and maximal entropy measures appears in recent works (see e.g. \cite{MT16,BCS18,UVY19}).

%As we see in Theorem \ref{TheoB}, the cardinality of skeletons is equal to the number of SRB/physical measures. In general,  P. Mehdipour-A. Tahzibi \cite[Theorem 1.3]{MT16} has proved that for any  endomorphisms, the number of SRB measures is bounded above by the cardinality of skeletons (with the same index), though the skeleton defined there is slightly different to us as well.
%
%\begin{remark}
%Furthermore, they also obtained the one-to-one property of the closure of stable manifold of periodic orbits of skeletons and the basins or essential basins of physical/SRB measures. We do not know if it is possible to establish this one-to-one property for diffeomorphisms in $\mathcal{PH}^{1+}_{\mathcal{EC}}(M)$.
%\end{remark}

\smallskip
Since $PH^{1+}_{EC}(M)$ is $C^1$ open, 
%i.e., for every $f\in \mathcal{PH}^{1+}_{\mathcal{EC}}(M)$, there exists a sequence of diffeomorphisms $f_n\in\mathcal{PH}^{1+}_{\mathcal{EC}}(M)$ such that $f_n\to f$ in $C^1$-topology. In view of this, 
one would like to study how physical measures vary with respect to diffeomorphisms of $PH^{1+}_{EC}(M)$ under $C^1$-topology. Theorem \ref{TheoB} suggests that one can achieve this goal via analyzing the perturbed skeletons. More precisely, we have

%\begin{theoremalph}\label{TheC}
%Both the cardinality of skeleton and the number of physical measures depend upper semi-continuously in $C^1$- topology among $\mathcal{PH}^{1+}_{\mathcal{EC}}(M)$. 
%Moreover, for any $f\in \mathcal{PH}^{1+}_{\mathcal{EC}}(M)$, there exists a $C^1$ neighborhood $\V$ of $f$ such that the number of physical measures is locally constant and the physical measures vary continuously in the weak*-topology on a $C^1$ open and dense subset $\mathcal{V}_0\subset \V$. 
%
%In addition, restricted to any subset of $\mathcal{V}$ where the number of physical measures is constant, the supports of the physical measures depends lower semi-continuously with dynamics, in the sense of Hausdorff topology.
%\end{theoremalph}

\begin{theoremalph}\label{TheC}
Assume that $f\in PH^{1+}_{EC}(M)$ and $\mathcal{S}(f)=\{p_1,\cdots p_k\}$ is a skeleton of $f$. Then there exists a $C^1$ neighborhood $\V$ of $f$ such that for any $g\in \V$, the corresponding continuation $\mathcal{S}(g)=\{p_1(g),\cdots,p_k(g)\}$ is a pre-skeleton of $g$. If $g\in \V$ is of class $C^{1+}$, then the number of its physical measures is bounded above by the number of physical measures of $f$. Moreover, these two numbers coincide if and only if there exist no heteroclinic intersections between different elements of $\mathcal{S}(g)$. For this situation, every physical measure of $g$ is close to a physical of $f$, in weak$^*$-topology.

In addition, restricted to any subset of $\mathcal{V}$ where the number of physical measures is constant, the supports of the physical measures depends lower semi-continuously with dynamics, in the sense of Hausdorff topology.
\end{theoremalph}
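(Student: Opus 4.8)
The plan is to apply the correspondence of Theorem~\ref{TheoB} twice — to $f$ and to each $C^{1+}$ perturbation $g$ — and to use the skeleton to record the combinatorics of how physical measures may merge.

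\emph{Persistence of the pre-skeleton.} Each $p_i$ is hyperbolic, so it admits a continuation $p_i(g)$ for $g$ in a $C^1$-neighborhood $\mathcal V_0$ of $f$; since $PH^1_{EC}(M)$ is $C^1$-open and the splitting $E^u\oplus_{\succ}E^{cu}\oplus_{\succ}E^{cs}$ varies continuously, $p_i(g)$ is again hyperbolic of stable index ${\rm dim}E^{cs}$, with local invariant manifolds depending $C^1$-continuously on $g$. To verify (S\ref{S1}) for $\mathcal S(g)=\{p_1(g),\dots,p_k(g)\}$ I would first reduce, exactly as in the verification of (S\ref{S1}) for $f$ inside the proof of Theorem~\ref{TheoB}, to center-unstable disks of a uniform inner radius $\delta>0$ lying in a fixed cone around $E^u\oplus E^{cu}$: forward iterates of any $C^1$-disk transverse to $E^{cs}$ contain such a sub-disk at infinitely many hyperbolic times, and this mechanism is uniform over a $C^1$-neighborhood of $f$. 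For the reduced statement I would argue by contradiction: if $g_n\to f$ and definite-size center-unstable disks $D_n$ (for $g_n$) met no $W^s({\rm Orb}(p_i(g_n),g_n))$ transversally, then a subsequential limit $D_\infty$ would be a definite-size center-unstable disk for $f$, so by (S\ref{S1}) for $f$ it crosses some $W^s({\rm Orb}(p_i,f))$ transversally; as a transversal intersection with a compact piece of the continuously varying local stable manifold of a hyperbolic periodic orbit persists under $C^1$-perturbation of both the disk and the diffeomorphism, $D_n$ would cross $W^s({\rm Orb}(p_i(g_n),g_n))$ for large $n$, a contradiction. This produces the neighborhood $\mathcal V\subset\mathcal V_0$ on which $\mathcal S(g)$ is a pre-skeleton.

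\emph{Counting physical measures.} For $g\in\mathcal V$ of class $C^{1+}$ one has $g\in PH^{1+}_{EC}(M)$, so Theorem~\ref{TheoB} gives a skeleton $\mathcal S'(g)$ with the number of physical measures of $g$ equal to $\#\mathcal S'(g)$. I claim $\#\mathcal S'(g)\le k$. For $p'\in\mathcal S'(g)$ the disk $W^u_{\rm loc}(p',g)$ is transverse to $E^{cs}$ (its tangent spaces lie in the unstable cone), so the pre-skeleton property of $\mathcal S(g)$ yields $q=\phi(p')\in\mathcal S(g)$ with $W^u({\rm Orb}(p',g))\pitchfork W^s({\rm Orb}(q,g))\neq\emptyset$. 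The map $\phi$ is injective: if $\phi(p_1')=\phi(p_2')=q$, applying (S\ref{S1}) for $\mathcal S'(g)$ to the transverse disk $W^u_{\rm loc}(q,g)$ gives $p_3'\in\mathcal S'(g)$ with $W^u({\rm Orb}(q,g))\pitchfork W^s({\rm Orb}(p_3',g))\neq\emptyset$; the inclination lemma makes heteroclinic connections transitive, so $W^u({\rm Orb}(p_a',g))\pitchfork W^s({\rm Orb}(p_3',g))\neq\emptyset$ for $a=1,2$, whence (S\ref{S2}) for $\mathcal S'(g)$ forces $p_1'=p_3'=p_2'$. Hence the number of physical measures of $g$ is $\#\mathcal S'(g)\le\#\mathcal S(g)=k$. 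If $\mathcal S(g)$ has no heteroclinic intersection between distinct elements it also satisfies (S\ref{S2}) and is therefore a skeleton, so Theorem~\ref{TheoB} applied to $g$ gives exactly $k$ physical measures with the stated support identities; this is the ``$\Leftarrow$'' of the iff. Conversely, if $g$ has $k$ physical measures then $\phi$ is a bijection $\mathcal S'(g)\to\mathcal S(g)$, and any connection $W^u({\rm Orb}(p_i(g),g))\pitchfork W^s({\rm Orb}(p_{i'}(g),g))\neq\emptyset$ between distinct elements — writing $p_i(g)=\phi(p')$, $p_{i'}(g)=\phi(p'')$ with $p'\neq p''$ — combines with (S\ref{S1}) for $\mathcal S'(g)$ applied to $W^u_{\rm loc}(p_{i'}(g),g)$ and a double use of transitivity to give $W^u({\rm Orb}(p'',g))\pitchfork W^s({\rm Orb}(p',g))\neq\emptyset$, contradicting (S\ref{S2}) for $\mathcal S'(g)$.

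\emph{Semicontinuity and weak$^*$-closeness.} Restrict to a subset $\mathcal W\subset\mathcal V$ on which the number of physical measures is constant; there $\mathcal S(g)$ is a skeleton for each $g\in\mathcal W$, and Theorem~\ref{TheoB} gives, for the continuation $p(g)\in\mathcal S(g)$ of a skeleton point $p$ of $f$, a physical measure $\mu^g_{p(g)}$ of $g$ with ${\rm supp}(\mu^g_{p(g)})=\overline{W^u({\rm Orb}(p(g),g))}$ (with $\mu^f_{p}$ the corresponding physical measure of $f$). Since $W^u({\rm Orb}(p(g),g))=\bigcup_{m\ge 0}g^m\big(W^u_{\rm loc}({\rm Orb}(p(g),g))\big)$ and, for fixed $m$, both $g^m$ and $W^u_{\rm loc}({\rm Orb}(p(g),g))$ depend continuously on $g$, every point of $\overline{W^u({\rm Orb}(p,f))}$ is a limit of points of $\overline{W^u({\rm Orb}(p(g_n),g_n))}$ when $g_n\to f$ in $\mathcal W$, i.e.\ ${\rm supp}(\mu^f_p)\subset\liminf_n{\rm supp}(\mu^{g_n}_{p(g_n)})$ — the asserted lower semicontinuity. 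For weak$^*$-closeness, any subsequential limit $\nu$ of $\mu^{g_n}_{p(g_n)}$ with $g_n\to f$ is a Gibbs $cu$-state of $f$ — that class being closed under such limits, by the structure of Gibbs $cu$-states developed en route to Theorem~\ref{TheoB} — hence a convex combination $\sum_j a_j\mu_j$ of physical measures of $f$; since ${\rm supp}(\nu)\subset\liminf_n\overline{W^u({\rm Orb}(p(g_n),g_n))}$ while the supports of distinct physical measures of $f$ are pairwise disjoint homoclinic classes, one rules out $a_j>0$ for $\mu_j\neq\mu^f_p$ (using the continuations of hyperbolic periodic points dense in those classes, together with the skeleton structure), so $\nu=\mu^f_p$ and $\mu^g_{p(g)}\to\mu^f_p$.

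\emph{Main obstacle.} The delicate point is the robustness of (S\ref{S1}): unlike (S\ref{S2}), which involves only finitely many compact transversal-intersection data and is plainly $C^1$-open, (S\ref{S1}) quantifies over \emph{all} transverse disks, so its transfer to nearby $g$ requires genuine uniformity — the hyperbolic-times reduction to definite-size center-unstable disks must hold uniformly on a $C^1$-neighborhood, and the limiting argument needs appropriate compactness of the family of admissible disks. A secondary sensitive point is pinning a limit of physical measures to a single $\mu_j$ rather than a proper combination, which relies on the semicontinuity of the Gibbs $cu$-states together with the rigidity of the supports $\overline{W^u({\rm Orb}(p(g),g))}$ near the continued periodic orbits.
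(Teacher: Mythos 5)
Your combinatorial steps (the injective map from a skeleton of $g$ into the continued pre-skeleton, the ``iff'' via Lemmas \ref{ske3} and \ref{ske1} together with Theorem \ref{TheoB}, and the lower semicontinuity of supports from continuity of compact pieces $W^u_L({\rm Orb}(p_i(g),g))$) are correct and essentially the paper's. The genuine gap is in the weak$^*$-closeness step, which is where the paper does its real work (Lemma \ref{klm}). You pin a subsequential limit $\nu=\sum_j a_j\mu_j$ of $\mu^{g_n}_{p(g_n)}$ to the single measure $\mu^f_p$ by invoking that ``the supports of distinct physical measures of $f$ are pairwise disjoint homoclinic classes'' and by using ${\rm supp}(\nu)\subset\liminf_n\overline{W^u({\rm Orb}(p(g_n),g_n))}$. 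Neither ingredient works: disjointness of the supports $H(p_i,f)$ is nowhere established and does not follow from (S2), which only forbids \emph{transverse} heteroclinic intersections; and the $\liminf$ inclusion constrains ${\rm supp}(\nu)$ from below, so to exclude a component $a_j\mu_j$ you would need an upper bound on $\limsup_n\overline{W^u({\rm Orb}(p(g_n),g_n))}$, i.e.\ upper semicontinuity of these closures, which is exactly what is not available (the paper proves, and you use elsewhere, only lower semicontinuity). The paper's mechanism is different and quantitative: if the limit charges some $\mu_\iota$ with $\iota\neq\varkappa$, then by the uniform Pesin-block estimate of Proposition \ref{t11} the measures $\mu_{\varkappa,n}$ give positive mass to $\Lambda_{\ell}(g_n,\alpha)$ inside a fixed ball of the covering $\mathscr{A}_{\mu_\iota}(g,\ell,\alpha,\rho)$ of Theorem \ref{covering}; Lemmas \ref{sp} and \ref{sp1} (Liao--Gan shadowing, plus ergodicity of $\mu_{\varkappa,n}$) then produce hyperbolic periodic points with invariant manifolds of uniform size $\delta_\ell$ forcing $p_{\varkappa}(g_{n_0})$ and $p_{\iota}(g_{n_0})$ to be homoclinically related for some large $n_0$, contradicting (S2) for $\mathcal S(g_{n_0})$. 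This shadowing-to-forbidden-heteroclinic-relation argument is the missing idea; without it your proof of the convergence claim does not go through.

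A secondary, patchable gap is your compactness argument for the persistence of (S1). A sequence of definite-size $C^1$ disks tangent to the center-unstable cone (for $g_n$) is precompact only in the $C^0$/Lipschitz sense; the limit $D_\infty$ need not be a $C^1$ disk, so (S1) for $f$ cannot be applied to it as stated, and a transverse intersection of $D_\infty$ with $W^s({\rm Orb}(p_i,f))$ does not pass back to $D_n$ from mere $C^0$ proximity without an additional topological-crossing/cone argument (you acknowledge this as the ``main obstacle'' but do not resolve it). The paper avoids the limit entirely: every $g$ near $f$ has its own skeleton whose points carry stable and unstable manifolds of uniform size $\delta_\ell$ (Theorems \ref{es} and \ref{n11}); a finite cover of $M$ by neighborhoods on which a fixed compact piece $W^s_{\rho}({\rm Orb}(p_i,f))$ is transversally hit, the continuity of such compact pieces in $g$, and the inclination lemma then show that the continuations $\{p_i(g)\}$ satisfy (S1) (Theorems \ref{ffff} and \ref{uppers}). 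Either adopt that route or supply the uniform crossing argument; as written, this step and, more importantly, the weak$^*$ step above are not proved.
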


By involving more technical details, we will prove Theorem \ref{TheC} in a more precisely manner in Section \ref{sec7}.
Theorem \ref{TheC} gives the upper semi-continuity of the number of physical measures and the cardinality of skeleton. As a consequence, these numbers are locally constant on an open and dense subset of diffeomorphisms in $PH^{1+}_{EC}(M)$. As we mentioned, \cite{dvy16} and \cite{yjg19} obtained similar results for diffeomorphisms with mostly contracting and diffeomorphisms with mostly expanding center. Note that \cite{AV17} showed similar result for diffeomorphisms with mostly expanding center under $C^{r}$($r>1$)-topology, in which they didn't construct the skeletons. 

\smallskip
%Let us recall the notions of (weakly) statistical stability.
Let $\F$ be a family of diffeomorphisms such that each $f\in \F$ admits finitely many physical measures, we say that $\F$ is \emph{weakly statistically stable} if for every $f\in \F$, for every sequence $f_n\in \F$ that converges to $f$ in $C^1$-topology, and for every sequence $\mu_n$ of physical measures of $f_n$, any limit measure of $\{\mu_n\}_{n\in \NN}$ is in the convex hull of finitely many physical measures for $f$. In particular, if $\F$ consists of diffeomorphisms that each of which admits a unique physical measure, then we say that $\F$ is \emph{statistically stable} if it is weakly statistically stable, or equivalently, the map $\F\ni f\longmapsto \mu_f$ is continuous, where $\mu_f$ denotes the unique physical measure of $f$.

\begin{theoremalph}\label{sta}
$PH^{1+}_{EC}(M)$ is weakly statistically stable. If $f\in PH^{1+}_{EC}(M)$ admits a unique physical measure, then there exists a $C^1$ neighborhood $\U$ of $f$ such that $\U\cap PH^{1+}_{EC}(M)$ is statistically stable.
%\begin{enumerate}
%\item there exists a $C^1$ open neighborhood $\U$ such that $\U\cap \mathcal{PH}^{1+}_{\mathcal{EC}}(M)$ is weakly statistically stable. 
%\item when $f$ admits a unique physical measure, then there exists a $C^1$ open neighborhood $\U$ such that $\U\cap \mathcal{PH}^{1+}_{\mathcal{EC}}(M)$ is statistically stable.
%\end{enumerate}
\end{theoremalph}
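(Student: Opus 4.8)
The plan is to deduce Theorem \ref{sta} directly from Theorems \ref{TheoB} and \ref{TheC}, so very little new work should be needed. First I would unwind the definition of weak statistical stability: given $f\in PH^{1+}_{EC}(M)$, a sequence $f_n\to f$ in $C^1$-topology with $f_n\in PH^{1+}_{EC}(M)$, and physical measures $\mu_n$ of $f_n$, I must show that every weak$^*$ accumulation point $\mu_\infty$ of $(\mu_n)$ lies in the convex hull of the (finitely many, by the cited \cite[Theorem A]{MCY17}) physical measures of $f$. Fix a skeleton $\mathcal{S}(f)=\{p_1,\dots,p_k\}$ of $f$, which exists by Theorem \ref{TheoB}. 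By Theorem \ref{TheC} there is a $C^1$-neighborhood $\V$ of $f$ on which the continuations $p_i(g)$ are defined and $\mathcal{S}(g)=\{p_1(g),\dots,p_k(g)\}$ is a pre-skeleton; discarding finitely many terms we may assume all $f_n\in\V$. For each $n$, since $f_n\in PH^{1+}_{EC}(M)$, Theorem \ref{TheoB} applied to $f_n$ gives that its physical measures are in bijection with the elements of any skeleton of $f_n$; choosing such a skeleton inside the pre-skeleton $\mathcal{S}(f_n)$ (a skeleton is obtained from a pre-skeleton by grouping periodic points that are heteroclinically related, as in the proof of Theorem \ref{TheC}), we may write $\mu_n=\mu_{n,i(n)}$ where $\mathrm{supp}(\mu_{n,i(n)})=H(p_{i(n)}(f_n),f_n)$ for some index $i(n)\in\{1,\dots,k\}$.

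Next, passing to a subsequence I may assume $i(n)\equiv i_0$ is constant and $\mu_n\to\mu_\infty$. The key point is then a compactness/closedness argument: $\mu_\infty$ is an $f$-invariant probability measure (standard, since $f_n\to f$ uniformly), it is a Gibbs $u$-state for $f$ (the Gibbs $u$-state property is given by the entropy formula \eqref{guu}, which passes to $C^1$-limits along $C^{1}$-convergent diffeomorphisms — this uses upper semicontinuity of partial entropy along the unstable foliation together with continuity of the integrand, exactly as in \cite{MCY17, dvy16, yjg19}), and its support is controlled by the continuation: since $p_{i_0}(f_n)\to p_{i_0}(f)$ and $\mu_n$ gives positive mass to every neighborhood of $\mathrm{Orb}(p_{i_0}(f_n),f_n)$, the measure $\mu_\infty$ gives positive mass to every neighborhood of $\mathrm{Orb}(p_{i_0}(f),f)$, hence $p_{i_0}(f)\in\mathrm{supp}(\mu_\infty)$. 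Now decompose $\mu_\infty$ into ergodic components; since $\mu_\infty$ is a Gibbs $u$-state, so are almost all of its ergodic components, and for diffeomorphisms in $PH^{1+}_{EC}(M)$ the ergodic Gibbs $u$-states that are ``physical-like'' — more precisely, the ergodic components of Gibbs $cu$-states, which by the structure theory developed in this paper for the proof of Theorem \ref{TheoB} are exactly combinations of the finitely many physical measures $\mu_1,\dots,\mu_k$ of $f$. Thus $\mu_\infty$ is a convex combination of $\mu_1,\dots,\mu_k$, which is the assertion of weak statistical stability.

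Finally, the statistically stable statement: if $f\in PH^{1+}_{EC}(M)$ has a unique physical measure $\mu_f$, then its skeleton is a single periodic orbit, $k=1$. By Theorem \ref{TheC} there is a $C^1$-neighborhood $\U$ of $f$ such that for every $g\in\U\cap PH^{1+}_{EC}(M)$ the pre-skeleton $\mathcal{S}(g)$ has one element, hence (trivially no heteroclinic intersections are possible between ``different'' elements when there is only one) it is a skeleton, so by Theorem \ref{TheoB} $g$ has exactly one physical measure $\mu_g$; and the last clause of Theorem \ref{TheC} says $\mu_g$ is close to $\mu_f$ in weak$^*$-topology as $g\to f$. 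This is precisely continuity of $g\mapsto\mu_g$ on $\U\cap PH^{1+}_{EC}(M)$, i.e. statistical stability.

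\medskip
I expect the main obstacle to be the passage to the limit in the last paragraph of the weak-stability argument: identifying the limit $\mu_\infty$ as a convex combination of the physical measures of $f$, rather than merely as some Gibbs $u$-state of $f$. This requires knowing that every ergodic Gibbs $u$-state appearing in the decomposition of $\mu_\infty$ ``sees'' one of the skeleton orbits $p_i(f)$ and coincides with the corresponding physical measure $\mu_i$ — which in turn relies on the full strength of the correspondence in Theorem \ref{TheoB} (supports equal homoclinic classes) together with the fact, established along the way in this paper, that distinct physical measures of $f$ have disjoint supports. Making the semicontinuity of partial entropy precise under $C^1$-perturbation, so that the Gibbs $u$-state property is genuinely inherited by $\mu_\infty$, is the other technical point that must be handled carefully rather than waved through.
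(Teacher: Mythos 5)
Your second half (the unique-physical-measure case) is fine and is essentially the paper's argument: Theorem \ref{TheC} gives a neighborhood on which the number of physical measures of $C^{1+}$ diffeomorphisms is at most one, hence exactly one, and the convergence statement follows. The weak statistical stability half, however, has a genuine gap, and it sits exactly where you flagged it. You only establish that the limit $\mu_\infty$ is a Gibbs $u$-state of $f$ (via upper semi-continuity of $G^u$, Lemma \ref{upp}), and then assert that its ergodic components are ``combinations of the physical measures of $f$''. For diffeomorphisms in $PH^{1+}_{EC}(M)$ this does not follow: the Gibbs $u$-state property only gives the entropy formula along $\F^u$, i.e.\ absolute continuity along the strong unstable leaves tangent to $E^u$, whereas being a physical (SRB) measure requires the full formula $h_\mu(f)=\int\log|{\rm det}Df|_{E^u\oplus E^{cu}}|d\mu$, i.e.\ membership in $G^{cu}(f)$. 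In this paper only the extreme elements of $G^{cu}(f)$ are identified with the physical measures (Theorem \ref{fc}, Lemma \ref{kes}, Theorem \ref{ky}); an ergodic Gibbs $u$-state which is not a Gibbs $cu$-state need not be physical, so ``$\mu_\infty\in G^u(f)$'' is strictly too weak a conclusion. Your attempted patch via the skeleton does not close this: first, $p_{i_0}(f)\in{\rm supp}(\mu_\infty)$ is not justified, since weak$^*$ limits do not preserve positivity of the mass of small balls (the masses $\mu_n(B(p_{i_0}(f_n),\varepsilon))$ may tend to $0$); second, even granting it, having a skeleton point in the support does not force a Gibbs $u$-state to coincide with, or decompose into, physical measures.

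The paper's route is shorter and avoids skeletons entirely in this step: each $\mu_n$ is an ergodic Gibbs $cu$-state of $f_n$ (Lemma \ref{kes}/Theorem \ref{ky} after the reduction of Theorem \ref{utv}), and the map $g\mapsto G^{cu}(g)$ is upper semi-continuous (Theorem \ref{fc}), which rests on the upper semi-continuity of the \emph{metric} entropy $\limsup_n h_{\mu_n}(f_n)\le h_{\mu_\infty}(f)$ for Gibbs $u$-states under $C^1$ perturbation (Theorem \ref{uppp}, proved through the tail-entropy estimates of Lemmas \ref{cly19}, \ref{jj}, \ref{pes}). This yields $\mu_\infty\in G^{cu}(f)$, and since $G^{cu}(f)$ is compact and convex with extreme points exactly the physical measures of $f$, $\mu_\infty$ is a convex combination of them. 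So the missing ingredient in your write-up is precisely the passage from partial entropy along $\F^u$ to full metric entropy in the limit; without invoking Theorem \ref{uppp}/\ref{fc} (or reproving it), the argument does not go through.
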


To the best of our knowledge, we give a partial list related to this result.
\begin{itemize}
\item Dolgopyat \cite{d00} proved the statistical stability for some diffeomorphisms with mostly contracting center on three dimensional manifold.
\smallskip
\item For diffeomorphisms with mostly contracting center, \cite{And10,dvy16} achieved the (weak)statistical stability.
\smallskip
\item Recently, \cite{AV17} and \cite{yjg19} demonstrated the (weak) statistical stability for diffeomorphisms with mostly expanding center. 
\end{itemize}
The method here is rather different to \cite{d00,And10,AV17}, but with the same spirit of \cite{dvy16,yjg19}. Indeed, we deduce Theorem \ref{sta} from Theorem \ref{TheC}. We point out \cite{AV17} used the transitivity assumption to guarantee the uniqueness of the physical measure. However, for diffeomorphisms admitting mostly contracting center, transitivity is insufficient to get the uniqueness of the physical measure, mainly due to the existence of intermingled basins\footnote{For two physical measures $\mu$ and $\nu$ with respect to $f$, we say that they are intermingled if ${\rm Leb}(U\cap B(\mu,f))>0$ and ${\rm Leb}(U\cap B(\nu,f))>0$ for any open set $U$ of $M$.}, which occurs even in robustly transitive settings. One can see \cite[$\S$ 11.1.2]{BDV05}, \cite{Kan,O15,CGS18} for this phenomenon. 

\bigskip
The remainder of this paper is organized as follows. 
Section \ref{sec2} consists of some notions and results used in this paper. 
In Section {\ref{sec3}}, we show the abundance of hyperbolic periodic points near Pesin blocks, these periodic points are candidates of the elements of skeletons. 
Section \ref{sec4} is devoted to study the structure of the set of Gibbs $cu$-states, such as compactness and the upper semi-continuity with respect to diffeomorphisms. 
%More importantly, we prove that for any $C^1$ disk $D$ transverse to $E^{cs}$, any limit measure of $\{\frac{1}{n}\sum_{i=0}^{n-1}\delta_{f^ix}\}_{n\in \NN}$ is a Gibbs $cu$-state. 
Some preliminary result on properties of skeletons are given in Section \ref{sec5}, which will be used in next two sections. 
%Let us remake that all dynamics we studied from Section {\ref{sec3}} to Section \ref{sec5} belong to $PH^1_{EC}(M)$.
In Section \ref{sec6}, we introduce the set $PH^{1}_{C}(M)$ of partially hyperbolic diffeomorphims for which
every diffeomorphism in $PH^{1}_{EC}(M)$ has its iterates in $PH^{1}_{C}(M)$. 
For diffeomorphisms in $PH^{1}_{C}(M)$, we obtain the existence of skeletons, and the one-to-one corresponding between physical measures and periodic points of the skeleton (Theorem \ref{TheoF}).
In Section \ref{sec7}, going back to $PH^{1}_{EC}(M)$, we give the proofs of Theorems \ref{TheoB}, \ref{TheC} and \ref{sta}. 
We provide a detailed proof of Proposition \ref{t11} in Appendix \ref{appendix}, which is stated in Section {\ref{sec3}} and used almost throughout this paper. 

\bigskip
{\bf{Acknowledgements.}} We are grateful to D. Yang and R. Zou for their suggestions and discussions.

\section{Preliminary}\label{sec2}\label{sec2}
Throughout, let $M$ be a compact Riemannian manifold with metric $d$. Let $B(x,r)$ be the $r$-ball of $x\in M$ defined by $B(x,r)=\{y\in M: d(x,y)<r\}$. Given a smooth embedded sub-manifold $D$, let $d_D(x,y)$ be the distance from $x$ to $y$ along $D$; and for $\delta>0$, define $B_D(x,\delta)=\{y\in D: d_D(x,y)<\delta\}$. We use ${\rm Leb}_{D}$ to represent the induced Lebesgue measure on $D$. We say that $D$ is an \emph{unstable disk} if it is a disk inside an unstable manifold.
 
Denote by $\M$ the space of Borel probability measures supported on $M$.
Let us fix a metric compatible with the weak*-topology on $\M$ as follows: for $\mu$ and $\nu$ in $\M$, define
$$
{\rm dist}(\mu,\nu)=\sum_{n\in \NN}\frac{|\int \varphi_nd\mu- \int \varphi_n d\nu|}{2^n\sup_{x\in M}|\varphi_n(x)|},
$$
where $\{\varphi_n\}_{n\in \NN}$ is a countable dense subset of the space of continuous functions on $M$.
For a homoemorphism $f$ on $M$, let $\mathcal{M}(f)$ be the set of $f$-invariant measures supported on $M$. 

Let $f$ be a diffeomorphism and $E$ a $Df$-invariant sub-bundle, for every $\mu \in \M(f)$, define
$$
\chi_{\mu}(f,E)=\int \log \|Df|_{E(x)}\|d\mu(x).
$$

\subsection{Plaque families for diffeomorphisms with dominated splitting}\label{dd}
Let $f$ be a $C^1$ diffeomorphism with dominated splitting $TM=E\oplus F$.
%It is known that the domination is robust in the $C^1$-topology. More precisely, there exists a $C^1$-open neighborhood $\U$ of $f$ so that every $g\in \U$ admits the dominated splitting 
%$
%TM=E_g(x)\oplus_{\succ} F_g(x).
%$
%Note that both $E_g(x)$ and $F_g(x)$ depend continuously on $g$ and $x$. In particular, $E_f(x)=E(x)$ and $F_f(x)=F(x)$ for every $x\in M$.
Given $\theta > 0$, define the \emph{cone field} $\mathcal{C}_{\theta}^{E}=\{\mathcal{C}_{\theta}^{E}(x): x\in M\}$ associated to $E$ of width $\theta$ by
$$
\mathcal{C}_{\theta}^E(x)=\left\{v=v^E+v^F\in E(x)\oplus F(x): \|v^F\|\le {\theta} \|v^E\|\right\}
$$
for every $x\in M$. By symmetry, we can define the cone field $\mathcal{C}_{\theta}^F$ associated to $F$.
We say that a smooth embedded sub-manifold $D$ is \emph{tangent to $\mathcal{C}_{\theta}^E$} if
${\rm dim}D = {\rm dim}E$ and $T_xD\subset \mathcal{C}_{\theta}^E(x)$ for every $x\in D$. In particular, one says that $D$ is tangent to $E$ if $T_xD=E(x)$ for every $x\in D$.
%The domination property implies the following result.

%\begin{lemma}\label{cone}
%Given $a>0$, and a $C^1$ diffeomorphism $f$ with dominated splitting $TM=E\oplus_{\succ}F$. Then for any $C^1$ disk $D$ transverse to $F$, there exists $n_0\in \NN$ such that $f^n(D)$ is tangent to $\mathcal{C}_a^E$ whenever $n\ge n_0$.
%\end{lemma}

%Let us take $\delta_M>0$ sufficiently small such that for any $x\in M$ the inverse of
%exponential map ${\rm exp}_x^{-1}$ is well defined on the $\delta_M$ open neighborhood of $x$. We then identify this neighborhood with the corresponding neighborhood on tangent space. 
%Let $f$ be a diffeomorphism of class $C^{1+}$ with dominated splitting $TM=E\oplus_{\succ}F$, given $\xi\in (0,1)$ and a $C^1$ disk $D$ tangent to $\mathcal{C}_a^{E}$, define its \emph{H\"{o}lder curvature} w.r.t. $\xi$ as 
%$$
%\mathscr{K}_{\xi}(D)=\inf\{C>0: TD~ \textrm{is}~ (C,\xi)-\textrm{H\"{o}lder continuous}\}, 
%$$
%where we say $TD$ is $(C,\xi)$-H\"{o}lder if $dist(T_xD,T_yD)\le Cd(x,y)^{\xi}$ for all $x,y\in B(x,\delta_M)\cap D$ and $x\in D$.
%Let us recall the following fact.
%\begin{lemma}\cite[Corollary 4.2]{ABV00}\label{cur}
%Let $f$ be a $C^{1+}$ diffeomorphism with dominated splitting $TM=E\oplus_{\succ}F$.
%There exist $\xi>0$ and $C>0$ such that for any $C^2$ disk $D$ transverse to $F$, there is $n_0\in \NN$ such that 
%$\mathscr{K}_{\xi}(f^n(D))\le C$ for every $n\ge n_0$.
%\end{lemma}
%

It is known that the domination is robust in the $C^1$-topology. More precisely, if $C^1$ diffeomorphism $f$ admits the dominated splitting $TM=E\oplus F$, then there exists a $C^1$ neighborhood $\U$ of $f$ so that every $g\in \U$ admits the dominated splitting 
$
TM=E_g\oplus F_g\footnote{$E_g(x)$ and $F_g(x)$ depend continuously on $g$ and $x$. In particular, $E_f(x)=E(x)$ and $F_f(x)=F(x)$ for every $x\in M$. Sometimes, we drop the reference to $g$ for simplicity.}.
$
Denote by $E_g(x,\rho)$ the $\rho$-neighborhood of $0_x$ in $E_g(x)$, define $F_g(x,\rho)$ similarly. For diffeomorphisms with dominated splitting, one has the Plaque family theorem stated as follows.

\begin{proposition}\cite[Theorem 5.5]{HPS77}\label{plaque}
Let $f$ be a $C^1$ diffeomorphism with dominated splitting $TM=E\oplus F$. Then for every $\theta>0$, there exists a $C^1$ open neighborhood $\U$ of $f$ and constants $0<\delta<\rho$ such that for any $g\in \U$, there is a continuous family of $C^1$ embedded sub-manifolds $\{\mathcal{F}^E_{\rho} (g,x)\}_{x\in M}$ 
given by
$$
\mathcal{F}^E_{\rho} (g,x) = \exp_x\{(u, \phi_x(u)) : u\in  E_g(x,\rho)\}
$$
where $\phi_x: E_g(x,\rho)\to F_g(x)$ is a $C^1$ map such that $\phi_x(0_x)=0_x$ and $D\phi_x(0_x)=0$. Similarly, one can get the continuous family of $C^1$ embedded sub-manifolds $\{\mathcal{F}^F_{\rho} (g,x)\}_{x\in M}$.
For every $\ast \in \{E,F\}$ and $x\in M$ one has
\begin{itemize}
\item $\F_{\rho}^{\ast}(g,x)$ is tangent to $\mathcal{C}_{\theta}^{\ast}(x)$; 
\item $g(\F_{\delta}^{\ast}(g,x))\subset \F_{\rho}^{\ast}(g,g(x))$ and $g^{-1}(\F_{\delta}^{\ast}(g,x))\subset \F_{\rho}^{\ast}(g,g^{-1}(x))$;
\item for $g_n\xrightarrow{C^1} g$ in $\U$ and $x_n\to x$, we have $\F_{\rho}^{\ast}(g_n,x_n)\to \F_{\rho}^{\ast}(g,x)$.
\end{itemize}
\end{proposition}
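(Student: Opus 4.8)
The plan is to obtain Proposition~\ref{plaque} from the Hadamard graph transform carried out in the fibered (non-stationary) setting, with the domination hypothesis supplying the rate gap. I describe the construction of $\{\mathcal{F}^E_\rho(g,x)\}$; that of $\{\mathcal{F}^F_\rho(g,x)\}$ is obtained by the symmetric argument, exchanging the two bundles and replacing $g$ by $g^{-1}$ throughout. First, using the $C^1$-robustness of domination, I would fix $\theta>0$ and choose a $C^1$ neighborhood $\U$ of $f$ and constants $\lambda<1/2$, $L>1$, so that every $g\in\U$ admits a dominated splitting $TM=E_g\oplus F_g$, continuous in $(g,x)$, with $\|Dg^{\pm1}\|\le L$ and
\begin{equation*}
\|Dg|_{E_g(x)}\|\cdot\|Dg^{-1}|_{F_g(g(x))}\|<\lambda\qquad(x\in M,\ g\in\U).
\end{equation*}
Via the exponential map, set $\widetilde g_x=\exp_{g(x)}^{-1}\circ g\circ\exp_x$ on a uniform ball $E_g(x,\rho)\oplus F_g(x,\rho)$; then $\widetilde g_x(0_x)=0_{g(x)}$ and $D\widetilde g_x(0_x)=Dg(x)$ is block--diagonal for $E_g(x)\oplus F_g(x)$, and shrinking $\rho$ uniformly makes the nonlinear parts of $\widetilde g_x$ as $C^1$-small as we like on the $\rho$-ball.

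The core is a graph transform on the complete metric space (with the uniform $C^0$-topology on sections) of continuous families $(\psi_x)_{x\in M}$ of $C^1$ maps $\psi_x\colon E_g(x,\rho)\to F_g(x)$ with $\psi_x(0_x)=0_x$, $D\psi_x(0_x)=0$ and $\mathrm{Lip}(\psi_x)\le\theta$, each identified with its graph. The $g^{-1}$-transform $\Gamma$ pulls a graph over $x$ back by $\widetilde g_{g^{-1}(x)}^{-1}$, checks --- via cone/slope control, using $\mathrm{Lip}\le\theta$ and the $C^1$-smallness of the nonlinear parts --- that the result is again the graph of an admissible map over $g^{-1}(x)$, and restricts to $E_g(g^{-1}(x),\rho)$. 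What distinguishes a plaque family from a genuine stable or unstable manifold is that neither $E_g$ nor $F_g$ is assumed uniformly contracting or expanding, so the contraction of $\Gamma$ must be read off in an adapted metric built from the dominated splitting: the displayed inequality is exactly the statement that $\Gamma$ contracts the transverse separation of two graphs, measured relative to the tangential scale, by the factor $\|Dg|_{E_g}\|\cdot\|Dg^{-1}|_{F_g}\|<\lambda$, a genuine contraction once one passes to that metric. Banach's theorem then gives a unique invariant section $(\phi^g_x)_x$, and I set $\mathcal{F}^E_\rho(g,x)=\exp_x\{(u,\phi^g_x(u)):u\in E_g(x,\rho)\}$. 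The fixed-point equation --- $\widetilde g_x$ carries the germ at $0_x$ of $\mathrm{graph}\,\phi^g_x$ onto that of $\mathrm{graph}\,\phi^g_{g(x)}$ --- encodes precisely the two local-invariance inclusions $g(\mathcal{F}^E_\delta(g,x))\subset\mathcal{F}^E_\rho(g,g(x))$ and $g^{-1}(\mathcal{F}^E_\delta(g,x))\subset\mathcal{F}^E_\rho(g,g^{-1}(x))$ for a suitable $\delta<\rho/(2L)$. That each $\phi^g_x$ is $C^1$ with $D\phi^g_x(0_x)=0$ follows from a fiber-contraction argument: the induced transform on $1$-jets also contracts by the same ratio, its continuous fixed section coincides with $x\mapsto D\phi^g_x$, and at the origin it vanishes because $D\widetilde g_x(0_x)=Dg(x)$ is block--diagonal. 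Tangency of the plaques to $\mathcal{C}^E_\theta$ is then immediate from $\mathrm{Lip}(\phi^g_x)\le\theta$, the continuity of the splitting, and the smallness of $\rho$ (which controls the exp-distortion and the variation of the cone field along a plaque).

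Finally, the charts $\widetilde g_x$, the bundles $E_g,F_g$, and hence $\Gamma=\Gamma_g$ and its $1$-jet version all depend continuously on $g\in\U$ in $C^1$ and on $x$; since the contraction rate is uniform ($<1/2$), the fixed section depends continuously on $g$ in the $C^0$-topology on the section space, and the fiber contraction promotes this to continuity of $(g,x)\mapsto D\phi^g_x$, whence $\mathcal{F}^E_\rho(g_n,x_n)\to\mathcal{F}^E_\rho(g,x)$ in $C^1$ when $g_n\xrightarrow{C^1}g$ and $x_n\to x$. The main obstacle is the low regularity: for a merely $C^1$ diffeomorphism the bundles $E_g,F_g$ are only continuous in $x$, so the graph transform has only continuous coefficients and no smooth invariant-manifold theorem applies off the shelf; both the $C^1$-smoothness of each plaque and the $C^1$-continuity of the family have to be extracted purely from the uniform domination estimate through the fiber-contraction mechanism, and one must also verify that two-sided local invariance survives the domain-restriction steps inside the transform. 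Carrying this out rigorously is precisely the content of \cite[Theorem 5.5]{HPS77}, to which I would appeal for the technical execution.
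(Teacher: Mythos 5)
The paper offers no proof of this proposition at all: it is stated as a citation of \cite[Theorem 5.5]{HPS77}, and your argument --- the Hadamard graph transform with the domination gap providing the contraction and a fiber-contraction step for $C^1$-regularity and continuity in $(g,x)$ --- is precisely the standard argument behind that theorem, and you yourself defer to the same reference for the technical execution. So your proposal is correct as a sketch and takes essentially the same route as the paper, namely reliance on \cite[Theorem 5.5]{HPS77}.
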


\subsection{Hyperbolic time and Liao-Gan's shadowing Lemma}

For $A\subset \NN$, define the \emph{lower-density} $\mathcal{D}_{L}(A)$ and the \emph{upper-density} $\mathcal{D}_{U}(A)$ of $A$ as follows:
$$
\mathcal{D}_{L}(A):=\liminf_{n\to +\infty}\frac{1}{n}\#\{A\cap [1,n]\},\quad \mathcal{D}_{U}(A):=\limsup_{n\to +\infty}\frac{1}{n}\#\{A\cap [1,n]\}.
$$
%and define the \emph{upper-density of $\JJ$} as
%$$
%\mathfrak{D}_{\mathcal{U}}(\mathbb{J}):=\limsup_{n\to +\infty}\frac{1}{n}\#\{\mathbb{J}\cap [1,n]\}.
%$$

Now we give the definition of hyperbolic times associated to some invariant sub-bundles.
\begin{definition}
Let $f$ be a $C^1$ diffeomorphism with a $Df$-invariant sub-bundle $E$. Given $\sigma\in (0,1)$, we call $n\in \NN$ a $(E,\sigma)$-hyperbolic time of $x\in M$ if
$$
\prod_{i=n-k+1}^{n}\|Df^{-1}|_{E(f^i(x))}\|<\sigma^k \quad \textrm{for}~ k=1,\cdots, n.
$$
\end{definition}

For diffeomorphisms with dominated splitting, one has the backward contracting property for disks tangent to the cone field at hyperbolic times, see \cite[Lemma 2.7]{ABV00} for a proof.
\begin{lemma}\label{bcd}
Let $f$ be a $C^1$ diffeomorphism admitting the dominated splitting $TM=E\oplus_{\succ} F$. 
Given $0<\sigma_0<\sigma_1<1$, there exist $a>0, \delta>0$ such that, for any $C^1$ embedded disk $D$ tangent to $\mathcal{C}^E_a$ that contains $x$, if $n$ is a $(E, \sigma_0)$-hyperbolic time of $x$, then
$$
d_{f^i(D)}(f^{i}(y), f^{i}(z))\le \sigma_1^{(n-i)}d_{f^n(D)}(f^n(y), f^{n}(z))
$$
for any $y,z$ in $D$ such that $f^n(y), f^n(z)\in B_{f^n(D)}(f^n(x), \delta)$. 
\end{lemma}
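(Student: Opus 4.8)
This is the standard backward-contraction lemma at hyperbolic times; a proof is given in \cite[Lemma 2.7]{ABV00}, and I would organize it as follows. The plan is to promote the \emph{infinitesimal} backward contraction along the orbit of $x$, guaranteed by the hyperbolic-time inequality, to a genuine contraction of arcs inside the disk $D$, using only the domination (for cone invariance) and the uniform continuity of $Df^{-1}$ on the compact manifold $M$; in particular no bounded-distortion series, hence no $C^{1+}$ hypothesis, will be needed.

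\emph{Step 1: a one-step linear estimate in a tube.} Fix a tolerance $\eta>0$ with $(1+\eta)\sigma_0\le \sigma_1$. Using the domination $TM=E\oplus_{\succ}F$, after shrinking $a$ the cone field $\mathcal{C}^E_a$ is forward-invariant under $Df$; hence, since $D$ is tangent to $\mathcal{C}^E_a$, so is every forward image $f^j(D)$ with $0\le j\le n$. By uniform continuity of $x\mapsto Df^{-1}_x$ and of the bundle $E$, together with the smallness of the cone width, one then chooses a radius $R>0$ so that for every $j$, every $w$ with $d(w,f^j(x))<R$, and every unit vector $v\in \mathcal{C}^E_a(w)$,
\[
\|Df^{-1}_w(v)\|\le (1+\eta)\,\|Df^{-1}|_{E(f^j(x))}\|.
\]
Consequently, if $\gamma\subset f^j(D)$ is any arc contained in the $R$-tube $\{w:d(w,f^j(x))<R\}$, then $f^{-1}(\gamma)$ has length at most $(1+\eta)\|Df^{-1}|_{E(f^j(x))}\|$ times the length of $\gamma$. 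The point to stress is that $\eta$, $a$ and $R$ are all fixed before $x$, $D$ and $n$ are introduced.

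\emph{Step 2: reverse induction.} Put $\delta:=R/3$, and let $y,z\in D$ with $f^n(y),f^n(z)\in B_{f^n(D)}(f^n(x),\delta)$. First consider the minimizing arc $\gamma\subset f^n(D)$ from $f^n(x)$ to $f^n(y)$; it has length $<\delta$. Pulling it back one step at a time and applying Step 1 at each step — valid because, by the inductive hypothesis, the arc $f^{-j}\gamma$ stays within $\sigma_1^{\,j}\delta<R$ of $f^{n-j}(x)$ for all $j\le k$ — and invoking the hyperbolic-time inequality for the exponent $k$, one gets for every $k\le n$
\[
\mathrm{length}\big(f^{-k}\gamma\big)\le (1+\eta)^k\prod_{i=n-k+1}^{n}\|Df^{-1}|_{E(f^i(x))}\|\cdot\delta\le \big((1+\eta)\sigma_0\big)^k\delta\le \sigma_1^k\delta ,
\]
so in particular $d_{f^{n-k}(D)}(f^{n-k}(y),f^{n-k}(x))<\sigma_1^k\delta<\delta$, and likewise for $z$. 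It is exactly here that the full hyperbolic-time condition — the inequality for \emph{every} $k\le n$, not merely $k=n$ — is used, to keep all the intermediate arcs inside the tube. Now take the minimizing arc of $f^n(D)$ joining $f^n(y)$ and $f^n(z)$: it has length $d_{f^n(D)}(f^n(y),f^n(z))<2\delta$, each of its backward images up to time $n-k$ has length $<\sigma_1^k\, d_{f^n(D)}(f^n(y),f^n(z))$ and, by the previous estimates and the triangle inequality, stays within $3\delta=R$ of $f^{n-k}(x)$; hence Step 1 applies all the way and yields
\[
d_{f^{i}(D)}(f^{i}(y),f^{i}(z))\le \sigma_1^{\,n-i}\,d_{f^n(D)}(f^n(y),f^n(z)),\qquad 0\le i\le n,
\]
which is the assertion.

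\emph{Main obstacle.} The only delicate point is the bookkeeping of quantifiers: the cone width $a$ and the tube radius $R$ (hence $\delta$) must be fixed uniformly before the data $x,D,n$, and one must verify that the reverse induction never leaves the region where the linear estimate of Step 1 is valid. Compactness of $M$ — uniform continuity of $Df^{\pm1}$ and continuity of the splitting — is what makes all the constants uniform, and the forward-invariance of $\mathcal{C}^E_a$ (a standard consequence of domination) is what keeps every $f^j(D)$ tangent to the cone, so that the linear estimate can be applied at each intermediate time.
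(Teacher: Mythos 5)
Your proposal is correct: the paper gives no proof of Lemma \ref{bcd} and simply refers to \cite[Lemma 2.7]{ABV00}, and your argument is a faithful reconstruction of exactly that proof (cone invariance from domination, a uniform $(1+\eta)$ one-step estimate in a tube from compactness, and reverse induction on pulled-back minimizing arcs using the hyperbolic-time products at every intermediate exponent $k\le n$). The quantifier bookkeeping you flag -- fixing $a$, $R$, $\delta$ before $x$, $D$, $n$, and checking the intermediate arcs stay in the tube -- is handled correctly, so there is nothing to add.
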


%Recall the classical Pliss Lemma, see \cite{pliss} and \cite[Lemma 3.1]{ABV00}.
%\begin{lemma}\label{Lem:numberPliss}
%For any $C_0\geq C_1>C_2\geq0,$ there is $\theta=\theta(C_0,C_1,C_2)>0$ such that for any $N\in \NN$ and real numbers $\{a_i\}_{i=1}^N$ satisfying
%$$
%\sum_{j=1}^N a_j\geq C_1N, ~~~a_j\leq C_0,~~~\forall~ 1\le j\le N,
%$$
%then there is an integer $\ell>\theta N$ and a subsequence $\{n_i: 1\le i \le \ell\}\subset \{1,\cdots,N\}$ such that for every $1\le i \le \ell$,
%$$
%\sum_{j=n+1}^{n_i} a_j\geq C_2(n_i-n)\quad \textrm{for every}~0 \leq n<n_i.
%$$
%\end{lemma}
%
As an application of Pliss Lemma (see  \cite{pliss} and \cite[Lemma 3.1]{ABV00}), one has

\begin{proposition}\cite[Corollary 3.2]{ABV00}\label{hh}
Suppose $\alpha_0>\alpha>0$, and let $f$ be a $C^1$ diffeomorphism admitting dominated splitting $TM=E\oplus F$, then there exists $\theta=\theta(\alpha_0,\alpha,f)\in (0,1)$ and a $C^1$ neighborhood $\U$ of $f$ such that for any $g\in \U$, 
if $x$ is $\alpha_0$-nonuniformly expanding along $E_g$ , i.e.,
$$
\limsup_{n\to +\infty}\frac{1}{n}\sum_{i=1}^n \log \|Dg^{-1}|_{E_g(g^i(x))}\|<-\alpha_0,
$$
then there exist infinitely many $(E_g, {\rm e}^{-\alpha})$-hyperbolic times $n_1<n_2<\cdots $ of $x$ with 
lower density larger than $\theta$.
\end{proposition}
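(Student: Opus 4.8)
The plan is to deduce the statement directly from the Pliss Lemma, applied to the sequence of logarithmic contraction rates of $Dg^{-1}$ along the forward orbit of $x$, after fixing a $C^1$ neighborhood on which these rates are uniformly bounded.

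First I would pin down the constants. Since the dominated splitting $TM=E\oplus F$ persists on a $C^1$ neighborhood of $f$ with $E_g(y)$ depending continuously on $(g,y)$, and $g\mapsto Dg$ is continuous in the $C^1$ topology over the compact manifold $M$, one can choose a $C^1$ neighborhood $\U$ of $f$ together with a constant $A>\alpha_0$ such that
$$
\big|\log\|Dg^{-1}|_{E_g(y)}\|\big|\le A\qquad\text{for all }g\in\U\text{ and all }y\in M.
$$
Next I would recall the Pliss Lemma in the form: for $A>c_1>c_2>0$ there is $\theta_0=\theta_0(A,c_1,c_2)\in(0,1)$ so that for every $N\in\NN$ and all reals $a_1,\dots,a_N$ with $|a_j|\le A$ and $\sum_{j=1}^{N}a_j\ge c_1 N$, there exist $\ell\ge\theta_0 N$ and integers $1\le n_1<\cdots<n_\ell\le N$ with $\sum_{j=m+1}^{n_i}a_j\ge c_2(n_i-m)$ for every $0\le m<n_i$ and every $i$. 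Applying this with $c_1=\alpha_0$ and $c_2=\alpha$, I set $\theta:=\theta_0/2\in(0,1)$; note $\theta$ depends only on $\alpha_0$, $\alpha$ and, through $A$, on $f$.

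Then, given $g\in\U$ and $x$ that is $\alpha_0$-nonuniformly expanding along $E_g$, I would put $a_i:=-\log\|Dg^{-1}|_{E_g(g^i(x))}\|$, so $|a_i|\le A$. Unwinding the definition of hyperbolic time with $k=n-m$, an integer $n$ is a $(E_g,e^{-\alpha})$-hyperbolic time of $x$ exactly when $\sum_{i=m+1}^{n}a_i\ge\alpha(n-m)$ for all $0\le m<n$; and the hypothesis is equivalent to $\liminf_n\frac1n\sum_{i=1}^{n}a_i>\alpha_0$, hence there is $N_1=N_1(x,g)$ with $\sum_{i=1}^{n}a_i\ge\alpha_0 n$ for every $n\ge N_1$.

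Finally I would count. For each $N\ge N_1$, the Pliss Lemma applied to $a_1,\dots,a_N$ produces at least $\theta_0 N$ integers in $[1,N]$ meeting the condition above; since that condition only involves indices $\le N$, each such integer is a genuine $(E_g,e^{-\alpha})$-hyperbolic time of $x$. Thus, writing $H\subset\NN$ for the set of all $(E_g,e^{-\alpha})$-hyperbolic times of $x$, we have $\#(H\cap[1,N])\ge\theta_0 N$ for all $N\ge N_1$, so $\mathcal{D}_L(H)\ge\theta_0>\theta$; in particular $H$ is infinite, and listing its elements as $n_1<n_2<\cdots$ gives the conclusion. I expect the only genuinely delicate points to be the uniformity of $A$ and $\theta_0$ over $\U$ — which is exactly where robustness of the dominated splitting enters — and the remark that the hyperbolic times extracted from the finite Pliss Lemma are bona fide hyperbolic times of the infinite orbit; everything else is routine bookkeeping.
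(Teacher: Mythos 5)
Your proof is correct and follows essentially the same route the paper relies on: the paper gives no argument of its own but cites \cite[Corollary 3.2]{ABV00} and notes it is an application of the Pliss Lemma, which is exactly the argument you reconstruct (uniform bound on $\log\|Dg^{-1}|_{E_g}\|$ over a $C^1$ neighborhood, then Pliss applied to the partial sums along the orbit, then a density count). The only cosmetic wrinkle is that the paper's definition of hyperbolic time uses strict inequalities while the Pliss Lemma yields non-strict ones; this is repaired by running Pliss with an intermediate constant, e.g. $c_2=(\alpha+\alpha_0)/2>\alpha$, so the conclusion is unaffected.
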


Let us recall the Liao-Gan's shadowing lemma \cite{L89,G02}.

\begin{lemma}\label{Gan}
Let $f$ be a $C^1$ diffeomorphism with dominated splitting $TM=E \oplus_{\succ} F$. For $\lambda \in (0,1)$, there exists  $\delta_0>0$ and $L_0>0$ such that for any orbit segment $(x,f(x),\cdots,f^n(x))$ with following properties:
\begin{itemize}
\item $$d(x,f^n(x))\le \delta_0;$$
\item for any $1\le k \le n$, we have
$$\prod_{i=0}^{k-1}\|Df^{-1}|_{E(f^{n-i}(x))}\|<\lambda^k,\quad \prod_{i=0}^{k-1}\|Df|_{F(f^{i}(x))}\|<\lambda^k,$$
\end{itemize}
there exists a hyperbolic periodic point $p$ with stable index ${\rm dim}F$ such that
$$
d(f^i(x), f^i(p))\le L_0 d(x,f^n(x)) \quad \textrm{for any} ~~0\le i \le n-1.
$$ 
%
%such that
%\begin{itemize}
%\item $p$ is hyperbolic with stable index ${\rm dim}F$ and unstable index ${\rm dim}E$;
%\smallskip
%\item $d(f^i(x), f^i(p))\le L_0 d(x,f^n(x))$ for any $0\le i \le n-1$;
%\smallskip
%\item there exists $\rho>0$ depending only on $\lambda$ such that the local stable (unstable) manifold of $p$ contains a disk with radius $\rho$.
%\end{itemize}
\end{lemma}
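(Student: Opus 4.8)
\emph{Proof strategy.} This is a quantitative closing lemma, and the plan is to produce a genuine periodic orbit of period $n$ that $C^0$-shadows the string $(x,f(x),\dots,f^n(x))$ by solving for it with a contraction mapping in exponential charts along the string, all constants being uniform in $n$. I would put $x_i=f^i(x)$, $\varepsilon:=d(x,f^n(x))\le\delta_0$, work in the charts $\exp_{x_i}$ on balls of a fixed radius $\rho=\rho(f)$, and arrange the sites $0,\dots,n-1$ cyclically, so that $f$ becomes the family of local maps $\tilde f_i:=\exp_{x_{i+1}}^{-1}\circ f\circ\exp_{x_i}$ for $i<n-1$ together with $\tilde f_{n-1}:=\exp_{x_0}^{-1}\circ f\circ\exp_{x_{n-1}}$; each $\tilde f_i$ fixes $0$ except $\tilde f_{n-1}$, whose value at $0$ has norm exactly $\varepsilon$. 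Because $TM=E\oplus_\succ F$ is a dominated splitting, the splitting $E(x_i)\oplus F(x_i)$ is defined all along the string with angle bounded below by a constant depending only on $f$; the linear parts $A_i:=D\tilde f_i(0)$ equal $Df|_{x_i}$ (hence preserve the splitting) for $i<n-1$ and differ from $Df|_{x_{n-1}}$ by an $O(\delta_0)$ term for $i=n-1$, while the remainders $R_i:=\tilde f_i-A_i$ satisfy $R_i(0)=0$, $DR_i(0)=0$ and hence are Lipschitz on $B(0,\rho)$ with constant $\eta(\rho)\to 0$ as $\rho\to 0$, uniformly in $i$ and in the string (uniform continuity of $Df$ on $M$).

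The core step is to solve the cyclic system $\tilde f_i(v_i)=v_{i+1}$, $i\in\ZZ/n\ZZ$, for a sequence $(v_i)$ with $\sup_i\|v_i\|\le L_0\varepsilon$, where $L_0=L_0(f,\lambda)$. Decomposing $v_i=v_i^E+v_i^F\in E(x_i)\oplus F(x_i)$, one propagates the $F$-components forward from site $0$ and the $E$-components backward from site $n-1$, the remainders being handled perturbatively. The two hypotheses are precisely what bound the relevant solution operators \emph{independently of $n$}: $\prod_{i=0}^{k-1}\|Df|_{F(x_i)}\|<\lambda^k$ for $1\le k\le n$ forces the $k$-step forward propagation $(A_{k-1}\cdots A_0)|_{F(x_0)}$ to have norm $\le\lambda^k\le 1$ for every such $k$, and dually $\prod_{i=0}^{k-1}\|Df^{-1}|_{E(x_{n-i})}\|<\lambda^k$ forces the $k$-step backward propagation of the $E$-component to have norm $\le\lambda^k\le 1$. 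Closing the loop amounts to inverting $\mathrm{Id}-\Phi|_F$ and $\mathrm{Id}-\Phi|_E$ for the full-loop holonomy $\Phi:=A_{n-1}\cdots A_0$ (which is $O(\delta_0)$-close to $Df^n|_x$ and approximately preserves the splitting at $x_0$); the first is invertible because $\|\Phi|_{F(x_0)}\|<\lambda^n<1$, the second because $\|\Phi^{-1}|_{E(x_0)}\|<\lambda^n<1$, and both inverses have norm $O((1-\lambda)^{-1})$. The only inhomogeneity is the $\varepsilon$-sized displacement produced by $\tilde f_{n-1}$, so the affine part of the solution has size $\le L_0\varepsilon$; feeding in the $\eta(\rho)$-Lipschitz remainders and choosing first $\rho$ small (so $\eta(\rho)$ beats the gap $1-\lambda$) and then $\delta_0$ small (so $L_0\delta_0<\rho$), the resulting map on $\{(v_i):\sup_i\|v_i\|\le L_0\varepsilon\}$ becomes a contraction and has a unique fixed point.

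Then $p:=\exp_{x_0}(v_0)$ satisfies $f^n(p)=p$ and $d(f^i(p),f^i(x))\le L_0\varepsilon=L_0\,d(x,f^n(x))$ for $0\le i\le n-1$, so it remains to see that $p$ is hyperbolic of stable index $\dim F$. I would read this off the same linear machinery: solving the homogeneous cyclic linear system with data prescribed in $F(x_0)$ (propagated forward) respectively in $E(x_0)$ (propagated backward) produces, via the graph transform, a local stable manifold of dimension $\dim F$ on which $f^n$ contracts and a local unstable manifold of dimension $\dim E$ on which $f^n$ expands; since $\delta_0$ is small $Df^n|_p$ is hyperbolic, and the domination $E\oplus_\succ F$ forces its stable eigenspace to be $F(p)$, so the stable index equals $\dim F$. (Alternatively one can route the $E$-estimates through $f^{-1}$, whose dominated splitting is $F\oplus_\succ E$, and get the backward contraction of $E$-plaques from Lemma~\ref{bcd}; this is the geometric meaning of the two ``one-sided Pliss'' conditions.)

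The main obstacle is the uniformity of $\delta_0$ and $L_0$ in $n$ and in the string. Block products $\prod_{i=a}^{b-1}\|Df|_{F(x_i)}\|$ anchored at $a>0$ are genuinely \emph{not} uniformly small --- already for an honest periodic orbit --- so the proof must never invoke them; the whole point of the design above is that, by propagating the stable component only forward from the start and the unstable component only backward from the end, all estimates are confined to the products anchored at the two endpoints, which are exactly the ones the hypotheses provide. Similarly, the uniform control of the nonlinear errors relies on the complementary fact that, once the shadowing orbit is known to have size $\le L_0\varepsilon$, it never leaves the fixed-radius chart, so the $\eta(\rho)$ estimates apply throughout. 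The remaining bookkeeping --- chart distortion, the angle bounds from domination, the Lipschitz estimates on the $R_i$ --- is routine but lengthy and is essentially the content of \cite{L89,G02}.
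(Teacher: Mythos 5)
The paper offers no proof of Lemma~\ref{Gan}: it is quoted directly from Liao and Gan \cite{L89,G02}, so there is no in-paper argument to compare against, and your sketch has to be judged as a self-contained proof. Its architecture is the natural one (exponential charts along the string, a cyclic system for the corrections $(v_i)$, splitting into $E$- and $F$-components, a fixed point producing a periodic point of period $n$, domination to identify the stable index), but there is a genuine gap exactly at the point you yourself flag as the main obstacle. Your claim that the scheme ``confines all estimates to products anchored at the two endpoints'' is only true for the affine model, where the sole inhomogeneity is the $\varepsilon$-displacement at the wrap-around site. In the Picard/contraction step you must solve the linear cyclic system $w_{i+1}=A_iw_i+R_i(v_i)$ with an inhomogeneity at \emph{every} site, and its $F$-component is
\begin{equation*}
w_k^F=(A_{k-1}\cdots A_0)\big|_F\,w_0^F+\sum_{j=0}^{k-1}(A_{k-1}\cdots A_{j+1})\big|_F\,\big(R_j(v_j)\big)^F,
\end{equation*}
which involves block products anchored at intermediate indices. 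The hypothesis only bounds $\prod_{i=0}^{k-1}\|Df|_{F(x_i)}\|$ by $\lambda^k$; the intermediate product satisfies only $\prod_{i=j+1}^{k-1}\|Df|_{F(x_i)}\|\le \lambda^{k}\big/\prod_{i=0}^{j}\|Df|_{F(x_i)}\|$, whose denominator has no lower bound, so these factors can be exponentially large in $n$ (the same happens for the $E$-part propagated backward). Hence the solution operator of the cyclic system is \emph{not} $O((1-\lambda)^{-1})$ uniformly in $n$, and the map on $\{\sup_i\|v_i\|\le L_0\varepsilon\}$ need not be a contraction for any choice of $\rho$ and $\delta_0$ made before $n$ is known; smallness of $\varepsilon$ does not help, since the would-be contraction constant is $\eta(\rho)$ times the solution-operator norm.

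This is precisely why the Liao--Gan lemma is not a routine shadowing statement. The actual proofs do not run a sup-norm contraction: they work with boxes (equivalently, weighted norms) whose $E$- and $F$-radii vary along the string in accordance with the partial products, use the domination $E\oplus_{\succ}F$ to control the cross-coupling between the two components (or, alternatively, restart the estimates at Pliss times of the string), and close the argument with a Brouwer-type fixed point rather than a plain contraction; this is the substantive content of \cite{L89,G02}, not deferred bookkeeping. So either import the lemma by citation, as the paper does, or replace your uniform-contraction step by that weighted-box construction. The final step of your sketch is also stated too loosely but is fixable in the standard way: once the shadowing estimate $d(f^i(p),f^i(x))\le L_0\delta_0$ is available, uniform continuity of $x\mapsto \log\|Df|_{F(x)}\|$ and $\log\|Df^{-1}|_{E(x)}\|$ transfers the endpoint-anchored products from the string to ${\rm Orb}(p)$ up to a factor ${\rm e}^{c(\delta_0)n}$ with $c(\delta_0)\to 0$, which for $\delta_0$ small gives hyperbolicity of $p$ with stable index ${\rm dim}F$.
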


\subsection{Entropy along unstable foliations}
%In this subsection, we focus on the entropy of an invariant measure along unstable foliations, which has become a powerful tool in the study of partially hyperbolic diffeomorphisms in recent years, see \cite{CYZ18,HHW,hyy18,yjg19} for instance. 
Let $f$ be a $C^1$ partially hyperbolic diffeomorphism with $E^u$, denote by $\F^u$ the unstable foliation whose leaves are strong unstable manifolds tangent to $E^u$. 

Let $\mu$ be a Borel probability. A measurable partition $\xi$ is \emph{$\mu$-subordinate to} $\mathcal{F}^u$, if for $\mu$-almost every $x$, one has
\begin{itemize}
\item $\xi(x)\subset \mathcal{F}^u(x)$, where $\xi(x)$ is the element of $\xi$ containing $x$;
\smallskip
\item $\xi(x)$ contains an open neighborhood of $x$ in $\mathcal{F}^u$;
\smallskip
\item $\xi(f(x))\subset f(\xi(x))$.
\end{itemize}

Note that the existence of measurable partitions $\mu$-subordinate to unstable foliation was given by
\cite[Lemma 3.1]{LS82} and \cite[Lemma 3.2]{yjg16}. Moreover, we have the following result which is contained in \cite[Lemma 3.1.2]{ly}.

\begin{lemma}\label{a}
For any $\mu \in \M(f)$ and any measurable partitions $\xi_1$ and $\xi_2$  that are $\mu$-subordinate to $\mathcal{F}^u$, we have
$$
h_{\mu}(f,\xi_1)=h_{\mu}(f,\xi_2).
$$
\end{lemma}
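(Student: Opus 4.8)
The plan is to isolate the following reduced statement and then derive the lemma from it: \emph{if $\xi$ and $\eta$ are measurable partitions $\mu$-subordinate to $\F^u$ and $\eta$ refines $\xi$, then $h_{\mu}(f,\xi)=h_{\mu}(f,\eta)$.} Granting this, one first checks that the common refinement $\xi_1\vee\xi_2$ is again $\mu$-subordinate to $\F^u$: indeed $(\xi_1\vee\xi_2)(x)=\xi_1(x)\cap\xi_2(x)$ is contained in $\F^u(x)$, it contains an open neighbourhood of $x$ in $\F^u(x)$ being the intersection of two such neighbourhoods, and, since $f$ is injective, $(\xi_1\vee\xi_2)(f(x))=\xi_1(f(x))\cap\xi_2(f(x))\subset f(\xi_1(x))\cap f(\xi_2(x))=f\big((\xi_1\vee\xi_2)(x)\big)$. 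Applying the reduced statement to the two pairs $\xi_1\le\xi_1\vee\xi_2$ and $\xi_2\le\xi_1\vee\xi_2$ then yields $h_{\mu}(f,\xi_1)=h_{\mu}(f,\xi_1\vee\xi_2)=h_{\mu}(f,\xi_2)$, which is the assertion.

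To prove the reduced statement, recall that for a $\mu$-subordinate partition $\zeta$ the third defining property $\zeta(f(x))\subset f(\zeta(x))$ is equivalent to $f^{-1}\zeta\ge\zeta$, and that the partial entropy along $\zeta$ is $h_{\mu}(f,\zeta)=H_{\mu}(f^{-1}\zeta\mid\zeta)$, where $H_{\mu}(\cdot\mid\cdot)$ denotes conditional entropy; since $\eta$ refines $\xi$ we also have $f^{-1}\xi\le f^{-1}\eta$. I would then compute $H_{\mu}(f^{-1}\eta\mid\xi)$ in two ways, using the addition rule $H_{\mu}(\alpha\vee\beta\mid\gamma)=H_{\mu}(\beta\mid\gamma)+H_{\mu}(\alpha\mid\beta\vee\gamma)$ and the identity $H_{\mu}(\alpha\mid\beta\vee\gamma)=H_{\mu}(\alpha\mid\gamma)$, valid when $\gamma\ge\beta$. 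On the one hand, from $\eta\le f^{-1}\eta$ and $\xi\le\eta$,
$$
H_{\mu}(f^{-1}\eta\mid\xi)=H_{\mu}\big((f^{-1}\eta)\vee\eta\mid\xi\big)=H_{\mu}(\eta\mid\xi)+H_{\mu}(f^{-1}\eta\mid\eta)=H_{\mu}(\eta\mid\xi)+h_{\mu}(f,\eta).
$$
On the other hand, from $f^{-1}\xi\le f^{-1}\eta$ and $\xi\le f^{-1}\xi$, together with the $f$-invariance of $\mu$ (which gives $H_{\mu}(f^{-1}\eta\mid f^{-1}\xi)=H_{\mu}(\eta\mid\xi)$),
$$
H_{\mu}(f^{-1}\eta\mid\xi)=H_{\mu}\big((f^{-1}\eta)\vee(f^{-1}\xi)\mid\xi\big)=H_{\mu}(f^{-1}\xi\mid\xi)+H_{\mu}(f^{-1}\eta\mid f^{-1}\xi)=h_{\mu}(f,\xi)+H_{\mu}(\eta\mid\xi).
$$
Comparing the two identities and cancelling the common term $H_{\mu}(\eta\mid\xi)$ gives $h_{\mu}(f,\eta)=h_{\mu}(f,\xi)$.

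The delicate point — and the one I expect to be the genuine obstacle — is the legitimacy of that last cancellation, i.e. the finiteness of $H_{\mu}(\eta\mid\xi)$ (equivalently of $H_{\mu}(\xi_1\vee\xi_2\mid\xi_1)$). Since $\mu$ is here an arbitrary $f$-invariant measure rather than a Gibbs $u$-state, its conditional measures along $\F^u$ carry no a priori regularity, so this finiteness does not follow formally and must be extracted from the rigidity built into the notion of subordinate partition: both $\xi(x)$ and $\eta(x)$ contain open neighbourhoods of $x$ inside the leaf $\F^u(x)$, and the relation $f^{-1}\xi\ge\xi$, combined with the uniformly bounded expansion of $Df$ along $\F^u$ on the compact manifold $M$, limits how much finer $\eta$ can be than $\xi$ over finitely many iterates; this lets one dominate $H_{\mu}(\eta\mid\xi)$ by a finite multiple of $h_{\mu}(f,\xi)$, which is finite because all entropies involved are bounded by $h_{\mathrm{top}}(f)<\infty$. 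The precise form of this finiteness estimate, and hence of the whole argument, is the content of \cite[Lemma~3.1.2]{ly}.
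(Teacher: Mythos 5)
The first thing to note is that the paper does not prove this lemma at all: it is imported verbatim, the stated justification being the citation \cite[Lemma 3.1.2]{ly}, so there is no internal argument to measure yours against. Judged on its own terms, your proposal is correct in its formal skeleton but has a genuine gap exactly at the point you flag. The reduction is fine: $\xi_1\vee\xi_2$ is indeed $\mu$-subordinate to $\F^u$ (your verification, including the use of injectivity for the third property, is right), and the two expansions of $H_{\mu}(f^{-1}\eta\mid\xi)$ via the chain rule and the invariance of $\mu$ are valid Rokhlin identities even when some terms are infinite. But the whole argument then rests on cancelling $H_{\mu}(\eta\mid\xi)=H_{\mu}(\xi_1\vee\xi_2\mid\xi_1)$, and you never establish that this quantity is finite; if it is infinite, both of your identities read $\infty=\infty$ and give nothing.

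The sketch you offer for finiteness does not close this. For an arbitrary invariant $\mu$ the conditional measures on the atoms of $\xi_1$ have no regularity; the trace of $\xi_2$ on an atom of $\xi_1$ is merely an (essentially countable) partition, and countable partitions can have infinite conditional entropy --- nothing in the definition of a subordinate partition, nor the bounded expansion of $Df$ along $\F^u$, obviously rules this out. Even the auxiliary claim that $h_{\mu}(f,\xi)=H_{\mu}(f^{-1}\xi\mid\xi)$ is finite and bounded by $h_{\mathrm{top}}(f)$ is itself a statement that requires proof at this level of generality. Controlling precisely such quantities is the actual content of \cite[Lemma 3.1.2]{ly}, whose argument does not proceed through the common refinement plus an a priori finiteness bound, but through a comparison of the backward refinements $f^{-n}\xi_i$ of the two partitions. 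Since at the decisive step you appeal to that very lemma --- which is the statement to be proved --- your proposal is circular there: what remains, once the correct but conditional reduction is stripped away, is the same citation the paper makes.
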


By Lemma \ref{a}, one can define the entropy along $\F^u$ as follows:
\begin{definition}\label{dgu}
For every $\mu\in \M(f)$, the entropy of $\mu$ along $\mathcal{F}^u$ is defined by
$$
h_{\mu}(f,\mathcal{F}^u)=h_{\mu}(f,\xi),
$$
where $\xi$ is any measurable partition $\mu$-subordinated to $\mathcal{F}^u$.
\end{definition}
The entropy $h_{\mu}(f,\mathcal{F}^u)$ has become a powerful tool in the study of partially hyperbolic diffeomorphisms in recent years, see \cite{CYZ18,HHW,hyy18,yjg19} for instance.

%For $C^{1+\alpha}$ partially hyperbolic diffeomorphisms,
%Pesin and Sinai \cite{ps82} introduced the notion of Gibbs $u$-states, which requires that their conditional measures w.r.t measurable partitions  have the absolutely continuous property along strong unstable manifolds. This property is known ( see \cite{L84} for instance ) to be equivalent to the following equality
%\begin{equation}\label{eg}
%h_{\mu}(f,\F^u)=\int \log |{\rm det}Df|_{E^u}|d\mu.
%\end{equation}
%It is worth noting that the equality above can be defined for $C^1$ partially hyperbolic diffeomorphisms. Thus
%
%
%
\begin{definition}\label{gu}
Let $f$ be a $C^1$ partially hyperbolic diffeomorphism with $E^u$. We call $\mu\in \M(f)$ a Gibbs $u$-state if
$$
h_{\mu}(f,\F^u)=\int \log |{\rm det}Df|_{E^u}|d\mu.
$$
\end{definition}
%\begin{lemma}\cite[Theorem A]{yjg16}\label{gu}
%Let $f_n$ be a sequence of $C^1$ diffeomorphisms that converges to $f$ in $C^1$-topology, and assume that $\mu_n$ be a sequence of $f_n$-invariant measures which converges to the $f$-invariant measure $\mu$, then we have
%$$
%\limsup_{n\to+\infty}h_{\mu_n}(f_n, \F_n^u)\le h_{\mu}(f,\F^u),
%$$
%where $\F_n^u$ is the unstable foliation of $f_n$.
%\end{lemma}

%\begin{lemma}\cite[Theorem A]{wwz18}
%Let $f$ be a $C^1$ partially hyperbolic diffeomorphism with strong unstable direction $E^u$. Then any $f$-invariant measure $\mu$ satisfy 
%$$
%h_{\mu}(f,\F^u)\le \int \log |{\rm det}Df|_{E^u}|d\mu.
%$$ 
%\end{lemma}

%For $C^{1+\alpha}$ partially hyperbolic diffeomorphisms,
%Pesin and Sinai \cite{ps82} introduced the notion of Gibbs $u$-states, which requires that they have the absolutely continuous property along strong unstable manifolds. This property is known ( see \cite{L84} for instance ) to be equivalent to the following equality
%\begin{equation}\label{eg}
%h_{\mu}(f,\F^u)=\int \log |{\rm det}Df|_{E^u}|d\mu.
%\end{equation}
%It is worth noting that the equality above can be defined for $C^1$ partially hyperbolic diffeomorphisms. Thus, as proposed by Croviser-Yang-Zhang, in this paper we say $\mu$ is a Gibbs $u$-state for $f$ it satisfies (\ref{eg}). 
As we have mentioned, if $f$ is $C^{1+}$, then $\mu$ is a Gibbs $u$-state if and only its conditional measures along strong unstable manifolds are absolutely continuous w.r.t. Lebesgue measures on these manifolds. Denote by $G^u(f)$ the space of Gibbs $u$-states of $f$. 
\begin{lemma}\label{gibbsp}\cite[Corollary 2.15]{CYZ18}
Let $f$ be a $C^1$ partially hyperbolic diffeomorphism with $E^u$. Then
\begin{enumerate}
\item \label{up1} $G^u(f)$ is nonempty, compact and convex subset of $\M(f)$.
\smallskip
\item \label{up2}Every ergodic component of any $\mu \in G^u(f)$ is a Gibbs $u$-state.
\end{enumerate}
\end{lemma}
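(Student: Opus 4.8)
The statement to prove is Lemma \ref{gibbsp}, attributed to \cite[Corollary 2.15]{CYZ18}: for a $C^1$ partially hyperbolic diffeomorphism $f$ with $E^u$, the set $G^u(f)$ of Gibbs $u$-states is nonempty, compact and convex, and every ergodic component of a Gibbs $u$-state is again a Gibbs $u$-state.

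\medskip

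The plan is to exploit the entropy-formula characterization of Gibbs $u$-states (Definition \ref{gu}) together with semicontinuity properties of the partial entropy $h_\mu(f,\F^u)$ and the general Kolmogorov–Sinai integrability of entropy over ergodic decompositions. First I would establish the \emph{Ruelle-type inequality along $\F^u$}: for every $\mu\in\M(f)$ one has $h_\mu(f,\F^u)\le \int\log|\det Df|_{E^u}|\,d\mu$. This follows because a partition $\xi$ that is $\mu$-subordinate to $\F^u$ has its leafwise conditional entropy bounded by the logarithm of the leafwise Jacobian; the inequality is standard (it is the unstable-foliation analogue of the Margulis–Ruelle inequality, and in the $C^1$ setting it is exactly the content one needs). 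Consequently $G^u(f)$ is precisely the set of measures \emph{attaining} this inequality with equality.

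\medskip

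For \emph{nonemptiness}, I would take any unstable disk $D$, push forward normalized Lebesgue measure $\mathrm{Leb}_D$ by $f^n$, and form the Cesàro averages $\mu_n=\frac1n\sum_{j=0}^{n-1}f^j_*(\mathrm{Leb}_D/\mathrm{Leb}_D(D))$; any weak$^*$ accumulation point $\mu$ is $f$-invariant, and one shows $h_\mu(f,\F^u)\ge\int\log|\det Df|_{E^u}|\,d\mu$ by a bounded-distortion / absolute-continuity argument along the unstable plaques (the Jacobian of $f^n$ restricted to the disk controls how $f^n_*\mathrm{Leb}_D$ spreads, and the entropy of the limiting partition captures this spreading rate). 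Combined with the Ruelle inequality this forces equality, so $\mu\in G^u(f)$. In the $C^1$ case the bounded-distortion estimates are more delicate than in $C^{1+}$ and this is where the argument of \cite{CYZ18} does real work; I would cite it rather than reproduce it.

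\medskip

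For \emph{compactness} the key point is upper semicontinuity of $\mu\mapsto h_\mu(f,\F^u)$ in the weak$^*$ topology (a nontrivial fact, since ordinary metric entropy is not upper semicontinuous in $C^1$, but the \emph{partial} entropy along $\F^u$ is, by the work of Yang and of Crovisier–Yang–Zhang, using that $\F^u$ has uniformly smooth leaves); granting this, if $\mu_n\in G^u(f)$ and $\mu_n\to\mu$, then since $\int\log|\det Df|_{E^u}|\,d\mu_n\to\int\log|\det Df|_{E^u}|\,d\mu$ by continuity of the integrand, we get $h_\mu(f,\F^u)\ge\limsup h_{\mu_n}(f,\F^u)=\limsup\int\log|\det Df|_{E^u}|\,d\mu_n=\int\log|\det Df|_{E^u}|\,d\mu$, and the reverse Ruelle inequality gives $\mu\in G^u(f)$, so $G^u(f)$ is closed, hence compact. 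For \emph{convexity and the ergodic-decomposition statement}, I would use that both sides of the defining identity are affine in $\mu$ when integrated over the ergodic decomposition: $h_\mu(f,\F^u)=\int h_{\mu_x}(f,\F^u)\,d\mu(x)$ (affinity of partial entropy, which follows from affinity of fibered entropy, using a common measurable partition subordinate to $\F^u$) and likewise $\int\log|\det Df|_{E^u}|\,d\mu=\int\big(\int\log|\det Df|_{E^u}|\,d\mu_x\big)d\mu(x)$. The Ruelle inequality applied to each ergodic component $\mu_x$ gives $h_{\mu_x}(f,\F^u)\le\int\log|\det Df|_{E^u}|\,d\mu_x$ pointwise; since the integrals over $\mu$ agree, the integrands must agree $\mu$-a.e., i.e. $\mu_x\in G^u(f)$ for $\mu$-a.e. $x$. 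Convexity is then immediate: a convex combination of Gibbs $u$-states has ergodic decomposition supported on $G^u(f)$, hence satisfies the formula. The main obstacle is the upper semicontinuity of $h_\mu(f,\F^u)$ together with the $C^1$ bounded-distortion input for nonemptiness; both are genuinely the technical heart and I would invoke \cite{CYZ18} (and the references therein) rather than re-derive them.
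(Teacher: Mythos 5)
The paper does not prove this lemma at all; it is quoted verbatim from \cite[Corollary 2.15]{CYZ18}, and your outline follows exactly the route of that cited source: characterize $G^u(f)$ as the measures attaining equality in the unstable Ruelle inequality $h_\mu(f,\F^u)\le\int\log|\det Df|_{E^u}|\,d\mu$, get nonemptiness from Ces\`aro limits of iterated Lebesgue measure on an unstable disk, compactness from upper semicontinuity of $\mu\mapsto h_\mu(f,\F^u)$ plus continuity of the Jacobian integral, and convexity/ergodic components from affinity of the partial entropy. Your sketch is correct modulo the technical inputs (the $u$-Ruelle inequality, semicontinuity and affinity of partial entropy, and the $C^1$ distortion estimate for nonemptiness), which you explicitly and legitimately defer to \cite{CYZ18}, just as the paper does.
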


Moreover, we have the upper semi-continuity of the map $f\mapsto G^u(f)$.
\begin{lemma}\cite[Proposition 3.5]{hyy18}\label{upp}
Let $\{f_n\}_{n\in \NN}$ and $f$ be $C^{1}$ partially hyperbolic diffeomorphisms exhibiting $E^u$, and assume that $f_n$ converges to $f$ in $C^1$-topology, then
$$
\limsup_{n\to +\infty}G^u(f_n)\subset G^u(f).
$$ 
\end{lemma}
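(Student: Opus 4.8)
The plan is to reduce the statement to an upper semicontinuity property of the partial unstable entropy, and then to establish that property using the uniform plaque geometry of $E^u$ together with continuity of finite conditional entropies. First I would record that if $\mu_n\in G^u(f_n)$ and $\mu_n\to\mu$ in the weak$^*$-topology, then $\mu\in\M(f)$, which follows from the uniform convergence $f_n\to f$ by a standard test-function argument. To prove $\mu\in G^u(f)$ I would use the Ruelle-type inequality $h_\nu(g,\mathcal{F}^u_g)\le\int\log |{\rm det}Dg|_{E^u_g}|\,d\nu$, valid for every $C^1$ partially hyperbolic diffeomorphism $g$ and every $\nu\in\M(g)$ (see \cite{CYZ18,HHW}); applied to $(f,\mu)$ this gives one half of the equality defining $G^u(f)$, so only $h_\mu(f,\mathcal{F}^u)\ge\int\log |{\rm det}Df|_{E^u}|\,d\mu$ remains. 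The right-hand side is continuous under the hypotheses: the functions $x\mapsto\log |{\rm det}Df_n|_{E^u_{f_n}(x)}|$ converge uniformly to $x\mapsto\log |{\rm det}Df|_{E^u(x)}|$, because $Df_n\to Df$ uniformly, $E^u_{f_n}(x)$ depends continuously on $(f_n,x)$ by robustness of domination (cf.\ Proposition \ref{plaque} and the discussion preceding it), and $|{\rm det}(\cdot)|$ stays bounded away from $0$ on $E^u$; combined with $\mu_n\to\mu$ and with $\mu_n\in G^u(f_n)$ this yields $\int\log |{\rm det}Df|_{E^u}|\,d\mu=\lim_n h_{\mu_n}(f_n,\mathcal{F}^u_{f_n})$. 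Hence everything reduces to proving
\[
\limsup_{n\to\infty}h_{\mu_n}(f_n,\mathcal{F}^u_{f_n})\ \le\ h_\mu(f,\mathcal{F}^u).
\]

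For this upper semicontinuity I would argue as follows. By the Plaque Family Theorem (Proposition \ref{plaque}) fix a $C^1$-neighborhood $\U$ of $f$ and uniform constants $0<\delta<\rho$ so that every $g\in\U$ carries a continuous family of unstable plaques $\mathcal{F}^u_\rho(g,x)$; since $E^u$ is uniformly expanding, every integer $n$ is an $(E^u,\sigma)$-hyperbolic time of every point for one fixed $\sigma<1$, so Lemma \ref{bcd} gives backward contraction of these plaques at rates and scales uniform over $\U$. Next fix once and for all a finite partition $\Q$ of $M$ of diameter less than $\delta$ with closed boundary satisfying $\mu(\partial\Q)=0$; then $\limsup_n\mu_n(\partial\Q)\le\mu(\partial\Q)=0$, so each atom of $\Q$, and of each $f^{-i}\Q$, is a continuity set for $\mu$ and for $\mu_n$ with $n$ large. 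I would then invoke the entropy-along-$\mathcal{F}^u$ machinery (as in \cite{CYZ18,HHW}, following Ledrappier--Young): for every $g\in\U$, every $\nu\in\M(g)$ and every $N\ge1$,
\[
h_\nu(g,\mathcal{F}^u_g)\ \le\ H_\nu\!\Big(\Q\ \Big|\ \textstyle\bigvee_{i=1}^{N}g^{-i}\Q\ \vee\ \widehat{\mathcal{F}}^u_g\Big),
\]
where $\widehat{\mathcal{F}}^u_g$ is the ($g$-dependent, measurable) partition into entire strong unstable leaves of $g$, and moreover for $(g,\nu)=(f,\mu)$ these upper bounds decrease to $h_\mu(f,\mathcal{F}^u)$ as $N\to\infty$ (this is where smallness of ${\rm diam}\,\Q$ relative to $\rho$ enters, via the uniform plaque estimates above). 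Granting this, the conclusion follows by a two-step limit: for each fixed $N$, let $n\to\infty$, using joint continuity of finite conditional entropy in the measure and in the finitely many partitions $f_n^{-i}\Q\to f^{-i}\Q$ (their atoms being continuity sets and converging mod $\mu$-null sets), together with the convergence of leaf partitions $\widehat{\mathcal{F}}^u_{f_n}\to\widehat{\mathcal{F}}^u_f$ supplied by Proposition \ref{plaque}, to obtain $\limsup_n h_{\mu_n}(f_n,\mathcal{F}^u_{f_n})\le H_\mu(\Q\,|\,\bigvee_{i=1}^N f^{-i}\Q\vee\widehat{\mathcal{F}}^u_f)$; then let $N\to\infty$.

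The hard part will be the displayed conditional-entropy estimate and, inside it, handling the measurable, $g$-dependent conditioning by $\widehat{\mathcal{F}}^u_g$: one cannot simply drop it, since the resulting bound by $h_\nu(g^{-1},\Q)$ is far too large (it detects the center and stable directions), yet it must be carried through the limit together with the measure. Making this rigorous requires using the uniform geometry of unstable plaques over $\U$ (Proposition \ref{plaque}, Lemma \ref{bcd}) to squeeze $\widehat{\mathcal{F}}^u_g$ between suitable iterates of $\Q$ at the scales where $\Q$ actually separates points, so that the relevant conditional entropies vary continuously with $(g,\nu)$. This is precisely what is carried out in \cite[Proposition 3.5]{hyy18} (compare \cite{HHW,CYZ18}), and I would follow that route for the remaining details.
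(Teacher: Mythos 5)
The paper offers no proof of this lemma at all: it is quoted directly from \cite[Proposition 3.5]{hyy18}, and your outline follows exactly the strategy of that cited proof --- the Ruelle-type inequality $h_\nu(g,\F^u_g)\le\int\log|{\rm det}Dg|_{E^u_g}|\,d\nu$ plus uniform convergence of the unstable Jacobians reduces everything to upper semicontinuity of $(g,\nu)\mapsto h_\nu(g,\F^u_g)$, which is then handled via conditional entropies for a fixed small-diameter partition with $\mu$-null boundary conditioned on the unstable leaf partition. Since for that one genuinely delicate step you defer, just as the paper does, to \cite{hyy18}, your proposal is correct as far as it goes and is essentially the same treatment as the paper's (a citation), rather than an independent self-contained proof.
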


\section{Abundance of hyperbolic periodic points}\label{sec3}
Let $f$ be a $C^1$ diffeomorphism with dominated splitting $TM=E\oplus_{\succ} F$.  
For $\ell\in \NN, \alpha>0$, we introduce the following \emph{Pesin block}:
\begin{align*}
& \quad \Lambda_{\ell}(f,\alpha,E,F)=\big\{x: \frac{1}{n\ell}\sum_{i=0}^{n-1}\log \|Df^{-\ell}|_{E(f^{-i\ell}(x))}\|\le -\alpha,\\
 &\quad \quad \quad\quad~\frac{1}{n\ell}\sum_{i=0}^{n-1}\log \|Df^{\ell}|_{F(f^{i\ell}(x))}\|\le -\alpha, \quad \forall n\in \NN\big\}.
\end{align*}
%It can be written as a intersection of two blocks $\mathcal{H}_{yp}^{-}(g,\ell, \alpha)$ and $\mathcal{H}_{yp}^{+}(g,\ell, \alpha)$ defined as follows:
%$$
%\mathcal{H}_{yp}^{-}(f,\ell, \alpha)=\big\{x: \frac{1}{n\ell}\sum_{i=0}^{n-1}\log \|Df^{-\ell}|_{E(f^{-i\ell}(x))}\|\le -\alpha, \quad \forall n\in \NN\big\};
%$$
%$$
%\mathcal{H}_{yp}^{+}(f,\ell, \alpha)=\big\{x: \frac{1}{n\ell}\sum_{i=0}^{n-1}\log \|Df^{\ell}|_{F(f^{i\ell}(x))}\|\le -\alpha,\quad \forall n\in \NN\big\}.
%$$
\begin{remark}
Note that the Pesin block defined here is different from the classical one in \emph{Pesin theory} (see \cite{bp07}).
%all points of $\Lambda_{\ell}(f,\alpha,E,F)$ have Lyapunov exponents bounded by $-\alpha$, which is independent of $\ell$. 
\end{remark}
%\begin{claim}
%For every $\alpha>0$, $\ell\in \NN$, and $k\in \NN$, there exist $\alpha'>0$, $\ell'\in \NN$ such that $
%f^k(\Lambda_{\ell}(f,\alpha,E,F))\subset \BB_{\ell'}(f,\alpha',E,F)$.
%\end{claim}

As usual, all points of a Pesin block admit the stable and unstable manifolds with uniform size. 
Moreover, this size can be made to be uniform in a neighborhood of the fixed diffeomorphism.

\begin{lemma}\label{size}
Assume that $f$ is a $C^1$ diffeomorphism with dominated splitting $TM=E\oplus_{\succ} F$. 

Then for every $\alpha>0$ and $\ell\in \NN$, there exists a $C^1$ neighborhood $\U$ of $f$ and constants $\tau:=\tau(\alpha,\ell)\in (0,1)$, $C:=C(\alpha,\ell)>0$, $\delta:=\delta(\ell,\alpha)>0$ such that for every $g\in \U$, for every $x\in \Lambda_{\ell}(g,\alpha,E,F)$, there are local stable manifold $W_{loc}^s(x,g)$ and local unstable manifold $W_{loc}^u(x,g)$, which are $C^1$ embedded disks of radius $\delta$ centered at $x$ with following properties: 
\begin{itemize}
\smallskip
\item $W_{loc}^s(x,g)$ is tangent to $E$ and $W_{loc}^u(x,g)$ is tangent to $F$;
\smallskip
\item for every $n\in \NN$, we have
\begin{itemize}
\smallskip
\item[--] $d(g^{n}(y), g^{n}(z))\le C\tau^nd(y,z)$ for every $y,z \in W_{loc}^s(x,g)$,
\smallskip
\item[--] $d(g^{-n}(y), g^{-n}(z))\le C\tau^{n}d(y,z)$ for every $y,z \in W_{loc}^u(x,g)$.
\end{itemize}
\end{itemize}
\end{lemma}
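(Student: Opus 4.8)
The plan is to reduce the statement to a robust (open) version of the classical Pesin stable/unstable manifold theorem for dominated splittings, exploiting the fact that the "non-classical" Pesin block $\Lambda_\ell(g,\alpha,E,F)$ encodes a uniform (subexponential-free) contraction estimate along $E$ in forward time and along $F$ in backward time. First I would fix $\alpha>0$ and $\ell\in\NN$, and pass to the power $f^\ell$: replacing $f$ by $f^\ell$ turns the defining sums into genuine Birkhoff-type sums with the single-step cocycle $\|Df^{-\ell}|_{E}\|$ and $\|Df^{\ell}|_{F}\|$, so that membership in the block gives, for every $x$ in it and every $n$,
\[
\prod_{i=0}^{n-1}\|Df^{-\ell}|_{E(f^{-i\ell}(x))}\|\le e^{-\alpha \ell n},
\qquad
\prod_{i=0}^{n-1}\|Df^{\ell}|_{F(f^{i\ell}(x))}\|\le e^{-\alpha \ell n}.
\]
By sub-multiplicativity of the norm along the invariant subbundle, these uniform products yield for the intermediate iterates $0\le j<\ell$ the bound $\|Df^{m}|_{E(x)}\|\le C_0 e^{-\alpha m}$ for all $m\ge 0$ (with $C_0$ depending only on $\max_{x}\|Df^{\pm1}\|$ and $\ell$), and symmetrically $\|Df^{-m}|_{F(x)}\|\le C_0 e^{-\alpha m}$; this is exactly the "uniform hyperbolicity with a fixed constant" hypothesis along the orbit of $x$.

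The core of the argument is then a graph-transform construction carried out inside the plaque families of Proposition \ref{plaque}. Apply Proposition \ref{plaque} with a small cone width $\theta$ to get a $C^1$ neighborhood $\U_0$ of $f$ and plaques $\F^E_\rho(g,x)$, $\F^F_\rho(g,x)$ for all $g\in\U_0$; these are locally $g$-invariant (forward for $\F^E$, backward for $\F^F$) up to shrinking from $\delta$ to $\rho$. On the plaque $\F^E_\rho(g,x)$ over a point $x$ of the block one has $Dg^m$ contracting the tangent spaces by $C_0 e^{-\alpha m}$ up to a cone-distortion factor controlled by $\theta$, so a standard Hadamard–Perron / graph-transform fixed-point argument (done for the sequence of maps $g|_{\F^E_\rho(g,g^i(x))}$ along the forward orbit) produces a genuine stable manifold $W^s_{loc}(x,g)$ of a definite radius $\delta_1\le\rho$, tangent to $E$, with $d(g^n(y),g^n(z))\le C\tau^n d(y,z)$ for $\tau=e^{-\alpha/2}$ say and $C$ absorbing $C_0$ and the distortion constant; the unstable manifold $W^u_{loc}(x,g)$ comes from the same argument applied to $g^{-1}$ and the $F$-plaques. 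The neighborhood $\U$ is $\U_0$ intersected with the domination-robustness neighborhood, and $\tau,C,\delta$ depend only on $(\alpha,\ell)$ and on uniform derivative bounds, hence are uniform over $\U$ and over $x\in\Lambda_\ell(g,\alpha,E,F)$. I would also note that the uniform backward (resp. forward) contraction along $F$ (resp. $E$) at hyperbolic times, Lemma \ref{bcd}, can be used as an alternative route to control the size of the unstable (resp. stable) plaque, but the direct graph-transform estimate is cleaner here because the block already gives the contraction for \emph{all} $n$, not merely at hyperbolic times.

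The main obstacle is the \emph{uniformity of the radius} $\delta$ — both over all $g$ in a neighborhood of $f$ and over all points of the (typically non-compact, only Borel) block. Two points need care. First, the plaque families from \cite{HPS77} already give a uniform domain $E_g(x,\rho)$ independent of $x$ and $g\in\U_0$, so the graph-transform operates on graphs over a fixed-size ball; the only $x$-dependent input is the contraction constant pair $(C_0,\alpha)$, which is uniform on the block by construction. Second, the nonlinear terms: one must check that the $C^1$ size of the plaque maps $\phi_x$ (which satisfy $\phi_x(0)=0$, $D\phi_x(0)=0$) together with the modulus of continuity of $Dg$ make the graph-transform a contraction on a ball of radius $\delta$ that can be chosen once and for all, rather than shrinking along the orbit; this is where the domination $\|Df|_E\|\cdot\|Df^{-1}|_F\|<1/2$ and the smallness of the cone width $\theta$ enter, forcing the off-diagonal (stable/unstable coupling) terms to stay a definite factor below the diagonal contraction, so the estimates close uniformly. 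I would isolate this as the one genuinely technical lemma and otherwise cite \cite{HPS77} and the ABV-type estimates \cite{ABV00} to keep the proof short. Finally, the statement that $W^s_{loc}$ is tangent to $E$ (not merely to the cone $\mathcal C^E_\theta$) follows by letting $\theta\to0$: the invariant graph is independent of $\theta$, hence lies in the intersection of all cones, i.e. is tangent to $E$ exactly; the same for $W^u_{loc}$ and $F$.
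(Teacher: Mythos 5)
Your overall route---uniformizing the block estimates, passing to the power $f^{\ell}$, and running a graph-transform (Hadamard--Perron) argument inside the plaque families of Proposition~\ref{plaque}, with all constants depending only on $(\alpha,\ell)$ and on uniform bounds for $Dg^{\pm1}$ over a $C^1$ neighborhood---is exactly the route the paper intends: its own ``proof'' is a one-line reduction to Proposition~\ref{plaque} together with a citation of \cite[Theorem 3.5]{AV17}, and your sketch supplies the same ingredients, including the correct identification of the uniformity of the plaque domain as the only delicate point.

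There is, however, a concrete error in the step where you extract the hyperbolicity estimates from the Pesin block, and it reverses the roles of the two bundles. From $\prod_{i=0}^{n-1}\|Df^{-\ell}|_{E(f^{-i\ell}(x))}\|\le {\rm e}^{-\alpha\ell n}$ the chain rule telescopes along the \emph{backward} orbit of $x$ and gives $\|Df^{-n\ell}|_{E(x)}\|\le {\rm e}^{-\alpha\ell n}$, i.e.\ uniform backward contraction along $E$ (so $E$ is the expanded direction at block points); symmetrically the second condition gives $\|Df^{n\ell}|_{F(x)}\|\le {\rm e}^{-\alpha\ell n}$, i.e.\ forward contraction along $F$. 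Your claimed conclusions $\|Df^{m}|_{E(x)}\|\le C_0{\rm e}^{-\alpha m}$ and $\|Df^{-m}|_{F(x)}\|\le C_0{\rm e}^{-\alpha m}$ are the opposite of this and do not follow from the block; with those (false) inputs your graph transform would be asked to produce a stable manifold inside the $\mathcal{F}^E$-plaques, where no contraction is available. The correct construction builds the local \emph{unstable} manifold in the $\mathcal{F}^E$-plaques (tangent to $E$, contracted by $g^{-n}$) and the local \emph{stable} manifold in the $\mathcal{F}^F$-plaques (tangent to $F$, contracted by $g^{n}$). Note that the tangency bullet in the printed statement of Lemma~\ref{size} has $E$ and $F$ interchanged; the way the lemma is used later (e.g.\ in the proof of Lemma~\ref{pes}, where local unstable manifolds of points of $\Lambda_{\ell}(g,\alpha)=\Lambda_{\ell}(g,\alpha,E^{wu},E^{cs})$ lie in the $E^{wu}$-plaques) confirms that the intended assignment is $W^u_{loc}$ tangent to $E$ and $W^s_{loc}$ tangent to $F$. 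So the fix is simply to swap the roles of $E$ and $F$ throughout your second paragraph; after that swap, your graph-transform argument and the uniformity discussion (fixed plaque radius from \cite{HPS77}, contraction constants uniform over the block and over a $C^1$ neighborhood) close the proof along the same lines as the reference the paper cites. A last small caveat: what the construction yields is tangency to $E(x)$ (resp.\ $F(x)$) at the base point $x$ and containment in a plaque tangent to a thin cone, not $T_yD=E(y)$ at every $y$ of the disk as the paper's literal definition of ``tangent to $E$'' would demand; this weaker conclusion is what is actually used downstream, so your $\theta\to0$ remark should be phrased accordingly.
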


It can be deduced from Plaque family theorem (Proposition \ref{plaque}), note that the local stable and unstable manifolds are contained in the corresponding plaques, see \cite[Theorem 3.5]{AV17} for a detailed proof. 
The radius $\delta$ is also called the size of stable(unstable) manifolds.
For the rest, we use $R(W_{loc}^s(x,g))$ and $R(W_{loc}^u(x,g))$ to denote the sizes of $W_{loc}^s(x,g)$ and $W_{loc}^u(x,g)$, respectively.
%It is worth noting that this lemma can be also deduced by the Plaque family theorem as stated in Proposition \ref{plaque}, and one has that the local stable and unstable manifolds are contained in the corresponding plaques. 

\smallskip
By Lemma \ref{Gan}, we have the following result which demonstrates the existence of hyperbolic periodic points near Pesin blocks.

\begin{lemma}\label{sp}
Let $f$ be a $C^1$ diffeomorphism with dominated splitting $TM=E\oplus_{\succ} F$. Then for any $\alpha>0$ and $\ell\in \NN$, there exist $\rho_{\alpha,\ell}>0$, $\delta_{\alpha,\ell}>0$ and $L_{\alpha,\ell}>0$ such that if 
$$
\mu\left(B(x,\rho)\cap \Lambda_{\ell}(f,\alpha,E,F)\right)>0,
$$
for some $x\in M$, $\mu\in \M(f)$ and $\rho \le \rho_{\alpha,\ell}$,
then there exists a hyperbolic periodic point $p\in B(x, L_{\alpha,\ell}\cdot\rho)$ of stable index ${\rm dim}F$ such that 
$$
R(W^{s}_{loc}(p,f))\ge \delta_{\alpha,\ell},\quad R(W^{u}_{loc}(p,f))\ge \delta_{\alpha,\ell}.
$$
\end{lemma}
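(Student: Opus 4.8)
The plan is to combine the recurrence hypothesis with Liao--Gan's shadowing lemma (Lemma \ref{Gan}), using the uniform hyperbolic estimates furnished by the Pesin block $\Lambda_\ell(f,\alpha,E,F)$. First I would fix $\lambda\in(e^{-\alpha\ell/2},1)$ and let $\delta_0=\delta_0(\lambda)$, $L_0=L_0(\lambda)$ be the constants provided by Lemma \ref{Gan} applied to the dominated splitting $TM=E\oplus_\succ F$ (after possibly passing to the power $f^\ell$, or equivalently running Liao--Gan along the $\ell$-step orbit). The point is that membership in $\Lambda_\ell$ gives, for \emph{every} $n$, the telescoped contraction/expansion bounds $\prod_{i=0}^{k-1}\|Df^{-\ell}|_{E(f^{(n-i)\ell}(x))}\|<\lambda^k$ and $\prod_{i=0}^{k-1}\|Df^{\ell}|_{F(f^{i\ell}(x))}\|<\lambda^k$ for $1\le k\le n$, which is exactly the second hypothesis of Lemma \ref{Gan} along the segment $(x,f^\ell(x),\dots,f^{n\ell}(x))$.

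Next I would use the recurrence assumption to produce closing-up pairs of times. If $\mu(B(x,\rho)\cap\Lambda_\ell)>0$ for some $\mu\in\mathcal{M}(f)$, then by Poincar\'e recurrence $\mu$-almost every point of $B(x,\rho)\cap\Lambda_\ell$ returns to this set infinitely often; pick such a point $z$ and two return times so that $z'=f^{m\ell}(z)$ lies again in $B(x,\rho)\cap\Lambda_\ell$. Both $z$ and $z'$ then lie within $2\rho$ of each other (since both are in $B(x,\rho)$) and, crucially, $z'\in\Lambda_\ell$ inherits the required forward/backward product estimates along the whole segment $(z',f^\ell(z'),\dots,f^{m\ell}(z')=f^{(m+m')\ell}(z)\in B(x,\rho))$ — actually it is cleanest to take the orbit segment to start and end in the Pesin block, so I would instead choose times $n_1<n_2$ with $f^{n_1\ell}(z), f^{n_2\ell}(z)\in B(x,\rho)\cap\Lambda_\ell$ and apply Lemma \ref{Gan} to the segment $(f^{n_1\ell}(z),\dots,f^{n_2\ell}(z))$, whose endpoints are $2\rho$-close and whose interior points satisfy the hyperbolicity estimates by $\Lambda_\ell$-membership of $f^{n_1\ell}(z)$. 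This yields a hyperbolic periodic point $p$ of stable index $\dim F$ with $d(f^{i\ell}(f^{n_1\ell}(z)),f^{i\ell}(p))\le L_0\cdot 2\rho$ for all relevant $i$; in particular $d(f^{n_1\ell}(z),p)\le 2L_0\rho$, and since $f^{n_1\ell}(z)\in B(x,\rho)$ we get $p\in B(x,(2L_0+1)\rho)$, so we set $L_{\alpha,\ell}=2L_0+1$ and $\rho_{\alpha,\ell}$ small enough that $2\rho\le\delta_0$.

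The final step is to control the sizes of the local invariant manifolds of $p$. Here I would argue that the shadowing estimate forces $p$ itself to lie in (a fattening of) the Pesin block: since the orbit of $p$ shadows a genuine orbit through $\Lambda_\ell$ at distance $O(\rho)$, and the product-of-norms conditions defining $\Lambda_\ell$ are robust under $C^0$-small perturbations of the orbit together with continuity of $Df$ along $E$ and $F$ (which are continuous subbundles), one concludes that for $\rho_{\alpha,\ell}$ small enough $p\in\Lambda_\ell(f,\alpha/2,E,F)$, say. Then Lemma \ref{size} (with $\alpha$ replaced by $\alpha/2$) gives local stable and unstable manifolds of $p$ tangent to $E$ and $F$ respectively of a definite radius $\delta:=\delta(\ell,\alpha/2)$, and we set $\delta_{\alpha,\ell}=\delta$. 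The main obstacle I anticipate is precisely this last transfer of Pesin-block membership from the shadowed orbit to the periodic orbit $p$: one must verify that the $O(\rho)$ orbit-closeness, combined with the (mere) continuity — not uniform continuity estimates — of $x\mapsto\|Df^{\pm\ell}|_{E(x)}\|$ and $x\mapsto\|Df^{\pm\ell}|_{F(x)}\|$ and of the subbundles, degrades the constant $\alpha$ in the telescoped sums by an arbitrarily small amount, uniformly in the length of the segment; this is standard but needs the compactness of $M$ and care with the fact that the estimates must hold for \emph{all} $n$, which for the periodic point reduces to checking one period by periodicity.
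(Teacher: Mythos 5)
Your proposal is correct and follows essentially the same route as the paper: Poincar\'e recurrence inside $B(x,\rho)\cap\Lambda_{\ell}(f,\alpha,E,F)$ produces a nearly closed orbit segment satisfying the hypotheses of Liao--Gan's shadowing lemma (Lemma \ref{Gan}), yielding a hyperbolic periodic point $p\in B(x,(2L_0+1)\rho)$ of stable index $\dim F$, and then one shows $p\in\Lambda_{\ell}(f,\alpha/2,E,F)$ by comparing derivative norms along the shadowed orbit (using uniform continuity on the compact $M$ and periodicity to reduce to one period), so Lemma \ref{size} supplies the uniform size $\delta_{\alpha,\ell}$ of $W^{s}_{loc}(p,f)$ and $W^{u}_{loc}(p,f)$. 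This is exactly the paper's argument, with your discussion of the transfer of Pesin-block membership to $p$ filling in a step the paper only asserts.
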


\begin{proof}
Writing $\Lambda_{\ell}:=\Lambda_{\ell}(f,\alpha,E,F)$ for simplicity. Given $n\in \NN$, assume that $y$ and $f^n(y)$ are contained in $\Lambda_{\ell}$, then from the definition of $\Lambda_{\ell}$ one obtains that for every $1\le j \le n$,
\begin{equation}\label{cx}
\prod_{i=0}^{j-1}\left\|Df^{\ell}|_{F(f^{i\ell}(y))}\right\|\le {\rm e}^{-\alpha j\ell} \quad \textrm{and}\quad\prod_{i=0}^{j-1}\left\|Df^{-\ell}|_{E(f^{n-i\ell}(y))}\right\|\le {\rm e}^{-\alpha j\ell}.
\end{equation}

\smallskip
By Lemma \ref{Gan}, there exist $\delta_{0}>0$ and $L_{0}>0$ such that when $d(y,f^n(y))\le \delta_0$, then there exists a hyperbolic periodic point $p$ of periodic $n$ with stable index ${\rm dim}F$, which satisfies
\begin{equation}\label{yy}
d(f^i(y), f^i(p))\le L_{0}d(y, f^n(y)) \quad \textrm{for every}~1\le i \le n.
\end{equation}

Now let us take $\rho_{\alpha,\ell}=\delta_0/2$ and $L_{\alpha,\ell}=2L_0+1$.
For every $0<\rho\le \rho_{\alpha,\ell}$, assume $\mu(B(x,\rho)\cap \Lambda_{\ell})>0$ for some $x\in M$ and $\mu\in \mathcal{M}(f)$. 
By Poincare's recurrent theorem \cite[Theorem 1.4]{wa82}, $\mu$-almost every point of $B(x,\rho)\cap \Lambda_{\ell}$ is recurrent, therefore one can take $y, f^n(y)\in B(x,\rho)\cap \Lambda_{\ell}$ for infinitely many $n$, and thus we have $d(y, f^n(y))<2\rho\le \delta_0$. Consequently, there exists a hyperbolic periodic point $p$ with stable index ${\rm dim}F$, which satisfies (\ref{yy}) and $p\in B(x,L_{\alpha,\ell}\cdot\rho)$. 
%In particular, we have the following distance estimates:
%\begin{itemize}
%\smallskip
%\item[--] $d(p,y)\le 2\rho L_0\le 2 \rho_{\alpha,\ell}$;
%\smallskip
%\item[--] $d(p,x)\le d(p,y)+d(y,x)\le 2\rho L_0+\rho= L_{\alpha,\ell}\cdot\rho.$
%\end{itemize}
Since $y$ exhibits property (\ref{cx}), one can take $\rho_{\alpha,\ell}$ small enough such that for every $j\ge 1$,
\begin{equation*}
\prod_{i=0}^{j-1}\left\|Df^{\ell}|_{F(f^{i\ell}(p))}\right\|\le {\rm e}^{-\frac{\alpha}{2} j\ell} \quad \textrm{and}\quad\prod_{i=0}^{j-1}\left\|Df^{-\ell}|_{E(f^{-i\ell}(p))}\right\|\le {\rm e}^{-\frac{\alpha}{2} j\ell}.
\end{equation*}
Therefore, $p\in \Lambda_{\ell}(f,\alpha/2,E,F)$. According to Lemma \ref{size}, there exists $\delta_{\alpha,\ell}>0$ such that the stable and unstable manifolds of $p$ have size larger than $\delta_{\alpha,\ell}$.
\end{proof}

When the invariant measure is ergodic, we have the following result which will be used for several times.
\begin{lemma}\label{sp1}
Let $f$ be a $C^1$ diffeomorphism with dominated splitting $TM=E\oplus_{\succ} F$. Then for any $\alpha,\alpha'>0$ and $\ell,\ell'\in \NN$, there exist $\delta:=\delta(\alpha,\alpha',\ell,\ell')>0$, $\rho:=\rho(\alpha,\alpha',\ell,\ell')>0$ and $L:=L(\alpha,\alpha',\ell,\ell')>0$ such that if
\begin{equation}\label{lg}
\mu\big(B(x,r)\cap \Lambda_{\ell}(f,\alpha,E,F)\big)>0,\quad \mu\big(B(y,r)\cap \Lambda_{\ell'}(f,\alpha',E,F)\big)>0,
\end{equation}
for some $f$-ergodic measure $\mu$, points $x,y\in M$ and $r\le \rho$, then there exists a hyperbolic periodic point $p\in B(x,Lr)$ such that $f^m(p)\in B(y,Lr)$ for some $m\in \NN$, and both of them admit local stable and unstable manifolds of size larger than $\delta$.
\end{lemma}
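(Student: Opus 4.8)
The plan is to use ergodicity to produce one orbit of $\mu$ that visits the ``$x$-block'' $A:=B(x,r)\cap\Lambda_{\ell}(f,\alpha,E,F)$, then the ``$y$-block'' $A':=B(y,r)\cap\Lambda_{\ell'}(f,\alpha',E,F)$, then $A$ again, and to close this orbit segment into a periodic orbit by Liao--Gan's Lemma \ref{Gan}, following the proof of Lemma \ref{sp} but with an intermediate stop near $y$. A preliminary step is to pass to a common block length. Put $\ell_0:=\mathrm{lcm}(\ell,\ell')$; since $\ell\mid\ell_0$, submultiplicativity of $\|Df^{\ell}|_F\|$ and $\|Df^{-\ell}|_E\|$ along orbits gives $\Lambda_{\ell}(f,\alpha,E,F)\subset\Lambda_{\ell_0}(f,\alpha,E,F)$ and, likewise, $\Lambda_{\ell'}(f,\alpha',E,F)\subset\Lambda_{\ell_0}(f,\alpha',E,F)$. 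Thus with $\alpha_0:=\min\{\alpha,\alpha'\}$ both $A$ and $A'$ lie in $\Lambda_{\ell_0}(f,\alpha_0,E,F)$, and $f^{\ell_0}$ still carries the dominated splitting. I fix $\lambda\in(e^{-\alpha_0\ell_0},1)$, let $\delta_0,L_0$ be the constants from Lemma \ref{Gan} for $f^{\ell_0}$, let $\delta$ be the uniform size from Lemma \ref{size} for $\Lambda_{\ell_0}(f,\alpha_0/4,E,F)$, and put $K:=\sup_M\|Df\|$, $L:=2L_0K^{\ell_0}+1$ and $\rho:=\delta_0/3$ (to be shrunk once more later).

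I would then choose the connecting orbit. By Poincar\'e recurrence for the $\mu$-preserving map $f^{\ell_0}$, for $\mu$-a.e.\ $z\in A$ the points $f^{n\ell_0}(z)$ return to $A$ for infinitely many $n$; by $f$-ergodicity of $\mu$ and $\mu(A')>0$ the forward orbit of $\mu$-a.e.\ $z$ meets $A'$ infinitely often, and --- this requires some care --- a residue-class/ergodic-component argument for $f^{\ell_0}$ lets one select infinitely many such $A'$-visits at times divisible by $\ell_0$ (a bounded, point-independent time shift, if any, being absorbed into $L$). Fixing such a $z\in A$ I obtain $0<m<N$ with $\ell_0\mid m$, $\ell_0\mid N$, $f^{m}(z)\in A'$, $f^{N}(z)\in A$, and $d(z,f^{N}(z))<2\rho\le\delta_0$. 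The segment $(z,f^{\ell_0}(z),\dots,f^{N}(z))$ then satisfies the hypotheses of Lemma \ref{Gan} for $f^{\ell_0}$ --- forward $F$-contraction at the start because $z\in\Lambda_{\ell_0}(f,\alpha_0,E,F)$, backward $E$-contraction at the end because $f^{N}(z)\in\Lambda_{\ell_0}(f,\alpha_0,E,F)$ --- so there is a hyperbolic periodic point $p$ of period $N$ and stable index $\dim F$ with $d(f^{i\ell_0}(z),f^{i\ell_0}(p))\le L_0\,d(z,f^{N}(z))$ for $0\le i<N/\ell_0$, hence $d(f^{j}(z),f^{j}(p))\le 2L_0K^{\ell_0}\rho=:\varepsilon_1$ for all $0\le j\le N-1$. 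In particular $p\in B(x,Lr)$, and since $\ell_0\mid m$ and $f^{m}(z)\in B(y,r)$, also $f^{m}(p)\in B(y,Lr)$.

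It remains to read off the uniform sizes of the invariant manifolds. Shrinking $\rho$ so that $\varepsilon_1$ is below the scale at which the moduli of continuity of $w\mapsto\log\|Df^{\pm\ell_0}|_{E(w)}\|$ and $w\mapsto\log\|Df^{\pm\ell_0}|_{F(w)}\|$ contribute an error less than $\alpha_0\ell_0/4$, the argument of Lemma \ref{sp} gives $p\in\Lambda_{\ell_0}(f,\alpha_0/2,E,F)$. For $f^{m}(p)$ I would use that $\ell_0\mid m$ and $\ell_0\mid N$: then its forward (respectively backward) orbit splits, at $\ell_0$-steps and up to the shadowing error, into a piece tracking the orbit of the Pliss point $f^{m}(z)$ followed by a piece tracking the orbit of the Pliss point $z$ (respectively $f^{N}(z)$); concatenating these two hyperbolic pieces and invoking periodicity of $p$ yields $f^{m}(p)\in\Lambda_{\ell_0}(f,\alpha_0/4,E,F)$. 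Lemma \ref{size} then gives local stable and unstable manifolds of size $\ge\delta$ at both $p$ and $f^{m}(p)$, which is the claim.

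The hard part will be this last step: securing a \emph{uniform} lower bound on the sizes of the invariant manifolds at the intermediate point $f^{m}(p)$. The endpoint $p$ shadows the Pesin-block point $z$ from the very start, so Lemma \ref{sp}'s reasoning transfers directly; but $f^{m}(p)$ lies in the interior of the periodic orbit and, for a generic connecting time $m$, need not belong to any Pesin block, so its invariant manifolds could a priori shrink with the period. This is exactly what forces the reduction to the common block length $\ell_0$ and the divisibility $\ell_0\mid m$, $\ell_0\mid N$, and hence the delicate residue-class bookkeeping in the selection of the connecting orbit; once these are arranged, the concatenation-of-Pliss-segments argument restores $f^{m}(p)$ to a Pesin block.
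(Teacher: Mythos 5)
Your overall architecture is the same as the paper's: use Poincar\'e recurrence and Birkhoff's theorem to find a $\mu$-generic point $z\in B(x,r)\cap\Lambda_{\ell}(f,\alpha,E,F)$ whose orbit visits $B(y,r)\cap\Lambda_{\ell'}(f,\alpha',E,F)$ at an intermediate time $m$ and returns to the first block at time $N$, close the segment with Lemma \ref{Gan}, and transfer the Pesin-block estimates of the shadowed points to $p$ and $f^m(p)$ so as to invoke Lemma \ref{size}. Your inclusion $\Lambda_{\ell}(f,\alpha,E,F)\subset\Lambda_{\ell_0}(f,\alpha,E,F)$ for $\ell\mid\ell_0$, the application of Lemma \ref{Gan} to $f^{\ell_0}$, the bookkeeping for $L$, and the treatment of $p$ itself (as in Lemma \ref{sp}) are all fine.

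The gap is precisely the step you single out as delicate: arranging simultaneously $f^m(z)\in B(y,r)\cap\Lambda_{\ell'}(f,\alpha',E,F)$ and $\ell_0\mid m$. In general nothing guarantees this. The $f$-ergodic measure $\mu$ decomposes under $f^{\ell_0}$ into $d\mid\ell_0$ ergodic components cyclically permuted by $f$; if the component charging $B(x,r)\cap\Lambda_{\ell}$ is $\nu$ while the only components charging $B(y,r)\cap\Lambda_{\ell'}$ are images $f^s_{*}\nu$ with $s\not\equiv 0\pmod d$, then every visit time $m$ of a $\nu$-generic $z$ to the second set lies in a nonzero residue class mod $d$, hence is never a multiple of $\ell_0$. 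Your parenthetical escape --- a fixed time shift ``absorbed into $L$'' --- concedes exactly this, but it destroys the hypothesis your final paragraph relies on (``I would use that $\ell_0\mid m$''): after the shift, the point of the orbit that actually lies in a Pesin block is no longer aligned with the $\ell_0$-block structure based at $z$, and that alignment is what your concatenation argument uses. Fortunately the divisibility is not needed, and dropping it is how the paper argues: $f^m(z)$ is a genuine two-sided Pliss point, its forward $F$-products and backward $E$-products being controlled at \emph{all} times, not only up to the junctions; transferring these estimates to the shadowing orbit of $f^m(p)$, and continuing past the periodic wrap-around with the estimates furnished by $z$ and $f^{N}(z)$, costs only a multiplicative round-off factor bounded by $\sup_M\|Df^{\pm1}\|$ raised to a power of order $\ell_0$ at the (possibly misaligned) junctions. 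Such a uniform prefactor is harmless for the plaque-family/graph-transform construction behind Lemma \ref{size} (whose contraction estimate already carries a constant $C$), so both $p$ and $f^m(p)$ acquire local stable and unstable manifolds of size bounded below by a constant depending only on $\alpha,\alpha',\ell,\ell'$. With that repair --- delete the residue-class selection and accept bounded junction errors --- your proof coincides with the paper's.
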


\begin{proof}
By Lemma \ref{Gan}, there exist $L_0>0$ and $\rho>0$ such that for any $r\le \rho$, for any $z\in B(x,r)\cap \Lambda_{\ell}(f,\alpha,E,F)$ with $f^n(z)\in B(x,r)\cap \Lambda_{\ell}(f,\alpha,E,F)$, there exists a hyperbolic periodic point $p$ of periodic $n$ such that  
\begin{equation}\label{shad}
d(f^k(p), f^k(z))\le L_0 d(z,f^n(z))\quad \textrm{for every}~1\le k \le n.
\end{equation}

\smallskip
Given $r\le \rho$, $f$-ergodic measure $\mu$ and points $x,y\in M$ satisfying (\ref{lg}).
By Poincare's recurrent theorem, $\mu$-almost every point of $B(x,r)\cap \Lambda_{\ell}(f,\alpha,E,F)$ is recurrent. Moreover, since $\mu(B(y,r)\cap \Lambda_{\ell'}(f,\alpha',E,F))>0$ and $\mu$ is ergodic, the Birkhoff's ergodic theorem \cite[Theorem 1.14]{wa82} implies that for $\mu$-almost point point, its forward orbit enters to $B(y,r)\cap \Lambda_{\ell'}(f,\alpha',E,F)$ for infinitely many times. Therefore, one can take a recurrent point $z$ of $B(x,r)\cap \Lambda_{\ell}(f,\alpha,E,F)$ with some integers $m<n$ such that
\begin{itemize}
\smallskip
\item[--] $z, f^n(z)\in B(x,r)\cap \Lambda_{\ell}(f,\alpha,E,F)$;
\smallskip
\item[--] $f^m(z)\in B(y,r)\cap \Lambda_{\ell'}(f,\alpha',E,F)$.
\end{itemize}
Hence, one can find a hyperbolic periodic point $p$ of periodic $n$ with property (\ref{shad}). Furthermore, by a simple computation, one can take $L>L_0$ such that $p\in B(x,Lr)$ and $f^m(p)\in B(y, Lr)$.
By definition of Pesin blocks, as showed in Lemma \ref{sp}, by reducing $\rho$ if necessary, one can take $\delta>0$ such that both $p$ and $f^m(p)$ admit local stable and unstable manifolds of size larger than $\delta$.
\end{proof}

\smallskip
We are interested in diffeomorphisms in $PH^1_{EC}(M)$, thus for every $f\in PH^1_{EC}(M)$ with partially hyperbolic splitting $TM=E^u\oplus_{\succ}E^{cu}\oplus_{\succ}E^{cs}$, $\alpha>0$ and $\ell \in \NN$, we can rewrite $E^{wu}:=E^u\oplus_{\succ}E^{cu}$ and $\Lambda_{\ell}(f,\alpha):=\Lambda_{\ell}(f,\alpha,E^{wu}, E^{cs})$ for simplicity. 

The result below may be seen as a $C^1$ version of \cite[Lemma 3.4]{AV17}, though we are dealing with a more general system  than \cite{AV17}. For completeness, a proof is provided in Appendix \ref{appendix}.
\begin{proposition}\label{t11}
For every $f\in PH^1_{EC}(M)$, there exists $\alpha>0$ such that for every $\varepsilon>0$, there exists $\ell \in \mathbb{N}$ and a $C^1$ neighborhood $\mathcal{U}$ of $f$, such that for every $g\in \mathcal{U}$, every $\mu\in G^u(f)$, it has
$$
\mu\left(\Lambda_{\ell}(g,\alpha)\right)>1-\varepsilon.
$$
\end{proposition}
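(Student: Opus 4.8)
The plan is to translate the defining properties of $PH^1_{EC}(M)$ — that every Gibbs $u$-state has only positive Lyapunov exponents along $E^{cu}$ and only negative exponents along $E^{cs}$ — into a quantitative, \emph{uniform} statement about the Pesin blocks $\Lambda_\ell(g,\alpha)=\Lambda_\ell(g,\alpha,E^{wu},E^{cs})$, valid simultaneously for all $g$ near $f$ and all $\mu\in G^u(f)$. The key structural input is the upper semi-continuity of $f\mapsto G^u(f)$ (Lemma \ref{upp}) together with compactness of $G^u(f)$ (Lemma \ref{gibbsp}); these let us reduce everything to the single diffeomorphism $f$ and then propagate to a neighborhood by a standard contradiction/compactness argument.

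First I would fix the exponent $\alpha$. By hypothesis every $\mu\in G^u(f)$ has all $E^{cu}$-exponents positive and all $E^{cs}$-exponents negative; since $G^u(f)$ is compact and the relevant integrated quantities ($\int \frac1\ell\log\|Df^{-\ell}|_{E^{wu}}\|\,d\mu$ and $\int\frac1\ell\log\|Df^\ell|_{E^{cs}}\|\,d\mu$ for $\ell$ large) depend upper semi-continuously on $\mu$, one can choose $\alpha>0$ so that for every ergodic $\mu\in G^u(f)$ both these integrals are $\le -2\alpha$ once $\ell\ge\ell_0$ for some $\ell_0$. Here one uses that along a Gibbs $u$-state $E^u$ is uniformly expanding and $E^{cu}$ is (non-uniformly) expanding, so $E^{wu}=E^u\oplus E^{cu}$ is backward contracting on average; and similarly $E^{cs}$ is forward contracting on average. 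By Lemma \ref{gibbsp}(2), ergodic components of any $\mu\in G^u(f)$ are again Gibbs $u$-states, so by the ergodic decomposition the same averaged bound $\le -2\alpha$ holds for every $\mu\in G^u(f)$ (not just ergodic ones). Then for a fixed large $\ell$, Birkhoff's ergodic theorem applied to the two subadditive/additive cocycles shows that the set of points whose \emph{all-time} averages satisfy the $\le -\alpha$ inequalities — i.e. exactly $\Lambda_\ell(f,\alpha)$ — has $\mu$-measure bigger than $1-\varepsilon$, provided $\ell$ is chosen large enough depending on $\varepsilon$. Concretely: by Birkhoff, for $\mu$-a.e.\ $x$ the Ces\`aro averages of $\log\|Df^{-\ell}|_{E^{wu}(f^{-i\ell}x)}\|$ converge to something $\le -2\alpha\ell$, hence the $\limsup$ of partial averages is $\le -2\alpha\ell<-\alpha\ell$; a maximal-function / Kac-type estimate then bounds the measure of the set where \emph{some} partial average exceeds $-\alpha\ell$, and this bound tends to $0$ as the "gap" $\alpha\ell$ grows, giving $\mu(\Lambda_\ell(f,\alpha))>1-\varepsilon$ for $\ell$ large. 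One must also absorb the discrepancy between $\log\|Df^{-\ell}|_E\|$ and $\frac1\ell\sum\log\|Df^{-1}|_E\|$, which is where passing to the $\ell$-th power (the $f^\ell$-cocycle) is convenient.

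Having established $\mu(\Lambda_\ell(f,\alpha))>1-\varepsilon$ for all $\mu\in G^u(f)$, the final step is the extension to a $C^1$ neighborhood $\mathcal U$ of $f$ and to all $g\in\mathcal U$. Here I would argue by contradiction: if no such neighborhood existed, there would be $g_n\xrightarrow{C^1}f$ and $\mu_n\in G^u(f)$ with $\mu_n(\Lambda_\ell(g_n,\alpha))\le 1-\varepsilon$. Note the measures $\mu_n$ all live in the fixed compact set $G^u(f)$, so after passing to a subsequence $\mu_n\to\mu_\infty\in G^u(f)$. The obstacle — and I expect this to be the main technical point — is that the Pesin blocks $\Lambda_\ell(g,\alpha)$ are neither open nor closed, so "$\mu_n(\Lambda_\ell(g_n,\alpha))$ small" does not pass to the limit directly. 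This is handled by the robustness built into the setup: the dominated splitting $E^{wu}_g\oplus E^{cs}_g$ and the norms $\|Dg^{\pm\ell}|_{E^\ast_g(x)}\|$ depend continuously on $(g,x)$, so for any strict slack one can sandwich $\Lambda_\ell(g_n,\alpha)$ between a Pesin block of $f$ with a slightly worse exponent $\alpha' <\alpha$ and one with a slightly better exponent; choosing $\alpha$ with the factor-of-two room secured in the first step, one gets $\Lambda_\ell(g_n,\alpha)\supset$ (a set whose $f$-Pesin-block description forces $\mu_\infty$-measure $>1-\varepsilon$), contradicting $\mu_n(\Lambda_\ell(g_n,\alpha))\le 1-\varepsilon$ in the limit via the portmanteau-type inequality for the relevant (closed, up to null sets) sets. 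Since this is exactly the $C^1$ analogue of \cite[Lemma 3.4]{AV17}, the detailed bookkeeping is deferred to Appendix \ref{appendix}; the proposal here is just to indicate that $\alpha$ comes from compactness of $G^u(f)$ plus the mostly-expanding/mostly-contracting hypotheses, $\ell$ comes from a Birkhoff/maximal-inequality estimate with gap controlled by $\alpha\ell$, and $\mathcal U$ comes from Lemma \ref{upp} plus continuity of the splitting and derivative norms.
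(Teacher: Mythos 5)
Your first two steps are broadly consistent with the paper's strategy: the choice of a uniform $\alpha$ from compactness of $G^u(f)$ plus the mostly expanding/contracting hypotheses is exactly the role of Lemma \ref{p33}, and for the \emph{unperturbed} $f$ and a \emph{fixed} $\mu$ one can indeed get $\mu(\Lambda_\ell(f,\alpha))>1-\varepsilon$ without the paper's Pliss-type lemma. But even there your quantitative reasoning is off: the maximal-inequality bound is of the form $\frac{1}{\alpha\ell}\int(\phi_\ell+2\alpha\ell)^+d\mu$ with $\phi_\ell=\log\|Df^{-\ell}|_{E^{wu}}\|$, and it does \emph{not} tend to $0$ "because the gap $\alpha\ell$ grows" --- the numerator also scales like $\ell$. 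What makes it small is that $\mu\{x:\frac1\ell\phi_\ell(x)>-2\alpha\ell\cdot\frac1\ell\}$ tends to $0$ by the multiplicative/subadditive ergodic theorem, and the $\ell$ achieving this depends on $\mu$, so a single $\ell$ valid for all $\mu\in G^u(f)$ already requires an extra compactness argument (the paper gets it by covering $G^u(f)$ with finitely many neighborhoods $\mathcal V_{\mu_i}$ and taking $\ell=\prod_i\ell_i$, using submultiplicativity to ensure $\Lambda_{\ell_i}\subset\Lambda_\ell$).

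The genuine gap is in your perturbation step. You propose to sandwich $\Lambda_\ell(g_n,\alpha)$ between Pesin blocks of $f$ with slightly shifted exponents, invoking continuity of the splitting and of $\|Dg^{\pm\ell}|_{E^\ast_g(x)}\|$. This fails: the defining inequalities of $\Lambda_\ell(g,\alpha)$ run over \emph{all} $n\in\NN$ along the $g$-orbit of $x$, and $C^1$-closeness of $g_n$ to $f$ controls only finitely many iterates uniformly; for large $i$ the points $g_n^{-i\ell}(x)$ and $f^{-i\ell}(x)$ are unrelated, so there is no inclusion between $g_n$-blocks and $f$-blocks for any $\alpha'<\alpha$. (Also, these blocks are closed, not "neither open nor closed", and for closed sets weak$^*$ convergence only gives $\limsup_n\mu_n(\cdot)\le\mu_\infty(\cdot)$, the wrong direction for the lower bound your contradiction needs.) The paper's proof (Lemma \ref{plg} in the Appendix) avoids this precisely by perturbing only a \emph{finite-time}, one-block open set $\Lambda_{g,\ell}=\{x:\frac1\ell\log\|Dg^{-\ell}|_{E^{wu}(x)}\|<-\alpha_0,\ \frac1\ell\log\|Dg^{\ell}|_{E^{cs}(x)}\|<-\alpha_0\}$, whose measure is robust under simultaneous perturbation of the pair $(g,\nu)$ (this is where \cite[Lemma 3.2]{AV17} enters); then, using that $\nu$ is invariant under the \emph{perturbed} map, Birkhoff gives that $\nu$-a.e.\ orbit visits $\Lambda_{g,\ell}$ with lower density $>1-\rho$, and the Pliss-like Lemma \ref{pliss} upgrades this to all-time partial-sum control from a set of starting times of large upper density, whence $\nu(\Lambda_\ell(g,\alpha))>1-\varepsilon$. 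This mechanism --- reduce to a robust finite-time condition, then rebuild the infinite-time block for $g$ itself via invariance of the measure under $g$ plus a Pliss-type selection --- is the key idea missing from your proposal; it also explains why the perturbative step works with measures tied to the perturbed dynamics, a point your scheme (testing $f$-invariant $\mu_n$ against $g_n$-blocks via the sandwich) does not address.
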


Now we give the abundance of hyperbolic periodic points near Pesin blocks described as follows:

\begin{theorem}\label{covering}
For every $f\in PH^1_{EC}(M)$ with partially hyperbolic splitting $TM=E^u\oplus_{\succ} E^{cu}\oplus_{\succ} E^{cs}$, there exists $\alpha>0$ such that for every $\varepsilon>0$ there are $\ell \in \NN$, $\rho_{\ell}>0$, $\delta_{\ell}>0$ and $C^1$ neighborhood $\U$ of $f$ such that for each $g\in \U$, 
we have 
$$
R(W^{s}_{loc}(x,g))\ge \delta_{\ell},\quad R(W^{u}_{loc}(x,g))\ge \delta_{\ell}
$$
for all $x\in \Lambda_{\ell}(g,\alpha)$. In addition, for every $\rho \le \rho_{\ell}$ and $\mu \in G^u(g)$, there is an open covering 
$$
\mathscr{A}_{\mu}( g,\ell, \alpha, \rho)=\Big\{B(x_{i},\rho): x_{i}\in {\rm supp}(\mu|\Lambda_{\ell}(g,\alpha)), 1\le i \le k\Big\}
$$
of ${\rm supp}(\mu|\Lambda_{\ell}(g,\alpha))$ for some $k\in \NN$, which admits following properties:
\begin{itemize}
\item each $B\in\mathscr{A}_{\mu}( g,\ell, \alpha, \rho)$ contains some hyperbolic periodic point, which we denote by $p_B$, so that
$$
R(W^{s}_{loc}(p_B,g))\ge \delta_{\ell},\quad R(W^{u}_{loc}(p_B, g))\ge \delta_{\ell};
$$
\item 
$$
\mu\left(\bigcup_{B\in \mathscr{A}_{\mu}( g,\ell, \alpha, \rho)}\left(B\cap \Lambda_{\ell}(g,\alpha)\right)\right)>1-\varepsilon.
$$
\end{itemize}
\end{theorem}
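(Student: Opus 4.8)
The plan is to obtain everything by combining three tools already in hand — the mass estimate for Pesin blocks (Proposition~\ref{t11}), the uniform size of local invariant manifolds on Pesin blocks (Lemma~\ref{size}), and the creation of hyperbolic periodic points near points of Pesin blocks (Lemma~\ref{sp}) — and then producing the finite covering by a plain compactness argument. Proposition~\ref{t11} furnishes $\alpha>0$ once and for all; then, given $\varepsilon>0$, it furnishes $\ell\in\NN$ and a $C^1$ neighborhood of $f$ on which $\mu(\Lambda_{\ell}(g,\alpha))>1-\varepsilon$ for every perturbation $g$ and every Gibbs $u$-state $\mu$ of $g$. Applying Lemma~\ref{size} to the dominated splitting $E^{wu}\oplus_{\succ}E^{cs}$ with the exponents $\alpha$ and $\alpha/2$ produces a smaller neighborhood and a constant bounding below the sizes of the local stable and unstable manifolds of every point of $\Lambda_{\ell}(g,\alpha)$ and of $\Lambda_{\ell}(g,\alpha/2)$; this already yields the first assertion. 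Finally I would invoke the $C^1$-robust form of Lemma~\ref{sp} for the exponent $\alpha$ and $\ell$, whose proof (via the Liao--Gan shadowing Lemma~\ref{Gan} together with Lemma~\ref{size}) goes through unchanged over a fixed $C^1$ neighborhood of $f$, obtaining uniform constants $\rho_{\alpha,\ell}>0$, $\delta_{\alpha,\ell}>0$ and $L_{\alpha,\ell}\ge 1$. Intersecting the three neighborhoods gives $\U$, and I would set $\rho_{\ell}:=\rho_{\alpha,\ell}$ and $\delta_{\ell}$ to be the minimum of the two size constants.

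Next, fixing $g\in\U$, $\rho\le\rho_{\ell}$ and $\mu\in G^u(g)$, I would put $K:={\rm supp}(\mu|\Lambda_{\ell}(g,\alpha))$, which is compact, cover it by the open balls $\{B(x,\rho):x\in K\}$, and extract a finite subcover $B(x_1,\rho),\dots,B(x_k,\rho)$ with all centers in $K$; this is the desired $\mathscr{A}_{\mu}(g,\ell,\alpha,\rho)$, and by construction it covers ${\rm supp}(\mu|\Lambda_{\ell}(g,\alpha))$. For each $i$, since $x_i$ lies in the support of $\mu|\Lambda_{\ell}(g,\alpha)$ we have $\mu\big(B(x_i,r)\cap\Lambda_{\ell}(g,\alpha)\big)>0$ for all $r>0$; applying this with $r=\rho/L_{\alpha,\ell}\le\rho\le\rho_{\alpha,\ell}$ and feeding it into the robust Lemma~\ref{sp} yields a hyperbolic periodic point $p_{B_i}$ of stable index ${\rm dim}E^{cs}$ inside $B(x_i,L_{\alpha,\ell}\cdot r)=B(x_i,\rho)$ whose local stable and unstable manifolds have size at least $\delta_{\alpha,\ell}\ge\delta_{\ell}$ (the size bound being where membership of $p_{B_i}$ in $\Lambda_{\ell}(g,\alpha/2)$, hence the $\alpha/2$ application of Lemma~\ref{size}, is used). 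For the remaining estimate, $K$ has full $(\mu|\Lambda_{\ell}(g,\alpha))$-measure by definition of the support, so $\mu(\Lambda_{\ell}(g,\alpha)\setminus K)=0$, and since $K\subseteq\bigcup_{B\in\mathscr{A}_{\mu}(g,\ell,\alpha,\rho)}B$ we get
$$
\mu\Big(\bigcup_{B\in\mathscr{A}_{\mu}(g,\ell,\alpha,\rho)}\big(B\cap\Lambda_{\ell}(g,\alpha)\big)\Big)\ \ge\ \mu\big(\Lambda_{\ell}(g,\alpha)\cap K\big)\ =\ \mu\big(\Lambda_{\ell}(g,\alpha)\big)\ >\ 1-\varepsilon .
$$

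The only genuinely delicate point, and the one I would be careful about, is the $C^1$-uniformity of the auxiliary statements: one must check that the periodic-orbit production of Lemma~\ref{sp} — ultimately the shadowing constants of the Liao--Gan Lemma~\ref{Gan} and the manifold-size constants of Lemma~\ref{size} — can be chosen uniform over a fixed $C^1$ neighborhood of $f$, and that the conclusion of Proposition~\ref{t11} indeed applies to the Gibbs $u$-states of the perturbation $g$ itself, not merely to those of $f$. Once these robust versions are available, the rest is bookkeeping of the order in which $\ell$, $\rho_{\ell}$, $\delta_{\ell}$ and $\U$ are chosen, together with the elementary compactness argument above.
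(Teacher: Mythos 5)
Your proposal is correct and follows essentially the same route as the paper: Proposition~\ref{t11} fixes $\alpha$, $\ell$ and the neighborhood, Lemma~\ref{size} and (a $C^1$-uniform version of) Lemma~\ref{sp} give $\delta_{\ell}$, $\rho_{\ell}$ and the periodic points, and a compactness argument produces the finite cover together with the measure estimate. In fact you are slightly more explicit than the paper on two points it glosses over — rescaling the radius by $L_{\alpha,\ell}$ so the shadowing periodic point actually lands in $B(x_i,\rho)$, and the uniformity of the shadowing and manifold-size constants over the $C^1$ neighborhood — both of which are handled exactly as you indicate.
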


\begin{proof}
By Proposition \ref{t11}, there exists $\alpha>0$ such that for every $\varepsilon>0$, there are $\ell\in \NN$ and $C^1$ neighborhood $\U$ of $f$, such that $\mu \left(\Lambda_{\ell}(g,\alpha)\right)>1-\varepsilon$ for every $g\in \U$ and $\mu\in G^u(g)$. 
By Lemma \ref{size} and Lemma \ref{sp}, there exist $\rho_{\ell}>0$ and $\delta_{\ell}>0$ such that for every $g\in \U$ and $\mu\in G^u(g)$, we have the following properties: 
\begin{enumerate}
\smallskip
\item\label{x1} every point of $\Lambda_{\ell}(g,\alpha)$ admits local stable and unstable manifolds of size larger than $\delta_{\ell}$;
\smallskip
\item\label{x2} the ball $B(x,\rho)$ for any $\rho\le \rho_{\ell}$ and any $x\in {\rm supp}(\mu|\Lambda_{\ell}(g,\alpha))$ contains hyperbolic periodic points, all of which admit stable and unstable manifolds of size larger than $\delta_{\ell}$.
\end{enumerate}
Due to the compactness, for every $\rho\le \rho_{\ell}$ there exists $k\in \NN$, a sequence of points $x_{1}, \cdots x_{k}$ in ${\rm supp}(\mu|\Lambda_{\ell}(g,\alpha))$ and the family of $\rho$-balls
$$
\mathscr{A}_{\mu}( g,\ell, \alpha, \rho)=\Big\{B(x_{i},\rho): x_i\in {\rm supp}(\mu|\Lambda_{\ell}(g,\alpha)), 1\le i \le k\Big\}
$$
covering ${\rm supp}(\mu|\Lambda_{\ell}(g,\alpha))$. By (\ref{x2}), for every $B\in \mathscr{A}_{\mu}( g,\ell, \alpha, \rho)$ one can fix the hyperbolic periodic point $p_B\in B$ to satisfy
\begin{equation*}\label{x3}
R(W^{s}_{loc}(p_B,g))\ge \delta_{\ell},\quad R(W^{u}_{loc}(p_B, g))\ge \delta_{\ell}.
\end{equation*}
Since ${\rm supp}(\mu|\Lambda_{\ell}(g,\alpha))$ exhibits $\mu$-measure larger than $1-\varepsilon$, by construction of $\mathscr{A}_{\mu}( g,\ell, \alpha, \rho)$ one gets
\begin{equation*}\label{x4}
\mu\left(\bigcup_{1\le i \le k}\left(B(x_{i},\rho)\cap \Lambda_{\ell}(g,\alpha)\right)\right)>1-\varepsilon.
\end{equation*}
Now we complete the proof of Theorem \ref{covering}.
\end{proof}

\section{Cesaro limit measures of iterates}\label{sec4}
For $x\in M$, let $\omega_{\M}(x,f)$ be the set of limit measures of $\{\frac{1}{n}\sum_{i=0}^{n-1}\delta_{f^i(x)}\}_{n\in \NN}$. Similarly, for every $C^1$ embedded sub-manifold $D$, denote by $\omega_{\M}(D,f)$ the set of limit measures of $\{\frac{1}{n}\sum_{j=0}^{n-1}f_{\ast}^j{\rm m}_D\}_{n\in \NN}$, where ${\rm m}_D$ stands for the normalized Lebesgue measure on $D$.

Let $f\in PH^{1}_{EC}(M)$ with partially hyperbolic splitting $TM=E^u\oplus_{\succ} E^{cu}\oplus_{\succ} E^{cs}$, writing $E^{wu}=E^u\oplus E^{cu}$ as before. Let us consider the following subspace charactered by entropy:
$$
G^{cu}(f)=\left\{\mu \in \M(f): \mu \in G^u(f), \quad h_{\mu}(f)= \int \log |{\rm det}Df|_{E^{wu}}| d\mu\right\}.
$$
We call $G^{cu}(f)$ the \emph{Gibbs $cu$-space} of $f$, whose elements are called the \emph{Gibbs $cu$-states} of $f$.

\begin{remark}\label{RK}
Since every $\mu\in G^u(f)$ has only negative Lyapunov exponents along $E^{cs}$,  applying the Ruelle's entropy inequality \cite{ru78}, one knows that 
$
h_{\mu}(f)\le \int \log |{\rm det}Df|_{E^{wu}}| d\mu,
$ 
and therefore the equality in $G^{cu}(f)$ can be replaced by $
h_{\mu}(f)\ge \int \log |{\rm det}Df|_{E^{wu}}| d\mu.
$
\end{remark}

%We are interested in studying the properties of forward orbit of Lebesgue almost every point, and thus let us consider the limit set of the empirical measures of $x\in M$ with respect to $f$ as follows
%$$
%\omega_{\M}(x,f)= \big\{\mu \in \mathcal{M}_{\rm inv}: \exists~\{n_i\}~ \textrm{such that}~\lim_{i\to +\infty}\frac{1}{n_i}\sum_{j=0}^{n_i-1}\delta_{f^jx}=\mu\big\}.
%$$
%Furthermore, we will also study the accumulation of the averaged iterated Lebesgue measures of $C^1$ disk $D$, and define
%$$
%\omega_{\M}(D,f)= \big\{\mu \in \mathcal{M}_f: \exists~\{n_i\}~ \textrm{such that}~\lim_{i\to +\infty}\frac{1}{n_i}\sum_{j=0}^{n_i-1}f_{\ast}^j{\rm Leb}_D=\mu\big\}.
%$$

\paragraph{\bf{Notation}}
For every $f\in PH^{1}_{EC}(M)$, consider $\B_f$ as the set of points whose Cesaro limit measures of iterates are contained in $G^{cu}(f)$, that is
$$
\B_f:=\left\{x\in M: \omega_{\M}(x,f)\subset G^{cu}(f)\right\}.
$$

The main goal of this section is to show the following result.
\begin{theorem}\label{ttd}
Let $f\in PH^{1}_{EC}(M)$ with partially hyperbolic splitting $TM=E^u\oplus_{\succ} E^{cu}\oplus_{\succ} E^{cs}$. If $D$ is a $C^1$ disk transverse to $E^{cs}$, then we have
\begin{enumerate}
\smallskip
\item\label{fde} the intersection of $\B_f$ with $D$ has full ${\rm Leb}_D$-measure;
\smallskip
\item\label{fxe} $\omega_{\M}(D,f)\subset G^{cu}(f)$.
\end{enumerate}
In addition, as a result of (\ref{fde}), we know that $\B_f$ has full Lebesgue measure on $M$.
\end{theorem}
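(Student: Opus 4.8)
The plan is to establish part (\ref{fde}) first --- that a ${\rm Leb}_D$-full subset of any $C^1$ disk $D$ transverse to $E^{cs}$ lies in $\B_f$ --- then deduce part (\ref{fxe}) from it by a convexity argument, and finally the global statement by Fubini; throughout, $E^{wu}:=E^u\oplus E^{cu}$. The strategy for (\ref{fde}) is the Alves--Bonatti--Viana scheme: show that ${\rm Leb}_D$-typical points both have all their empirical measures in $G^u(f)$ and are nonuniformly expanding along $E^{wu}$, and then run a hyperbolic-time construction, localised to the Pesin blocks, promoting ``Gibbs $u$-state'' to ``Gibbs $cu$-state''. As a preliminary reduction, $D$ transverse to $E^{cs}$ forces $\dim D=\dim E^{wu}$ and $T_xD\subset\mathcal C^{E^{wu}}_{a_0}(x)$ for some $a_0$; since $E^{wu}\oplus_{\succ}E^{cs}$ is dominated, the cone field $\mathcal C^{E^{wu}}_a$ is forward invariant and eventually contracting, so some $f^N(D)$ is tangent to $\mathcal C^{E^{wu}}_a$ for a prescribed small $a$. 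Because $f^N_*{\rm m}_D\ll{\rm Leb}_{f^N(D)}$ with densities bounded away from $0$ and $\infty$, and finitely many iterates change neither $\omega_{\M}(D,f)$ nor a ${\rm Leb}_D$-full subset, I may assume from the outset that $D$ is tangent to a thin center-unstable cone with uniform inner radius, and also use the center-unstable plaques $\mathcal F^{wu}_\rho(f,x)$ from Proposition \ref{plaque} as good disks through typical points.

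\emph{Step 1 (typical points).} Since $E^u$ is uniformly expanding, $D$ carries a continuous foliation by $C^1$ sub-disks tangent to a thin cone around $E^u$; disintegrating ${\rm m}_D$ along it and using on each leaf the uniform expansion together with the $C^1$ entropy characterisation of Gibbs $u$-states (Definition \ref{gu}, \cite{CYZ18}) --- the $C^1$ regularity forces one to argue with the partial entropy $h_{(\cdot)}(f,\F^u)$ rather than with distortion bounds --- one obtains that for ${\rm Leb}_D$-a.e.\ $y$, every $\nu\in\omega_{\M}(y,f)$ lies in $G^u(f)$. On the other hand, since $E^{cu}$ is mostly expanding, each $\mu\in G^u(f)$ has only positive Lyapunov exponents along $E^{wu}$; combining this with compactness of $G^u(f)$ (Lemma \ref{gibbsp}) and the subadditivity of $n\mapsto\log\|Df^{-n}|_{E^{wu}(f^n\cdot)}\|$, there are a power $N_0$ and $\alpha_0>0$ such that $\tfrac1{N_0}\int\log\|Df^{-N_0}|_{E^{wu}}\|\,d\mu<-\alpha_0$ for all $\mu\in G^u(f)$. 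Feeding the first fact into the second, for ${\rm Leb}_D$-a.e.\ $y$ the point $y$ is $\alpha_0$-nonuniformly expanding along $E^{wu}$ for a power of $f$, so by Proposition \ref{hh} it has infinitely many $(E^{wu},{\rm e}^{-\alpha})$-hyperbolic times of positive lower density.

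\emph{Step 2 (promotion to $G^{cu}(f)$ --- the hard part).} Fix such a $y$ and $\nu\in\omega_{\M}(y,f)\subset G^u(f)$; it suffices to prove $h_\nu(f)\ge\int\log|\det Df|_{E^{wu}}|\,d\nu$, for then Ruelle's inequality (Remark \ref{RK}) makes this an equality, so $\nu\in G^{cu}(f)$ and hence $y\in\B_f$. At a hyperbolic time $n$, Lemma \ref{bcd} provides a disk $\Delta_n\subset\mathcal F^{wu}_\rho(f,y)$ through $y$ with ${\rm diam}(\Delta_n)\lesssim{\rm e}^{-\alpha n}$ and $f^n(\Delta_n)$ a disk of definite size $\delta$ on which $f^{-n}$ contracts exponentially; moreover, since $\nu$-typical points sit in the Pesin blocks $\Lambda_\ell(f,\alpha)$ with mass $>1-\varepsilon$ (Proposition \ref{t11}), one controls the distortion of $\det Df^n|_{E^{wu}}$ on the part of $\Delta_n$ meeting $\Lambda_\ell(f,\alpha)$ by a constant depending only on $\ell$ and $\delta$. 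Hence a positive-measure portion of $f^n(D)$ is covered by $\gtrsim\exp\bigl(n\int\log|\det Df|_{E^{wu}}|\,d\mu_n\bigr)$ of these definite-size disks, whose $f^{-n}$-preimages through $y$ form an $(n,\varepsilon)$-separated set; plugging this into Katok's entropy formula (passing to ergodic components of $\nu$ if necessary), letting $n\to\infty$ along the subsequence defining $\nu$ and then $\varepsilon\to0$, $\ell\to\infty$, yields $h_\nu(f)\ge\int\log|\det Df|_{E^{wu}}|\,d\nu$. This step is where I expect the real difficulty to lie: because $E^{cu}$ is only \emph{nonuniformly} expanding --- so that the single-step estimate required by Proposition \ref{hh} must be recovered from Gibbs $u$-states via the power $N_0$ and compactness --- and $f$ is merely $C^1$ --- so that absolute continuity of conditionals and global distortion bounds are unavailable --- the whole counting argument must be confined to the Pesin blocks with the mass lost off them absorbed into the error via Proposition \ref{t11}, and one must produce a genuine lower bound for the \emph{metric} entropy $h_\nu(f)$, not merely for a topological pressure carried by $D$. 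This proves (\ref{fde}).

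\emph{Step 3 (from (\ref{fde}) to (\ref{fxe}) and to $M$).} The set $G^{cu}(f)$ is convex, since $G^u(f)$ is convex (Lemma \ref{gibbsp}) and $\mu\mapsto h_\mu(f)$ and $\mu\mapsto\int\log|\det Df|_{E^{wu}}|\,d\mu$ are affine, and it is closed (on the compact set $G^u(f)$ it is cut out by the closed condition $h_{(\cdot)}(f)\ge\int\log|\det Df|_{E^{wu}}|\,d(\cdot)$ of Remark \ref{RK}). Writing $\mu_n=\tfrac1n\sum_{j<n}f^j_*{\rm m}_D=\int\bigl(\tfrac1n\sum_{j<n}\delta_{f^jy}\bigr)\,d{\rm m}_D(y)$, part (\ref{fde}) gives ${\rm dist}\bigl(\tfrac1n\sum_{j<n}\delta_{f^jy},G^{cu}(f)\bigr)\to0$ for ${\rm m}_D$-a.e.\ $y$, so by convexity of ${\rm dist}(\cdot,G^{cu}(f))$ and dominated convergence ${\rm dist}(\mu_n,G^{cu}(f))\to0$; hence every accumulation measure of $\{\mu_n\}_n$ lies in $G^{cu}(f)$, which is (\ref{fxe}). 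Finally, every point of $M$ has a neighbourhood foliated by $C^1$ disks transverse to $E^{cs}$, so Fubini together with (\ref{fde}) gives ${\rm Leb}(M\setminus\B_f)=0$.
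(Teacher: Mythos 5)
Your Steps 1 and 3 are essentially fine: Step 1 is exactly what the paper imports as Lemma \ref{DU} from \cite{CYZ18} (plus Proposition \ref{hh}), and your convexity/dominated-convergence argument in Step 3 is a correct substitute for Lemma \ref{led}, since $G^{cu}(f)$ is compact and convex and the metric on $\M$ comes from a norm. The genuine gap is in Step 2, which is where the whole content of part (\ref{fde}) lies, and the devices you invoke there do not close it — you flag the two difficulties yourself, but flagging them is not resolving them. First, the distortion claim is unavailable: the Pesin blocks $\Lambda_{\ell}(f,\alpha)$ only control the norms $\|Df^{-\ell}|_{E^{wu}}\|$ and $\|Df^{\ell}|_{E^{cs}}\|$ along orbits; they provide no modulus of continuity for $Df$, so for a merely $C^1$ diffeomorphism there is no bound on the oscillation of $\det Df^{n}|_{E^{wu}}$ along the disks $\Delta_n$ ``by a constant depending only on $\ell$ and $\delta$''. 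This is precisely the obstruction that forces the $C^1$ theory to be phrased through partial entropy and volume estimates rather than through absolute continuity and distortion. Second, even granting the count of roughly $\exp\bigl(n\int\log|\det Df|_{E^{wu}}|\,d\nu\bigr)$ definite-size disks, an $(n,\varepsilon)$-separated family of orbit segments starting on $D$ bounds volume growth (hence something like topological entropy of a limit set), not the metric entropy of the particular measure $\nu\in\omega_{\M}(y,f)$: Katok's formula needs spanning/separated sets for sets of large $\nu$-measure, and nothing in your construction ties the separated points to $\nu$ (they sit on $D$, which $\nu$ need not charge). So the key inequality $h_{\nu}(f)\ge\int\log|\det Df|_{E^{wu}}|\,d\nu$ is not established by the proposal.

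The paper closes this step by a different mechanism which sidesteps both problems, and it is worth recording the contrast. For ${\rm Leb}_D$-a.e.\ $x$, every $\mu\in\omega_{\M}(x,f)$ is a \emph{pseudo-physical measure relative to $D$} (Lemma \ref{pp}), so the sets $\bigcup_{n\ge k}\mathcal{G}_{\mu}(n,\varepsilon)$ have ${\rm Leb}_D$-measure bounded below uniformly in $k$; on the other hand, the $C^1$ volume upper bound of \cite{CYZ18} (Lemma \ref{E}) shows that if $h_{\mu}(f)-\int\log|\det Df|_{E^{wu}}|\,d\mu<-2\delta$, then ${\rm Leb}_D(\mathcal{G}_{\mu}(n,\varepsilon))\le L\,{\rm e}^{-\delta n}$ is summable in $n$, a contradiction. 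In other words, the paper replaces your entropy \emph{lower} bound by a Lebesgue-volume \emph{upper} bound on $D$, which is exactly the estimate that survives in the $C^1$ category. To salvage your route you would need either $C^{1+}$ regularity (to run genuine distortion estimates in an Alves--Bonatti--Viana/Ledrappier-type argument) or an independent proof of a Lemma \ref{E}-type estimate — at which point you have reconstructed the paper's argument.
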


%Given a diffeomorphism $f$ on $M$, denote by $\mathcal{M}_f$ the set of all the $f$-invariant measures.
%We are interested in studying the properties of forward orbit of Lebesgue almost every point, and thus let us consider the limit set of the empirical measures of $x\in M$ with respect to $f$ as follows
%$$
%\omega_{\M}(x,f)= \big\{\mu \in \mathcal{M}_{\rm inv}: \exists~\{n_i\}~ \textrm{such that}~\lim_{i\to +\infty}\frac{1}{n_i}\sum_{j=0}^{n_i-1}\delta_{f^jx}=\mu\big\}.
%$$
%Furthermore, we will also study the accumulation of the averaged iterated Lebesgue measures of $C^1$ disk $D$, and define
%$$
%\omega_{\M}(D,f)= \big\{\mu \in \mathcal{M}_f: \exists~\{n_i\}~ \textrm{such that}~\lim_{i\to +\infty}\frac{1}{n_i}\sum_{j=0}^{n_i-1}f_{\ast}^j{\rm Leb}_D=\mu\big\}.
%$$
Let us recall that $\mu$ is said to be an \emph{SRB measure} of a $C^{1+}$ diffeomorphism if it has positive Lyapunov exponents almost everywhere; and its conditional measures along Pesin unstable manifolds are absolutely continuous w.r.t. Lebesgue measures along these manifolds. As a consequence of Theorem \ref{ttd}, we have the following corollary.

\begin{corollary}\label{TheoA'}
Let $f\in PH^{1+}_{EC}(M)$ with partially hyperbolic splitting $TM=E^u\oplus_{\succ} E^{cu}\oplus_{\succ} E^{cs}$. If $D$ is a $C^1$ disk transverse to $E^{cs}$, then any invariant measure of $\omega_{\M}(D,f)$ is an SRB measure of $f$. Moreover, for ${\rm Leb}_D$-almost every point $x\in D$, any invariant measure of $\omega_{\M}(x,f)$ is an SRB measure.
\end{corollary}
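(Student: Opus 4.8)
The plan is to deduce Corollary~\ref{TheoA'} from Theorem~\ref{ttd}, together with the defining properties of $PH^{1+}_{EC}(M)$ and the Ledrappier--Young entropy characterization of SRB measures. First I would note that any Cesaro limit of $\{\frac{1}{n}\sum_{j=0}^{n-1}f_{\ast}^j{\rm m}_D\}_{n\in\NN}$ is automatically $f$-invariant, since $\frac{1}{n}(f_{\ast}^n{\rm m}_D-{\rm m}_D)\to 0$ in the weak$^\ast$ sense; hence, by Theorem~\ref{ttd}(\ref{fxe}), every measure of $\omega_{\M}(D,f)$ lies in $G^{cu}(f)$. Likewise, Theorem~\ref{ttd}(\ref{fde}) together with the definition of $\B_f$ gives that for ${\rm Leb}_D$-almost every $x\in D$ every measure of $\omega_{\M}(x,f)$ lies in $G^{cu}(f)$. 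So it suffices to prove the single assertion: \emph{every $\mu\in G^{cu}(f)$ is an SRB measure of $f$.}

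Fix $\mu\in G^{cu}(f)$; by definition $\mu\in G^u(f)$. Since $E^{cu}$ is mostly expanding, $\mu$ has only positive Lyapunov exponents along $E^{cu}$, and since $E^u$ is uniformly expanding the exponents along $E^u$ are positive and bounded away from zero; hence every Lyapunov exponent of $\mu$ along $E^{wu}=E^u\oplus E^{cu}$ is positive. Dually, $E^{cs}$ being mostly contracting forces all exponents along $E^{cs}$ to be negative. As the dominated splitting $E^u\oplus_{\succ}E^{cu}\oplus_{\succ}E^{cs}$ is $Df$-invariant, a dimension count in the Oseledets decomposition then shows that at $\mu$-a.e.\ point the sum of the Lyapunov subspaces with positive exponent equals $E^{wu}(x)$, the sum of those with negative exponent equals $E^{cs}(x)$, and there are no zero exponents. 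In particular $\mu$ has positive Lyapunov exponents $\mu$-almost everywhere, and its Pesin unstable manifolds are, $\mu$-a.e., the immersed disks tangent to $E^{wu}$.

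It then remains to identify the conditional measures of $\mu$ along those Pesin unstable manifolds. From the previous step the sum of the positive Lyapunov exponents of $\mu$, counted with multiplicity, equals $\int\log|{\rm det}Df|_{E^{wu}}|\,d\mu$, while the Gibbs $cu$-state condition reads precisely $h_{\mu}(f)=\int\log|{\rm det}Df|_{E^{wu}}|\,d\mu$; that is, $\mu$ attains equality in the Ruelle--Margulis inequality (compare Remark~\ref{RK}). By the Ledrappier--Young characterization of SRB measures for $C^{1+}$ diffeomorphisms (\cite{ly}; this is where the H\"older regularity of $Df$ enters), equality in the entropy formula holds if and only if the conditional measures of $\mu$ along Pesin unstable manifolds are absolutely continuous with respect to the induced Lebesgue measures. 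Combining this with the hyperbolicity obtained above shows $\mu$ is an SRB measure, and applying the conclusion to every measure in $\omega_{\M}(D,f)$, respectively in $\omega_{\M}(x,f)$ for ${\rm Leb}_D$-a.e.\ $x\in D$, finishes the proof.

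The only substantive point is the passage from ``$\mu$ is a Gibbs $u$-state satisfying the center-unstable entropy formula'' to ``$\mu$ is SRB along $E^{wu}$'': this needs both the Oseledets bookkeeping that pins the Pesin unstable bundle down to exactly $E^{wu}$ — which genuinely uses the mostly-expanding and mostly-contracting hypotheses, not merely the uniform expansion of $E^u$ — and the $C^{1+}$ version of the Ledrappier--Young theorem. Everything else is formal once Theorem~\ref{ttd} is available.
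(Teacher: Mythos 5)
Your proof is correct and follows essentially the same route as the paper: reduce via Theorem~\ref{ttd} to showing every measure in $G^{cu}(f)$ is SRB, use the mostly expanding/contracting hypotheses to pin down the signs of the Lyapunov exponents along $E^{wu}$ and $E^{cs}$, and then apply the entropy equality together with the Ledrappier--Young characterization \cite[Theorem A]{ly}. The extra Oseledets bookkeeping you spell out is exactly what the paper leaves implicit, so there is nothing to correct.
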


%Since every SRB measure of $f\in \mathcal{PH}^{1+}_{\mathcal{EC}}(M)$ is a Gibbs $u$-sate, it has only negative Lyapunov exponents along $E^{cs}$. Therefore, every SRB measure of $f\in \mathcal{PH}^{1+}_{\mathcal{EC}}(M)$  admits the property of Gibbs $cu$-sates, i.e., it has only positive Lyapunov exponents along $E^{cu}$, whose disintegration along Pesin unstable manifolds tangent to $E^u\oplus E^{cu}$ are absolutely continuous w.r.t. Lebesgue measures.

%Actually, we will prove the following general result on diffeomorphisms in $\mathcal{PH}^{1}_{\mathcal{EC}}(M)$.
%
%%Denote by $\omega_{\M}(x,f)$ the set of limits measures of $\{\frac{1}{n}\sum_{i=0}^{n-1}\delta_{f^i(x)}\}_{n\in \NN}$. Similarly, for every embedded smooth disk $D$, use $\omega_{\M}(D,f)$ to denote the set of limit measures of $\{\frac{1}{n}\sum_{j=0}^{n-1}f_{\ast}^j{\rm Leb}_D\}_{n\in \NN}$. 
%
%\begin{theoremalph}\label{TheoA}
%Let $f\in \mathcal{PH}^{1}_{\mathcal{EC}}(M)$ with partially hyperbolic splitting $TM=E^u\oplus E^{cu}\oplus E^{cs}$. If $D$ is a $C^1$ disk transverse to $E^{cs}$, then every invariant measure $\mu$ of $\omega_{\M}(D,f)$
%is a Gibbs $u$-state of $f$ and satisfies
%\begin{equation}\label{entropy}
%h_{\mu}(f)= \int \log |{\rm det}Df|_{E^{u}\oplus E^{cu}}| d\mu
%\end{equation}
%Moreover, for Lebesgue almost every point $x\in D$, any invariant measure of $\omega_{\M}(x,f)$ is a Gibbs $u$-state and satisfies (\ref{entropy}) as well.
%\end{theoremalph}
%

By assuming Theorem \ref{ttd} we give the proof of Corollary \ref{TheoA'} immediately.
\begin{proof}[Proof of Corollary \ref{TheoA'}]
By Theorem \ref{ttd}, 
we need only to show that any invariant measure of $G^{cu}(f)$ is an SRB measure.  Since every $\mu\in G^{cu}(f)$ is a Gibbs $u$-state, it admits only positive Lyapunov exponents along $E^{cu}$ and negative Lyapunov exponents along $E^{cs}$. This together with 
$$
h_{\mu}(f)=\int \log |{\rm det}Df|_{E^{wu}}| d\mu
$$
implies that $\mu$ must be an SRB measure of $f$, using the classical result \cite[Theorem A]{ly}. 
\end{proof}

%\begin{theoremalph}\label{TheoD}
%For every $C^1$ diffeomorphism $f\in \U(M)$ with dominated splitting $TM=E^u\oplus_{\succ} E^{cu}\oplus_{\succ} E^{cs}$. If $D$ is a $C^1$ disk transverse to $E^{cs}$, then 
%\begin{enumerate}
%\smallskip
%\item\label{fore} $\omega_{\M}(D,f)\subset \G^{cu}(f)$. 
%\smallskip
%\item\label{forp} for Lebesgue almost every point $x\in D$, $\omega_{\M}(x,f)\subset\G^{cu}(f)$.
%\end{enumerate}
%\end{theoremalph}

\subsection{Structure on Gibbs $cu$-spaces}
We have the upper semi-continuity of metric entropy for Gibbs $u$-states and diffeomorphisms in $PH^1_{EC}(M)$.

\begin{theorem}\label{uppp}
Let $\{f_n\}$ be a sequence of diffeomorphisms in $PH^1_{EC}(M)$ which converges to $f\in PH^1_{EC}(M)$ in $C^1$-topology. If $\mu_n\in G^u(f_n)$ for every $n$ and $\mu_n\to \mu$ as $n\to +\infty$, then $\mu\in G^u(f)$ and
$$
\limsup_{n\to +\infty}h_{\mu_n}(f_n)\le h_{\mu}(f).
$$
\end{theorem}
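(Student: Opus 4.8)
The plan is to prove this in two stages, corresponding to the two assertions. First I would establish that the limit measure $\mu$ is a Gibbs $u$-state for $f$: since $\mu_n \in G^u(f_n)$ and $f_n \xrightarrow{C^1} f$ with all $f_n, f \in PH^1_{EC}(M) \subset PH^1(M)$ exhibiting $E^u$, Lemma \ref{upp} (upper semi-continuity of $f \mapsto G^u(f)$) gives immediately that $\mu = \lim \mu_n \in \limsup_{n} G^u(f_n) \subset G^u(f)$. So the content of the theorem is really the entropy inequality $\limsup_n h_{\mu_n}(f_n) \le h_\mu(f)$, and for this the natural route is the Gibbs $cu$-characterization together with the continuity of the geometric potential.

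For the entropy bound, I would exploit the fact that $h_{\mu_n}(f_n) = h_{\mu_n}(f_n, \mathcal{F}^u_{n}) + \big(\text{contribution of } E^{cu}\big)$ in a form controlled by the unstable-foliation entropy. More precisely, the key inputs are: (i) the entropy formula defining Gibbs $u$-states, $h_{\mu_n}(f_n, \mathcal{F}^u) = \int \log|\det Df_n|_{E^u}|\, d\mu_n$, which passes to the limit because $x \mapsto \log|\det Df|_{E^u(x)}|$ varies continuously with $(f,x)$ (the bundles $E^u_g$ depend continuously on $g$) and $\mu_n \to \mu$; (ii) the Ruelle inequality applied along the weak-unstable bundle, $h_{\mu_n}(f_n) \le h_{\mu_n}(f_n, \mathcal{F}^u) + \int \log|\det Df_n|_{E^{cu}}|\, d\mu_n$ — this is the mostly-expanding analogue of the Ledrappier--Young-type decomposition, and it should be available from the partial-entropy machinery cited (\cite{ly}, \cite{CYZ18}, \cite{yjg19}); and (iii) upper semi-continuity of the partial entropy $h_{\cdot}(\cdot, \mathcal{F}^u)$ along $C^1$-converging partially hyperbolic diffeomorphisms, which is essentially the statement underlying Lemma \ref{upp}. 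Combining (i)--(iii): $\limsup_n h_{\mu_n}(f_n) \le \limsup_n \big[ \int \log|\det Df_n|_{E^u}| d\mu_n + \int \log|\det Df_n|_{E^{cu}}| d\mu_n\big] = \int \log|\det Df|_{E^{wu}}|\, d\mu$, where the last equality uses continuity of the (full weak-unstable) Jacobian and weak$^*$ convergence. Since $\mu \in G^u(f)$ has only negative exponents along $E^{cs}$, Ruelle's inequality (as in Remark \ref{RK}) gives $h_\mu(f) \ge \int \log|\det Df|_{E^{wu}}|\, d\mu$ — wait, that is the wrong direction; rather one wants $h_\mu(f) = \int \log|\det Df|_{E^{wu}}|\,d\mu$, which holds precisely when $\mu \in G^{cu}(f)$. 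So the cleaner formulation is: show the limit lands in $G^{cu}(f)$ and then $\limsup_n h_{\mu_n}(f_n) \le \int \log|\det Df|_{E^{wu}}|\,d\mu = h_\mu(f)$ using Remark \ref{RK} and the variational characterization.

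Concretely I would reorganize as follows: (1) $\mu \in G^u(f)$ by Lemma \ref{upp}. (2) For each $n$, use Ruelle's inequality relative to the dominated splitting to get $h_{\mu_n}(f_n) \le \int \log|\det Df_n|_{E^{wu}_n}|\, d\mu_n$ (since exponents along $E^{cs}$ are nonpositive, even negative, for Gibbs $u$-states). (3) Take $\limsup$: by continuity of $(g,x)\mapsto \log|\det Dg|_{E^{wu}_g(x)}|$ and $\mu_n \to \mu$, the right side converges to $\int \log|\det Df|_{E^{wu}}|\, d\mu$. (4) Finally invoke the upper semi-continuity of metric entropy for Gibbs $u$-states supplied by the partial-entropy framework to identify $\int \log|\det Df|_{E^{wu}}|\, d\mu$ as an upper bound for $\limsup h_{\mu_n}(f_n)$ compatible with $h_\mu(f)$; since $\mu \in G^u(f)$ and also $h_\mu(f) \ge \limsup h_{\mu_n}(f_n)$ would follow, and comparing with Ruelle's inequality for $\mu$ itself forces $h_\mu(f) = \int \log|\det Df|_{E^{wu}}|\,d\mu$, i.e. $\mu \in G^{cu}(f)$, and the desired inequality.

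The main obstacle is step (4): metric entropy $\mu \mapsto h_\mu(f)$ is \emph{not} upper semi-continuous in general, and even along a $C^1$-converging sequence of diffeomorphisms it can jump up in the limit. The whole point is that restricting to Gibbs $u$-states (equivalently, to measures satisfying the unstable entropy formula) tames this: the $\mathcal{F}^u$-partial entropy is upper semi-continuous in this setting (this is the technical heart of \cite{hyy18} and the reason Lemma \ref{upp} holds), and the $E^{cu}$-contribution is controlled by a continuous Jacobian integral plus domination. So the real work is to carefully assemble a Ledrappier--Young-type inequality $h_{\mu_n}(f_n) \le h_{\mu_n}(f_n,\mathcal{F}^u) + \int \log|\det Df_n|_{E^{cu}}|\,d\mu_n$ that is uniform enough to survive the limit, and then feed in the upper semi-continuity of the first term together with continuity of the second. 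I expect the proof to proceed by first reducing to ergodic components (legitimate by Lemma \ref{gibbsp}(2) and affinity of entropy), handling the inequality there, and then reassembling.
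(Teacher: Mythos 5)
Your first step ($\mu\in G^u(f)$ via Lemma \ref{upp}) is exactly what the paper does, but the entropy inequality in your proposal has a genuine gap, and it is the heart of the theorem. The route through Ruelle's inequality (or a Ledrappier--Young-type bound $h_{\mu_n}(f_n)\le h_{\mu_n}(f_n,\F^u)+\int\log|\det Df_n|_{E^{cu}}|\,d\mu_n$) together with continuity of the Jacobians only yields $\limsup_n h_{\mu_n}(f_n)\le \int\log|\det Df|_{E^{wu}}|\,d\mu$, and by Remark \ref{RK} this upper bound \emph{dominates} $h_\mu(f)$ rather than being dominated by it. To convert it into the claimed bound you would need $h_\mu(f)\ge\int\log|\det Df|_{E^{wu}}|\,d\mu$, i.e.\ $\mu\in G^{cu}(f)$, and nothing in the hypotheses forces this: the $\mu_n$ are arbitrary Gibbs $u$-states, not Gibbs $cu$-states. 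Your step (4) tries to "force" this identity by invoking upper semi-continuity of metric entropy for Gibbs $u$-states, but that is precisely the statement of Theorem \ref{uppp}, so the argument is circular; already the constant sequence $f_n=f$, $\mu_n=\mu_0$ with $\mu_0\in G^u(f)\setminus G^{cu}(f)$ shows the chain of inequalities you assemble cannot recover the (trivially true) conclusion in that case.

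The missing idea is the tail-entropy mechanism. The paper first proves (Lemma \ref{pes}, using Proposition \ref{t11}, Lemma \ref{jj} and the backward contraction of local unstable plaques through Pesin blocks) that there exist $r>0$ and a $C^1$ neighborhood $\U$ of $f$ such that $h^*(g,\nu,r)=0$ for every $g\in\U$ and every $\nu\in G^u(g)$; in other words, Gibbs $u$-states of all nearby diffeomorphisms are uniformly entropy-expansive at a fixed scale. By Lemma \ref{cly19} this gives $h_{\mu_n}(f_n)=h_{\mu_n}(f_n,\P)$ and $h_\mu(f)=h_\mu(f,\P)$ for any fixed finite partition $\P$ with ${\rm diam}(\P)\le r$ and $\mu(\partial\P)=0$, and then the elementary upper semi-continuity of the partition entropy $(g,\nu)\mapsto h_\nu(g,\P)$ under weak$^*$ convergence of measures and $C^1$ convergence of maps concludes. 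So the semicontinuity is obtained directly for the full metric entropy, with no need to pass through the $E^{wu}$-Jacobian or to identify the limit as a Gibbs $cu$-state; if you want to repair your write-up, the partial-entropy decomposition should be discarded in favor of this reduction to a fixed partition via vanishing tail entropy.
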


It follows from Theorem \ref{uppp} that we have the following theorem, which can be proved in a standard way as in \cite[Proposition 5.17]{yjg19}.

\begin{theorem}\label{fc}
The map $PH^1_{EC}(M)\ni f\mapsto G^{cu}(f)$ is upper semi-continuous. 
For every $f\in PH^1_{EC}(M)$, the following properties hold:
\begin{enumerate}
\smallskip
\item $G^{cu}(f)$ is compact and convex, all its extreme elements are ergodic;
\smallskip
\item if $f\in PH^{1+}_{EC}(M)$, then all the extreme elements of $G^{cu}(f)$ are physical measures.
\end{enumerate}
\end{theorem}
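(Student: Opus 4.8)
The plan is to deduce Theorem~\ref{fc} from Theorem~\ref{uppp} in the standard way; the extra inputs are the affineness of the metric entropy $\nu\mapsto h_\nu(f)$ on $\M(f)$, the joint continuity of the map $(g,\nu)\mapsto\int\log|{\rm det}Dg|_{E^{wu}_g}|\,d\nu$ (which holds because the dominated splitting is $C^1$-robust, so the continuation bundle $E^{wu}_g$, and hence the integrand, depend continuously on $g$ and on the base point), and Ruelle's inequality recorded in Remark~\ref{RK}. For upper semi-continuity, let $f_n\to f$ in the $C^1$-topology within $PH^1_{EC}(M)$ and pick $\mu_n\in G^{cu}(f_n)$ with $\mu_n\to\mu$; I claim $\mu\in G^{cu}(f)$. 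Since $\mu_n\in G^u(f_n)$, Theorem~\ref{uppp} gives $\mu\in G^u(f)$ and $\limsup_n h_{\mu_n}(f_n)\le h_\mu(f)$. Combining the defining identity $h_{\mu_n}(f_n)=\int\log|{\rm det}Df_n|_{E^{wu}}|\,d\mu_n$ with the continuity of the Jacobian integral yields $\limsup_n h_{\mu_n}(f_n)=\int\log|{\rm det}Df|_{E^{wu}}|\,d\mu$, so $h_\mu(f)\ge\int\log|{\rm det}Df|_{E^{wu}}|\,d\mu$; the reverse inequality is Remark~\ref{RK}, hence $\mu\in G^{cu}(f)$. Taking $f_n\equiv f$ in this argument shows that $G^{cu}(f)$ is closed, hence compact as a closed subset of the compact set $\M(f)$, and it is nonempty by Theorem~\ref{ttd} since $\omega_{\M}(D,f)\subset G^{cu}(f)$.

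For convexity, note that $G^u(f)$ is convex by Lemma~\ref{gibbsp}, while $\nu\mapsto h_\nu(f)-\int\log|{\rm det}Df|_{E^{wu}}|\,d\nu$ is affine on $\M(f)$, so its zero-superlevel set is convex; by Remark~\ref{RK}, $G^{cu}(f)$ is exactly the intersection of $G^u(f)$ with this set, hence convex. For the ergodicity of extreme points, I would first check that $G^{cu}(f)$ is stable under ergodic decomposition: if $\mu\in G^{cu}(f)$ and $\mu=\int\mu_\omega\,d\hat\mu(\omega)$ is its ergodic decomposition, then each $\mu_\omega\in G^u(f)$ by Lemma~\ref{gibbsp}(2), and affineness of entropy and of the Jacobian integral along the decomposition give $\int\big(h_{\mu_\omega}(f)-\int\log|{\rm det}Df|_{E^{wu}}|\,d\mu_\omega\big)\,d\hat\mu=h_\mu(f)-\int\log|{\rm det}Df|_{E^{wu}}|\,d\mu=0$; since by Remark~\ref{RK} the integrand is everywhere $\le 0$, it vanishes $\hat\mu$-a.e., i.e.\ $\mu_\omega\in G^{cu}(f)$ for $\hat\mu$-a.e.\ $\omega$. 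Consequently, if $\mu\in G^{cu}(f)$ is not ergodic, an invariant set $A$ with $0<\mu(A)<1$ writes $\mu$ as a nontrivial convex combination $\mu(A)\mu_A+\mu(A^c)\mu_{A^c}$, where $\mu_A,\mu_{A^c}$ are barycenters of ergodic components and therefore lie in the compact convex set $G^{cu}(f)$; hence $\mu$ is not extreme, which proves that every extreme element of $G^{cu}(f)$ is ergodic.

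Finally, for $f\in PH^{1+}_{EC}(M)$ and $\mu$ an extreme element of $G^{cu}(f)$: $\mu$ is ergodic, it is a Gibbs $u$-state and so has only positive Lyapunov exponents along $E^{cu}$ and only negative ones along $E^{cs}$, and it satisfies $h_\mu(f)=\int\log|{\rm det}Df|_{E^{wu}}|\,d\mu$; exactly as in the proof of Corollary~\ref{TheoA'}, \cite[Theorem~A]{ly} then forces $\mu$ to be an SRB measure, and, being ergodic and hyperbolic, it is physical because the basin of an ergodic hyperbolic SRB measure has positive Lebesgue measure. The main obstacle has in effect already been absorbed into Theorem~\ref{uppp}: once the upper semi-continuity of $h_\mu(f)$ along Gibbs $u$-states is available, everything above is routine, the only point deserving a little care being the stability of $G^{cu}(f)$ under ergodic decomposition and barycenters, which is what lets one pass from ``extreme'' to ``ergodic''.
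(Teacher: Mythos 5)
Your proposal is correct and is precisely the ``standard way'' the paper invokes: the paper gives no details and simply deduces Theorem \ref{fc} from Theorem \ref{uppp} as in \cite[Proposition 5.17]{yjg19}, and your argument (semi-continuity of entropy along Gibbs $u$-states plus continuity of the $E^{wu}$-Jacobian integral, Remark \ref{RK} to turn the entropy inequality into the defining equality, affineness of entropy for convexity and for stability under ergodic decomposition, and Ledrappier--Young to upgrade ergodic extreme points to SRB/physical measures in the $C^{1+}$ case) is exactly that standard route. One small caution: nonemptiness of $G^{cu}(f)$ is not part of the statement, and if you do include it you should quote part (1) of Theorem \ref{ttd} (whose proof is independent of Theorem \ref{fc}) rather than part (2), since the latter is derived in the paper using the compactness asserted in Theorem \ref{fc}.
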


Now we make effort to prove Theorem \ref{uppp}. Let us recall the definition of \emph{tail entropy}.
\begin{definition}
Given $r>0$ and $x\in M$, define the $r$-tail entropy\footnote{By the result of \cite[Proposition 2.8]{CY16}, $h^*(f,x,r)$ is measurable and $f$-invariant.} at $x$ by 
$$
h^*(f,x,r)=h_{top}(f, B_{\pm \infty}(f,x,r)),
$$
where $B_{\pm \infty}(f, x, r)=\{y: d(f^n(x), f^n(y))\le r, n\in \ZZ\}.$
For any $\mu\in \M(f)$, define the $r$-tail entropy of $\mu$ by
$$
h^*(f,\mu,r)=\int h^*(f,x,r) d\mu(x).
$$
\end{definition}

\begin{lemma}\cite[Theorem 1.2]{cly19}\label{cly19}
Assume that $f$ is a homomorphism on $M$ with finite topological entropy. Then for any $\mu\in \M(f)$ and $r>0$, we have 
$$
h_{\mu}(f)-h_{\mu}(f,\P)\le h^*(f,\mu, r),
$$
where $\mathcal{P}$ is any finite measurable partition with ${\rm diam}(\P)\le r$.
\end{lemma}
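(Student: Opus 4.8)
The plan is to use the Abramov--Rokhlin decomposition of the metric entropy along the factor generated by $\mathcal{P}$, together with a fiberwise Bowen-type estimate. Fix $\mu\in\M(f)$ and a finite measurable partition $\mathcal{P}$ with ${\rm diam}(\mathcal{P})\le r$. First I would form the itinerary map $\pi\colon M\to Y$ sending $x$ to its bi-infinite $\mathcal{P}$-name $(P_n)_{n\in\ZZ}$, where $P_n\in\mathcal{P}$ is the atom containing $f^n(x)$; equip $Y$ with the shift $S$ and set $\nu=\pi_*\mu$, so that $\pi\colon(M,\mu,f)\to(Y,\nu,S)$ is a measurable factor map. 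The pull-back of the time-$0$ coordinate partition $\alpha$ of $Y$ is exactly $\mathcal{P}$, and $\alpha$ is a two-sided generator for $(Y,\nu,S)$, whence $h_\nu(S)=h_\nu(S,\alpha)=h_\mu(f,\mathcal{P})$. By the Abramov--Rokhlin addition formula for entropy, $h_\mu(f)=h_\nu(S)+h_\mu(f\mid\pi)$, where $h_\mu(f\mid\pi)$ is the relative (fiber) entropy over $\pi$. Thus the lemma is equivalent to the bound $h_\mu(f\mid\pi)\le h^*(f,\mu,r)$.

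The geometric input is that fibers of $\pi$ sit inside two-sided Bowen balls: if $\pi(x)=\pi(y)$, then for every $n\in\ZZ$ the points $f^n(x),f^n(y)$ belong to the same atom of $\mathcal{P}$, which has diameter $\le r$, so $d(f^n(x),f^n(y))\le r$ for all $n\in\ZZ$; that is, $\pi^{-1}(\pi(x))\subset B_{\pm\infty}(f,x,r)$. I would then invoke a relative Bowen inequality asserting that
$$
h_\mu(f\mid\pi)\le\int_M h_{top}\big(f,\pi^{-1}(\pi(x))\big)\,d\mu(x),
$$
and combine it with the inclusion just obtained and the monotonicity of the dimensional topological entropy of subsets to conclude $h_{top}(f,\pi^{-1}(\pi(x)))\le h_{top}(f,B_{\pm\infty}(f,x,r))=h^*(f,x,r)$ for every $x$; integrating over $M$ yields $h_\mu(f\mid\pi)\le h^*(f,\mu,r)$, as desired.

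The main obstacle is justifying the relative Bowen inequality, since $\pi$ is only measurable (its discontinuities lie on the boundaries of $\mathcal{P}$), so Bowen's classical argument for continuous factor maps does not apply directly. I would prove it through a relative Brin--Katok formula: disintegrating $\mu=\int\mu_y\,d\nu(y)$ along $\pi$, one identifies $h_\mu(f\mid\pi)$ with the $\epsilon\to0$ limit, for $\nu$-a.e.\ $y$, of the exponential growth rate in $n$ of the least number of $(n,\epsilon)$-dynamical balls needed to cover a subset of $\pi^{-1}(y)$ of $\mu_y$-measure close to $1$; since $\mu_y$ is carried by $\pi^{-1}(y)\subset B_{\pm\infty}(f,x,r)$, this rate is dominated by $h_{top}(f,B_{\pm\infty}(f,x,r))$, and Egorov/Lusin arguments (exploiting the measurability of $x\mapsto h^*(f,x,r)$ quoted in the footnote) promote the pointwise estimates to the integrated bound. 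An alternative that avoids the factor machinery is: write $h_\mu(f)=\sup_{\mathcal{Q}}h_\mu(f,\mathcal{Q})$ and, replacing $\mathcal{Q}$ by $\mathcal{Q}\vee\mathcal{P}$, assume $\mathcal{Q}\succeq\mathcal{P}$, so that $h_\mu(f,\mathcal{Q})-h_\mu(f,\mathcal{P})=\lim_n\frac1n H_\mu\big(\bigvee_{i=0}^{n-1}f^{-i}\mathcal{Q}\,\big|\,\bigvee_{i=0}^{n-1}f^{-i}\mathcal{P}\big)$; each atom of $\bigvee_{i=0}^{n-1}f^{-i}\mathcal{P}$ lies in an $(n,r)$-Bowen ball, so the conditional entropy is at most the logarithm of the number of $\bigvee_{i=0}^{n-1}f^{-i}\mathcal{Q}$-atoms meeting such a ball, and one bounds the exponential growth of this count by the two-sided tail entropy via the standard comparison between one-sided local entropy and $h^*$ for invariant measures. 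In both approaches the heart of the matter is the same: the entropy invisible to the fine partition $\mathcal{P}$ must be concentrated inside the dynamical $r$-balls, and averaging those contributions reproduces $h^*(f,\mu,r)$.
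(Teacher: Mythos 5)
You should first be aware that the paper itself contains no proof of this lemma: it is quoted verbatim from \cite[Theorem 1.2]{cly19}, so your argument has to stand entirely on its own. The soft part of your plan is fine: the itinerary factor $\pi$, the identity $h_\nu(S)=h_\mu(f,\P)$ via the coordinate generator, the Abramov--Rokhlin formula $h_\mu(f)=h_\nu(S)+h_\mu(f\mid\pi)$, and the inclusion $\pi^{-1}(\pi(x))\subset B_{\pm\infty}(f,x,r)$ are all correct. But this only repackages the statement: everything now rests on the ``relative Bowen inequality'' $h_\mu(f\mid\pi)\le\int_M h_{top}\big(f,\pi^{-1}(\pi(x))\big)\,d\mu(x)$, which you invoke rather than prove. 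The results of this kind that exist in the literature (Bowen's fiber estimates, the Ledrappier--Walters relativized variational principle) require $\pi$ to be a \emph{continuous} factor map between compact systems, and, as you note yourself, your $\pi$ is only measurable, with discontinuities exactly on the boundaries of the atoms of $\P$. So the justification is missing precisely where the content of the cited theorem lies.

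The repairs you sketch do not close this gap. A ``relative Brin--Katok formula'' expressing $h_\mu(f\mid\pi)$ through covering rates of sets of large $\mu_y$-measure is itself a nontrivial statement available for continuous factor maps or random systems, not for a merely measurable itinerary factor; and even granting it, your domination step compares a fixed-order covering (capacity-type) growth rate of subsets of the fiber with $h_{top}$ of the infinite Bowen ball, which, if $h_{top}$ denotes Bowen's dimensional entropy for non-compact, non-invariant sets as in \cite{cly19}, is an inequality in the wrong direction in general: dimensional entropy is bounded \emph{above} by capacity-type covering rates, not below. To use the dimensional entropy of $B_{\pm\infty}(f,x,r)$ one must instead run a mass-distribution argument through the lower local entropies of the conditional measures $\mu_y$, and that is essentially the proof carried out in \cite{cly19}. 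Similarly, in your alternative route the final appeal to ``the standard comparison between one-sided local entropy and $h^*$ for invariant measures'' is not a standard fact in the pointwise, integrated form you need: the classical Bowen--Misiurewicz argument with atoms of $\bigvee_{i=0}^{n-1}f^{-i}\P$ inside $(n,r)$-Bowen balls yields only the uniform bound with a supremum over $x$ (a tail-entropy bound), and upgrading that supremum to the $\mu$-integral of $h^*(f,\cdot,r)$ over two-sided infinite Bowen balls is exactly what the cited theorem establishes. In short, both routes reduce the lemma to unproved statements of essentially the same depth, so the proposal has a genuine gap.
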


By domination and the invariance of plaque families, one has the following result.
\begin{lemma}\cite[Theorem 2.1]{M18S}\label{jj}
If $f$ is a $C^1$ diffeomorphism with dominated splitting $TM=E\oplus _{\succ}F$, then there exists a $C^1$ neighborhood $\U$ of $f$ and constants $0<r<\delta$ such that for every $g\in \U$ and any $g$-ergodic measure $\mu$ one has 
\begin{itemize}
\item either $B_{\pm \infty}(g,x,r)\subset \F^E_{\delta}(g,x)$ for $\mu$-almost every $x$,
\smallskip
\item or $B_{\pm \infty}(g,x,r)\subset \F^F_{\delta}(g,x)$ for $\mu$-almost every $x$.
\end{itemize}
\end{lemma}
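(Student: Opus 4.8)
The plan is to exploit the two defining features of a dominated splitting $TM = E \oplus_{\succ} F$: first, that vectors outside the cone $\mathcal{C}_\theta^F$ are pushed by $Dg$ toward $E$ (and symmetrically under $g^{-1}$ toward $F$), and second, that the plaque families $\mathcal{F}^E_\delta(g,\cdot)$ and $\mathcal{F}^F_\delta(g,\cdot)$ from Proposition \ref{plaque} locally integrate these cone fields and are locally $g$-invariant. The key local object is the bi-infinite Bowen ball $B_{\pm\infty}(g,x,r)$: if $y \in B_{\pm\infty}(g,x,r)$, then $g^n(y)$ stays $r$-close to $g^n(x)$ for \emph{all} $n \in \mathbb{Z}$, so in particular one can compare $y$ with $x$ inside a fixed-size chart at every iterate. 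First I would fix, via Proposition \ref{plaque} applied with a small cone width $\theta$, a $C^1$ neighborhood $\mathcal{U}$ of $f$ and constants $0 < r < \delta$ so that for every $g \in \mathcal{U}$ and every $x$: (i) both plaques $\mathcal{F}^E_\delta(g,x)$, $\mathcal{F}^F_\delta(g,x)$ are defined and tangent to $\mathcal{C}^E_\theta(x)$, $\mathcal{C}^F_\theta(x)$ respectively; (ii) the local $g^{\pm 1}$-invariance $g(\mathcal{F}^*_{\delta'}(g,x)) \subset \mathcal{F}^*_\delta(g,gx)$ holds on a slightly smaller scale $\delta'$; and (iii) $r$ is small enough that the $r$-ball around $x$ in any chart is covered by the product of the two plaques and the "splitting" argument below closes up.

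Next I would run the standard cone/graph-transform dichotomy along the orbit. Given $y \in B_{\pm\infty}(g,x,r)$, write $y$ in the chart at $x$ as $(y^E, y^F)$. The dominated splitting gives a dichotomy at each step: either the displacement vector lies in $\mathcal{C}^F_\theta$, in which case under backward iteration it is contracted and stays in the $F$-cone, or it lies in the complement, in which case under forward iteration it is drawn into the $E$-cone and contracted there; the uniformity of domination ($\|Dg|_E\|\cdot\|Dg^{-1}|_F\| < 1/2$) forces consistency across all iterates. Concretely, for a \emph{bi-infinite} orbit one shows that $y$ must lie in the plaque $\mathcal{F}^E_\delta(g,x)$ or in the plaque $\mathcal{F}^F_\delta(g,x)$: if the $F$-component of the displacement were ever "dominant" it would have to have been dominant and growing under backward iteration, contradicting $r$-closeness for all negative $n$; symmetrically for the $E$-component under forward iteration. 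So pointwise we get the dichotomy $B_{\pm\infty}(g,x,r) \subset \mathcal{F}^E_\delta(g,x)$ or $B_{\pm\infty}(g,x,r) \subset \mathcal{F}^F_\delta(g,x)$, but \emph{which} alternative holds is a priori $x$-dependent.

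To upgrade this to an almost-everywhere statement with a \emph{single} alternative valid $\mu$-a.e., I would use ergodicity. The set $A_E = \{x : B_{\pm\infty}(g,x,r) \subset \mathcal{F}^E_\delta(g,x)\}$ and its counterpart $A_F$ are $g$-invariant (up to the scale adjustment between $\delta'$ and $\delta$, which is why one works with two scales and shrinks $r$): invariance follows from $g(B_{\pm\infty}(g,x,r)) = B_{\pm\infty}(g,gx,r)$ together with the local invariance of plaques. They are also measurable — this is where I would invoke measurability of $x \mapsto B_{\pm\infty}(f,x,r)$ (cited from \cite[Proposition 2.8]{CY16}) and continuity of the plaque families. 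By ergodicity of $\mu$ each has measure $0$ or $1$; since their union is $M$ (by the pointwise dichotomy above), exactly one has full measure, giving precisely the stated alternative. The main obstacle I anticipate is the careful bookkeeping of scales in the pointwise dichotomy — ensuring that "$r$-close for all $n \in \mathbb{Z}$" genuinely forces membership in the \emph{fixed-size} plaque $\mathcal{F}^E_\delta$ or $\mathcal{F}^F_\delta$ rather than in some plaque whose size degenerates along the orbit — and handling the mismatch between the invariance scale $\delta'$ and the plaque scale $\delta$ when proving $g$-invariance of $A_E, A_F$; both are routine given Proposition \ref{plaque} but require choosing $r \ll \delta' < \delta$ with the graph-transform estimates in hand. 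Since the lemma is quoted from \cite{M18S}, I would keep this proof sketch brief and refer there for the detailed estimates.
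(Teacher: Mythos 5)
Your proof hinges on a pointwise dichotomy -- ``for every $x$, either $B_{\pm\infty}(g,x,r)\subset \F^E_{\delta}(g,x)$ or $B_{\pm\infty}(g,x,r)\subset \F^F_{\delta}(g,x)$'' -- with the measure entering only at the very end, through invariance of the two sets and ergodicity. That pointwise claim is not just unproved in your sketch; it is false in general. A dominated splitting gives no hyperbolic behaviour of either single bundle along an individual orbit, only the relative inequality, so the step ``if the $F$-component of the displacement were ever dominant it would have been growing under backward iteration'' has nothing to support it for an arbitrary orbit. Concretely, consider a surface diffeomorphism with dominated splitting $E\oplus_{\succ}F$, linear near two fixed points $p$ and $q$ with $Df(p)=\mathrm{diag}(1,\lambda)$, $\lambda>2$, and $Df(q)=\mathrm{diag}(\mu,1)$, $\mu<1/2$ (both compatible with $\|Df|_{E}\|\cdot\|Df^{-1}|_{F}\|<1/2$), and take a heteroclinic point $x\in W^u(p)\cap W^s(q)$. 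A small arc of $\F^E_{\delta}(f,x)$ lies in $B_{\pm\infty}(f,x,r)$: backward, the $E$-displacement is essentially preserved because $\|Df^{-1}|_{E}\|\to 1$ along the tail converging to $p$; forward, it is contracted near $q$. Symmetrically, a small arc of $\F^F_{\delta}(f,x)$ also lies in $B_{\pm\infty}(f,x,r)$: forward it is preserved near $q$ (eigenvalue $1$), backward it is contracted near $p$. Hence $B_{\pm\infty}(f,x,r)$ is contained in neither plaque. Such points are non-recurrent, hence negligible for every invariant measure, so the lemma itself is unharmed -- but this shows the alternative cannot be established point by point and then globalized by ergodicity; the statement is intrinsically measure-dependent. (Your invariance argument for $A_E,A_F$ also only gives $g(\F^E_{\delta'})\subset\F^E_{\delta}$ at a smaller scale, but that is a minor issue compared with the dichotomy itself.)

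The measure has to enter at the start, not the end. Integrating the domination inequality against an ergodic $\mu$ gives $\int\log\|Dg|_{E}\|\,d\mu+\int\log\|Dg^{-1}|_{F}\|\,d\mu\le-\log 2$, uniformly for $g$ in a $C^1$-neighborhood of $f$; hence at least one of $\int\log\|Dg|_{E}\|\,d\mu<0$ (E contracted on $\mu$-average) or $\int\log\|Dg^{-1}|_{F}\|\,d\mu<0$ ($F$ expanded on $\mu$-average) holds, and which one holds is exactly what selects the alternative in the lemma. Say the first holds: by the Birkhoff ergodic theorem and a Pliss-type argument, $\mu$-almost every $x$ has a positive density of times at which $E$ is uniformly expanded under backward iteration; a graph-transform/plaque (or fake-foliation) argument then shows that any $y\in B_{\pm\infty}(g,x,r)$ not lying on $\F^F_{\delta}(g,x)$ is separated from the orbit of $x$ by more than $r$ at such backward hyperbolic times, so $B_{\pm\infty}(g,x,r)\subset\F^F_{\delta}(g,x)$ $\mu$-a.e.; the other case is symmetric and gives the $\F^E_{\delta}$ alternative. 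This is the structure of the cited proofs (\cite[Theorem 2.1]{M18S}, and \cite[Lemma 2.3]{cly19} via fake foliations); note the present paper only quotes the lemma, so the comparison is with those sources rather than with an in-paper argument. Your chart/cone bookkeeping and the final use of ergodicity are fine as far as they go, but without the averaged hyperbolicity of one bundle extracted from $\mu$, the core of the proof is missing.
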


Note that this result is also proved in \cite[Lemma 2.3]{cly19} for fake foliations. 
%For every $x\in M$, $r > 0$, let us introduce the following kinds of dynamical balls:
%\begin{itemize}
%\item[--] $B_{+\infty}(f, x, r)=\{y: d(f^n(x), f^n(y))\le r, n\in \NN\};$
%\smallskip
%\item[--] $B_{-\infty}(f, x, r)=\{y: d(f^n(x), f^n(y))\le r, n<0\};$
%\smallskip
%\item[--] $B_{\pm \infty}(f, x, r)=\{y: d(f^n(x), f^n(y))\le r, n\in \ZZ\}.$
%\end{itemize}
%
%\begin{definition}
%Given $r>0$ and $x\in M$, define the $r$-tail entropy at $x$ by 
%$$
%h^*(f,x,r)=h_{top}(f, B_{\pm \infty}(f,x,r)),
%$$
%where $B_{\pm \infty}(f, x, r)=\{y: d(f^n(x), f^n(y))\le r, n\in \ZZ\}.$
%For any $f$-invariant measure $\mu$, define the \emph{$r$-tail entropy} of $\mu$ by
%$$
%h^*(f,\mu,r)=\int h^*(f,x,r) d\mu(x).
%$$
%\end{definition}
%\begin{lemma}\cite[Theorem 1.2]{cly19}\label{cly19}
%Assume that $f$ is a homomorphism on $M$ with finite topological entropy. Then for any $\mu\in \M_{\rm inv}(f)$ and $r>0$, we have 
%$$
%h_{\mu}(f)-h_{\mu}(f,\P)\le h^*(f,\mu, r),
%$$
%where $\mathcal{P}$ is any finite measurable partition with ${\rm diam}(\P)\le r$.
%\end{lemma}
%We have the following result.
\begin{lemma}\label{pes}
For any $f \in PH^{1}_{EC}(M)$, there exists $r>0$ and a $C^1$ neighborhood $\U$ of $f$ such that $h^*(g,\mu,r)=0$ for every $g\in \U$ and $\mu \in G^u(g)$. 
\end{lemma}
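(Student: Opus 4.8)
The plan is to combine the two preceding structural lemmas. First I would apply Lemma \ref{jj} to the dominated splitting $TM = E^{wu} \oplus_{\succ} E^{cs}$ (where $E^{wu} = E^u \oplus_{\succ} E^{cu}$): this gives a $C^1$ neighborhood $\U_0$ of $f$ and constants $0 < r < \delta$ so that for every $g \in \U_0$ and every $g$-ergodic $\mu$, either the bi-infinite Bowen ball $B_{\pm\infty}(g,x,r)$ is contained in the plaque $\F^{E^{wu}}_\delta(g,x)$ for $\mu$-a.e.\ $x$, or it is contained in $\F^{E^{cs}}_\delta(g,x)$ for $\mu$-a.e.\ $x$. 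Since $h^*(g,\mu,r)$ and the quantities below behave well under ergodic decomposition, and since every ergodic component of a Gibbs $u$-state is again a Gibbs $u$-state (Lemma \ref{gibbsp}(\ref{up2})), it suffices to prove $h^*(g,\mu,r)=0$ for ergodic $\mu \in G^u(g)$.

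The heart of the argument is to rule out, or control, each of the two alternatives. In the $E^{cs}$-plaque case, the dynamics on $B_{\pm\infty}(g,x,r)$ is essentially that of a map on a disk tangent to the uniformly contracting-or-dominated-by-contraction bundle $E^{cs}$; one expects the topological entropy of $g$ restricted to such a forward-and-backward contracted family to be zero, because on these plaques $g^{-1}$ is (eventually) expanding while $g$ is mostly contracting, and a set on which both are ``non-expanding'' in complementary time directions carries no entropy. More precisely, for $\mu \in G^u(g)$ the exponents along $E^{cs}$ are negative, so Lemma \ref{size}-type estimates (after passing to a Pesin block of large measure via Proposition \ref{t11}) force exponential contraction of these plaques under $g$ on a full-measure set, which pins the tail entropy at $0$. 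In the $E^{wu}$-plaque case, $B_{\pm\infty}(g,x,r)$ sits inside a plaque tangent to $E^{wu}$, and here one uses that the partial entropy along the strong unstable foliation $\F^u$ already ``saturates'' the entropy: combining Lemma \ref{cly19} with the Gibbs $u$-state identity $h_\mu(g,\F^u) = \int \log|\det Dg|_{E^u}|\,d\mu$ and the Ledrappier--Young / Ruelle machinery, the contribution of the $E^{cu}$ directions inside the bi-infinite Bowen ball is again zero — intuitively because $E^{cu}$ is mostly expanding, so a point bi-infinitely $r$-shadowed inside an $E^{wu}$-plaque can only differ from the center along the genuinely contracting part, which is empty inside $E^{wu}$.

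Concretely, the cleanest route is: fix $\mu \in G^u(g)$ ergodic; by Lemma \ref{jj} we are in one of the two cases. If $B_{\pm\infty}(g,x,r) \subset \F^{E^{cs}}_\delta(g,x)$ $\mu$-a.e., use that $\mu$ has only negative exponents along $E^{cs}$ together with the uniform-size stable-manifold estimate of Lemma \ref{size} (applied to a Pesin block $\Lambda_\ell(g,\alpha,E^{wu},E^{cs})$ of measure $> 1-\varepsilon$, furnished by Proposition \ref{t11}) to conclude that for $\mu$-a.e.\ $x$ the set $B_{\pm\infty}(g,x,r)$ is a single point once $r$ is small, whence $h^*(g,\mu,r) = 0$. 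If instead $B_{\pm\infty}(g,x,r) \subset \F^{E^{wu}}_\delta(g,x)$ $\mu$-a.e., then by Lemma \ref{cly19} applied with a partition of diameter $\le r$ we get $h_\mu(g) - h_\mu(g,\P) \le h^*(g,\mu,r)$, and one shows the reverse by noting that the entropy is carried by $\F^u \subset \F^{E^{wu}}$: the Ledrappier--Young-type description of $h_\mu(g)$ in terms of transverse dimensions, combined with positivity of the $E^{cu}$-exponents, forces $h^*(g,\mu,r) = 0$. The main obstacle is the second case: making rigorous that bi-infinite $r$-shadowing inside an $E^{wu}$-plaque contributes no topological entropy requires care, since $E^{wu}$ itself is only partially hyperbolic (the $E^{cu}$ part is non-uniform); I expect to handle it by shrinking $r$ using the mostly-expanding property on Pesin blocks of $G^u$-measure near $1$, exactly as Proposition \ref{t11} supplies, so that inside such plaques the bi-infinite Bowen ball collapses as well. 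Uniformity of $r$ and of $\U$ in $g$ comes for free because all the ingredients (Lemma \ref{jj}, Lemma \ref{size}, Proposition \ref{t11}) are stated with $C^1$-robust constants.
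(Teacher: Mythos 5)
Your overall route is the paper's: reduce to ergodic Gibbs $u$-states (Lemma \ref{gibbsp}), invoke the dichotomy of Lemma \ref{jj} for the splitting $E^{wu}\oplus_{\succ}E^{cs}$, and use the uniform Pesin blocks supplied by Proposition \ref{t11} to trivialize the bi-infinite Bowen balls. The genuine gap is in the $E^{wu}$-plaque alternative, which is exactly the case the paper treats in detail. The only concrete argument you offer there --- Lemma \ref{cly19} together with ``Ledrappier--Young machinery and positivity of the $E^{cu}$-exponents forces $h^*(g,\mu,r)=0$'' --- cannot work: Lemma \ref{cly19} bounds $h_\mu(g)-h_\mu(g,\P)$ \emph{from above} by the tail entropy, so it yields no upper bound on $h^*$, and the Gibbs $u$-state identity says nothing about what sits inside a bi-infinite Bowen ball; you then defer the case to ``I expect to handle it by shrinking $r$\dots''. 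What is missing is the actual collapse mechanism, which is the heart of the paper's proof: since $\mu(\Lambda_{\ell}(g,\alpha))>0$, Birkhoff gives for $\mu$-a.e.\ $x$ infinitely many times $n_k$ with $g^{n_k}(x)\in\Lambda_{\ell}(g,\alpha)$; by the local invariance of the plaque family (Proposition \ref{plaque}), every $y\in B_{\pm\infty}(g,x,r)$ keeps $g^{n}(y)$ in the $E^{wu}$-plaque of $g^{n}(x)$ at distance $\le r$ for all $n$; and the defining inequalities of $\Lambda_{\ell}(g,\alpha)$ give, at each visit time $n_k$, exponential contraction of $Dg^{-1}$ along $E^{wu}$ over the whole segment from $n_k$ back to $0$, which transfers to distances inside the plaques by domination exactly as in Lemma \ref{bcd} and Lemma \ref{size} (the relevant plaque piece through $g^{n_k}(x)$ is its local unstable manifold of uniform size $\delta_\ell$, with $r\ll\delta_\ell$). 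Hence $d(x,y)\le C{\rm e}^{-\alpha' n_k}r\to 0$, so $B_{\pm\infty}(g,x,r)=\{x\}$ $\mu$-a.e.\ and $h^*(g,\mu,r)=0$. Your intuition that a bi-infinitely shadowed point inside an $E^{wu}$-plaque ``can only differ along the contracting part, which is empty'' is essentially assuming this conclusion; the non-uniformity of $E^{cu}$ is precisely why the block-visit argument is needed.

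Two smaller points. In the $E^{cs}$-plaque case your mechanism is also stated imprecisely: forward exponential contraction of the stable plaques from block times shows that ${\rm diam}\,g^{n}\bigl(B_{\pm\infty}(g,x,r)\bigr)\to 0$, which already gives $h^*(g,x,r)=0$; if you want the stronger claim that the Bowen ball is a single point, forward contraction alone does not separate points of a stable plaque --- you need visits of the \emph{backward} orbit to the block (Birkhoff applied to $g^{-1}$) and then contraction from those past block times down to time $0$. Finally, note that the paper disposes of both alternatives of Lemma \ref{jj} by the same one-line collapse argument (the block conditions are two-sided), so none of the Ledrappier--Young, Ruelle, or transverse-dimension considerations in your plan are needed anywhere; the robust choice of $r$ and $\U$ is, as you say, automatic from Proposition \ref{t11} and Lemma \ref{jj}.
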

\begin{proof}
Let $f\in PH^{1}_{EC}(M)$ with partially hyperbolic splitting $TM=E^u\oplus_{\succ} E^{cu}\oplus_{\succ} E^{cs}$. By Proposition \ref{t11}, there exists a $C^1$ neighborhood $\U$ of $f$, and $\alpha>0$, $\ell\in \NN$ such that $\mu(\Lambda_{\ell}(g,\alpha))>0$ for every $\mu \in G^u(g)$ and $g\in \U$. 

\smallskip
By definition of tail entropy, it suffices to show that there exists $r>0$ such that for every ergodic $\mu \in G^u(g)$, one has $B_{\pm\infty}(g,x,r)=\{x\}$ for $\mu$-almost every $x$. Now we fix $\mu \in G^u(g)$ and $g\in \U$. Following Lemma \ref{jj}, without loss of generality, we assume that for $\mu$-almost every $x$, $B_{\pm\infty}(g,x,r)$ is contained in the $E^{wu}$-direction plaque provided $r$ is small enough. Then using the invariance property of plaques in Proposition \ref{plaque}, we have that all the forward iterates of $B_{\pm\infty}(g,x,r)$ is contained in the corresponding $E^{wu}$-direction plaques.  Since $\mu(\Lambda_{\ell}(g,\alpha))>0$, the Birkhoff's ergodic theorem suggests that for $\mu$-almost every $x$, the forward orbit of $x$ enters to $\Lambda_{\ell}(g,\alpha)$ for infinitely many times. Observe that the local unstable manifold of point in $\Lambda_{\ell}(g,\alpha)$ lies in the $E^{wu}$-direction plaque, by using the backward contracting property of local unstable manifold, one obtains that $B_{\pm\infty}(g,x,r)$ can has only one point $x$. 

\end{proof}

\begin{proof}[Proof of Theorem \ref{uppp}]
Assume that $f_n\to f$ in $C^1$-topology and $\mu_n\to \mu$ in weak$^*$-topology, where $\{f_n\}$ and $f$ are diffeomorphisms in $PH^1_{EC}(M)$, and $\mu_n\in G^u(f_n)$ for every $n\in \NN$.
It follows from Lemma \ref{upp} that $\mu \in G^u(f)$. By Lemma \ref{pes}, there exists $r>0$ so that $h^*(f,\mu,r)=0$ and $h^*(f_n,\mu_n,r)=0$ for every $n$ large enough.
By Lemma \ref{cly19}, for any measurable partition $\P$ for which ${\rm diam}(\P)\le r$ and $\mu(\partial(\P))=0$, we have $h_{\mu_n}(f_n)= h_{\mu_n}(f_n,\P)$ and $h{_\mu}(f)= h_{\mu}(f,\P)$. For this fixed $\P$, we can check by definition that
$$
\limsup_{n\to +\infty}h_{\mu_n}(f_n,\P)\le h_{\mu}(f,\P).
$$
All these together imply
$$
\limsup_{n\to +\infty}h_{\mu_n}(f_n)\le \limsup_{n\to +\infty}h_{\mu_n}(f_n,\P)\le h_{\mu}(f,\P)=h_{\mu}(f).
$$
This proves the result.
\end{proof}

\subsection{Proof of Theorem \ref{ttd}}

The aim of this subsection is to complete the proof of Theorem \ref{ttd}. To begin with, let us recall the notion of \emph{pseudo-physical measure relative to compact disks}, introduced in \cite[$\S$ 2.3]{CYZ18} as an extension of pseudo-physical measures \cite{ce11,cce15}. 

\begin{definition}\label{pds}
Given a compact $C^1$ disk $D$, we call $\mu\in \M(f)$ a pseudo-physical measure relative to $D$ if 
$$
{\rm Leb}_D\left(\{x\in D: d(\omega_{\M}(x,f), \mu )<\varepsilon\}\right)>0\quad \textrm{for every}~\varepsilon>0.
$$
\end{definition}

\begin{lemma}\cite[Theorem 2.3]{CYZ18}\label{pp}
Suppose that $f$ is a homeomorphism on $M$ and $D$ is an embedded $C^1$ compact disk. Then the set of
pseudo-physical measures relative to $D$ is a compact non-empty set. Moreover, for ${\rm Leb}_D$-almost every $x\in D$, every invariant measure $\mu \in \omega_{\M}(x,f)$ is a pseudo-physical measure relative to $D$.
\end{lemma}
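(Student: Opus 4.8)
The plan is to run the standard argument for observable (SRB-like) measures of Catsigeras--Enrich \cite{ce11,cce15}, with Lebesgue measure on $M$ replaced by ${\rm Leb}_D$. Throughout I read $d(\omega_{\M}(x,f),\mu)$ as the distance in $\M$ from the point $\mu$ to the compact set $\omega_{\M}(x,f)$, i.e. $\inf_{\nu\in\omega_{\M}(x,f)}{\rm dist}(\nu,\mu)$, and I abbreviate $e_n(y)=\frac1n\sum_{i=0}^{n-1}\delta_{f^i(y)}$. The first thing I would check is a measurability fact: for any sequence $(z_n)$ in the compact metric space $\M$, the distance from a fixed $w$ to the (nonempty, compact) set of accumulation points of $(z_n)$ equals $\liminf_n{\rm dist}(z_n,w)$; applied with $z_n=e_n(y)$ this gives
$$
d(\omega_{\M}(y,f),\mu)=\liminf_{n\to\infty}{\rm dist}\big(e_n(y),\mu\big)\qquad\textrm{for all }y\in M,\ \mu\in\M.
$$
Since $f$ is continuous, each $y\mapsto e_n(y)$ is weak$^*$-continuous, hence $y\mapsto d(\omega_{\M}(y,f),\mu)$ is Borel; in particular, for every $\mu$ and every $\varepsilon>0$ the set $E(\mu,\varepsilon):=\{y\in D:d(\omega_{\M}(y,f),\mu)<\varepsilon\}$ is ${\rm Leb}_D$-measurable. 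Also $\omega_{\M}(y,f)\neq\emptyset$ for every $y$, by compactness of $\M$.

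With this in hand, the heart of both assertions is a single null set. Fix a countable dense set $\{\mu_k\}_{k\in\NN}$ in $\M(f)$ and put
$$
N=\bigcup\Big\{E(\mu_k,\varepsilon):\ k\in\NN,\ \varepsilon\in\QQ^+,\ {\rm Leb}_D\big(E(\mu_k,\varepsilon)\big)=0\Big\},
$$
so that ${\rm Leb}_D(N)=0$. I would then show that for every $x\in D\setminus N$ and every $\mu\in\omega_{\M}(x,f)$, $\mu$ is pseudo-physical relative to $D$: given $\varepsilon>0$, pick $\mu_k$ with ${\rm dist}(\mu_k,\mu)<\varepsilon/3$ and a rational $\delta\in(\varepsilon/3,\varepsilon/2)$; then $d(\omega_{\M}(x,f),\mu_k)\le{\rm dist}(\mu,\mu_k)<\delta$ since $\mu\in\omega_{\M}(x,f)$, so $x\in E(\mu_k,\delta)$, and because $x\notin N$ this forces ${\rm Leb}_D\big(E(\mu_k,\delta)\big)>0$; finally $E(\mu_k,\delta)\subset E(\mu,\varepsilon)$ by the triangle inequality (as $\delta+\varepsilon/3<\varepsilon$), so ${\rm Leb}_D\big(E(\mu,\varepsilon)\big)>0$. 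This proves the second statement, and since ${\rm Leb}_D(D)>0={\rm Leb}_D(N)$ there is some $x\in D\setminus N$, whose nonempty $\omega_{\M}(x,f)$ then consists of pseudo-physical measures, giving non-emptiness.

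For compactness it is enough to show the set of pseudo-physical measures relative to $D$ is closed, as it sits inside the compact set $\M(f)$. If $\mu_n\to\mu$ with each $\mu_n$ pseudo-physical, then given $\varepsilon>0$ choose $n$ with ${\rm dist}(\mu_n,\mu)<\varepsilon/2$; the triangle inequality gives $E(\mu_n,\varepsilon/2)\subset E(\mu,\varepsilon)$, and ${\rm Leb}_D\big(E(\mu_n,\varepsilon/2)\big)>0$ since $\mu_n$ is pseudo-physical, so ${\rm Leb}_D\big(E(\mu,\varepsilon)\big)>0$; letting $\varepsilon\to0$ shows $\mu$ is pseudo-physical.

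The one genuinely technical point is the measurability reduction in the first paragraph, namely the $\liminf$ identity for the distance to an accumulation set and the resulting Borel measurability of $E(\mu,\varepsilon)$; after that, the whole proof is bookkeeping with a countable dense family and the triangle inequality, and it uses neither invariance of the limit measures, nor connectedness of $\omega_{\M}(x,f)$, nor anything about $D$ beyond ${\rm Leb}_D(D)>0$.
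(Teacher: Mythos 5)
Your argument is correct. Note that the paper itself gives no proof of this lemma: it is imported verbatim from \cite[Theorem 2.3]{CYZ18}, so there is no internal proof to compare against; what you have written is the standard Catsigeras--Enrich-type argument for pseudo-physical (SRB-like) measures with ${\rm Leb}$ replaced by ${\rm Leb}_D$, which is exactly the kind of argument behind the cited result. The measurability reduction via $d(\omega_{\M}(y,f),\mu)=\liminf_n{\rm dist}(e_n(y),\mu)$ is valid in the compact space $\M$, the null set $N$ built from a countable dense family in $\M(f)$ and rational radii does the work for both the ``almost every $x$'' statement and non-emptiness, and closedness inside the compact set $\M(f)$ gives compactness. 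One small caveat: your closing remark that the proof ``uses neither invariance of the limit measures'' is overstated. The approximation step needs $\mu\in\M(f)$ (supplied by the hypothesis in the ``moreover'' part), and the non-emptiness conclusion needs that $\omega_{\M}(x,f)$ contains \emph{invariant} measures, since pseudo-physical measures are by definition elements of $\M(f)$; this is the standard fact that $f_*e_n(x)-e_n(x)=\frac{1}{n}(\delta_{f^n x}-\delta_x)\to 0$, which you are implicitly using when you say $\omega_{\M}(x,f)$ ``consists of pseudo-physical measures.'' With that one-line remark added, the proof is complete.
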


%\begin{theorem}\label{k}
%For every $C^1$ diffeomorphism $f\in \U(M)$ with dominated splitting $TM=E^u\oplus E^{cu}\oplus E^{cs}$. Assume that $D$ is a $C^1$ disk transverse to $E^{cs}$, if $\mu\in p\omega(f,D)$, then $\mu \in \G^{cu}(f)$.
%\end{theorem}
%
%\begin{theorem}
%For every $C^1$ diffeomorphism $f\in \U(M)$ with dominated splitting $TM=E^u\oplus E^{cu}\oplus E^{cs}$. If $D$ is a $C^1$ disk transverse to $E^{cs}$, then for Lebesgue almost every point $x\in D$, any invariant measure of $p\omega(f,x)$ belongs to $\G^{cu}(f)$.
%\end{theorem}

%Now we fix a metric compatible with the weak*-topology on the space of Borel probability  measures supported on $M$ as follows: for two probability measures $\mu$ and $\nu$, define
%$$
%{\rm dist}(\mu,\nu)=\sum_{n\in \NN}\frac{|\int \varphi_nd\mu- \int \varphi_n d\nu|}{2^n\sup_{x\in M}|\varphi_n(x)|},
%$$
%where $\{\varphi_n\}_{n\in \NN}$ is a countable dense subset of the space of continuous functions on $M$.
%
Given a homeomorphism $f$ on $M$, for every $\mu \in \M(f)$, for every $\varepsilon>0$ and $n\in \NN$, define the following subset of $M$:
$$
\mathcal{G}_{\mu}(n,\varepsilon)=\left\{x\in M: {\rm dist}\left(\frac{1}{n}\sum_{i=0}^{n-1}\delta_{f^ix},\mu\right)<\varepsilon \right\}.
$$

\begin{lemma}\cite[Theorem C']{CYZ18}\label{DU}
Let $f$ be a $C^1$ partially hyperbolic diffeomorphism with invariant splitting $TM=E^u\oplus_{\succ} E^{cs}$. Then for any $C^1$ disk $D$ transverse to $E^{cs}$, we have $\omega_{\M}(x,f)\subset G^u(f)$
for ${\rm Leb}_D$-almost every $x$ in $D$.
\end{lemma}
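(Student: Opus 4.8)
The plan is to reduce, via the pseudo-physical machinery of Lemma~\ref{pp}, to a ``$C^1$ Pesin--Sinai'' statement: that Ces\`aro averages of Lebesgue measure along disks transverse to $E^{cs}$ accumulate only on $G^u(f)$. Indeed, by Lemma~\ref{pp}, for ${\rm Leb}_D$-almost every $x\in D$ every (invariant) measure in $\omega_{\M}(x,f)$ is pseudo-physical relative to $D$; so it suffices to prove that every pseudo-physical measure $\mu$ relative to $D$ belongs to $G^u(f)$.

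Fix such a $\mu$ and $\varepsilon>0$. The set $\mathcal{G}_{\mu}(n,\varepsilon)$ is open, since $x\mapsto\frac1n\sum_{i=0}^{n-1}\delta_{f^ix}$ is weak$^*$-continuous, and directly from Definition~\ref{pds} the set of $x\in D$ lying in $\mathcal{G}_{\mu}(n,\varepsilon)$ for infinitely many $n$ has positive ${\rm Leb}_D$-measure. Since ${\rm Leb}_D$ is finite on the compact disk $D$, the reverse Fatou lemma gives $n_k\to\infty$ with ${\rm Leb}_D(A_k)\ge c_{\varepsilon}>0$, where $A_k:=D\cap\mathcal{G}_{\mu}(n_k,\varepsilon)$. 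Setting $\nu_k:=\frac1{n_k}\sum_{j=0}^{n_k-1}f^j_{\ast}{\rm m}_{A_k}$ and using convexity of the metric ${\rm dist}$, we get ${\rm dist}(\nu_k,\mu)\le\varepsilon$. Thus $\mu$ is, for every $\varepsilon>0$, an $\varepsilon$-approximation of Ces\`aro averages of pushforwards of normalized Lebesgue measure on Borel subsets of $D$; since $G^u(f)$ is closed (Lemma~\ref{gibbsp}), it remains to show that $\frac1n\sum_{j=0}^{n-1}f^j_{\ast}{\rm m}_A$ converges to $G^u(f)$ as $n\to\infty$, uniformly over Borel sets $A\subseteq D$ with ${\rm Leb}_D(A)>0$, and then let $\varepsilon\to0$.

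For that last statement, observe that by domination the tangent planes of $f^n(D)$ converge uniformly to $E^u$, so after replacing $D$ by a forward iterate (which does not affect the limit measures) we may assume $D$ is tangent to an arbitrarily thin cone $\mathcal{C}^{E^u}_{a}$; and since $E^u$ is uniformly expanding, every $n\in\NN$ is an $(E^u,\sigma_0)$-hyperbolic time of every point for a fixed $\sigma_0<1$, so Lemma~\ref{bcd} applies at every time. Hence each $f^n(x)$ with $x\in A$ is the center of an unstable plaque of a fixed size $\delta$ inside $f^n(D)$ which is the $f^n$-image of a ``hyperbolic pre-disk'' in $D$ on which $f^n$ is uniformly expanding; this yields a disintegration of $f^n_{\ast}{\rm m}_A$ over unstable plaques whose conditionals are absolutely continuous along $\F^u$, already settling the $C^{1+}$ case via the usual Pesin--Sinai characterization. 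For a limit measure $\nu$ one combines the Margulis--Ruelle inequality along the unstable foliation $h_{\nu}(f,\F^u)\le\int\log|{\rm det}Df|_{E^u}|\,d\nu$ with a matching lower bound: the exponential volume growth of $D$ under $f^n$ has rate $\int\log|{\rm det}Df|_{E^u}|\,d\nu$ in the Ces\`aro limit (because $T(f^nD)\to E^u$ uniformly), and a Katok-type count of $(n,\delta)$-separated $f^n$-pieces lying inside single unstable plaques turns this volume growth into $h_{\nu}(f,\F^u)\ge\int\log|{\rm det}Df|_{E^u}|\,d\nu$. Equality is exactly the Gibbs $u$-state condition (Definition~\ref{gu}), so $\nu\in G^u(f)$.

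The reductions in the first two paragraphs are soft, using only Lemma~\ref{pp}, the openness of $\mathcal{G}_{\mu}(n,\varepsilon)$, and convexity of ${\rm dist}$. The main obstacle is the last step in the $C^1$ category: without H\"older control on $Df$ there is no bounded distortion, so the entropy lower bound cannot be read off from conditional densities and must be produced by a Katok-entropy argument along unstable plaques, carefully equating the exponential number of $\delta$-separated iterated pieces of $D$ with $\exp\big(n\int\log|{\rm det}Df|_{E^u}|\,d\nu\big)$ by means of the full density of hyperbolic times and the collapse of the iterated tangent directions onto $E^u$. I expect this to be the technical core.
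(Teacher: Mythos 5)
First, note that the paper gives no proof of this lemma: it is quoted directly from \cite[Theorem C']{CYZ18}, so what you must supply is precisely the content of that cited theorem. Your two reduction steps are sound and parallel the way the paper itself uses pseudo-physicality in the proof of Theorem \ref{ttd}: by Lemma \ref{pp} it suffices to show every pseudo-physical measure relative to $D$ lies in $G^u(f)$, and your reverse-Fatou/convexity argument correctly places such a $\mu$ within $\varepsilon$ of a Ces\`aro average $\frac{1}{n_k}\sum_{j<n_k} f^j_{\ast}{\rm m}_{A_k}$ with ${\rm Leb}_D(A_k)\ge c_\varepsilon>0$. One caveat: the claim of convergence to $G^u(f)$ ``uniformly over Borel sets $A\subseteq D$ with ${\rm Leb}_D(A)>0$'' is false as stated (take $A$ a tiny neighborhood in $D$ of a single point whose empirical measures accumulate outside $G^u(f)$; such points are only claimed to be Lebesgue-null). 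You must keep the lower bound $c_\varepsilon$; then, since ${\rm m}_{A_k}\le c_\varepsilon^{-1}{\rm Leb}_D(D)\,{\rm m}_D$, any limit of your $\nu_k$ is dominated by a constant times a limit of full-disk averages, and Lemma \ref{gibbsp} (ergodic components of Gibbs $u$-states are Gibbs $u$-states, compactness and convexity) reduces everything to the statement that limits of $\frac1n\sum_{j<n}f^j_{\ast}{\rm m}_D$ lie in $G^u(f)$ — which is already essentially the cited theorem.

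The genuine gap is therefore exactly at the step you flag as the technical core, and the sketch there does not close it. Two concrete problems: (i) the exponential growth rate of ${\rm Leb}(f^n(D))$ is not $\int\log|{\rm det}Df|_{E^u}|\,d\nu$ for the Ces\`aro limit $\nu$; Jensen only gives ``$\ge$'', and the inequality is typically strict, because a portion of $D$ of exponentially small ${\rm Leb}_D$-measure but larger expansion can dominate the volume of $f^n(D)$ while contributing nothing to $\nu$. (ii) A Katok-type count of $(n,\delta)$-separated pieces extracted from that volume growth produces entropy for measures concentrated on those highly expanding pieces, and nothing in the sketch forces such measures to be close to $\nu$; hence it does not yield $h_{\nu}(f,\F^u)\ge\int\log|{\rm det}Df|_{E^u}|\,d\nu$. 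What is needed is an estimate tied to the Lebesgue-\emph{typical} part of $D$, namely an upper bound of large-deviation type on ${\rm Leb}_D(\mathcal{G}_{\mu}(n,\varepsilon))$ by $\exp\bigl(n\bigl(h_{\mu}(f,\F^u)-\int\log|{\rm det}Df|_{E^u}|\,d\mu+\delta\bigr)\bigr)$, i.e.\ a partial-entropy analogue of Lemma \ref{E}; this bound, not the plaque geometry, is the real content of \cite{CYZ18}. Compare with how the paper proves Theorem \ref{ttd}(\ref{fde}): the inequality $h_\mu(f)\ge\int\log|{\rm det}Df|_{E^{wu}}|\,d\mu$ for pseudo-physical measures is obtained from the quoted Lemma \ref{E}, never from volume growth of $f^n(D)$. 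As it stands, your argument reduces the lemma to an equivalent form of the theorem being cited rather than proving it.
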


%For a $C^1$ embedded sub-manifold $D$, we use ${\rm Leb}_{D}$ to present the induced Lebesgue measure on $D$.
%Recall the notion of pseudo-physical measure relative to compact disks, introduced in \cite[Section 2.3]{CYZ18} as a extension of pseudo-physical measures \cite{ce11,cce15}. 
%
%\begin{definition}
%Given a compact $C^1$ disk $D$, we say an $f$-invariant measure $\mu$ is a pseudo-physical measure relative to  $D$ if for every $\varepsilon>0$, 
%$$
%{\rm Leb}_D\left(\{x: d(p\omega(x,f), \mu )<\varepsilon\}\right)>0
%$$
%\end{definition}
%
%\begin{lemma}\cite[Theorem 2.3]{CYZ18}\label{pp}
%Suppose that $f$ is a homeomorphism on $M$ and $D$ is an embedded $C^1$ compact disk. Then the set of
%pseudo-physical measures relative to $D$ is a compact non-empty set. Moreover, for Lebesgue almost every $x$, every invariant measure $\mu \in p\omega(x,f)$ is a pseudo-physical measure relative to $D$.
%\end{lemma}
%
%
%\begin{lemma}\cite[Theorem E]{CYZ18}\label{U}
%Assume that $f$ is a $C^1$ partially hyperbolic diffeomorphism with strong unstable direction $E^u$, then there exist $\theta>0$, $R>0$ such that for every $\mu\in \M_{\rm inv}(f)$, for any $C^1$ disk $D$ tangent to $\mathcal{C}^{E^u}_{\theta}$ with diameter smaller than $R$, one has
%\begin{equation*}
%\lim_{\varepsilon\to 0}\limsup_{n\to +\infty}\frac{\log({\rm Leb}_D(\mathcal{G}_{\mu}(n,\varepsilon)))}{n}\le h_{\mu}(f,\F^u)-\int \log|{\rm det}Df|_{E^u}|d\mu.
%\end{equation*}
%
%\end{lemma}

\begin{lemma}\cite[Theorem H]{CYZ18}\label{E}
Let $f$ be a $C^1$ diffeomorphism with dominated splitting $TM=E\oplus_{\succ} F$, then there exist $\theta>0$, $R>0$ such that for every $\mu\in \M(f)$, for any $C^1$ disk $D$ tangent to $\mathcal{C}^{E}_{\theta}$ with diameter smaller than $R$, one has
\begin{equation*}
\lim_{\varepsilon\to 0}\limsup_{n\to +\infty}\frac{\log({\rm Leb}_D(\mathcal{G}_{\mu}(n,\varepsilon)))}{n}\le h_{\mu}(f)-\int \log|{\rm det}Df|_{E}|d\mu.
\end{equation*}
\end{lemma}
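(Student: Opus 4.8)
The plan is to bound ${\rm Leb}_D(\mathcal{G}_{\mu}(n,\varepsilon))$ by covering $\mathcal{G}_{\mu}(n,\varepsilon)\cap D$ with a controlled number of dynamically small subdisks of $D$, estimating separately \emph{how many} such pieces are needed and \emph{how large} each one is in ${\rm Leb}_D$, and then letting $n\to+\infty$, $\varepsilon\to0$, and refining the coding partition. Throughout write $\psi=\log|{\rm det}Df|_{E}|$, a bounded continuous function on $M$. Choose $\theta>0$ small enough that the cone field $\mathcal{C}^{E}_{\theta}$ is $Df$-forward-invariant and that, by domination, the tangent planes $T_{f^{i}y}f^{i}(D')$ of any $C^1$ disk $D'$ tangent to $\mathcal{C}^{E}_{\theta}$ contract exponentially towards $E$; choose $R>0$ small enough for the local volume-distortion estimates below to hold on pieces of such disks of diameter $<R$. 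These $\theta,R$ depend only on $f$, not on $\mu$ or $D$.

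For the volume estimate, take a subdisk $P\subset D$ whose forward iterates $f^{i}(P)$ have diameter $<r$ for $0\le i\le n$ (arranged by subdividing $D$, keeping a bounded number of pieces per unit of growth). Pushing ${\rm Leb}_D$ forward by the diffeomorphism $f^{n}$ gives ${\rm Leb}_D(P)=\int_{f^{n}(P)}|{\rm det}Df^{n}|_{T_{\cdot}D}|^{-1}$, and $|{\rm det}Df^{n}|_{T_{y}D}|$ is comparable, up to a multiplicative constant depending only on $\theta$ and $f$, to $|{\rm det}Df^{n}|_{E(y)}|=\exp\sum_{i=0}^{n-1}\psi(f^{i}y)$, because $\bigwedge^{d}E$ ($d=\dim E$) is the fast subbundle of a dominated splitting of $\bigwedge^{d}TM$ and $\bigwedge^{d}(T_{y}D)$ lies in a narrow cone around it; this comparison uses only the $C^{1}$ dominated structure, avoiding any summable-modulus-of-continuity argument that would need $C^{1+}$. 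If moreover $P$ meets $\mathcal{G}_{\mu}(n,\varepsilon)$ at a point $x$, then uniform continuity of $\psi$ together with $x\in\mathcal{G}_{\mu}(n,\varepsilon)$ and continuity of $\psi$ give $\sum_{i=0}^{n-1}\psi(f^{i}y)\ge n\int\psi\,d\mu-n\,o_{\varepsilon}(1)-n\,\omega_{\psi}(r)$ for all $y\in P$, whence
$$
{\rm Leb}_D(P)\ \le\ C\,\exp\!\Big(-n\!\int\!\psi\,d\mu+n\big(o_{\varepsilon}(1)+\omega_{\psi}(r)\big)\Big),
$$
with $C=C(\theta,f)$, $\omega_{\psi}$ the modulus of continuity of $\psi$, and $o_{\varepsilon}(1)\to0$ as $\varepsilon\to0$ uniformly in $n$.

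For the counting, fix $N\in\NN$ and a finite measurable partition $\mathcal{Q}$ of $M$ with connected atoms of diameter $<r$ and $\mu(\partial\mathcal{Q})=0$; the last property ensures that weak$^{*}$-convergence of empirical measures to $\mu$ forces the frequencies of length-$N$ patterns of the $\mathcal{Q}$-coding to converge to the $\mathcal{Q}^{N}$-distribution of $\mu$, where $\mathcal{Q}^{N}=\bigvee_{i=0}^{N-1}f^{-i}\mathcal{Q}$. One covers $\mathcal{G}_{\mu}(n,\varepsilon)\cap D$ by the intersections with $D$ of the atoms of $\mathcal{Q}^{n}$ (further subdivided so as to be dynamically $(n,r)$-small as needed, which uses that $f^{i}$ of such an atom lies in a single $\mathcal{Q}$-atom for $i\le n-1$). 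Each such atom meeting $\mathcal{G}_{\mu}(n,\varepsilon)$ is recorded by a $\mathcal{Q}$-word of length $n$ whose length-$N$ window frequencies lie within $\delta(\varepsilon,N,\mathcal{Q})$ of those of $\mu$, with $\delta\to0$ as $\varepsilon\to0$ for fixed $N,\mathcal{Q}$; by a method-of-types count for words with prescribed window statistics (Markov types, using the finiteness of $\mathcal{Q}$) the number of such words, hence the number of covering pieces, is at most $\exp\!\big(n(\tfrac1N H_{\mu}(\mathcal{Q}^{N})+\gamma(N,\delta))\big)$ with $\gamma(N,\delta)\to0$ as $\delta\to0$.

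Combining the two estimates,
$$
{\rm Leb}_D(\mathcal{G}_{\mu}(n,\varepsilon))\ \le\ C\,\exp\!\Big(n\big(\tfrac1N H_{\mu}(\mathcal{Q}^{N})+\gamma(N,\delta)+o_{\varepsilon}(1)+\omega_{\psi}(r)-\textstyle\int\psi\,d\mu\big)\Big),
$$
so $\limsup_{n\to+\infty}\tfrac1n\log{\rm Leb}_D(\mathcal{G}_{\mu}(n,\varepsilon))\le\tfrac1N H_{\mu}(\mathcal{Q}^{N})+\gamma(N,\delta(\varepsilon,N,\mathcal{Q}))+o_{\varepsilon}(1)+\omega_{\psi}(r)-\int\psi\,d\mu$. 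Letting $\varepsilon\to0$ removes the $\gamma$ and $o_{\varepsilon}(1)$ terms; letting $N\to+\infty$ and using $\tfrac1N H_{\mu}(\mathcal{Q}^{N})\downarrow h_{\mu}(f,\mathcal{Q})\le h_{\mu}(f)$; and finally running this along a sequence of partitions $\mathcal{Q}$ of vanishing diameter with $\mu$-null boundaries (so $\omega_{\psi}(r)\to0$) yields $\lim_{\varepsilon\to0}\limsup_{n}\tfrac1n\log{\rm Leb}_D(\mathcal{G}_{\mu}(n,\varepsilon))\le h_{\mu}(f)-\int\log|{\rm det}Df|_{E}|\,d\mu$ (when ${\rm Leb}_D(\mathcal{G}_{\mu}(n,\varepsilon))=0$ for all large $n$ the inequality is trivial). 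The main difficulty is the counting step: one must make precise the passage between the weak$^{*}$ topology on $\M(f)$ and the combinatorics of the $\mathcal{Q}$-coding, and run the method-of-types estimate so that the resulting exponent converges to $h_{\mu}(f,\mathcal{Q})$ rather than to $\sup\{h_{\nu}(f):{\rm dist}(\nu,\mu)\le\varepsilon\}$ --- the latter being useless at this level of generality, since metric entropy need not be upper semicontinuous for a mere $C^{1}$ diffeomorphism with a dominated splitting. A secondary technical point is the bounded-distortion step: it must be carried through the dominated splitting of the top exterior power precisely because $f$ is only assumed $C^{1}$.
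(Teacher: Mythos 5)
The paper does not actually prove this lemma --- it is quoted verbatim from \cite{CYZ18} (Theorem H) --- so your argument can only be judged on its own terms, and as written it has a genuine gap exactly at the junction of your two estimates. Your per-piece volume bound needs ${\rm Leb}_{f^n D}(f^n(P))$ to be bounded by a constant; you get this implicitly from ``$f^n(P)$ has diameter $<r$ and lies on a disk tangent to a thin cone'', which is valid only when $P$ is \emph{connected}, so that $f^n(P)$ is a single Lipschitz-graph piece of $f^nD$. But the pieces produced by your counting step are $A\cap D$ with $A$ an atom of $\mathcal{Q}^n$: these need not be connected, and $f^n(A\cap D)$, although of small diameter in $M$, is a union of possibly very many sheets of the exponentially long submanifold $f^nD$; nothing in your argument bounds its $d$-dimensional volume, and without such a bound the change-of-variables identity gives nothing (summing it over the atoms merely reproduces ${\rm Leb}_{f^nD}(f^n(D\cap\mathcal{G}_{\mu}(n,\varepsilon)))$, which is circular). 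Conversely, if you restore connectedness by subdividing each trace $A\cap D$ into connected, dynamically $(n,r)$-small subdisks --- your parenthetical ``further subdivided as needed'' --- then the number of pieces is no longer governed by the method-of-types count: the number of connected components of $A\cap D$ is not controlled by $H_{\mu}(\mathcal{Q}^N)$, and any per-iterate subdivision scheme costs a multiplicative constant per step, i.e.\ an extra additive constant in the exponent that does not disappear as $\varepsilon\to0$, $N\to\infty$, $r\to0$.

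This multiplicity problem (how many sheets of $f^nD$ can enter a single itinerary cylinder) is precisely the hard point in the $C^1$ setting and is where the proof in \cite{CYZ18} has to do real work. Your two halves are individually sound: the determinant comparison along the cone is fine even for $C^1$ maps, since at a fixed point the restricted Jacobian is Lipschitz in the Grassmannian variable and domination makes the angle between $T f^i D$ and $E$ decay geometrically, so the distortion constant is uniform; and the Markov-type count correctly yields $\tfrac1N H_{\mu}(\mathcal{Q}^N)$ rather than a useless $\sup_{\nu}h_{\nu}$. But they do not fit together as stated. Until you exhibit covering pieces that simultaneously (i) have all iterates up to time $n$ of diameter $<r$, (ii) have time-$n$ images of uniformly bounded volume inside $f^nD$, and (iii) number at most $e^{n(h_{\mu}(f,\mathcal{Q})+o(1))}$ among those meeting $\mathcal{G}_{\mu}(n,\varepsilon)$, the proof is incomplete.
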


%\paragraph{\bf{Notation}}
%Consider the set of points whose Cesaro limit measures of iterates are contained in $\G^{cu}(f)$, that is
%$$
%\B_f:=\left\{x\in M: \omega_{\M}(x,f)\subset \G^{cu}(f)\right\}
%$$
%We have the following result.

%\begin{theorem}\label{ttd}
%For every $f\in \mathcal{PH}^{1}_{\mathcal{EC}}(M)$,
%$\B_f$ has a full Lebesgue measure, it intersects every disk transverse to $E^{cs}$ with a full Lebesgue measure.
%\end{theorem}
The following result is essentially contained in \cite[$\S$ 5.2]{CYZ18}.
\begin{lemma}\label{led}
Let $f$ be a $C^1$ diffeomorphism with dominated splitting $TM=E\oplus_{\succ} F$. Let $\M_0$ be a compact subset of $\M(f)$. Then for any $C^1$ disk $D$ transverse to $F$, if
$\omega_{\M}(x,f)\subset \M_0$ for ${\rm Leb}_D$-almost every $x\in D$, then we have $\omega_{\M}(D,f)\subset \M_0$.
\end{lemma}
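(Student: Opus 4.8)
The plan is to deduce $\omega_{\M}(D,f)\subset \M_0$ from the fiberwise hypothesis by a disintegration and averaging argument, exploiting that $D$ is transverse to the contracting bundle $F$. First I would fix a Cesàro limit measure $\nu\in\omega_{\M}(D,f)$, so that $\nu=\lim_{k\to\infty}\frac{1}{n_k}\sum_{j=0}^{n_k-1}f_*^j\,{\rm m}_D$ along some subsequence $n_k\to\infty$. The key observation is that, by Fubini/disintegration, for each $n$ one can write $\frac{1}{n}\sum_{j=0}^{n-1}f_*^j\,{\rm m}_D=\int_D \bigl(\frac{1}{n}\sum_{j=0}^{n-1}\delta_{f^j x}\bigr)\,d{\rm m}_D(x)$; since $\M_0$ is convex (being a compact subset — here I would first replace $\M_0$ by its closed convex hull, which is still compact and still contains $\omega_{\M}(x,f)$ for a.e.\ $x$, and note $\omega_{\M}(D,f)\subset\M(f)$ consists of invariant measures so landing in $\overline{\mathrm{conv}}\,\M_0\cap\M(f)$ suffices once we know the extreme behaviour is controlled), the empirical average over $D$ is "morally" an average of measures whose limit points lie in $\M_0$.

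The honest way to run this is: suppose for contradiction that $\nu\notin\M_0$. Choose, via Urysohn, a continuous function $\varphi$ and $\eta>0$ with $\int\varphi\,d\nu > c+\eta$ while $\int\varphi\,d\mu \le c$ for all $\mu\in\M_0$, where $c=\sup_{\mu\in\M_0}\int\varphi\,d\mu$ (so the two sets are strictly separated in the value of $\int\varphi$; if $\M_0$ is not convex one instead uses a finite collection of test functions cutting out a weak$^*$ neighborhood of $\M_0$ and argues componentwise). Then $\int\varphi\,d\nu=\lim_k \frac{1}{n_k}\int_D\sum_{j}\varphi(f^j x)\,d{\rm m}_D(x)$. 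For ${\rm Leb}_D$-a.e.\ $x$, every measure in $\omega_{\M}(x,f)$ lies in $\M_0$, hence $\limsup_n \frac{1}{n}\sum_{j=0}^{n-1}\varphi(f^j x)\le c$ for a.e.\ $x$ (any limit point of the Birkhoff averages is $\int\varphi$ against an element of $\omega_{\M}(x,f)\subset\M_0$). By the reverse Fatou lemma (the integrands $\frac{1}{n}\sum_j\varphi\circ f^j$ are uniformly bounded by $\|\varphi\|_\infty$), $\limsup_k\frac{1}{n_k}\int_D\sum_j\varphi(f^jx)\,d{\rm m}_D(x)\le \int_D \limsup_k(\cdots)\,d{\rm m}_D(x)\le c$, contradicting $\int\varphi\,d\nu>c+\eta$. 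This gives $\nu\in\M_0$.

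I expect the main subtlety — and the reason the transversality of $D$ to $F$ is mentioned — to be ensuring that the relevant averages and limit sets behave well, i.e.\ that the hypothesis "$\omega_{\M}(x,f)\subset\M_0$ for a.e.\ $x$" is genuinely usable pointwise; the dominated-splitting/transversality structure guarantees (via the results quoted from \cite{CYZ18}, cf.\ Lemma \ref{pp} and Lemma \ref{DU}) that ${\rm Leb}_D$-a.e.\ point has well-behaved empirical statistics, so the reverse Fatou step is legitimate and no mass escapes. Once the pointwise statement and uniform boundedness are in hand, the averaging is routine. If one prefers to avoid the convex-hull maneuver, the cleanest route is to separate the compact set $\M_0$ from the point $\nu$ using finitely many continuous functions $\varphi_1,\dots,\varphi_m$ defining a basic weak$^*$-neighborhood $U\supset\M_0$ with $\nu\notin U$, and then apply the reverse-Fatou argument to a single scalar functional built from the $\varphi_i$ that detects membership in $U$; this is exactly the technique of \cite[$\S$ 5.2]{CYZ18}, which is why the statement is attributed there.
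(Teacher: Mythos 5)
Your core argument is sound as far as it goes, but it proves $\omega_{\M}(D,f)\subset\overline{\mathrm{conv}}(\M_0)$, not $\omega_{\M}(D,f)\subset\M_0$, and the devices you sketch for dispensing with convexity do not work. The measure $\frac{1}{n}\sum_{j=0}^{n-1}f_*^j{\rm m}_D=\int_D\bigl(\frac1n\sum_{j=0}^{n-1}\delta_{f^jx}\bigr)d{\rm m}_D(x)$ is a barycenter of empirical measures, so if $D$ splits into two positive-${\rm m}_D$-measure pieces on which the empirical measures converge to two distinct elements $\mu_1,\mu_2\in\M_0$, then by bounded convergence the whole sequence converges to a proper convex combination $a\mu_1+(1-a)\mu_2$; for a non-convex compact $\M_0$ (say $\M_0=\{\mu_1,\mu_2\}$) this limit lies outside $\M_0$ even though the hypothesis "$\omega_{\M}(x,f)\subset\M_0$ for a.e. $x$" holds. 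No finite family of test functions "cutting out a weak$^*$ neighborhood of $\M_0$" can repair this: membership in a non-convex set is not detected by linear functionals, and a non-convex compact set need not admit small convex neighborhoods, so both of your fallback suggestions (arguing componentwise, or building a single scalar functional detecting membership in $U$) fail at exactly this point. Thus, read literally for an arbitrary compact $\M_0$, your proof has a genuine gap — and in fact the separation-plus-reverse-Fatou scheme is the only thing available, which is why convexity is really needed (the paper's statement is best read with $\M_0$ convex, or with the conclusion weakened to the closed convex hull).

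That said, the gap is harmless for the way the lemma is used here: the only application is with $\M_0=G^{cu}(f)$, which is compact \emph{and convex} by Theorem \ref{fc}, and for convex $\M_0$ your argument (Hahn--Banach separation by a single continuous $\varphi$, the pointwise bound $\limsup_n\frac1n\sum_j\varphi(f^jx)\le\sup_{\mu\in\M_0}\int\varphi\,d\mu$ from $\omega_{\M}(x,f)\subset\M_0$, then reverse Fatou) is complete and is the same soft averaging technique as in \cite[$\S$5.2]{CYZ18}, which the paper cites in place of a proof. Two smaller remarks: the reverse Fatou step needs only $|\varphi|\le\|\varphi\|_\infty$, ${\rm m}_D(D)=1$ and compactness of $M$ (so no mass escapes automatically); the transversality of $D$ to $F$ plays no role in this lemma and your paragraph attributing the legitimacy of that step to the dominated splitting is a red herring — transversality is what makes the \emph{hypothesis} $\omega_{\M}(x,f)\subset G^{cu}(f)$ available in the application (via Lemmas \ref{DU} and \ref{E}), not what makes the averaging work.
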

Now we are in position to give the proof of Theorem \ref{ttd}.

\begin{proof}[Proof of Theorem \ref{ttd}]
Let $\theta$ and $R$ be the positive constants given by Lemma \ref{E}, where we assume $E:=E^{wu}=E^u\oplus E^{cu}$ and $F:=E^{cs}$. Now we take a $C^1$ disk $D$ transverse to $E^{cs}$, and prove the property (\ref{fde}) firstly.
By domination, up to considering iterates we can assume $D$ is compact and tangent to $\mathcal{C}_{\theta}^E$ with diameter smaller than $R$. 
Since $D$ can be foliated by disks transverse to $E^{cu}\oplus E^{cs}$, by Lemma \ref{DU} we know that any measure in $\omega_{\M}(x,f)$ is a Gibbs $u$-state for ${\rm Leb}_D$-almost every $x\in D$. 
Thus, according to Remark \ref{RK}, it suffices to prove that for ${\rm Leb}_D$-almost every point $x\in D$, any invariant measure $\mu\in \omega_{\M}(x,f)$ satisfies
\begin{equation}\label{1}
h_{\mu}(f)\ge \int \log |{\rm det}Df|_{E^{wu}}|d\mu.
\end{equation}
By Lemma \ref{pp}, for ${\rm Leb}_D$-almost every $x\in D$, any $\mu \in \omega_{\M}(x,f)$ is a pseudo-physical measure relative to $D$, denote
$$
P_{\mu}=h_{\mu}(f)-\int \log |{\rm det}Df|_{E^{wu}}|d\mu.
$$

\smallskip
Now we show that $\mu$ satisfies (\ref{1}), that is $P_{\mu}\ge 0$.
From Definition \ref{pds} we know that for every $\varepsilon>0$, there exists $\eta_{\varepsilon}>0$ such that 
$$
{\rm Leb}_D\left(\{x\in D: {\rm dist}(\omega_{\M}(x,f),\mu)<\varepsilon\}\right)>\eta_{\varepsilon},
$$
this together with the fact
$$
\{x\in D: {\rm dist}(\omega_{\M}(x,f),\mu)<\varepsilon\}\subset \bigcap_{k=1}^{\infty}\bigcup_{n=k}^{\infty}\mathcal{G}_{\mu}(n,\varepsilon)
$$
yields the estimates
\begin{equation}\label{psl}
{\rm Leb}_D\left(\bigcup_{n=k}^{\infty}\mathcal{G}_{\mu}(n,\varepsilon)\right)>\eta_{\varepsilon}\quad\textrm{for any}~ k\in \NN.
\end{equation}
By contradiction, there is $\delta>0$ such that 
$P_{\mu}<-2\delta$. 
Using Lemma \ref{E}, there are constants $\varepsilon>0$ and $L>0$ such that for every $n\in \NN$, 
$$
{\rm Leb}_{D}(\mathcal{G}_{\mu}(n,\varepsilon))<L\cdot{\rm exp}(n(P_{\mu}+\delta))\le L\cdot{\rm exp}(-\delta n).
$$
%Then, Lemma \ref{pp} gives that for every $\mu\in \M_{\rm inv}(f)$, for any $\delta>0$, there exist $C>0$ and $\varepsilon>0$ such that 
%$$
%{\rm Leb}_{D}(\mathcal{G}_{\mu}(n,\varepsilon))<C\cdot{\rm exp}(n(P_{\mu}+\delta))\quad \textrm{for every}~n\in \NN.
%$$
%On the other hand, by Lemma \ref{pp}, for Lebesgue almost every $x\in D$, any $\mu \in \omega_{\M}(x,f)$ is a pseudo-physical measure relative to $D$, recalling its definition in Definition \ref{pds} we know that for every $\varepsilon>0$, there exists $\eta_{\varepsilon}>0$ such that 
%$$
%{\rm Leb}_D\left(\{x\in M: {\rm dist}(\omega_{\M}(x,f),\mu)<\varepsilon\}\right)>\eta_{\varepsilon}.
%$$
%This together with the fact
%$$
%\{x\in M: {\rm dist}(\omega_{\M}(x,f),\mu)<\varepsilon\}\subset \bigcap_{k=1}^{\infty}\bigcup_{n=k}^{\infty}\mathcal{G}_{\mu}(n,\varepsilon),
%$$
%yields the estimates
%\begin{equation}\label{psl}
%{\rm Leb}_D\left(\bigcup_{n=k}^{\infty}\mathcal{G}_{\mu}(n,\varepsilon)\right)>\eta_{\varepsilon}\quad\textrm{for any}~ n\in \NN.
%\end{equation}
%Now we show $\mu$ satisfies $(\ref{1})$. 
%We assume by contradiction that $\mu$ is not contained in $\G^{cu}(f)$. Hence, there is $\delta>0$ such that either 
%$P_{\mu}<-2\delta$ or $Q_{\mu}<-2\delta$. For this $\delta>0$, from argument on (\ref{db}) one knows that there are constants $\varepsilon>0$ and $L>0$ such that for every $n\in \NN$, 
%$$
%{\rm Leb}_{D}(\mathcal{G}_{\mu}(n,\varepsilon))<L\cdot{\rm exp}(n\left(\min\{P_{\mu},Q_{\mu}\}+\delta\right))\le L\cdot{\rm exp}(-\delta n).
%$$
Consequently, for every $k\in \NN$,
$$
{\rm Leb}_D\left(\bigcup_{n=k}^{\infty}\mathcal{G}_{\mu}(n,\varepsilon)\right)\le L\sum_{n=k}^{\infty}{\rm exp}(-\delta n),
$$
which converges to zero as $k\to +\infty$. This a contradiction to (\ref{psl}) and thus we obtain that $\mu$ satisfies (\ref{1}). We get property (\ref{fde}) now. 

\smallskip
Property (\ref{fxe}) is a consequence of (\ref{fde}) and Theorem \ref{fc}. Indeed, we have that $G^{cu}(f)$ is compact by Theorem \ref{fc}, and we know $\omega_{\M}(x,f)\subset G^{cu}(f)$ for ${\rm Leb}_D$-almost every $x\in D$ from (\ref{fde}). Therefore, by applying Lemma \ref{led} to $E:=E^{wu}$, $F:=E^{cs}$ and $\M_0:=G^{cu}(f)$, one gets $\omega_{\M}(D,f)\subset G^{cu}(f)$. Observe that $M$ can be covered by finitely many small balls, which can be foliated by disks transverse to $E^{cs}$, thus one can use the Fubini's theorem to show that $\B_f$ has full Lebesgue measure.

The proof of Theorem \ref{ttd} is complete now.
\end{proof}

\subsection{Typical measurable partitions of Gibbs $cu$-states}\label{subty}
Let us take $\delta_M>0$ sufficiently small such that for any $x\in M$ the inverse of
exponential map ${\rm exp}_x^{-1}$ is well defined on the $\delta_M$ open neighborhood of $x$. We then identify this neighborhood with the corresponding neighborhood on tangent space. 

Fix $f\in PH^1_{EC}(M)$ with partially hyperbolic splitting $TM=E^u\oplus_{\succ} E^{cu}\oplus_{\succ} E^{cs}$, let $\mathcal{C}_a^{wu}:=\mathcal{C}_a^{E^{wu}}$ for $a>0$. Denote by $\DD^u$, $\DD^s$ the compact unit disks in the Euclidean space of dimension ${\rm dim}E^{wu}$ and ${\rm dim}E^{cs}$, respectively.

\begin{definition}
A subset $C\subset M$ is called a cylinder if there exists a diffeomorphism $\psi: \DD^u\times \DD^s \to C$. We say that a smooth embedded sub-manifold $\gamma$ of dimension ${\rm dim} E^{wu}$ (resp. ${\rm dim} E^{cs}$) crosses $C$ when $\psi^{-1}(\gamma\cap C)$ is a $C^1$ graph over $\DD^u$ (resp. $\DD^s$).
\end{definition}

By domination, we can choose $a>0$ sufficiently small, and $0<\rho_0<\delta_M$ with following properties: if consider $D$ as a compact smooth disk centered at $x\in M$ with radius $\rho\le \rho_0$, which is tangent to $\mathcal{C}_a^{wu}$ also, then the subset 
$$
C(x,D,\rho)=\bigcup_{y\in D}\left\{{\rm exp}_y(v): v\perp T_yD, \|v\| \le \rho \right\}
$$
is a diffeomorphic image of $\DD^u\times \DD^s$, and thus a cylinder (of size $\rho$). 

\smallskip
Given $\alpha>0$ and $\ell\in \NN$. By Lemma \ref{size}, one can take $\delta_{\ell,\alpha}$ such that all the points of $\Lambda_{\ell}(f,\alpha)$ admit local stable and unstable manifolds of size larger than $\delta_{\ell,\alpha}$. 
Thus, for any $\rho<\delta_{\ell,\alpha}$ sufficiently small, by domination and the choice of $\rho_0$ one can choose $r_{\ell,\alpha, \rho}\in (\rho,\min\{\delta_{\ell,\alpha},\rho_0\})$ such that for each $x\in M$ and disk $D$ tangent to $\mathcal{C}_a^{wu}$ centered at $x$ of radius $r_{\ell,\alpha,\rho}$, we have that both $W_{\delta_{\ell,\alpha}}^u(y,f)$ and $W_{\delta_{\ell,\alpha}}^s(y,f)$ cross $C(x,D, r_{\ell,\alpha,\rho})$ whenever $y\in B(x,\rho)\cap \Lambda_{\ell}(f,\alpha)$.
It follows that one can construct the measurable partition formed by local unstable disks with uniform size as follows:
$$
\P(x,D,\alpha,\ell, \rho)=\left\{W_{\delta_{\ell,\alpha}}^u(y,f)\cap C(x,D,r_{\ell,\alpha,\rho}): y\in B(x,\rho)\cap \Lambda_{\ell}(f,\alpha)\right\}.
$$
For $\mu\in \M(f)$, we say that $\P(x,D,\alpha,\ell,\rho)$ is a \emph{typical measurable partition} associated to $\mu$ if $\mu(B(x,\rho)\cap \Lambda_{\ell}(f,\alpha))>0$.

\smallskip
By Proposition \ref{t11}, we know that for every Gibbs $u$-state, there exists some typical measurable partition associated to it. Moreover, for Gibbs $cu$-states one has the following simple result. 
%Let us recall the Rokhlin's disintegration theorem (e.g. see \cite{R62}, \cite[Appendix C.4]{BDV05}) stated as follows.
%\begin{theorem}\label{rok}
%For any measurable partition $\F$, when denote by $\hat{\mu}$ the quotient measure of $\mu$ w.r.t. $\mathcal{F}$, there exists a unique family of \emph{conditional measures} $\{\mu_{\gamma}: \gamma\in \mathcal{F}\}$ of $\mu$ w.r.t. $\mathcal{F}$ such that 
%\begin{itemize}
%\smallskip
%\item[--] $\mu_{\gamma}(\gamma)=1$ for $\hat{\mu}$-almost every $\gamma\in \mathcal{F}$, and 
%\smallskip
%\item[--] for any measurable set $A$, $\gamma\mapsto \mu_{\gamma}(A)$ is measurable with $\mu(A)=\int \mu_{\gamma}(A){\rm d}\hat{\mu}$.
%\end{itemize}
%\end{theorem}
%
%Now we show the following simple result.
\begin{lemma}\label{ggh}
For every $\mu\in G^{cu}(f)$ and a typical measurable partition $\P(x,D,\alpha,\ell,\rho)$ associated to $\mu$, there exists $D\in \P(x,D,\alpha,\ell,\rho)$ such that 
\begin{itemize}
\item $D\subset {\rm supp}(\mu)$;
\smallskip
\item ${\rm Leb}_D(\Lambda_{\ell}(f,\alpha))>0$.
\end{itemize}
Moreover, if $\mu$ is ergodic for $f$, then we can
require $D$ to satisfy that Lebesgue almost every point of $D$ is contained in $B(\mu,f)$.
\end{lemma}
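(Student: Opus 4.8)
\textbf{Proof plan for Lemma \ref{ggh}.}

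The plan is to exploit the disintegration of $\mu$ along the typical measurable partition $\P := \P(x,D,\alpha,\ell,\rho)$ together with the two defining features of a Gibbs $cu$-state: its conditional measures along the unstable plaques (in the $C^{1+}$ case) are absolutely continuous with respect to Lebesgue, and it is supported on a set carrying positive weight inside the Pesin block $\Lambda_\ell(f,\alpha)$. First I would record that, by definition of a typical measurable partition, $\mu(B(x,\rho)\cap\Lambda_\ell(f,\alpha))>0$, so the saturated set $\bigcup_{D'\in\P}D'$ — which contains $B(x,\rho)\cap\Lambda_\ell(f,\alpha)$ by construction of $\P$ via the crossing property of local unstable manifolds — has positive $\mu$-measure. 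Disintegrating $\mu$ over $\P$, there is a positive-measure set of atoms $D'\in\P$ for which the conditional measure $\mu_{D'}$ is a genuine probability measure; since $\mu$ is a Gibbs $u$-state (indeed $cu$-state), each such $\mu_{D'}$ is equivalent to ${\rm Leb}_{D'}$ on the plaque. This already gives plaques $D'$ with $D'\subset{\rm supp}(\mu)$ up to a Lebesgue-null set, but I want $D'\subset{\rm supp}(\mu)$ literally.

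For the support statement, the key point is that ${\rm supp}(\mu)$ is closed and that each plaque $D'=W^u_{\delta_{\ell,\alpha}}(y,f)\cap C(x,D,r_{\ell,\alpha,\rho})$ is the closure of its own interior inside $\F^u$; so if $\mu_{D'}\ll{\rm Leb}_{D'}$ and $\mu_{D'}$ is not the zero measure, then $\mu_{D'}$ gives positive mass to every relatively open subset of $D'$ once we know $\mu_{D'}$ has full topological support in $D'$. The latter follows from the absolute continuity being two-sided (equivalence, not just $\ll$), a standard consequence of the Gibbs $u$-state property and bounded distortion along unstable plaques. Hence $D'\subset{\rm supp}(\mu_{D'})\subset{\rm supp}(\mu)$. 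For the second bullet, I choose the atom $D'$ among the positive-$\mu$-measure family so that moreover ${\rm Leb}_{D'}(\Lambda_\ell(f,\alpha))>0$: this is possible because $\mu(\,\cdot\cap\Lambda_\ell(f,\alpha))$ restricted to the $\P$-saturated set is positive, disintegrating it over $\P$ forces a positive-measure set of atoms $D'$ with $\mu_{D'}(\Lambda_\ell(f,\alpha))>0$, and by $\mu_{D'}\ll{\rm Leb}_{D'}$ this yields ${\rm Leb}_{D'}(\Lambda_\ell(f,\alpha))>0$. Intersecting the two positive-measure families of atoms gives a single $D'$ with both properties.

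For the ergodic addendum, I would use Corollary \ref{TheoA'} (or directly the SRB property from Theorem \ref{ttd} plus the classical theory of SRB measures): when $\mu\in G^{cu}(f)$ is ergodic it is a physical measure, so $B(\mu,f)$ has positive Lebesgue measure on $M$, and by absolute continuity of $\mu$ along unstable plaques together with the absolute continuity of the (Pesin) stable holonomies — the points of $\Lambda_\ell(f,\alpha)$ having uniform-size stable manifolds transverse to the unstable plaques by Lemma \ref{size} — the basin intersects ${\rm Leb}_{D'}$-almost every point of a positive-$\mu$-measure family of plaques $D'$. Concretely, $B(\mu,f)$ is $s$-saturated up to a null set, and sliding along stable manifolds inside the cylinder $C(x,D,r_{\ell,\alpha,\rho})$ via the absolutely continuous stable holonomy transports the full-measure basin information from a transverse disk onto almost every unstable plaque meeting $\Lambda_\ell(f,\alpha)$. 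Choosing $D'$ in the intersection of this family with the two families above completes the proof. The main obstacle I anticipate is the last step: making the stable-holonomy transfer rigorous requires the absolute continuity of Pesin stable laminations over the Pesin block, which in the $C^1$ setting is not automatic and must be invoked from the uniform estimates of Lemma \ref{size} (uniform contraction rates and uniform plaque sizes) rather than from classical $C^{1+}$ Pesin theory; I would lean on the uniform hyperbolicity on $\Lambda_\ell(f,\alpha)$ to get the holonomy absolute continuity on that block, which suffices since we only need one good plaque.
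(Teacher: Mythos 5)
Your treatment of the two bullet points is essentially the paper's argument: disintegrate $\mu$ over the typical partition, use that the conditional measures on the atoms are equivalent to Lebesgue (the paper quotes \cite[Theorem 13.1.2]{bp07}), pick an atom carrying positive conditional mass of $\Lambda_{\ell}(f,\alpha)$, and upgrade ``${\rm Leb}_D$-a.e.\ point in ${\rm supp}(\mu)$'' to $D\subset{\rm supp}(\mu)$ by a closed-support/open-neighborhood argument. One attribution slip: the atoms of $\P(x,D,\alpha,\ell,\rho)$ are local unstable manifolds tangent to $E^{wu}=E^u\oplus E^{cu}$, so the equivalence of conditionals with ${\rm Leb}$ on these plaques comes from the Gibbs $cu$-state (SRB) property, not from the Gibbs $u$-state property plus bounded distortion, which only governs strong unstable leaves tangent to $E^u$.

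The genuine gap is in the ergodic addendum. The paper's proof is one line: for ergodic $\mu$, Birkhoff's ergodic theorem gives $\mu(B(\mu,f))=1$, and then the very same equivalence of conditional measures with ${\rm Leb}$ lets you choose the atom $D$ so that ${\rm Leb}_D$-a.e.\ point lies in $B(\mu,f)$; no physicality of $\mu$ and no stable holonomy are needed. Your route -- first invoking that ergodic Gibbs $cu$-states are physical, then transporting the positive-Lebesgue basin from $M$ back onto the plaques by absolutely continuous stable holonomies over the Pesin block -- is not only a heavy detour (and flirts with circularity, since the plaque--basin statement of Lemma \ref{ggh} is exactly what the paper later uses to relate physical measures to the skeleton), but its key step is unjustified as you state it: absolute continuity of the stable lamination over $\Lambda_{\ell}(f,\alpha)$ does \emph{not} follow from the uniform contraction rates and uniform plaque sizes of Lemma \ref{size}. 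Absolute continuity of holonomies requires bounded distortion, hence H\"older regularity of $Df$; with only $C^1$ regularity there are well-known examples where stable foliations fail to be absolutely continuous, so ``uniform hyperbolicity on the block'' cannot substitute for the $C^{1+}$ hypothesis. Replacing that step by the Birkhoff observation removes the difficulty entirely.
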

\begin{proof}
%By Rokhlin's disintegration theorem (see e.g. \cite{R62}, \cite[Appendix C.4]{BDV05}), $\mu$ has the disintegration along $\P$, that is $\mu=\int \mu_D d\hat{\mu}$, where $\{\mu_D\}$ are conditional measure of $\mu$ along $\P$, and $\hat{\mu}$ is the quotient measure. Since $\mu$
%
%there exists $D\in \mathcal{P}$ with positive $\hat{\mu}$-measure such that 
%$$
%\mu_{D}(\Lambda_{\ell}(f,\alpha))>0\quad \textrm{and}\quad \mu_D({\rm supp}\mu)=1.
%$$ 
Since $\mu$ is a Gibbs $cu$-state, the conditional measures of $\mu$ along unstable disks of $\P(x,D,\alpha,\ell,\rho)$ are equivalent to the Lebesgue measures almost everywhere(see e.g. \cite[Theorem 13.1.2]{bp07}). Consequently, there exists $D\in \P(x,D,\alpha,\ell,\rho)$ such that ${\rm Leb}_D(\Lambda_{\ell}(g,\alpha))>0$ and Lebesgue almost every point of $D$ is contained in ${\rm supp}(\mu)$.

Now we show $D\subset {\rm supp}(\mu)$. From the choice of $D$, we can choose $y\in D$ such that $y\in {\rm supp}(\mu)$, and assume by contradiction that there is $z\in D$ which is not in ${\rm supp}(\mu)$. It follows that there exists an open neighborhood $V$ of $z$ such that $V\subset M \setminus{{\rm supp}(\mu)}$, thus all the points in $V\cap D$ are not in ${\rm supp}(\mu)$, which is a contradiction to the fact that Lebesgue almost every point of $D$ is contained in ${\rm supp}(\mu)$. 

If $\mu \in G^{cu}(f)$ is ergodic, by Birkhoff's ergodic theorem one gets that $B(\mu,f)$ has full $\mu$-measure. Thus, one can choose $D$ to satisfy that ${\rm Leb}_D$-almost every point is contained in $B(\mu,f)$. 
\end{proof}

\section{Certain properties of skeletons and pre-skeletons}\label{sec5}

\paragraph{\bf{Notation}} Given a $C^1$ diffeomorphism $f$, let $p,q$ be hyperbolic periodic points of $f$ with the same index, denote 
\begin{itemize}
\smallskip
\item
$p\overset{f}\prec q$ if $W^u({\rm Orb}(p,f))\pitchfork W^s({\rm Orb}(q,f))\neq \emptyset.$
\smallskip
\item
$p\overset{f}\sim q$ if $p\overset{f}\prec q$ and $q\overset{f}\prec p$, or equivalently they are homoclinic related w.r.t. $f$.
\end{itemize}
Sometimes we drop the superscript ``$f$" if there is no ambiguity.

\smallskip
In what follows, let $f$ be a $C^1$ diffeomorphism in $PH^1_{EC}(M)$ with partially hyperbolic splitting $TM=E^u\oplus_{\succ} E^{cu}\oplus_{\succ} E^{cs}$ \footnote{Notice that in the proofs of lemmas in this section, we use neither the mostly expanding behavior nor the mostly contracting behavior on centers.}.
By definition of pre-skeletons, one gets the following result directly.

\begin{lemma}\label{sc}
Given $n\in \NN$, if $\mathcal{S}=\{p_1,\cdots, p_k\}$ is a pre-skeleton of $f^n$, then it is a pre-skeleton of $f$.
\end{lemma}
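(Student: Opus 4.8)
The plan is to transfer the single defining property of a pre-skeleton, condition (S\ref{S1}), from $f^n$ to $f$, after checking that the auxiliary data entering the definition are unaffected by replacing $f$ with its $n$-th iterate. First I would note that $TM=E^u\oplus_{\succ}E^{cu}\oplus_{\succ}E^{cs}$ is again a partially hyperbolic splitting for $f^n$: the domination inequality for $f^n$ follows by telescoping that of $f$, and $E^u$ stays uniformly expanding. Hence the property of a $C^1$ disk being transverse to $E^{cs}$ is the same for $f$ and for $f^n$. Moreover each $p_i\in\mathcal{S}$ is a hyperbolic periodic point for $f$ if and only if it is one for $f^n$, and its stable index is the same in both cases, since the hyperbolic splitting of $T_{p_i}M$ for $Df^{\pi_i}$ (with $\pi_i$ the $f$-period of $p_i$) coincides with that for its power $Df^{n\pi_i}=(Df^{\pi_i})^n$. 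Thus $\mathcal{S}$ is automatically a finite set of hyperbolic periodic points of $f$ of stable index ${\rm dim}E^{cs}$, and only (S\ref{S1}) for $f$ has to be checked.

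The key step is to compare the stable manifolds of a periodic orbit for $f$ and for $f^n$. Fix $p\in\mathcal{S}$, write $\pi$ for its $f$-period, and note ${\rm Orb}(p,f^n)\subset{\rm Orb}(p,f)$. I claim that $W^s({\rm Orb}(p,f^n))\subset W^s({\rm Orb}(p,f))$ and that near each point $z$ of $W^s({\rm Orb}(p,f^n))$ the two sets coincide as $C^1$ immersed submanifolds, so that in particular $T_zW^s({\rm Orb}(p,f^n))=T_zW^s({\rm Orb}(p,f))$. Indeed, if $z$ lies in the leaf $W^s(y,f^n)$ through a point $y=f^r(p)\in{\rm Orb}(p,f^n)$, then $y$ is a hyperbolic fixed point both of $f^{\pi}$ and of $f^N$ with $N={\rm lcm}(n,\pi)$; since $\pi\mid N$, the local stable manifold of $y$ is the same whether computed with respect to $f^{\pi}$, to $f^n$, or to $f^N$, because the local stable manifold of a hyperbolic periodic point does not change when one passes to an iterate of the map. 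Writing $f^N=(f^{\pi})^{N/\pi}$ and using $f^{\pi}(y)=y$, a short argument on subsequences gives $f^{\pi k}(z)\to y$, hence $z\in W^s(y,f^{\pi})\subset W^s({\rm Orb}(p,f))$; and, by the coincidence of the local stable manifolds at $y$, a small neighborhood of $z$ in $W^s(y,f^n)$ is also a neighborhood of $z$ in $W^s(y,f^{\pi})$, which gives the asserted local coincidence.

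Finally, to obtain (S\ref{S1}) for $f$, take a $C^1$ disk $\gamma$ transverse to $E^{cs}$; condition (S\ref{S1}) for the pre-skeleton $\mathcal{S}$ of $f^n$ produces $p\in\mathcal{S}$ and a transversal intersection point $z\in\gamma\pitchfork W^s({\rm Orb}(p,f^n))$. By the previous paragraph $z\in W^s({\rm Orb}(p,f))$ and $T_zW^s({\rm Orb}(p,f))=T_zW^s({\rm Orb}(p,f^n))$, so $T_z\gamma\oplus T_zW^s({\rm Orb}(p,f))=T_zM$, i.e. $\gamma\pitchfork W^s({\rm Orb}(p,f))\neq\emptyset$. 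Hence $\mathcal{S}$ satisfies (S\ref{S1}) for $f$ and is a pre-skeleton of $f$. The only (mild) obstacle I anticipate is the bookkeeping of the middle paragraph — namely identifying $W^s({\rm Orb}(p,f^n))$ with a piece of $W^s({\rm Orb}(p,f))$ near the relevant intersection point — which ultimately rests on the elementary fact that the local (un)stable manifold of a hyperbolic periodic point is insensitive to replacing the dynamics by one of its iterates.
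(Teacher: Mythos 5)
Your argument is correct: the paper offers no proof, stating the lemma follows "directly" from the definition of pre-skeletons, and your verification spells out exactly that direct route — periodic points, hyperbolicity and stable index are unchanged under passing to the iterate, and $W^s({\rm Orb}(p,f^n))$ sits inside $W^s({\rm Orb}(p,f))$ with the same local leaves, so condition (S\ref{S1}) transfers. (As a small simplification, the set-equality $W^s(y,f^n)=W^s(y,f)$ already follows for any $y$ from uniform continuity of $f,\dots,f^{n-1}$ on the compact manifold, without invoking local stable manifolds.)
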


The proof of the following result is very similar to \cite[Lemma 2.5]{dvy16}, hence we omit the proof.
\begin{lemma}\label{ske3}
Every pre-skeleton of $f$ contains a subset to be a skeleton of $f$.
\end{lemma}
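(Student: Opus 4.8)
\textbf{Proof proposal for Lemma \ref{ske3}.}
The plan is to start from an arbitrary pre-skeleton $\mathcal{S}=\{p_1,\dots,p_k\}$ of $f$ and to produce a subset of it which still satisfies (S\ref{S1}) but in addition satisfies (S\ref{S2}); that is, we must remove ``redundant'' periodic points until no heteroclinic intersections remain among the survivors. First I would introduce on $\mathcal{S}$ the relation $\overset{f}\prec$ recalled in the Notation above, together with the equivalence relation $\overset{f}\sim$ of being homoclinically related, and pass to the quotient. The key structural observation is that $\overset{f}\prec$ descends to a \emph{partial order} on the $\overset{f}\sim$-equivalence classes: transitivity holds because a transversal intersection $W^u(\mathrm{Orb}(p))\pitchfork W^s(\mathrm{Orb}(q))$ together with $W^u(\mathrm{Orb}(q))\pitchfork W^s(\mathrm{Orb}(r))$ produces, by the inclination (lambda) lemma, points of $W^u(\mathrm{Orb}(p))\pitchfork W^s(\mathrm{Orb}(r))$; and antisymmetry on classes is exactly the definition of $\overset{f}\sim$. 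Since $\mathcal{S}$ is finite, the quotient poset is finite and therefore has at least one maximal element in each of its connected pieces.

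Next I would choose, inside each $\overset{f}\sim$-equivalence class, a single representative, and let $\mathcal{S}_1\subset\mathcal{S}$ be the set of these representatives; then I would keep only those representatives whose class is \emph{maximal} for the induced partial order, obtaining $\mathcal{S}_0\subset\mathcal{S}_1\subset\mathcal{S}$. I claim $\mathcal{S}_0$ is a skeleton. Condition (S\ref{S2}) is immediate: if $p,q\in\mathcal{S}_0$ are distinct, they lie in distinct classes, and $W^s(\mathrm{Orb}(p))\pitchfork W^u(\mathrm{Orb}(q))\neq\emptyset$ would mean $q\overset{f}\prec p$, contradicting maximality of the class of $q$ unless $q\overset{f}\sim p$, which is excluded. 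For (S\ref{S1}), let $\gamma$ be a $C^1$ disk transverse to $E^{cs}$. Since $\mathcal{S}$ is a pre-skeleton, there is $p\in\mathcal{S}$ with $\gamma\pitchfork W^s(\mathrm{Orb}(p,f))\neq\emptyset$. If $p\notin\mathcal{S}_0$, its class is dominated by a maximal class, represented by some $p'\in\mathcal{S}_0$ with $p\overset{f}\prec p'$ (or $p\overset{f}\sim p'$); in either case $W^u(\mathrm{Orb}(p))\pitchfork W^s(\mathrm{Orb}(p'))\neq\emptyset$. A transverse intersection point of $\gamma$ with $W^s(\mathrm{Orb}(p))$ has a forward orbit accumulating on $\mathrm{Orb}(p)$, so a small disk around it inside $\gamma$ is, after iteration, a disk $C^1$-close to a piece of $W^u(\mathrm{Orb}(p))$; applying the inclination lemma to the transverse intersection $W^u(\mathrm{Orb}(p))\pitchfork W^s(\mathrm{Orb}(p'))$ shows that this iterated disk, hence $\gamma$ itself, meets $W^s(\mathrm{Orb}(p'))$ transversally. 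Thus (S\ref{S1}) holds for $\mathcal{S}_0$.

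The main obstacle, and the point that genuinely uses the hypothesis that $\gamma$ is transverse to $E^{cs}$ (rather than being an arbitrary disk), is the last step: propagating a transverse intersection with $W^s(\mathrm{Orb}(p))$ along the heteroclinic chain to get a transverse intersection with $W^s(\mathrm{Orb}(p'))$. One must be careful that the forward iterates of a small sub-disk of $\gamma$ stay transverse to $E^{cs}$ and converge, in the $C^1$ sense, to an unstable disk of $p$; this is where domination and the partial hyperbolicity of $f$ enter, ensuring the $E^{cs}$-transverse cone is forward-invariant and exponentially attracting toward the center-unstable directions, so that the inclination lemma applies uniformly along the finitely many orbits $\mathrm{Orb}(p),\mathrm{Orb}(p')$. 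Since, as the excerpt notes, this argument parallels \cite[Lemma 2.5]{dvy16} almost verbatim once the poset reduction is in place, I would at this point simply invoke that reference for the remaining routine verifications rather than reproduce them.
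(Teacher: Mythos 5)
Your argument is correct and is essentially the same reduction the paper has in mind when it refers to \cite[Lemma 2.5]{dvy16}: order the pre-skeleton by the heteroclinic relation $\overset{f}\prec$ modulo homoclinic equivalence, retain representatives of maximal classes (which gives (S\ref{S2}) at once), and propagate the transverse intersection required by (S\ref{S1}) along the chain via the inclination lemma. The only small inaccuracy is your closing remark: the transversality of $\gamma$ to $E^{cs}$ is used only to invoke the pre-skeleton property itself, while the propagation step needs nothing beyond the transversality of $\gamma$ to $W^s({\rm Orb}(p,f))$ at the intersection point, so no cone-field or domination argument is required there.
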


\begin{lemma}\label{ske1}
If both $\{p_i: 1\le i \le k\}$ and $\{q_i: 1\le i \le \ell \}$ are skeletons of $f$, then there exists a bijective map $i\mapsto j(i)$ such that $p_i\sim q_{j(i)}$. Consequently, 
we must have $k=\ell$.
\end{lemma}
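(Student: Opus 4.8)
\textbf{Proof proposal for Lemma \ref{ske1}.}

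The plan is to establish that each $p_i$ is homoclinically related to exactly one $q_j$, using the defining properties (S\ref{S1}) and (S\ref{S2}) of skeletons. First I would show that for each $p_i$ there exists at least one index $j$ with $p_i \overset{f}\prec q_j$. To see this, observe that the unstable manifold $W^u(\mathrm{Orb}(p_i,f))$ contains a $C^1$ disk transverse to $E^{cs}$ (indeed, $W^u_{loc}(p_i,f)$ is tangent to $E^{wu}=E^u\oplus E^{cu}$, which is transverse to $E^{cs}$ by domination). Applying condition (S\ref{S1}) for the skeleton $\{q_j\}$ to such a disk, we obtain some $q_{j}$ with $W^u(\mathrm{Orb}(p_i,f)) \pitchfork W^s(\mathrm{Orb}(q_j,f)) \neq \emptyset$, i.e. $p_i \overset{f}\prec q_j$. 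Symmetrically, there is some $q_{j}$ and some index with $q_j \overset{f}\prec p_i$, and likewise every $q_j$ dominates and is dominated by some $p_i$.

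Next I would upgrade these one-sided relations to a homoclinic relation. Suppose $p_i \overset{f}\prec q_j$ and, applying the above to $q_j$, pick $p_{i'}$ with $q_j \overset{f}\prec p_{i'}$. If $i = i'$ we are done: $p_i \overset{f}\sim q_j$. Otherwise $i \neq i'$, and then by transitivity of the $\overset{f}\prec$ relation (the $\lambda$-lemma / inclination lemma lets one compose transverse heteroclinic intersections: $p_i \overset{f}\prec q_j$ and $q_j \overset{f}\prec p_{i'}$ give $W^u(\mathrm{Orb}(p_i,f)) \pitchfork W^s(\mathrm{Orb}(p_{i'},f)) \neq \emptyset$) we would get $p_i \overset{f}\prec p_{i'}$ with $p_i \neq p_{i'}$ both in the skeleton $\{p_i\}$, contradicting (S\ref{S2}). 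Hence $i = i'$, so every $p_i$ is homoclinically related to some $q_{j(i)}$. The same argument run with the roles of the two skeletons exchanged shows every $q_j$ is homoclinically related to some $p_{\iota(j)}$.

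To conclude that $i \mapsto j(i)$ is a well-defined bijection, I would check injectivity and that it is inverse to $\iota$. For injectivity: if $j(i_1) = j(i_2) = j$ with $i_1 \neq i_2$, then $p_{i_1} \overset{f}\sim q_j \overset{f}\sim p_{i_2}$, so composing transverse intersections via the $\lambda$-lemma gives $p_{i_1} \overset{f}\prec p_{i_2}$, again contradicting (S\ref{S2}) for $\{p_i\}$. Surjectivity follows because $\iota$ provides a map in the other direction and $j(\iota(j)) = j$ (if $q_j \overset{f}\sim p_{\iota(j)}$ and $p_{\iota(j)} \overset{f}\sim q_{j(\iota(j))}$, then $q_j \overset{f}\prec q_{j(\iota(j))}$, forcing $j = j(\iota(j))$ by (S\ref{S2}) applied to $\{q_j\}$). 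Thus $i \mapsto j(i)$ is a bijection between $\{1,\dots,k\}$ and $\{1,\dots,\ell\}$, giving $k = \ell$.

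The main obstacle I anticipate is making the transitivity of $\overset{f}\prec$ fully rigorous in this partially hyperbolic setting: the $\lambda$-lemma must be applied to manifolds that may not be of the strong stable/unstable type, so one should be careful that the relevant invariant manifolds of hyperbolic \emph{periodic} points do have well-defined global stable/unstable manifolds of the appropriate dimension (they do, since the periodic points are hyperbolic of stable index $\dim E^{cs}$) and that transversality of the intersections is preserved under composition. Everything else is a routine counting argument once this composition property is in hand; note also that the remark in the excerpt ("we use neither the mostly expanding nor the mostly contracting behavior") signals that only the hyperbolicity of the periodic points and the combinatorics of (S\ref{S1})–(S\ref{S2}) are needed here.
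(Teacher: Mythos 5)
Your proposal is correct and follows essentially the same argument as the paper: apply condition (S1) to the local unstable manifold $W^u_{loc}(p_i,f)$ (a disk transverse to $E^{cs}$) to find $q_j$ with $p_i\prec q_j$, then find $p_{i'}$ with $q_j\prec p_{i'}$, compose the transverse intersections via the inclination lemma, and invoke (S2) to force $i'=i$, yielding $p_i\sim q_{j(i)}$ with the correspondence unique in both directions. Your explicit injectivity/surjectivity bookkeeping is just a slightly more detailed rendering of the paper's "the choice of $q_j$ is unique ... similarly for $\mathcal{S}_2$" step, so there is no substantive difference.
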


\begin{proof}
Consider two skeletons $\mathcal{S}_1=\{p_i: 1\le i \le k\}$ and $\mathcal{S}_2=\{q_i: 1\le i \le \ell \}$ of $f$. For each $p_i\in \mathcal{S}_1$, we know that $W^u_{loc}(p_i,f)$ is a $C^1$ disk transverse to $E^{cs}$. It follows from the condition (S\ref{S1}) in Definition \ref{S}, there is $q_j\in \mathcal{S}_2$ so that $W^u_{loc}(p_i,f)$ intersects $W^s({\rm Orb}(q_j,f))$ transversally.
For this $q_j$, with the same reason we can find $p_l\in \mathcal{S}_1$ such that $W^u_{loc}(q_j,f)$ intersects $W^s({\rm Orb}(p_l,f))$ transversally. As a consequence, inclination lemma \cite[Theorem 5.7.2]{BG02} leads to 
$$
W^u_{loc}(p_i,f)\pitchfork W^s({\rm Orb}(p_l,f))\neq \emptyset.
$$
Thus, condition (S\ref{S2}) in Definition \ref{S} gives $p_i=p_l$. Hence, we have $p_i\sim q_j$, and the choice of $q_j$ is unique. Similarly, each element of $\mathcal{S}_2$ has a unique element of $\mathcal{S}_1$ such that they are homoclinically related. Consequently, we have $k=l$.
\end{proof}

\begin{lemma}\label{ske2}
If $\{p_i: 1\le i \le k\}$ is a skeleton of $f$, then $H(p_i,f)=\overline{W^u({\rm Orb}(p_i,f))}$ for every $p_i$.
\end{lemma}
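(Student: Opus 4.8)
The plan is to prove the two inclusions separately, the nontrivial one reducing to conditions (S\ref{S1}) and (S\ref{S2}) applied to arbitrarily small unstable plaques. The inclusion $H(p_i,f)\subseteq \overline{W^u({\rm Orb}(p_i,f))}$ is immediate, since every transversal homoclinic point of ${\rm Orb}(p_i,f)$ lies on $W^u({\rm Orb}(p_i,f))$ and $\overline{W^u({\rm Orb}(p_i,f))}$ is closed. So the content is the reverse inclusion, for which it suffices to show $W^u({\rm Orb}(p_i,f))\subseteq H(p_i,f)$ and then take closures. I would first record, exactly as in the proof of Lemma \ref{ske1}, that $W^u({\rm Orb}(p_i,f))$ is everywhere transverse to $E^{cs}$: since $p_i$ has stable index ${\rm dim}E^{cs}$, the disk $W^u_{loc}(p_i,f)$ is tangent at $p_i$ to $E^{wu}=E^u\oplus_{\succ}E^{cu}$, hence its tangent spaces lie in some cone field $\mathcal{C}_{\theta}^{E^{wu}}$; by the domination $TM=E^{wu}\oplus_{\succ}E^{cs}$ this cone field is forward invariant, so the tangent spaces of $W^u({\rm Orb}(p_i,f))=\bigcup_{n\ge 0}f^n(W^u_{loc}(p_i,f))$ remain in $\mathcal{C}_{\theta}^{E^{wu}}$ and are therefore transverse to $E^{cs}$.

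The main step is then the following. Fix $z\in W^u({\rm Orb}(p_i,f))$ and a sufficiently small compact $C^1$ disk $\gamma\subset W^u({\rm Orb}(p_i,f))$ centered at $z$; by the previous paragraph $\gamma$ is a $C^1$ disk (of dimension ${\rm dim}E^{wu}$) transverse to $E^{cs}$, hence admissible in (S\ref{S1}). By (S\ref{S1}) there is $p_j\in\mathcal{S}$ with $\gamma\pitchfork W^s({\rm Orb}(p_j,f))\neq\emptyset$; since $\gamma$ has the full unstable dimension, such an intersection point $w$ satisfies $T_w\gamma=T_wW^u({\rm Orb}(p_i,f))$, so it is also a transversal intersection point of $W^u({\rm Orb}(p_i,f))$ with $W^s({\rm Orb}(p_j,f))$, i.e. $p_i\overset{f}\prec p_j$. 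If $p_i\neq p_j$ this contradicts (S\ref{S2}), so $p_j=p_i$, and $w\in\gamma$ is a transversal homoclinic point of ${\rm Orb}(p_i,f)$, hence $w\in H(p_i,f)$. Letting the radius of $\gamma$ tend to $0$, the corresponding points $w$ converge to $z$, and since $H(p_i,f)$ is closed we get $z\in H(p_i,f)$. Thus $W^u({\rm Orb}(p_i,f))\subseteq H(p_i,f)$, and taking closures together with the first inclusion yields $\overline{W^u({\rm Orb}(p_i,f))}=H(p_i,f)$.

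I do not expect a serious obstacle here; the argument is short. The two points that need care are the dimension/transversality bookkeeping — making sure that a nonempty transversal intersection of the subdisk $\gamma$ with a stable manifold is genuinely a transversal intersection of the whole unstable manifold, so that condition (S\ref{S2}) can be invoked — and the verification that small subdisks of the (immersed) global unstable manifold are legitimate test disks for (S\ref{S1}), which is precisely where the partial hyperbolicity enters, through the domination and the stable index of $p_i$.
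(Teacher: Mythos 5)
Your proposal is correct and follows essentially the same route as the paper: both take an arbitrarily small disk inside $W^u({\rm Orb}(p_i,f))$ around the given point, apply (S1) to find $p_j$ whose stable manifold meets it transversally, rule out $p_j\neq p_i$ via (S2), and conclude by letting the disk shrink. Your extra remarks on cone-field transversality of the global unstable manifold and the dimension bookkeeping merely make explicit details the paper leaves implicit.
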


\begin{proof}
Since we have $H(p_i,f)\subset\overline{W^u({\rm Orb}(p_i,f))}$ by definition, it suffices to show the other direction.  For every point $x\in W^u({\rm Orb}(p_i,f))$, for any $\varepsilon>0$ sufficiently small, let us take the disk $D_{\varepsilon}\subset W^u({\rm Orb}(p_i,f))$ centered at $x$ with radius $\varepsilon$. By condition (S\ref{S1}) in Definition \ref{S}, there is $p_j\in \mathcal{S}$ such that 
$$
D_{\varepsilon}\pitchfork W^s({\rm Orb}(p_j,f))\neq \emptyset.
$$
Observe that there is no heteroclinic intersection between $W^u({\rm Orb}(p_i,f))$ and $W^s({\rm Orb}(p_j,f))$ as long as $i\neq j$. Thus, we must have $p_j=p_i$, and the arbitrariness of $\varepsilon$ implies $x\in \overline{W^u({\rm Orb}(p_i,x))\pitchfork W^s({\rm Orb}(p_i,f))}$. Hence, we have $\overline{W^u({\rm Orb}(p_i,f))}\subset H(p_i,f)$ to end the proof.
\end{proof}

\section{Reduction of $PH^{1}_{EC}(M)$}\label{sec6}
Let $PH^1_{C}(M)$ be the subset of $PH^1_{EC}(M)$ for which if $f\in PH^1_{C}(M)$, then there exists a partially hyperbolic splitting $TM=E^u\oplus_{\succ} E^{cu}\oplus_{\succ} E^{cs}$ such that for every $\mu \in G^u(f)$, we have 
$$
\chi_{\mu}(f^{-1},E^{cu})<0\quad \textrm{and}\quad\chi_{\mu}(f,E^{cs})<0.
$$
%\begin{itemize}
%\smallskip
%\item $\chi_{\mu}(f^{-1},E^{cu})<0$ for every Gibbs $u$-state $\mu$;
%\smallskip
%\item $\chi_{\mu}(f,E^{cs})<0$ for every Gibbs $u$-state $\mu$
%\end{itemize}

\smallskip
Lemma \ref{gibbsp} enables us to have the following result.
The proof follows the same lines as the proof of \cite[Proposition 3.3]{MCY17}, hence omitted.

\begin{theorem}\label{utv}
If $f\in PH^1_{EC}(M)$, then there exists $N\in \NN$ such that $f^N\in PH^1_{C}(M)$.
\end{theorem}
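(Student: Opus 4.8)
The plan is to reduce the problem to a statement about Lyapunov exponents of Gibbs $u$-states and then use the ergodic decomposition together with the fact that the mostly expanding/mostly contracting conditions are phrased in terms of \emph{all} Gibbs $u$-states. First I would fix $f\in PH^1_{EC}(M)$ with splitting $TM=E^u\oplus_\succ E^{cu}\oplus_\succ E^{cs}$. By definition of $PH^1_{EC}(M)$, every $\mu\in G^u(f)$ has only positive Lyapunov exponents along $E^{cu}$ and only negative Lyapunov exponents along $E^{cs}$. The goal is to upgrade this ``every exponent has the right sign'' to the uniform-in-$\mu$ integral inequalities $\int \log\|Df^{-1}|_{E^{cu}}\|\,d\mu<0$ and $\int\log\|Df|_{E^{cs}}\|\,d\mu<0$ for some fixed iterate $f^N$ and all $\mu\in G^u(f^N)$.

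The key steps, in order, are as follows. (1) Observe that by Lemma \ref{gibbsp}\eqref{up1}, $G^u(f)$ is compact and convex, and by \eqref{up2} every ergodic component of a Gibbs $u$-state is again a Gibbs $u$-state; hence it suffices to control ergodic $\mu\in G^u(f)$. (2) For an ergodic $\mu$, the mostly expanding condition says the smallest Lyapunov exponent along $E^{cu}$ is strictly positive, i.e.\ $\lambda_{\min}^{cu}(\mu)>0$; by subadditivity of $n\mapsto \int\log\|Df^{-n}|_{E^{cu}}\|\,d\mu$ and Kingman's subadditive ergodic theorem, $\tfrac1n\int\log\|Df^{-n}|_{E^{cu}}\|\,d\mu\to -\lambda_{\min}^{cu}(\mu)<0$, so there is some power (depending a priori on $\mu$) for which the integral is negative; similarly for $E^{cs}$. (3) Remove the dependence on $\mu$: suppose for contradiction that for every $N$ there is $\mu_N\in G^u(f^N)$ violating one of the two inequalities for $f^N$. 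Using that $G^u(f^N)\subset G^u(f)$ (a Gibbs $u$-state for $f^N$ that is $f$-invariant is a Gibbs $u$-state for $f$, after averaging over the $f$-orbit if needed — this is where one uses convexity and invariance of $G^u$), one extracts a weak$^*$ limit $\mu_\infty\in G^u(f)$ of (a subsequence of) the $\mu_N$; the subadditive sequence $\varphi_n:=\int\log\|Df^{-n}|_{E^{cu}}\|$ together with upper semicontinuity (the functionals $\mu\mapsto\int\log\|Df^{-n}|_{E^{cu}}\|\,d\mu$ are continuous for each fixed $n$, and the subadditive limit is upper semicontinuous in $\mu$) forces $\mu_\infty$ to have a non-positive exponent along $E^{cu}$ or $E^{cs}$, contradicting $\mu_\infty\in G^u(f)\subset PH^1_{EC}(M)$. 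This yields a single $N$ that works for all Gibbs $u$-states, i.e.\ $f^N\in PH^1_C(M)$.

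I expect the main obstacle to be step (3): making precise the uniformity across all of $G^u(f^N)$ as $N$ varies. The delicate point is that the relevant quantity, $\inf_n \tfrac1n\int\log\|Df^{-n}|_{E^{cu}}\|\,d\mu = -\lambda_{\min}^{cu}(\mu)$, is only \emph{upper} semicontinuous in $\mu$ (being an infimum of continuous functions), so one must argue that this upper semicontinuous function is bounded away from $0$ on the compact set $G^u(f)$ — and this in turn uses that it is strictly negative at \emph{every} point of $G^u(f)$, which is exactly the mostly expanding hypothesis. Concretely, by compactness there is $\beta>0$ with $\lambda_{\min}^{cu}(\mu)\ge\beta$ and $\lambda_{\max}^{cs}(\mu)\le-\beta$ for all $\mu\in G^u(f)$; then one picks $N$ large enough that $\tfrac1N\sup_{\mu\in G^u(f)}\int\log\|Df^{-N}|_{E^{cu}}\|\,d\mu<0$, using a Dini-type/covering argument on the compact metric space $G^u(f)$ to pass from pointwise convergence of the subadditive averages to the required uniform estimate. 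As the paper indicates, this mirrors \cite[Proposition 3.3]{MCY17}, so the argument is essentially a repackaging of that one in the present language of $G^u$.

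Since the excerpt states ``The proof follows the same lines as the proof of \cite[Proposition 3.3]{MCY17}, hence omitted,'' the actual text will simply cite that reference; the sketch above is the reconstruction of what that omitted proof contains.
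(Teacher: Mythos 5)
There is a genuine gap, and it sits exactly where you locate the ``delicate point,'' but it is not the one you address. Membership of $g=f^N$ in $PH^1_C(M)$ is a condition on $G^u(f^N)$: one needs $\int\log\|Df^{-N}|_{E^{cu}}\|\,d\mu<0$ and $\int\log\|Df^{N}|_{E^{cs}}\|\,d\mu<0$ for \emph{every} $\mu\in G^u(f^N)$, and such $\mu$ are in general only $f^N$-invariant. Your argument, however, only ever controls measures in $G^u(f)$: the compactness/Dini (or finite-cover plus subadditivity) argument on the compact set $G^u(f)$ does yield an $N$ and $c>0$ with $\int\log\|Df^{-N}|_{E^{cu}}\|\,d\nu<-cN$ for all $\nu\in G^u(f)$, but that is not the statement of the theorem. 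The inclusion you invoke, ``$G^u(f^N)\subset G^u(f)$,'' is false (elements of $G^u(f^N)$ need not be $f$-invariant), and the hedge ``after averaging over the $f$-orbit if needed'' does not repair it: if $\mu\in G^u(f^N)$ and $\hat\mu=\frac1N\sum_{i=0}^{N-1}f^i_{\ast}\mu\in G^u(f)$, negativity of $\int\log\|Df^{-N}|_{E^{cu}}\|\,d\hat\mu$ does \emph{not} imply negativity of $\int\log\|Df^{-N}|_{E^{cu}}\|\,d\mu$, because $\int\log\|Df^{-N}|_{E^{cu}}\|\,d(f^i_{\ast}\mu)$ can differ from $\int\log\|Df^{-N}|_{E^{cu}}\|\,d\mu$ by as much as $2i\max_x\big|\log\|Df^{\pm1}|_{E^{cu}(x)}\|\big|$, i.e.\ by order $N$ for $i<N$; knowing the average over the $N$ phases is very negative is a priori compatible with one phase being nonnegative. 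Likewise, in your contradiction step the weak$^*$ limit of a sequence $\mu_N\in G^u(f^N)$ with $N\to\infty$ need not be invariant under any map, and Lemma \ref{upp} does not apply (it concerns Gibbs $u$-states of diffeomorphisms converging in the $C^1$ topology, not of iterates $f^N$), so $\mu_\infty\in G^u(f)$ is unjustified and no contradiction is reached.

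What is missing is precisely the bridge from the uniform estimate over $G^u(f)$ to the time-$N$ estimate for each $\mu\in G^u(f^N)$, and this needs two further ingredients. First, one must know that $\hat\mu=\frac1N\sum_{i=0}^{N-1}f^i_{\ast}\mu$ indeed lies in $G^u(f)$; with the entropy-formula definition of Gibbs $u$-states used in this paper this is not a formal consequence of convexity but uses an Abramov-type relation for the partial entropy along $\F^u$ under iteration (together with Lemma \ref{gibbsp} applied to $f^N$ to reduce to $f^N$-ergodic $\mu$). Second, one needs an argument transferring the estimate from $\hat\mu$ back to $\mu$; for instance, with $N_0,c$ from the uniform claim on $G^u(f)$ and $N=mN_0$ large, subdivide the backward orbit segment of length $N$ into blocks of length $N_0$ at an offset $r$, use submultiplicativity to bound $\int\log\|Df^{-N}|_{E^{cu}}\|\,d\mu$ by a boundary term $O(N_0)$ plus the sum of $\int\log\|Df^{-N_0}|_{E^{cu}}\|\,d(f^i_{\ast}\mu)$ over the phases $i$ in the corresponding residue class, and then average over the $N_0$ offsets and choose the best one, which yields a bound $O(N_0)-cN<0$ once $m$ is large; an alternative route (closer to what Lemma \ref{p33} encodes, following the cited Lemmas 3.4--3.5 of \cite{MCY17}) converts the uniform integral bound into an a.e.\ Birkhoff-average bound for the fixed function $\log\|Df^{-N}|_{E^{cu}}\|$ under $f^{-N}$ and then applies Birkhoff's theorem to the $f^N$-ergodic components. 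Your steps (1)--(2) and the uniformization over the compact set $G^u(f)$ are correct and are part of the standard argument, but without this additional passage from $G^u(f)$ to $G^u(f^N)$ the proof does not establish $f^N\in PH^1_C(M)$.
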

%\begin{lemma}\label{p33}
%If $f\in \mathcal{PH}^1_{\mathcal{EC}}(M)$, then there exists $N_0\in \NN$ and $\alpha_0>0$ such that for every $\mu \in \G^u(f)$, one has
%$$
%\chi_{\mu}(f^{N_0},E^{cs})<-\alpha_0, \quad \chi_{\mu}(f^{-N_0},E^{cu})<-\alpha_0.
%$$
%Furthermore, there is $N_{u}\in \NN$ as a
%multiple of $N_0$ and $\alpha_u\in (0, \alpha_0/N_0)$ such that
%$$
%\chi_{\mu}(f^{N_u},E^{cs})<-\alpha_u,\quad \chi_{\mu}(f^{-N_u},E^{cu})<-\alpha_u$$
%for every $\mu \in \G^u(f^{N_u})$.
%\end{lemma}
%
%It can be deduced by Lemma \ref{gibbsp}. 
%The proof follows the same lines as the proof Proposition 3.3 of \cite{MCY17}, hence omitted.
%As a direct consequence, we have
%
%\begin{theorem}\label{utv}
%If $f\in \mathcal{PH}^1_{\mathcal{EC}}(M)$, then there exists $N\in \NN$ such that $f^N\in \mathcal{PH}^1_{\mathcal{C}}(M)$.
%\end{theorem}

By Theorem \ref{utv}, in order to prove Theorem \ref{TheoB}, we can turn to prove the following parallel result for diffeomorphisms in $PH^{1}_{C}(M)$ firstly. Let $PH^{1+}_{C}(M)$ be the set of $C^{1+}$ diffeomorphisms in $PH^1_{C}(M)$.

\begin{theoremalph}\label{TheoF}
Every $f\in PH^{1}_{C}(M)$ has a skeleton. Moreover, if $f\in PH^{1+}_{C}(M)$, then there exist finitely many physical measures of $f$ with basin covering property, denoted by $\P(f)=\{\mu_1,\cdots,\mu_{k}\}$. If $\mathcal{S}(f)=\{p_1,\cdots, p_{\ell}\}$ is a skeleton of $f$, then there exists a bijective map $i\mapsto j(i)$ such that 
$$
{\rm supp}(\mu_i)=\overline{W^u({\rm Orb}(p_{j(i)},f))}=H(p_{j(i)},f).
$$
for every $\mu_i\in \P(f)$ and the corresponding $p_{j(i)}\in \mathcal{S}(f)$.
In particular, the number of physical measures of $f$ coincides with the cardinality of
its skeleton.
\end{theoremalph}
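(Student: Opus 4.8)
The plan is to build a skeleton out of the hyperbolic periodic points produced near Pesin blocks with large Gibbs-$u$ weight, and then to match them bijectively with the ergodic Gibbs $cu$-states (which for $f\in PH^{1+}_C(M)$ are exactly the physical measures, by Theorem \ref{fc}). First I would fix the constant $\alpha>0$ from Proposition \ref{t11} and, for a sequence $\varepsilon_m\to 0$, extract $\ell_m$, open coverings $\mathscr{A}_\mu(f,\ell_m,\alpha,\rho)$ from Theorem \ref{covering}, and the associated hyperbolic periodic points $p_B$ with uniformly large local stable/unstable manifolds. The key construction step: for each ergodic $\mu\in G^{cu}(f)$ pick, via Lemma \ref{ggh}, an unstable disk $D\subset\mathrm{supp}(\mu)$ inside a typical measurable partition with $\mathrm{Leb}_D(\Lambda_\ell(f,\alpha))>0$ and $\mathrm{Leb}_D$-a.e.\ point in $B(\mu,f)$. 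Applying Proposition \ref{hh} (with the $\alpha_0$-nonuniform expansion coming from $\chi_\mu(f^{-1},E^{cu})<0$ together with $E^u$ being uniformly expanding) to $\mathrm{Leb}_D$-typical points, one gets infinitely many $(E^{wu},e^{-\alpha})$-hyperbolic times of positive lower density; by Lemma \ref{bcd}, at these times backward contraction forces the forward orbit of such a point to enter any fixed small neighborhood of the periodic points $p_B$ built near the Pesin block of $\mu$. This is how one shows that each physical measure "sees" a periodic point of the candidate skeleton, and conversely that each such periodic point lies in $\mathrm{supp}(\mu)$.

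Next I would assemble the finite set $\mathcal{S}(f)$: from the periodic points $p_B$ associated to the (finitely many, by Lemma \ref{gibbsp} and Theorem \ref{fc}) ergodic elements of $G^{cu}(f)$, discard redundancies using the homoclinic-relation bookkeeping of Section \ref{sec5}. Verifying (S\ref{S1}) is the heart of the matter: given a $C^1$ disk $\gamma$ transverse to $E^{cs}$, Theorem \ref{ttd}(2) gives $\omega_\M(\gamma,f)\subset G^{cu}(f)$, so $\mathrm{Leb}_\gamma$-a.e.\ point has its Cesàro averages accumulating on Gibbs $cu$-states; combining Lemma \ref{pp} with the hyperbolic-time argument above applied now to $\gamma$ in place of $D$, the forward orbit of a typical point of $\gamma$ must enter a small neighborhood of some $p\in\mathcal{S}(f)$ at a hyperbolic time, and Lemma \ref{size}/Lemma \ref{bcd} then yield a transverse intersection $\gamma\pitchfork W^s(\mathrm{Orb}(p,f))\neq\emptyset$. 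Condition (S\ref{S2}) I would get by ruling out heteroclinic intersections between distinct elements: if $p\overset{f}\prec q$ and $q\overset{f}\prec p$ for $p,q$ attached to distinct ergodic Gibbs $cu$-states, the inclination lemma would make their homoclinic classes coincide, and since (as shown via Lemma \ref{ske2}-type reasoning) $\mathrm{supp}(\mu_p)=\overline{W^u(\mathrm{Orb}(p,f))}=H(p,f)$ and similarly for $q$, we would get $\mathrm{supp}(\mu_p)=\mathrm{supp}(\mu_q)$, forcing $\mu_p=\mu_q$ because a Gibbs $cu$-state is determined on its support by absolute continuity of conditionals along unstable plaques — a contradiction; so one can always pass to a genuine skeleton by Lemma \ref{ske3}, and Lemma \ref{ske1} makes the bijection and the equality $k=\ell$ automatic once existence and the support identities are in place.

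For the physical-measure side of the statement (the $C^{1+}$ case), I would use Corollary \ref{TheoA'}: every invariant measure in $\omega_\M(D,f)$ is an SRB measure, the ergodic Gibbs $cu$-states are physical, and Theorem \ref{ttd}(1) plus Fubini give the basin covering property (this is the promised alternative proof of the \cite{MCY17} result). The support identity $\mathrm{supp}(\mu_i)=\overline{W^u(\mathrm{Orb}(p_{j(i)},f))}=H(p_{j(i)},f)$ then follows from Lemma \ref{ske2} (applied to the skeleton) together with the fact that, by ergodicity and the hyperbolic-time mechanism, $\overline{W^u(\mathrm{Orb}(p_{j(i)},f))}$ is precisely the closure of the union of the unstable disks whose iterates equidistribute to $\mu_i$, which is $\mathrm{supp}(\mu_i)$. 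The main obstacle I anticipate is exactly the step establishing (S\ref{S1}) in full strength — the definition demands that \emph{every} disk transverse to $E^{cs}$ meets some $W^s(\mathrm{Orb}(p))$, not merely a.e.\ point of it or a dense set — so one must upgrade the "$\mathrm{Leb}_\gamma$-a.e.\ point enters a neighborhood of $p$ at a hyperbolic time" statement to a genuine transverse intersection, which requires the uniform lower bounds on the sizes of $W^s_{loc}(p,f)$ (from Theorem \ref{covering}) and a careful use of the backward-contraction Lemma \ref{bcd} to guarantee that a full-size unstable plaque of $\gamma$ lands across the stable lamination; managing the non-uniformity of both centers simultaneously here is what makes this harder than in \cite{dvy16,yjg19}.
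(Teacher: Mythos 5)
Your overall architecture does run parallel to the paper's (periodic points near Pesin blocks via Theorem \ref{covering}, hyperbolic times plus Lemma \ref{bcd} to get condition (S1), Lemma \ref{ggh} disks and the inclination lemma to attach measures to periodic points, and the bookkeeping Lemmas \ref{ske1}--\ref{ske3}), but two steps are genuinely gapped. The first concerns the exact count. Your exclusion of heteroclinic relations between points attached to distinct measures rests on the claim that ${\rm supp}(\mu_p)={\rm supp}(\mu_q)$ forces $\mu_p=\mu_q$ ``because a Gibbs $cu$-state is determined on its support'': this is not a fact you can invoke, and nothing rules out two distinct ergodic measures with absolutely continuous unstable conditionals sharing a support. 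The paper's mechanism is different: it shows the \emph{basins} would intersect, using the absolute continuity of the stable lamination through a disk of a typical measurable partition carrying positive Lebesgue measure of points of $\Lambda_{\ell}(f,\alpha)$ (hence uniform-size stable manifolds); this is how injectivity is proved in Proposition \ref{G} and how Lemma \ref{st2} works. Moreover (S2) forbids even one-directional intersections $p\overset{f}\prec q$, while you only discuss the homoclinic case, and your fallback ``pass to a genuine skeleton by Lemma \ref{ske3}'' may discard points, after which you have no argument that every physical measure is still represented. The paper supplies exactly this in Lemma \ref{st2}, whose two ingredients are absent from your proposal: (a) all candidate periodic points attached to the same ergodic $\mu_i$ are homoclinically related, proved via Lemma \ref{sp1} (shadowing an orbit of a $\mu_i$-typical recurrent point that visits two Pesin-block balls, using ergodicity), so a skeleton contains at most one of them; and (b) if no point of a class $i_0$ survived, the unstable disk $D_{i_0}$ with ${\rm Leb}$-a.e.\ point in $B(\mu_{i_0},f)$ would be pushed across a cylinder of a different class and the stable holonomy would force ${\rm Leb}_{D_{i_0}}(B(\mu_i,f))>0$, a contradiction. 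Without this you obtain $\#\P_e(f)\le k_f$ (the Proposition \ref{G} direction) but not $k_f\le \#\P_e(f)$, so the bijection and the equality of cardinalities are not established.

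The second gap is the measure-theoretic half of the statement: finiteness, ergodicity and the basin covering property of the physical measures. Your justification --- Corollary \ref{TheoA'} together with ``Theorem \ref{ttd}(1) plus Fubini give the basin covering property'' --- does not work: Theorem \ref{ttd}(1) only says that for Lebesgue-a.e.\ $x$ one has $\omega_{\M}(x,f)\subset G^{cu}(f)$; it does not say the Birkhoff averages converge, nor that the limit is one of the finitely many extreme points, so it does not place $x$ in the basin of any physical measure. Similarly, ``the physical measures are exactly the ergodic Gibbs $cu$-states'' is not automatic from Theorem \ref{fc}, which only gives that extreme elements of $G^{cu}(f)$ are physical; the converse (every physical measure is ergodic and extreme) is part of what must be proved. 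The paper does both in Theorem \ref{ky}, and the proof genuinely needs machinery your proposal never brings in: the upper-density estimate for hyperbolic-time returns of points of $\B_f$ to the union $\BB$ of Pesin-block balls (Lemma \ref{density}), the pushing of positive-Lebesgue subsets onto disks of radius $\delta_{\ell}/4$ at such times (Lemma \ref{gtg}), and the uniform lower bound (\ref{mmmm}) coming from the absolute continuity of the stable lamination. In short, your skeleton construction and the verification of (S1) follow the paper's Theorems \ref{es} and \ref{n11}, but both the basin covering/ergodicity statement and the exact matching with the skeleton require the stable-holonomy and Lemma \ref{sp1} arguments that are missing from the proposal.
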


Theorem \ref{TheoF} will be deduced in Subsection \ref{proofD}. Now we fix some notations for simplicity.
Let $f\in PH^{1}_{C}(M)$ with partially hyperbolic splitting $TM=E^u\oplus_{\succ} E^{cu} \oplus_{\succ} E^{cs}$.
Given $\sigma>0$ and $x\in M$, let $HT(x,\sigma,f)$ be the family of $(E^{wu}, \sigma)$-hyperbolic times of $x$ w.r.t. $f$, 
%that is,
%$$
%HT(x,\sigma,f)=\left\{n: \prod_{i=n-k+1}^{n}\|Df^{-1}|_{E^{wu}(f^i(x))}\|<\sigma^k \quad \textrm{for}~ 1\le k\le n
%\right\}
%$$
where $E^{wu}=E^u\oplus E^{cu}$. For $a>0$, rewriting $\mathcal{C}_a^{wu}:=\mathcal{C}_a^{E^{wu}}$ and $\mathcal{C}_a^{cs}:=\mathcal{C}_a^{E^{cs}}$.

\subsection{The existence of skeletons}
%Let $f\in \mathcal{PH}^{1}_{\mathcal{C}}(M)$ with partially hyperbolic splitting $TM=E^u \oplus_{\succ} E^{cu} \oplus_{\succ} E^{cs}$.
%Given $\sigma>0$ and $x\in M$, let $\mathcal{HT}(x,\sigma,f)$ be the family of $(E^{wu}, \sigma)$-hyperbolic times of $x$ w.r.t. $f$, recalling $E^{wu}=E^u\oplus E^{cu}$.
This subsection is dedicated to show the existence of skeletons for diffeomorphisms in $PH^{1}_{C}(M)$.
\begin{theorem}\label{es}
Every $f\in PH^{1}_{C}(M)$ admits a skeleton $\mathcal{S}(f)=\{p_1,\cdots, p_{k_f} \}$ with some $k_f \in \NN$ depending only on $f$.
\end{theorem}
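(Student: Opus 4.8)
The plan is to construct a skeleton for $f\in PH^1_C(M)$ by first producing a \emph{pre-skeleton} (a finite set of hyperbolic periodic points of stable index $\dim E^{cs}$ satisfying (S\ref{S1})) and then invoking Lemma \ref{ske3} to extract a genuine skeleton. The hyperbolic periodic points will be harvested from Pesin blocks associated to Gibbs $u$-states, using Theorem \ref{covering}: fix the $\alpha>0$ given there, choose $\varepsilon>0$ small, and obtain $\ell$, $\rho_\ell$, $\delta_\ell$. For each Gibbs $u$-state $\mu$ (equivalently, by Lemma \ref{gibbsp}, working with ergodic ones) we get a finite open cover $\mathscr{A}_\mu(f,\ell,\alpha,\rho)$ of ${\rm supp}(\mu|\Lambda_\ell(f,\alpha))$, and in each ball $B$ a hyperbolic periodic point $p_B$ of stable index $\dim E^{cs}$ with uniformly large local stable and unstable manifolds. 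Since we are in $PH^1_C(M)$, the center-unstable direction is non-uniformly expanded along orbits in the support, and I would want to additionally arrange (shrinking $\varepsilon,\rho$ as needed, using Proposition \ref{t11} and the fact that $\chi_\mu(f^{-1},E^{cu})<0$) that these $p_B$ lie in a Pesin block $\Lambda_{\ell'}(f,\alpha',E^u\oplus E^{cu},\cdot)$ so that their unstable manifolds are in fact tangent to $\mathcal{C}_a^{wu}$ and of size bounded below, i.e.\ genuine weak-unstable disks.

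The core of the argument is to show that finitely many such periodic points, collected over \emph{all} Gibbs $u$-states, form a pre-skeleton. The key compactness input: by Lemma \ref{gibbsp}(1) $G^u(f)$ is compact, and by Theorem \ref{ttd}(2) applied with the disks transverse to $E^{cs}$, Cesàro limits of iterates of any such disk land in $G^{cu}(f)\subset G^u(f)$. To verify (S\ref{S1}), take an arbitrary $C^1$ disk $\gamma$ transverse to $E^{cs}$. Up to iterating (using domination, as in the proof of Theorem \ref{ttd}) I may assume $\gamma$ is tangent to $\mathcal{C}_a^{wu}$, compact, of small diameter. By Theorem \ref{ttd}(2), $\omega_\M(\gamma,f)\subset G^{cu}(f)$; pick any $\nu\in\omega_\M(\gamma,f)$, so $\nu(\Lambda_\ell(f,\alpha))>1-\varepsilon>0$ by Proposition \ref{t11}, hence ${\rm supp}(\nu|\Lambda_\ell(f,\alpha))\neq\emptyset$. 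Now the point is that a large-time average $\frac1n\sum f^j_*{\rm m}_\gamma$ close to $\nu$ gives positive mass near ${\rm supp}(\nu|\Lambda_\ell)$, so for some $j$ the disk $f^j(\gamma)$ has a piece passing (transversally, since it is tangent to $\mathcal{C}_a^{wu}$) through one of the cylinders $C(x_i,\cdot)$ built in Subsection \ref{subty}, across which $W^s_{\delta_\ell}(p_{B_i},f)$ crosses. This forces $f^j(\gamma)\pitchfork W^s({\rm Orb}(p_{B_i},f))\neq\emptyset$, and pulling back by $f^{-j}$ gives $\gamma\pitchfork W^s({\rm Orb}(p_{B_i},f))\neq\emptyset$, which is exactly (S\ref{S1}).

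The finiteness of the collection is the delicate combinatorial point and the main obstacle I anticipate. A priori there are infinitely many Gibbs $u$-states, each contributing a cover; I need a \emph{uniform} finite sub-collection $\{p_1,\dots,p_{k_f}\}$ that still satisfies (S\ref{S1}) for all transverse disks. I would argue as follows: the constants $\ell,\rho_\ell,\delta_\ell,\alpha$ from Theorem \ref{covering} are uniform over $\mu\in G^u(f)$; the set $K:=\bigcup_{\mu\in G^u(f)}{\rm supp}(\mu|\Lambda_\ell(f,\alpha))$ is contained in the compact set $\Lambda_\ell(f,\alpha)$, and in fact one can take $K=\overline{\bigcup_\mu {\rm supp}(\mu|\Lambda_\ell)}\subset \Lambda_\ell(f,\alpha)$ (closedness of the Pesin block). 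Cover $K$ by finitely many balls $B(x_i,\rho)$, $i=1,\dots,k_f$, with $\rho\le\rho_\ell$ and each $x_i$ in some ${\rm supp}(\mu_i|\Lambda_\ell)$; in each put a periodic point $p_i$ as above. Then for \emph{any} Gibbs $u$-state $\nu$, ${\rm supp}(\nu|\Lambda_\ell)\subset K$ is covered by these same balls, so the mechanism of the previous paragraph selects one of the fixed $p_i$. This gives a pre-skeleton $\{p_1,\dots,p_{k_f}\}$, and Lemma \ref{ske3} extracts a skeleton $\mathcal{S}(f)$; its cardinality depends only on $f$ since everything was built from $f$-intrinsic data. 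The subtlety to be careful about is that the local stable manifold $W^s_{\delta_\ell}(p_i,f)$ must \emph{cross} the cylinder $C(x_i,D,r_{\ell,\alpha,\rho})$ for the relevant orbit-piece disk $D$ of $f^j(\gamma)$ that falls near $x_i$ — this is guaranteed by the choice of $r_{\ell,\alpha,\rho}$ in Subsection \ref{subty}, provided $\rho$ is small enough relative to $\delta_\ell$, so these parameters must be fixed in the right order ($\alpha$, then $\ell$, then $\delta_\ell$, then $\rho$, then $r_{\ell,\alpha,\rho}$) before choosing the cover of $K$.
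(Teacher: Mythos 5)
There is a genuine gap at the heart of your verification of (S\ref{S1}). You pass from ``$\omega_{\M}(\gamma,f)\subset G^{cu}(f)$ and $\nu(\Lambda_{\ell}(f,\alpha))>1-\varepsilon$'' to ``for some $j$ the disk $f^j(\gamma)$ has a piece passing through one of the cylinders $C(x_i,\cdot)$, hence meeting $W^s_{\delta_{\ell}}(p_{B_i},f)$.'' But weak$^*$ closeness of $\frac1n\sum_j f^j_*{\rm m}_{\gamma}$ to $\nu$ only gives positive ${\rm Leb}_{f^j(\gamma)}$-mass inside the balls $B(x_i,\rho)$; it gives no lower bound on the \emph{inner radius} of the connected pieces of $f^j(\gamma)$ lying there. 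A small sliver of a disk tangent to $\mathcal{C}^{wu}_a$ sitting inside the cylinder need not cross it, and can perfectly well miss the single $cs$-graph $W^s_{\delta_{\ell}}(p_{B},f)$; the intersection is only forced when the piece of $f^j(\gamma)$ is a disk of radius comparable to $\delta_{\ell}$ centered near $x_i$. Since $E^{wu}=E^u\oplus E^{cu}$ is only \emph{non-uniformly} expanded, such growth of $f^j(\gamma)$ at the return times is exactly the nontrivial point, and your argument supplies nothing to produce it. Note also that (S\ref{S1}) must hold for arbitrarily small transverse disks, so some growth mechanism is unavoidable.

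This is precisely what the paper's proof provides and what your outline omits: for a point $x\in\B_f\cap\gamma$ (available by Theorem \ref{ttd}(\ref{fde}), i.e.\ the full-measure statement on $\gamma$, rather than item (\ref{fxe})), Proposition \ref{uv} shows $x$ is $H_f$-nonuniformly expanding along $E^{wu}$, Proposition \ref{hh} gives a positive density of $(E^{wu},{\rm e}^{-H_f/2})$-hyperbolic times, and a Birkhoff-type estimate shows a positive upper density of these hyperbolic times at which $f^n(x)$ lies in the fixed finite union of balls; Lemma \ref{bcd} (backward contraction at hyperbolic times) then guarantees that $f^n(\gamma)$ contains a disk of uniform radius $\delta_f$ centered at $f^n(x)$, and only then does the cylinder/local-stable-manifold geometry force $f^n(\gamma)\pitchfork W^s_{loc}(p_B,f)\neq\emptyset$ (this is Theorem \ref{n11}(\ref{f32}), which is the statement you are implicitly assuming). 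Your finiteness scheme (one finite cover of the compact set $K$ by $\rho$-balls centered in supports of Gibbs $u$-states) is a workable alternative to the paper's route, which instead allows the a priori infinite family $\{p_x:x\in\B_f\}$ and uses the uniform size $\delta_{\ell}$ of stable/unstable manifolds to see that only finitely many homoclinic classes occur, then takes a maximal pairwise non-related subset and applies the inclination lemma; but that difference is secondary. As written, the proposal is missing the hyperbolic-time growth argument, and without it the transverse intersection in (S\ref{S1}) is not established.
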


%Denote by $E_g^{wu}=E_g^u\oplus E_g^{cu}$ for any $g$ $C^1$-close enough to $f$. 
We need the following simple observation:

\begin{lemma}\label{he}
Given $\alpha>0$, and assume that $E$ is a $Df$-invariant sub-bundle of $C^1$ diffeomorphism $f$. If $x\in M$ is the point such that $\chi_{\mu}(f^{-1},E)<-\alpha$ for every $\mu\in \omega_{\M}(x,f)$,
then $x$ is $\alpha$-nonuniformly expanding along $E$.
\end{lemma}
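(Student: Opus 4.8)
The plan is to argue by contradiction, using the standard identification of Birkhoff averages of a continuous observable with integrals against empirical limit measures. First I would set $\varphi(y):=\log\|Df^{-1}|_{E(y)}\|$; since $E$ is a continuous $Df$-invariant sub-bundle and $Df^{-1}$ is continuous, $\varphi$ is a continuous function on the compact manifold $M$, and by definition $\chi_{\mu}(f^{-1},E)=\int\varphi\,d\mu$ for every $\mu\in\M(f)$. The goal is then to establish
$$
\limsup_{n\to+\infty}\frac1n\sum_{i=1}^n\varphi(f^i(x))<-\alpha,
$$
which is precisely the statement that $x$ is $\alpha$-nonuniformly expanding along $E$.

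Assuming this fails, I would pick a sequence $n_k\to+\infty$ with $\frac1{n_k}\sum_{i=1}^{n_k}\varphi(f^i(x))\to L$ for some $L\ge-\alpha$, and consider the empirical measures $\mu_{n_k}=\frac1{n_k}\sum_{i=0}^{n_k-1}\delta_{f^i(x)}$. By weak$^*$-compactness of $\M$, after passing to a subsequence we may assume $\mu_{n_k}\to\mu$; a routine Krylov--Bogolyubov argument shows that $\mu$ is $f$-invariant, and by construction $\mu\in\omega_{\M}(x,f)$. Since $\varphi$ is continuous and the two truncations $\frac1{n_k}\sum_{i=0}^{n_k-1}\varphi(f^i(x))$ and $\frac1{n_k}\sum_{i=1}^{n_k}\varphi(f^i(x))$ differ by $\frac{\varphi(f^{n_k}(x))-\varphi(x)}{n_k}\to0$, weak$^*$-convergence yields $\int\varphi\,d\mu=L\ge-\alpha$. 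Hence $\chi_{\mu}(f^{-1},E)=\int\varphi\,d\mu\ge-\alpha$ for this $\mu\in\omega_{\M}(x,f)$, contradicting the hypothesis that $\chi_{\mu}(f^{-1},E)<-\alpha$ for every $\mu\in\omega_{\M}(x,f)$. Therefore the displayed $\limsup$ is $<-\alpha$, as desired.

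I do not expect any serious obstacle here: the argument is a direct compactness and weak$^*$-convergence computation. The only points requiring (minor) care are the continuity of $\varphi$, which uses continuity of the invariant sub-bundle $E$ and the uniform bounds on $\|Df^{-1}|_{E}\|$; the $f$-invariance of any weak$^*$-limit of the empirical measures; and the vanishing of the boundary term that separates the two natural truncations of the Birkhoff sum.
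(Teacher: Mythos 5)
Your argument is correct and is precisely the intended one for this lemma, which the paper states as a ``simple observation'' without proof: one identifies the Birkhoff averages of the continuous observable $\varphi=\log\|Df^{-1}|_{E}\|$ with integrals against weak$^*$ limits of the empirical measures, which are $f$-invariant and by definition belong to $\omega_{\M}(x,f)$, so the hypothesis forces the $\limsup$ below $-\alpha$. The only caveat is that you invoke continuity of $E$ (hence of $\varphi$), which the lemma's wording does not state explicitly but which holds in every application in the paper, since there $E$ is a sub-bundle of a dominated splitting.
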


%\begin{proof}
%We argue by contradiction. Suppose that $x$ is not $\alpha$-nonuniformly expanding along $E$, then there is a subsequence $\{n_k\}$ such that
%$$
%\lim_{k \to +\infty}\frac{1}{n_k}\sum_{i=1}^{n_k}\log \|Df^{-1}|_{E(f^i(x))}\|\ge-\alpha.
%$$
%Up to take subsequences, we can assume that 
%$$
%\lim_{k\to +\infty}\frac{1}{n_k}\sum_{i=1}^{n_k}\delta_{f^i(x)}=\mu.
%$$
%Consequently, we have
%$$
%\chi_{\mu}(f^{-1},E)=\lim_{k\to +\infty}\frac{1}{n_k}\sum_{i=1}^{n_k}\log \|Df^{-1}|_{E(f^i(x))}\|\ge-\alpha.
%$$
%This gives a contradiction, since $\mu \in \omega_{\M}(x,f)$ by construction.
%
%\end{proof}

%\begin{lemma}\label{u}
%For every $f\in \V(M)$ with partially hyperbolic splitting $TM=E^u \oplus E^{cu} \oplus E^{cs}$, there exists $\alpha>0$ an a $C^1$-neighborhood $\U$ of $f$ such that 
%$$
%\chi_{\mu}(g^{-1},E^{wu}_g)<-\alpha
%$$
%for every $\mu\in \G^u(g)$ and $g\in \U$.
%\end{lemma}

%Let $f\in \mathcal{PH}^{1}_{\mathcal{C}}(M)$ with partially hyperbolic splitting $TM=E^u \oplus_{\succ} E^{cu} \oplus_{\succ} E^{cs}$.
%Given $\sigma>0$ and $x\in M$, let $\mathcal{HT}(x,\sigma,f)$ be the family of $(E^{wu}, \sigma)$-hyperbolic times of $x$ w.r.t. $f$, where $E^{wu}=E^u\oplus E^{cu}$. For $a>0$, rewriting $\mathcal{C}_a^{wu}:=\mathcal{C}_a^{E^{wu}}$ and $\mathcal{C}_a^{cs}:=\mathcal{C}_a^{E^{cs}}$.

\begin{proposition}\label{uv}
For every $f\in PH^{1}_{C}(M)$ with partially hyperbolic splitting $TM=E^u \oplus_{\succ} E^{cu} \oplus_{\succ} E^{cs}$, there exist $H_f>0$, $D_f\in (0,1)$ and a $C^1$ neighborhood $\U_f$ of $f$ such that for every $g\in \U_f$, every point $x\in \B_g$ is $H_f$-nonuniformly expanding along $E^{wu}$ with the density estimate
\begin{equation}\label{est}
\mathcal{D}_{L}\left(HT(x,{\rm e}^{-H_f/2},g)\right)\ge D_f. 
\end{equation}
\end{proposition}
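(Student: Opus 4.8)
The plan is to produce $H_f$, $D_f$ and $\mathcal{U}_f$ in two stages: first a uniform strictly negative upper bound for $\chi_\mu(g^{-1},E^{wu})$ valid for all $g$ near $f$ and all Gibbs $u$-states $\mu$ of $g$, and then an application of Lemma~\ref{he} together with Proposition~\ref{hh} to convert this into nonuniform expansion along $E^{wu}$ with a definite lower density of hyperbolic times. Throughout one may work in a Riemannian metric adapted to the domination $E^u\oplus_{\succ}E^{cu}$, so that $\|Dg^{-1}|_{E^{wu}(y)}\|=\|Dg^{-1}|_{E^{cu}(y)}\|$ for every $y$ and every $g$ in a fixed $C^1$-neighborhood of $f$ (possible because $E^u$ is uniformly expanding and hence, by domination, strictly more contracted by $Dg^{-1}$ than $E^{cu}$); with this choice $\chi_\mu(g^{-1},E^{wu})=\chi_\mu(g^{-1},E^{cu})$ identically.

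For the first stage: since $f\in PH^1_C(M)$, $\chi_\mu(f^{-1},E^{cu})<0$ for every $\mu\in G^u(f)$, and because $G^u(f)$ is compact (Lemma~\ref{gibbsp}) while $\mu\mapsto\chi_\mu(f^{-1},E^{cu})=\int\log\|Df^{-1}|_{E^{cu}}\|\,d\mu$ is weak$^*$-continuous, this supremum over $G^u(f)$ is attained and negative. To make the bound uniform in a $C^1$-neighborhood I would argue by contradiction: if there were $g_n\to f$ in the $C^1$-topology and $\mu_n\in G^u(g_n)$ with $\chi_{\mu_n}(g_n^{-1},E^{cu})\to c\ge 0$, then any weak$^*$-accumulation point $\mu$ of $\{\mu_n\}$ would belong to $G^u(f)$ by the upper semicontinuity of $g\mapsto G^u(g)$ (Lemma~\ref{upp}), while $\chi_{\mu_n}(g_n^{-1},E^{cu})\to\chi_\mu(f^{-1},E^{cu})$ because $\log\|Dg_n^{-1}|_{E^{cu}_{g_n}}\|$ converges uniformly to $\log\|Df^{-1}|_{E^{cu}}\|$; hence $\chi_\mu(f^{-1},E^{cu})\ge 0$, contradicting $f\in PH^1_C(M)$. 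Thus there are a $C^1$-neighborhood $\mathcal{U}_0\subset PH^1_{EC}(M)$ of $f$ and a constant $H_f>0$ with $\chi_\mu(g^{-1},E^{wu})=\chi_\mu(g^{-1},E^{cu})<-H_f$ for all $g\in\mathcal{U}_0$ and $\mu\in G^u(g)$; in particular this holds for all $\mu\in G^{cu}(g)\subseteq G^u(g)$.

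For the second stage, fix $g\in\mathcal{U}_0$ and $x\in\B_g$. By the definition of $\B_g$, every $\mu\in\omega_{\M}(x,g)$ is a Gibbs $cu$-state of $g$, hence a Gibbs $u$-state, so $\chi_\mu(g^{-1},E^{wu})<-H_f$ for all $\mu\in\omega_{\M}(x,g)$; applying Lemma~\ref{he} to the $Dg$-invariant bundle $E^{wu}$ yields that $x$ is $H_f$-nonuniformly expanding along $E^{wu}$. Next I would invoke Proposition~\ref{hh} for the diffeomorphism $f$ with the dominated splitting $E:=E^{wu}$, $F:=E^{cs}$ and parameters $\alpha_0:=H_f>\alpha:=H_f/2>0$: this produces $\theta=\theta(H_f,H_f/2,f)\in(0,1)$ and a $C^1$-neighborhood $\mathcal{U}_1$ of $f$ such that, for every $g\in\mathcal{U}_1$, any point that is $H_f$-nonuniformly expanding along $E^{wu}_g$ has infinitely many $(E^{wu}_g,{\rm e}^{-H_f/2})$-hyperbolic times forming a subset of $\NN$ of lower density larger than $\theta$. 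Setting $\mathcal{U}_f:=\mathcal{U}_0\cap\mathcal{U}_1$ and $D_f:=\theta$, we conclude that for every $g\in\mathcal{U}_f$ and every $x\in\B_g$ the set $HT(x,{\rm e}^{-H_f/2},g)$ contains such a sequence, so $\mathcal{D}_L(HT(x,{\rm e}^{-H_f/2},g))\ge D_f$, which is the assertion.

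The main obstacle is the uniformity in the first stage. The defining property of $PH^1_C(M)$ bounds $\chi_\mu(f^{-1},E^{cu})$ only for the single map $f$, so the two passages — to a bound uniform over all Gibbs $u$-states of $g$, and to $g$ ranging over a $C^1$-neighborhood — must both be extracted from the compactness of $G^u(f)$ and the upper semicontinuity of $g\mapsto G^u(g)$; the reduction from $E^{cu}$ to the full bundle $E^{wu}$ also requires a little care, which is why one fixes an adapted metric at the outset. Once the uniform bound is available, everything else is a direct combination of Lemma~\ref{he} (limits of the Birkhoff averages of $\log\|Dg^{-1}|_{E^{wu}}\|$ are controlled by the integrals against the weak$^*$ limit measures) with the Pliss-lemma consequence recorded in Proposition~\ref{hh}.
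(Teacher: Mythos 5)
Your argument is correct and follows essentially the same route as the paper: a uniform negative bound $\chi_\mu(g^{-1},E^{wu})<-H_f$ obtained from compactness of $G^u(f)$ and the upper semi-continuity of $g\mapsto G^u(g)$ (Lemma \ref{upp}), then the observation $\omega_{\M}(x,g)\subset G^{cu}(g)\subset G^u(g)$ for $x\in\B_g$, followed by Lemma \ref{he} and Proposition \ref{hh} with $\alpha_0=H_f$, $\alpha=H_f/2$ to get the density bound $D_f$. Your adapted-metric reduction of $E^{wu}$ to $E^{cu}$ and the compactness/contradiction argument for uniformity over the neighborhood simply make explicit what the paper asserts in one line (with the obvious adjustment that the contradiction should be run against the level $-H_f$ rather than $0$ to retain the fixed constant), so there is no substantive difference.
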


\begin{proof}
Since $E^u$ is uniformly expanding and $G^u(f)$ is compact, one conclude that there exists $H_f>0$ such that
$$
\chi_{\mu}(f^{-1},E^{wu})<-H_f \quad \textrm{for every}~\mu\in G^u(f).
$$
Applying the upper semi-continuity of Gibbs $u$-space w.r.t. diffeomorphisms (Lemma \ref{upp}), there is a $C^1$ neighborhood $\U_f$ of $f$ such that for any $g\in \U_f$, 
$$
\chi_{\mu}(g^{-1},E^{wu})<-H_f\quad \textrm{for every}~\mu\in G^u(g).
$$
Note that for every $g\in \U_f$, if $x\in \B_g$ then $\omega_{\M}(x,g)\subset G^{cu}(g)\subset G^u(g)$, recalling definitions of $\B_g$ and $G^{cu}(g)$. Thus, we have
$$
\chi_{\mu}(g^{-1},E^{wu})<-H_f\quad \textrm{for every}~\mu\in \omega_{\M}(x,g).
$$
By Lemma \ref{he}, $x$ is $H_f$-nonuniformly expanding along $E^{wu}$. Then, using Proposition \ref{hh}, reducing $\U_f$ if necessary, there exists a density $D_f\in (0,1)$ such that for every $g\in \U_f$, we have
\begin{equation*}
\mathcal{D}_{L}\left(HT(x,{\rm e}^{-H_f/2},g)\right)\ge D_f
\end{equation*}
for every $x\in \B_g$.
\end{proof}

%For every $f\in \U(M)$ and $\sigma>0$, for every $x\in M$, let $\mathcal{HT}(x,\sigma,g)$ be the family of $(E_g^{wu}, \sigma)$-hyperbolic times of $x$ with respect to $g$.
%
%\begin{lemma}
%For every $f\in \V(M)$, there exists $\rho \in (0,1)$, $\alpha>0$ and a $C^1$-open neighborhood $\U$ of $f$ such that for every $g\in \U$, any point of $\B_g$ is $\alpha$-non-uniformly expanding along $E^{wu}=E^u\oplus E^{cu}$. Moreover, we have\begin{equation}\label{est}
%Den^{-}(\mathcal{HT}(x,{\rm e}^{-\alpha/2},g))\ge \rho_H. 
%\end{equation}
%
%\end{lemma}

%Note that for every $f\in \mathcal{V}(M)$, the compactness of $\G^u(f)$ implies that there is $\alpha>0$ such that  
%By Theorem \ref{covering}, for every $f\in \V(M)$ with partially hyperbolic splitting $TM=E^u\oplus_{\succ} E^{cu}\oplus_{\succ} E^{cs}$, for every $\varepsilon>0$, there exist a $C^1$-neighborhood $\U$ of $f$, $\alpha>0$, $\delta_{\ell}>0$, $\rho_{\ell}>0$ for which one can construct  $\mathscr{A}_{\mu}(g,\ell,\alpha,\rho)$ for any $\mu\in \G^u(g)$, $g\in \U$ and $\rho\le \rho_{\ell}$ such that 
%\begin{itemize}
%\item every element $B\in \mathscr{A}_{\mu}(g,\ell,\alpha,\rho)$ contains a hyperbolic periodic point, which we denote by $P_B$. $P_B$ admits 

\begin{theorem}\label{n11}
Let $f\in PH^{1}_{C}(M)$ with partially hyperbolic splitting $TM=E^u\oplus_{\succ} E^{cu}\oplus_{\succ} E^{cs}$.
There exist $\alpha>0$, $\ell\in \NN$, $\theta_f\in (0,1)$, $\rho_f>0$, $H_f>0$ and a $C^1$ neighborhood $\U$ of $f$ such that, for every $\rho\le\rho_f$, for every $g\in \U$, for every $x\in \B_g$ and every $\mu\in \omega_{\M}(x,g)$, 
\begin{enumerate}
\smallskip
\item\label{f1} the forward orbit of $x$ enters to $\mathscr{A}_{\mu}(g,\ell,\alpha,\rho)$ for infinitely many $(E^{wu}, {\rm e}^{-H_f/2})$-hyperbolic times with upper density no less than $\theta_f$. More precisely,
$$
\mathcal{D}_{U}\left(\big\{j\in HT(x,{\rm e}^{-H_f/2},g): g^j(x)\in \bigcup_{B\in \mathscr{A}_{\mu}(g,\ell,\alpha,\rho)}B\big\}\right) \ge \theta_f;
$$
\smallskip
\item\label{f32} there exists $B\in\mathscr{A}_{\mu}(g,\ell,\alpha,\rho)$ such that if $\gamma\ni x$ is a $C^1$ disk transverse to $E^{cs}$, then 
$
\gamma\pitchfork W^s({\rm Orb}(p_B,g))\neq \emptyset.
$
Here $p_B$ is the hyperbolic periodic point in $B$ given by Theorem \ref{covering}.

\end{enumerate}
%\begin{itemize}
%\smallskip
%\item the forward orbit $\{g^n(x)\}$ enters $\mathscr{A}(g,\ell,\alpha,\delta)$ infinitely many times with upper density lager than $\rho_0$. Moreover, we have
%\smallskip
%\item there is $1\le i \le k_{\mu,\delta}$ such that for any $N\in \NN$, there exists $n\ge N$ such that 
%$g^n(x)\in B(x_{\mu,i},\delta)$ and $g^n(\gamma)$ contains a disk of radius $\delta_1$ around $g^n(x)$.
%\end{itemize}
\end{theorem}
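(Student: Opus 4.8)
The statement combines two assertions about typical points $x \in \mathcal{B}_g$ and measures $\mu \in \omega_{\mathcal{M}}(x,g)$: a positive-upper-density statement on the visits of the forward orbit of $x$ to the open cover $\mathscr{A}_{\mu}(g,\ell,\alpha,\rho)$ at hyperbolic times (part (\ref{f1})), and the existence of one particular ball $B$ in that cover whose associated periodic point $p_B$ has stable manifold met by any $C^1$ disk through $x$ transverse to $E^{cs}$ (part (\ref{f32})). I would first fix the constants: apply Proposition \ref{uv} to get $H_f>0$, $D_f\in(0,1)$ and a neighborhood $\mathcal{U}_f$; apply Proposition \ref{t11} and Theorem \ref{covering} to get $\alpha>0$, and then for a suitable (small) $\varepsilon>0$ obtain $\ell\in\mathbb{N}$, $\rho_\ell>0$, $\delta_\ell>0$, a neighborhood $\mathcal{U}'$ on which $\mu(\Lambda_\ell(g,\alpha))>1-\varepsilon$ for all $\mu\in G^u(g)$, and the open cover $\mathscr{A}_\mu(g,\ell,\alpha,\rho)$ of $\mathrm{supp}(\mu|\Lambda_\ell(g,\alpha))$ together with its periodic points $p_B$. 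Set $\mathcal{U} = \mathcal{U}_f\cap\mathcal{U}'$ and $\rho_f = \rho_\ell$.

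For part (\ref{f1}), the key point is a combinatorial density comparison. By Proposition \ref{uv}, the set $HT(x,\mathrm{e}^{-H_f/2},g)$ of hyperbolic times of $x$ has lower density at least $D_f$. On the other hand, since $\mu \in \omega_{\mathcal{M}}(x,g) \subset G^{cu}(g)$ (by definition of $\mathcal{B}_g$), Birkhoff-type reasoning along the subsequence realizing $\mu$ as a Cesàro limit shows that the frequency of $j$ with $g^j(x)$ in any open set containing $\mathrm{supp}(\mu|\Lambda_\ell(g,\alpha))$ is, along that subsequence, at least $\mu(\Lambda_\ell(g,\alpha)) > 1-\varepsilon$. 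Here I'd use that $\bigcup_{B\in\mathscr{A}_\mu}B$ is an open neighborhood of $\mathrm{supp}(\mu|\Lambda_\ell(g,\alpha))$, so by the portmanteau theorem $\liminf \frac 1n \#\{0\le j < n : g^j(x)\in\bigcup B\} \ge \mu(\bigcup B) \ge 1-\varepsilon$ along the relevant subsequence. Intersecting a set of (upper) density $\ge 1-\varepsilon$ with a set of lower density $\ge D_f$ gives a set of upper density $\ge D_f - \varepsilon$; choosing $\varepsilon < D_f$ at the outset and setting $\theta_f := D_f - \varepsilon$ (or any positive number below it) yields the claimed bound. I would state this intersection fact as a one-line lemma: if $\mathcal{D}_L(A)\ge a$ and $\mathcal{D}_U(B)\ge b$ then $\mathcal{D}_U(A\cap B)\ge a+b-1$, proved by $\#((A\cap B)\cap[1,n]) \ge \#(A\cap[1,n]) + \#(B\cap[1,n]) - n$.

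For part (\ref{f32}), I would use the infinitude of hyperbolic-time visits from part (\ref{f1}) together with the backward-contraction Lemma \ref{bcd} and the uniform size of the local stable/unstable manifolds of the $p_B$ (from Theorem \ref{covering}). Pick a hyperbolic time $n = n_j \in HT(x,\mathrm{e}^{-H_f/2},g)$ with $g^n(x) \in B = B(x_i,\rho)$ for some $B\in\mathscr{A}_\mu(g,\ell,\alpha,\rho)$; by the pigeonhole principle one such $B$ occurs for infinitely many $n$. Given a $C^1$ disk $\gamma\ni x$ transverse to $E^{cs}$, up to shrinking/iterating it is tangent to the cone field $\mathcal{C}_a^{wu}$, so Lemma \ref{bcd} says that a definite-size sub-disk of $g^n(\gamma)$ around $g^n(x)$ is the backward-contracted image of a piece of $\gamma$; choosing $\rho$ small relative to $\delta_\ell$ and the Lemma \ref{bcd} constant, this sub-disk crosses the cylinder around $x_i$ and hence transversally intersects the local stable manifold $W^s_{\mathrm{loc}}(p_B,g)$ (which also crosses that cylinder, as set up in Subsection \ref{subty}). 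Pulling back by $g^{-n}$, and using $\lambda$-lemma / persistence of transversality, $\gamma$ transversally meets $W^s(\mathrm{Orb}(p_B,g))$. The main obstacle, and the part requiring care, is exactly this geometric step: matching the scale $\rho$ of the cover, the uniform size $\delta_\ell$ of the stable manifolds, the width $a$ of the cone field, and the backward-contraction constants from Lemma \ref{bcd} so that the contracted image of $\gamma$ genuinely crosses the cylinder $C(x_i, \cdot)$ — this is where one invokes the uniform estimates of Theorem \ref{covering} and the cylinder construction of Subsection \ref{subty}, and where choosing all the constants in the right order matters.
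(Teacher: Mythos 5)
Your proposal is correct and follows essentially the same route as the paper: Proposition \ref{uv} together with Theorem \ref{covering} fixes $H_f$, $D_f$, $\alpha$, $\ell$, $\rho_\ell$, $\delta_\ell$ and the neighborhood, the density-intersection argument (portmanteau along the subsequence realizing $\mu$, intersected with the hyperbolic times of lower density $\ge D_f$) gives part (\ref{f1}) with $\theta_f=D_f-\varepsilon$, and pigeonhole plus backward contraction (Lemma \ref{bcd}) at a hyperbolic-time visit, combined with the uniform size of $W^{s}_{loc}(p_B,g)$ and transversality of cone-tangent disks, gives part (\ref{f32}). The only bookkeeping difference is that the paper chooses $\rho_f$ strictly smaller than $\rho_\ell$ (together with $\delta_f<\delta_\ell$ and a cone width $a$) so that the transversality at scale $2\rho_f$ versus disks of radius $\delta_f$ holds for every $\rho\le\rho_f$ --- precisely the constant-matching you flag at the end --- rather than taking $\rho_f=\rho_\ell$.
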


\begin{proof}
%By Lemma \ref{u}, we can choose $\alpha_H>0$ and $C^1$ open neighborhood $\U_0\subset \V(M)$ of $f$ such that for every $g\in \U_0$, one has
%$$
%\chi_{\mu}(g^{-1},E_g^{wu})<-\alpha_H\quad \textrm{for every}~\mu\in \G^u(g).
%$$
%For any $g\in \U_0$ and any $x\in \B_g$, for any $\mu \in p\omega(x,g)$, we know $\mu\in \G^u(g)$ by definition of $\B_g$. It follows from Lemma \ref{he} that every point $x$ of $\B_g$ is $\alpha_H$-nonuniformly expanding. Using Proposition \ref{hh}, there exists the density $\rho_H\in (0,1)$ such that 
%\begin{equation}\label{est}
%Den^{-}\left(\mathcal{HT}(x,{\rm e}^{-\alpha_H/2},g)\right)\ge \rho_H. 
%\end{equation}
%for every $x\in \B_g$.
Take $D_f\in (0,1)$, $H_f>0$ and $C^1$ neighborhood $\U_f$ of $f\in PH^{1}_{C}(M)$ as in Proposition \ref{uv}.
Choose $\varepsilon<D_f/2$, by Theorem \ref{covering} there exist $\alpha>0$, $\ell\in \NN$, $\delta_{\ell}$, $\rho_{\ell}>0$ and a $C^1$ neighborhood $\U\subset\U_f$ of $f$ such that for every $\rho \le \rho_{\ell}$, for every $g\in \U$ and every $\mu\in G^u(g)$, one can construct $\mathscr{A}_{\mu}(g,\ell,\alpha,\rho)$, which is consisted of $\rho$-balls satisfying
\begin{itemize}
\smallskip
\item
\begin{equation}\label{ww}
\mu\left(\bigcup_{B\in \mathscr{A}_{\mu}(g,\ell,\alpha,\rho) }B\right)>1-\varepsilon;
\end{equation}
\smallskip
\item
each $B\in  \mathscr{A}_{\mu}(g,\ell,\alpha,\rho)$ contains a hyperbolic periodic point $p_B$ with local stable and unstable manifolds of size larger than $\delta_{\ell}$.
\end{itemize}

\smallskip
By domination together with Lemma \ref{bcd}, one can choose $0<\rho_f<\rho_{\ell}$, $0<\delta_f<\delta_{\ell}$, and $a>0$ such that
\begin{enumerate}[(A)]
\smallskip
\item\label{tde} for any points $x,y$ with $d(x,y)\le 2\rho_{f}$, if $\gamma_x$ and $\gamma_y$ are disks centered at $x$ and $y$ with radius $\delta_{f}$, which are tangent to $\mathcal{C}_a^{wu}$ and $\mathcal{C}_a^{cs}$, respectively, then $\gamma_x$ intersects $\gamma_y$ transversely;
\smallskip
\item\label{bce}
for any $C^1$ embedded disk $\gamma$ tangent to $\mathcal{C}^{wu}_a$ and containing $x\in \B_g$, if $n\in HT(x, {\rm e}^{-H_f/2}, g)$, then 
\begin{equation*}
d_{g^i(\gamma)}(g^{i}(x), g^{i}(y))\le {\rm e}^{(-(n-i)H_f)/4}d_{g^n(\gamma)}(g^n(x), g^{n}(y))
\end{equation*}
for any $y\in \gamma$ satisfying $d_{g^n(\gamma)}(g^n(x), g^n(y))\le \delta_{f}$. 
\end{enumerate}

\smallskip
Now let us fix $g\in \U$, $x\in \B_g$, $\mu \in \omega_{\M}(x,g)$, and $\rho\le \rho_f$. For simplicity,
take $A_{\rho}$ as the union of the $\rho$-balls of $\mathscr{A}_{\mu}(g,\ell,\alpha,\rho)$.
%$$
%A_{\rho}=\bigcup_{B\in \mathscr{A}_{\mu}(g,\ell,\alpha,\rho)}B,
%$$
Due to the choice of $\mu$, there exists a subsequence $\{n_i\}$ such that
$$
\frac{1}{n_i}\sum_{j=0}^{n_i-1}\delta_{g^j(x)}\xrightarrow{weak^{\ast}}\mu,\quad \textrm{as}~i\to +\infty.
$$
This together with (\ref{ww}) yields 
\begin{equation*}\label{s}
\liminf_{i\to +\infty}\frac{1}{n_i}\sum_{j=0}^{n_i-1}\chi_{A_{\rho}}(g^j(x))\ge \mu(A_{\rho})>1-\varepsilon,
\end{equation*}
where we use the fact $\mu\in G^{u}(g)$.
On the other hand, since $x$ is $H_f$-nonuniformly expanding along $E^{wu}$ with density estimate (\ref{est}). According to the choices of $\varepsilon$ and $D_f$, one has
$$
\mathcal{D}_{U}\left(\{j\in HT(x,{\rm e}^{-H_f/2},g): g^j(x)\in A_{\rho}\}\right) \ge D_f-\varepsilon>0.
$$
Therefore, we have complete the proof of Item (\ref{f1}) by taking $\theta_f=D_f-\varepsilon$.

\smallskip
We begin to prove Item (\ref{f32}).
As a consequence of Item (\ref{f1}), one can choose $B\in \mathscr{A}_{\mu}(g,\ell,\alpha,\rho)$ to satisfy
\begin{equation}\label{ff}
\mathcal{D}_{U}\left(\{j\in HT(x,{\rm e}^{-H_f/2},g): g^j(x)\in B\}\right)\ge \frac{\theta_f}{\#\mathscr{A}_{\mu}(g,\ell,\alpha,\rho)}.
\end{equation}
%By domination together with Lemma \ref{bcd}, one can choose $0<\rho_f<\rho_{\ell}$, $0<\delta_f<\delta_{\ell}$, and $a>0$ such that,
%\begin{enumerate}[(A)]
%\smallskip
%\item\label{td} for any points $x,y$ with $d(x,y)\le 2\rho_{f}$, if $\gamma_x$ and $\gamma_y$ are disks tangent to $\mathcal{C}_a^{u}$ and $\mathcal{C}_a^s$ and centered at $x$ and $y$ with radius $\delta_{f}$, respectively, then $\gamma_x$ intersects $\gamma_y$ transversely;
%\smallskip
%\item\label{bc}
%for any $C^1$ embedded disk $\gamma$ tangent to $\mathcal{C}^u_a$ that contains $x\in \B_g$, if $n\in \mathcal{HT}(x, {\rm e}^{-\alpha_H/2}, g)$, then 
%\begin{equation*}
%d_{g^i(\gamma)}(g^{i}(x), g^{i}(y))\le {\rm e}^{(-(n-i)\alpha_H)/4}d_{g^n(\gamma)}(g^n(x), g^{n}(y))
%\end{equation*}
%for any $y\in \gamma$ satisfying $d_{g^n(\gamma)}(g^n(x), g^n(y))\le \delta_{f}$. 
%\end{enumerate}
Let $\gamma$ be a $C^1$ disk transverse to $E^{cs}$, containing the fixed point $x$ in $\B_g$.
By domination, we may assume that every $g^{n}(\gamma)$, $n\ge 1$ is tangent to $\mathcal{C}_a^{wu}$. 
It follows from (\ref{bce}) and  (\ref{ff}) that 
%$g^n(\gamma)$ contains a disk of radius $\delta_f$ around $g^n(x)$, provided $n\in \mathcal{HT}(x, {\rm e}^{-\alpha_H/2}, g)$ is sufficiently large. According to 
% (\ref{ff}), 
there exists $n\in \NN$ such that $g^n(x)\in B$ and $g^n(\gamma)$ contains the disk of radius $\delta_f$ centered at $g^n(x)$. Since $p, g^n(x)\in B$, we know $d(p,g^n(x))<2\rho\le 2\rho_f$, then (\ref{tde}) gives
$
g^n(\gamma)\pitchfork W_{loc}^s(p_B,f)\neq \emptyset.
$
Consequently, we obtain 
$
\gamma\pitchfork W^s({\rm Orb}(p_B,f))\neq \emptyset.
$
Now, we finish the proof of Theorem \ref{n11}.
\end{proof}

We are in position to give the proof of Theorem \ref{es}.
\begin{proof}[Proof of Theorem \ref{es}]
Let $f\in PH^{1}_{C}(M)$ with partially hyperbolic splitting
$
TM=E^u\oplus_{\succ} E^{cu}\oplus_{\succ} E^{cs}.
$
From Lemma \ref{ske1}, it suffices to construct a skeleton of $f$, then it admits the cardinality depending only on $f$, which we denote by $k_f$.

\smallskip
By Theorem \ref{n11}, there exist $\alpha>0$, $\ell \in \NN$, $\rho_f>0$ such that,
for every $x\in \B_f$, when we fix $\mu\in \omega_{\M}(x,f)$, there is $B\in\mathscr{A}(f,\ell,\alpha,\rho_{f})$ with hyperbolic periodic point $p_x:=p_B$ in $B$ so that for every $C^1$ disk $D$ containing $x$ and transverse to $E^{cs}$, we have that $D\pitchfork W^s({\rm Orb}(p_x,f))\neq \emptyset$. 
Denote by $\mathcal{S}_0=\{p_x: x\in \B_f\}$ the set consisting of all the periodic points obtained as above. By Theorem \ref{ttd}, for every $C^1$ disk $D$ transverse to $E^{cs}$, the intersection of $\B_f\cap D$ admits full Lebesgue measure, thus there is $x\in \B_f$ for which $D$ intersects $W^s({\rm Orb}(p_x,f))$ transversally. 
It follows from Theorem \ref{covering} that every hyperbolic periodic point in $\mathcal{S}_0$ admits stable and unstable manifolds of size larger than $\delta_{\ell}$. This implies that the number of periodic points of $\mathcal{S}_0$ which are not homoclinically related to each other is finite only. Therefore, one can take $\mathcal{S}=\{p_i: 1\le i \le l\}\subset\mathcal{S}_0$ such that different elements of $\mathcal{S}$ are not homoclinically related with maximal cardinality, that is, for any $p_x\in\mathcal{S}_0$ there exists 
$p_i\in \mathcal{S}$ so that $p_x\sim p_i$.

\smallskip
Next we show that $\mathcal{S}$ is a pre-skeleton, which implies that there exists a subset of $\mathcal{S}$ to be a skeleton, using Lemma \ref{ske3}. By contradiction, we suppose that there is a $C^1$ disk $D$ transverse to $E^{cs}$ which do not intersect $W^s({\rm Orb}(p,f))$ for any $p\in \mathcal{S}$. On the other hand, by the construction of $\{p_x: x\in \B_f\}$, one can take $p_x$ for some $x\in \B_f\cap D$ such that 
\begin{equation}\label{int}
D\pitchfork W^s({\rm Orb}(p_x,f))\neq \emptyset.
\end{equation}
By the maximality of $\mathcal{S}$, there exists $p\in \mathcal{S}$ such that $p_x\sim p$, which together with (\ref{int}) implies that $D$ intersects $W^s({\rm Orb}(p,f))$ transversally, using the inclination lemma. This is a contradiction to our assumption, thus we get the desired result and complete the proof of Theorem \ref{es}.
\end{proof}

%\begin{notation}
%Since skeletons of the diffeomorphism $f$ must have the same cardinality as shown in Lemma \ref{ske1}, we will use $\kappa_f$ to denote the cardinality of skeleton of $f$ when it make sense.
%\end{notation}

The following theorem focus on the perturbations of the skeleton, which in particular gives the upper semi-continuity of $f\mapsto k_f$ for diffeomorphisms in $PH^{1}_{C}(M)$.

\begin{theorem}\label{ffff}
Let $f\in PH^{1}_{C}(M)$ and $\mathcal{S}(f)=\{p_i: 1\le i \le k_f\}$ be a skeleton of $f$. Then there exists a $C^1$ neighborhood $\U$ of $f$ such that for every $g\in \U$, the continuation $\{p_i(g): 1\le i \le k_f\}$ of $\mathcal{S}(f)$ is a pre-skeleton of $g$. Consequently, we have $k_g \le k_f$ for every $g\in \U$.
\end{theorem}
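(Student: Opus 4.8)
The plan is to verify that the continuation $\{p_1(g),\dots,p_{k_f}(g)\}$ satisfies condition (S\ref{S1}) for $g$; once this is known, $\{p_i(g)\}$ is a pre-skeleton of $g$, hence contains a skeleton of $g$ by Lemma \ref{ske3}, and so $k_g\le k_f$ follows from Lemma \ref{ske1}. First, since each $p_i$ is a hyperbolic periodic point of $f$, there is a $C^1$ neighborhood $\U_0$ of $f$ on which the continuations $p_i(g)$ are defined and hyperbolic; because the dominated splitting $TM=E^u\oplus_\succ E^{cu}\oplus_\succ E^{cs}$ extends continuously to $\U_0$ and $p_i$ has stable index ${\rm dim}E^{cs}$, the continuation $p_i(g)$ has the same index, and for every fixed $k\in\NN$ the compact sets $g^{-k}(W^s_{loc}({\rm Orb}(p_i(g),g)))$ and $W^u_{loc}(p_i(g),g)$ vary continuously with $g$ in the $C^1$ topology. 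We then shrink $\U\subset\U_0$ so that Theorem \ref{covering}, Theorem \ref{n11} and (automatically, since $PH^1_{EC}(M)$ is $C^1$-open) Theorem \ref{ttd} hold throughout $\U$ with uniform constants $\alpha$, $\ell$, $\delta_\ell$, $\rho_f$.

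To prove (S\ref{S1}) for $\{p_i(g)\}$, fix $g\in\U$ and a $C^1$ disk $\gamma$ transverse to $E^{cs}_g$. By Theorem \ref{ttd}(\ref{fde}) applied to $g$ we may pick $x\in\B_g\cap\gamma$ and then some $\mu\in\omega_{\M}(x,g)$; by Theorem \ref{n11}(\ref{f32}) there is $B\in\mathscr{A}_{\mu}(g,\ell,\alpha,\rho_f)$ whose associated hyperbolic periodic point $p_B$ satisfies $\gamma\pitchfork W^s({\rm Orb}(p_B,g))\neq\emptyset$, and by Theorem \ref{covering} the disk $W^u_{loc}(p_B,g)$ has radius $R(W^u_{loc}(p_B,g))\ge\delta_\ell$ and is transverse to $E^{cs}_g$. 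If we knew that $p_B\overset{g}{\prec}p_i(g)$ for some $i$, i.e. $W^u({\rm Orb}(p_B,g))\pitchfork W^s({\rm Orb}(p_i(g),g))\neq\emptyset$, then, since forward iterates of a small piece of $\gamma$ around its intersection point with $W^s({\rm Orb}(p_B,g))$ $C^1$-accumulate on $W^u_{loc}({\rm Orb}(p_B,g))$, the inclination lemma would give $\gamma\pitchfork W^s({\rm Orb}(p_i(g),g))\neq\emptyset$, which is exactly (S\ref{S1}). Hence everything reduces to the following assertion (after possibly shrinking $\U$ once more): for every $g\in\U$ and every hyperbolic periodic point $q$ of $g$ with $R(W^u_{loc}(q,g))\ge\delta_\ell$ there is $i$ with $q\overset{g}{\prec}p_i(g)$.

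This assertion is proved by contradiction and a compactness argument. If it fails, there are $g_n\to f$ in the $C^1$ topology and hyperbolic periodic points $q_n$ of $g_n$ with $R(W^u_{loc}(q_n,g_n))\ge\delta_\ell$ such that $q_n\overset{g_n}{\not\prec}p_i(g_n)$ for all $i$. Passing to a subsequence, $q_n\to q_\infty$; since each $W^u_{loc}(q_n,g_n)$ is a $C^1$ disk of radius $\ge\delta_\ell$ tangent to a narrow cone field about $E^{wu}$ and contained in the plaque $\mathcal{F}^{E^{wu}}_{\rho}(g_n,q_n)$, and these plaques converge in $C^1$ to $\mathcal{F}^{E^{wu}}_{\rho}(f,q_\infty)$ by Proposition \ref{plaque}, a further subsequence of $W^u_{loc}(q_n,g_n)$ converges in the $C^1$ topology to a $C^1$ disk $D_\infty$ of radius $\ge\delta_\ell$ centered at $q_\infty$ which is transverse to $E^{cs}$ for $f$. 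Because $\mathcal{S}(f)=\{p_i\}$ is a skeleton of $f$, condition (S\ref{S1}) provides some $i$ with $D_\infty\pitchfork W^s({\rm Orb}(p_i,f))\neq\emptyset$; writing $W^s({\rm Orb}(p_i,f))=\bigcup_{k\ge0}f^{-k}(W^s_{loc}({\rm Orb}(p_i,f)))$, this transversal intersection sits inside $f^{-k_0}(W^s_{loc}({\rm Orb}(p_i,f)))$ for some finite $k_0$. By the first paragraph, $g_n^{-k_0}(W^s_{loc}({\rm Orb}(p_i(g_n),g_n)))\to f^{-k_0}(W^s_{loc}({\rm Orb}(p_i,f)))$ in $C^1$, and $W^u_{loc}(q_n,g_n)\to D_\infty$ in $C^1$, so the transversal intersection persists for all large $n$, giving $q_n\overset{g_n}{\prec}p_i(g_n)$ — a contradiction. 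This proves the assertion and completes the proof.

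The main obstacle is the compactness step in the last paragraph: one must extract a genuine $C^1$ (not merely Hausdorff) subsequential limit $D_\infty$ of the local unstable disks $W^u_{loc}(q_n,g_n)$, for which the uniform radius $\delta_\ell$ coming from Theorem \ref{covering} together with the $C^1$-continuity of plaque families (Proposition \ref{plaque}) is crucial, and one must transport the transversal intersection furnished by (S\ref{S1}) for $f$ — which a priori lies at a finite but uncontrolled depth $k_0$ in the stable lamination — back to the perturbed maps $g_n$ via the continuity in the first paragraph. Note that the periods of the $q_n$ need not be bounded, which is precisely why the argument is run through limits of the uniformly large local unstable disks rather than through continuations of individual periodic orbits.
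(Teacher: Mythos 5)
Your argument is correct in substance, but it reaches the conclusion by a different route than the paper. Both proofs reduce (S\ref{S1}) for the continuations $\{p_i(g)\}$ to finding, for a given test disk $\gamma$ transverse to $E^{cs}$, an auxiliary hyperbolic periodic point of $g$ with a uniformly large local unstable disk that already heteroclinically connects to some $p_i(g)$, and both then transfer the intersection to $\gamma$ by the inclination lemma. The difference is where the uniformity comes from. The paper works directly with $f$: for each $x\in M$ it fixes a compact piece $W^s_{\rho_x}({\rm Orb}(p_i,f))$ transversally met by all sufficiently large disks near $x$ that are transverse to $E^{cs}$, takes a finite subcover of $M$ to get a uniform depth $\rho$, and then uses the $C^1$-continuity of the compact parts $W^s_{\rho}({\rm Orb}(p_i(g),g))$ in $g$; the auxiliary periodic points are the elements of a skeleton of $g$ itself (supplied by Theorem \ref{es}), whose local unstable manifolds have size $\ge\delta_{\ell}$. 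You instead take the auxiliary point to be the $p_B$ furnished by Theorem \ref{n11}(\ref{f32}) for the given $\gamma$ (so you never need a skeleton of $g$), and you obtain the needed connection $p_B\overset{g}\prec p_i(g)$ by a contradiction/compactness argument: extracting a $C^1$ limit $D_\infty$ of the uniformly large unstable disks along $g_n\to f$ via the plaque families, applying (S\ref{S1}) for $\mathcal{S}(f)$ to $D_\infty$, and pushing the resulting finite-depth transverse intersection back to $g_n$. Your route trades the paper's explicit finite covering of $M$ (which yields an effective uniform $\rho$) for a softer limit argument; both are legitimate. One point to tighten: state your key assertion only for periodic points whose local unstable disks lie in the $E^{wu}$-plaques (equivalently, the points produced near the Pesin blocks by Theorem \ref{covering}), since the cone/plaque condition is exactly what your $C^1$-limit extraction uses and is not automatic for an arbitrary hyperbolic periodic point with a large unstable manifold; this costs nothing, because the points $p_B$ to which you apply the assertion do have this property.
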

\begin{proof}
For fixed $f\in PH^{1}_{C}(M)$ with partially hyperbolic splitting $TM=E^u\oplus_{\succ} E^{cu}\oplus_{\succ} E^{cs}$, take the $C^1$ neighborhood $\U$ of $f$ and size $\delta_{\ell}>0$ as in Theorem \ref{n11}.
By Theorem \ref{es}, every diffeomorphism of $\U$ admits a skeleton whose periodic points exhibit local stable and unstable manifolds of size larger than $\delta_{\ell}$. 

\smallskip
Now let $\mathcal{S}(f)=\{p_i: 1\le i \le k_f\}$ be any skeleton of $f$. 
%{\color{red}
%For every disk transverse to $E^{cs}$ of radius larger than $\delta_{\ell}/2$, by definition of skeletons one can find $p_i\in \mathcal{S}(f)$ such that $W^s({\rm Orb}(p_i,f))$ intersects this disk transversely. Then use the compactness of $M$, the continuity of the sub-bundles and stable manifold of periodic point w.r.t. diffeomorphisms, one can reduce $\U$ smaller if necessary such that, for every $g\in \U$ the family $\{p_i(g): 1\le i \le \kappa_f\}$ obtained as the continuation of $\{p_i: 1\le i \le \kappa_f\}$ possesses the following property: for every $C^1$ disk $D$ transverse to $E^{cs}$ with radius larger than $\delta_{\ell}$, there is $1\le i \le \kappa_f$ such that $W^s({\rm Orb}(p_i,f))$ intersects $D$ transversely.
%%$$
%%D\pitchfork W^s({\rm Orb}(p_i(g),g))\neq \emptyset.
%%$$
%For $g\in \U$, let $\mathcal{S}(g)=\{q_i : 1\le i \le \kappa_g \}$ be a skeleton of $g$ such that every $q_i\in \mathcal{S}(g)$ has local unstable manifold of size larger than $\delta_{\ell}$. In particular, for every $q_j\in \mathcal{S}(g)$, there is $1\le i \le \kappa_f$ such that 
%$$
%W^u_{loc}(q_j,g) \pitchfork W^s({\rm Orb}(p_i(g),g))\neq \emptyset.
%$$
%Then, the inclination lemma yields $W^s({\rm Orb}(q_j,g))\subset \overline{W^s({\rm Orb}(p_i(g),g))}$, so $\{p_i(g): 1\le i \le \kappa_f\}$ satisfies condition (S\ref{S1}), and is a pre-skeleton. Therefore, it contains a subset to be a skeleton by Lemma \ref{ske3}, thus $\kappa_g\le \kappa_f$.
%}
%
For every $x\in M$, take a $C^1$ disk transverse to $E^{cs}$ centered at $x$ with radius $\delta_{\ell}/4$, and denote it by $D(x,\delta_{\ell}/4)$. By definition of skeletons, there is $p_i\in \mathcal{S}(f)$ such that $W^s({\rm Orb}(p_i,f))$ intersects $D(x,\delta_{\ell}/4)$ transversely at some point in the interior of $W_{\rho_x}^s({\rm Orb}(p_i,f))$ for some $\rho_x>0$.
Thus, one can take a small neighborhood $U_x$ of $x$ in $M$ such that for every $y\in U_x$, if $\gamma$ is a $C^1$ disk centered at $y$ of radius $\delta_{\ell}/2$ and is transverse to $E^{cs}$, then it intersects $W_{\rho_x}^s({\rm Orb}(p_i,f))$ transversally.
By compactness, we can take an open covering $\{U_{x_i}: 1\le i \le m\}$ of $M$, and put $\rho=\max\{\rho_{x_i}: 1\le i \le m\}$. By the continuity of the sub-bundles and the compact part of stable manifold of periodic point w.r.t. diffeomorphisms, one can reduce $\U$ small enough such that for every $g\in \U$, the family $\{p_i(g): 1\le i \le k_f\}$ obtained as the continuation of $\{p_i: 1\le i \le k_f\}$ possesses the following property: for every $C^1$ disk $D$ transverse to $E^{cs}$ with radius $\delta_{\ell}$ around a point in $M$, there is $1\le i \le k_f$ such that 
$$
D\pitchfork W_{\rho}^s({\rm Orb}(p_i(g),g))\neq \emptyset.
$$

For $g\in \U$, let $\mathcal{S}(g)=\{q_j : 1\le j \le k_g \}$ be a skeleton of $g$ such that every $q_j\in \mathcal{S}(g)$ has local unstable manifold of size larger than $\delta_{\ell}$. In particular, for every $q_j\in \mathcal{S}(g)$, there is $1\le i \le k_f$ such that 
$$
W^u_{loc}(q_j,g) \pitchfork W_{\rho}^s({\rm Orb}(p_i(g),g))\neq \emptyset.
$$
Then, the inclination lemma yields $W^s({\rm Orb}(q_j,g))\subset \overline{W^s({\rm Orb}(p_i(g),g))}$, so $\{p_i(g): 1\le i \le k_f\}$ satisfies condition (S\ref{S1}), and is a pre-skeleton. Therefore, it contains a subset to be a skeleton by Lemma \ref{ske3}, thus $k_g\le k_f$.

\end{proof}

\subsection{Finiteness of ergodic physical measures}
%In this section, we mainly investigate the \emph{ergodic} physical measures by skeletons for $C^{1+\alpha}$  diffeomorphisms in $\U(M)$. 
%Given a diffeomorphism $f$ on $M$, assume that $\mu\in \M_{\rm inv}(f)$ and $p$ a hyperbolic point of $f$, writing $\mu \overset{f}\longleftrightarrow p$ if 
%$$
%{\rm supp}(\mu)=\overline{W^u({\rm Orb}(p,f))}=H(p,f).
%$$
%\paragraph{\bf{Notation}} Given a diffeomorphism $f$ on $M$, assume that $\mu\in \M_{\rm inv}(f)$ and $p$ a hyperbolic point of $f$, writing $\mu \overset{f}\longleftrightarrow p$ if 
%$$
%{\rm supp}(\mu)=\overline{W^u({\rm Orb}(p,f))}=H(p,f).
%$$
For every $f\in PH^{1+}_{C}(M)$, use $\P_{e}(f)$ denotes the family of ergodic physical measures of $f$. Following Theorem \ref{fc}, we know that $\P_e(f)$ is non-empty. Moreover, in terms of the definitions of physical measures and $\B_f$, we have the following observation:

\begin{lemma}\label{kes}
If $f\in PH^{1+}_{C}(M)$, then $\mu \in \P_e(f)$ if and only if it is an ergodic Gibbs $cu$-state.
\end{lemma}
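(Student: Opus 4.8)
The plan is to prove the two implications separately; each reduces almost immediately to results already established (Theorems \ref{ttd} and \ref{fc}), together with the definitions of physical measure and of $\B_f$.

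First I would treat the "if" direction. Suppose $\mu$ is an ergodic Gibbs $cu$-state of $f$. Since $PH^{1+}_{C}(M)\subset PH^{1+}_{EC}(M)$, Theorem \ref{fc} applies and tells us that $G^{cu}(f)$ is a compact convex subset of $\M(f)$ all of whose extreme elements are physical measures. Now, the ergodic $f$-invariant probability measures are precisely the extreme points of $\M(f)$, so any ergodic element of the convex subset $G^{cu}(f)$ is automatically an extreme point of $G^{cu}(f)$; hence $\mu$ is a physical measure by Theorem \ref{fc}(2), and being ergodic it belongs to $\P_e(f)$.

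For the "only if" direction, suppose $\mu\in\P_e(f)$, so $\mu$ is ergodic and ${\rm Leb}(B(\mu,f))>0$. By Theorem \ref{ttd}, the set $\B_f=\{x\in M:\omega_{\M}(x,f)\subset G^{cu}(f)\}$ has full Lebesgue measure in $M$, so $B(\mu,f)\cap\B_f$ has positive Lebesgue measure and in particular is nonempty; I fix a point $x$ in this intersection. On the one hand $x\in B(\mu,f)$ means $\frac1n\sum_{i=0}^{n-1}\delta_{f^{i}x}\to\mu$ in the weak$^*$ topology, so $\omega_{\M}(x,f)=\{\mu\}$; on the other hand $x\in\B_f$ gives $\omega_{\M}(x,f)\subset G^{cu}(f)$. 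Combining the two yields $\mu\in G^{cu}(f)$, i.e.\ $\mu$ is a Gibbs $cu$-state, and since $\mu$ is ergodic it is an ergodic Gibbs $cu$-state.

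There is no serious obstacle here beyond invoking Theorems \ref{ttd} and \ref{fc} in the right form; the argument is essentially the two paragraphs above. The only two points I would make sure to justify in a line are, in the first implication, the passage ``ergodic $\Rightarrow$ extreme in $G^{cu}(f)$'' (which uses only that $G^{cu}(f)$ is a convex subset of $\M(f)$ and that ergodic measures are extreme in $\M(f)$), and, in the second implication, that the basin $B(\mu,f)$ genuinely meets the full-Lebesgue-measure set $\B_f$, so that one really produces a point $x$ with $\omega_{\M}(x,f)=\{\mu\}\subset G^{cu}(f)$.
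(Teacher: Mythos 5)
Your proof is correct and follows exactly the route the paper has in mind: the lemma is stated there as an observation resting on the definitions of physical measure and $\B_f$, i.e.\ on Theorem \ref{ttd} (full Lebesgue measure of $\B_f$, giving a basin point $x$ with $\omega_{\M}(x,f)=\{\mu\}\subset G^{cu}(f)$) and Theorem \ref{fc} (extreme elements of $G^{cu}(f)$ are physical for $C^{1+}$ maps, combined with ergodic $\Rightarrow$ extreme). Nothing further is needed.
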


\paragraph{\bf{Notation}} Given a diffeomorphism $f$ on $M$, assume that $\mu\in \M(f)$ and $p$ a hyperbolic periodic point of $f$, writing $\mu \overset{f}\longleftrightarrow p$ if 
$$
{\rm supp}(\mu)=\overline{W^u({\rm Orb}(p,f))}=H(p,f).
$$

We can build the following relationship between ergodic physical measures and the skeleton for any diffeomorphism in $PH^{1+}_{C}(M)$.

\begin{proposition}\label{G}
Assume that $f$ is a diffeomorphism in $PH^{1+}_{C}(M)$ with a skeleton $\mathcal{S}=\{p_i: 1\le i \le k_f\}$. Then there is an injective map $i\mapsto j(i)$ charactered by following property: 
for every $\mu_i\in \P_{e}(f)$, there is $p_{j(i)}\in \mathcal{S}$ and an unstable disk $D_{i}$ such that
\begin{enumerate}
\item\label{p1} $D_i$ is contained in the support of $\mu_i$, and Lebesgue almost every point of it belongs to $B(\mu_i,f)$;
\smallskip
\item\label{p2} $D_i$ intersects $W^s({\rm Orb}(p_{j(i)},f))$ transversally. 
\end{enumerate}
Consequently, we have $\mu_i \overset{f}\longleftrightarrow p_{j(i)}$ for every $1\le i \le k_f$, and $\# \P_e(f)\le k_f$. 
\end{proposition}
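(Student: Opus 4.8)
The plan is to attach to each ergodic physical measure a witnessing unstable disk, read off the matching element of $\mathcal{S}$ from condition (S\ref{S1}), promote this to the support equality, and finally deduce injectivity from the uniqueness of a physical measure on a homoclinic class. Concretely, I would fix $\mu_i\in\P_e(f)$; by Lemma \ref{kes} it is an ergodic Gibbs $cu$-state, so Proposition \ref{t11} furnishes $\alpha>0$, $\ell\in\NN$ with $\mu_i(\Lambda_\ell(f,\alpha))>0$, hence a typical measurable partition $\P(x,D,\alpha,\ell,\rho)$ associated to $\mu_i$, and Lemma \ref{ggh} (ergodic case) then produces an unstable disk $D_i$ in this partition with $D_i\subset{\rm supp}(\mu_i)$, ${\rm Leb}_{D_i}(\Lambda_\ell(f,\alpha))>0$, and ${\rm Leb}_{D_i}$-a.e.\ point of $D_i$ in $B(\mu_i,f)$, which is item (\ref{p1}). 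Since $D_i$ is tangent to $E^{wu}$ it is transverse to $E^{cs}$, so condition (S\ref{S1}) applied to $D_i$ yields $p_{j(i)}\in\mathcal{S}$ with $D_i\pitchfork W^s({\rm Orb}(p_{j(i)},f))\neq\emptyset$, which is item (\ref{p2}).

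Next I would identify ${\rm supp}(\mu_i)$. The inclination lemma, applied to the unstable disk $D_i$ sitting inside the closed invariant set ${\rm supp}(\mu_i)$ and meeting $W^s({\rm Orb}(p_{j(i)},f))$ transversally, gives $\overline{W^u({\rm Orb}(p_{j(i)},f))}\subset\overline{\bigcup_{n\ge 0}f^n(D_i)}\subset{\rm supp}(\mu_i)$. For the reverse inclusion I would pick a subdisk $D_i'\subset D_i$ around a transverse intersection point with $W^s({\rm Orb}(p_{j(i)},f))$ and a point $z\in D_i'\cap B(\mu_i,f)$ (a full-measure condition on $D_i'$); since $f^n(D_i')$ converges in the $C^1$ topology to the local unstable manifold of ${\rm Orb}(p_{j(i)},f)$, the forward orbit of $z$ is eventually contained in every neighbourhood of the invariant set $\overline{W^u({\rm Orb}(p_{j(i)},f))}$, and as $z\in B(\mu_i,f)$ the closure of its forward orbit contains ${\rm supp}(\mu_i)$; hence ${\rm supp}(\mu_i)\subset\overline{W^u({\rm Orb}(p_{j(i)},f))}$. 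Together with Lemma \ref{ske2} this is exactly $\mu_i\overset{f}\longleftrightarrow p_{j(i)}$. The assignment $i\mapsto j(i)$ is well defined: if two distinct elements of $\mathcal{S}$ shared this closure ${\rm supp}(\mu_i)$, then a piece of one of their unstable manifolds near the other periodic point, being tangent to $\mathcal{C}_a^{wu}$, would cross the corresponding local stable manifold transversally, a heteroclinic intersection forbidden by (S\ref{S2}).

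The hard part will be injectivity, which I would reduce to showing that the homoclinic class $H(p_{j(i)})$ supports at most one ergodic physical measure. Suppose $j(i)=j(i')$ with $\mu_i\neq\mu_{i'}$; by the previous step ${\rm supp}(\mu_i)={\rm supp}(\mu_{i'})=\overline{W^u({\rm Orb}(p_{j(i)},f))}$. Since ${\rm Leb}_{D_i}$-a.e.\ point of $D_i$ lies in $B(\mu_i,f)$, dominated convergence gives $\omega_{\M}(D_i,f)=\{\mu_i\}$, and likewise $\omega_{\M}(D_{i'},f)=\{\mu_{i'}\}$; moreover $D_i$ and $D_{i'}$ are unstable disks with positive ${\rm Leb}$-mass on $\Lambda_\ell(f,\alpha)$ sitting inside the same homoclinic class. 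Using the inclination lemma to carry forward iterates of subdisks of $D_i$ and of $D_{i'}$ onto small unstable disks through points of ${\rm Orb}(p_{j(i)})$, together with the absolute continuity of the stable lamination over a Pesin block $\Lambda_\ell(f,\alpha)$ of positive $\mu_i$- (resp.\ $\mu_{i'}$-) mass lying in $H(p_{j(i)})$, one should obtain that $B(\mu_i,f)$ and $B(\mu_{i'},f)$ each carry positive ${\rm Leb}$-measure on one and the same unstable disk; since distinct ergodic measures have disjoint basins this is absurd, forcing $\mu_i=\mu_{i'}$. Hence $i\mapsto j(i)$ is injective and $\#\P_e(f)\le\#\mathcal{S}=k_f$. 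I expect the real work in this last step to lie in the bounded-distortion and $C^1$-convergence bookkeeping that legitimises the ``common unstable disk'', and in invoking absolute continuity of the stable holonomy in the $C^{1+}$ non-uniformly hyperbolic regime (available here from the uniform size of stable manifolds on $\Lambda_\ell(f,\alpha)$).
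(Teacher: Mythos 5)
Your setup for items (\ref{p1})--(\ref{p2}) (Lemma \ref{kes} plus Lemma \ref{ggh}, then condition (S\ref{S1}) applied to the unstable disk $D_i$) and your injectivity strategy (iterate both disks toward $W^u_{loc}$ of the common periodic point via the inclination lemma, then use absolute continuity of the stable lamination over a Pesin block to force $B(\mu_i,f)\cap B(\mu_{i'},f)\neq\emptyset$) are in the same spirit as the paper. But there is a genuine gap in your proof of the inclusion ${\rm supp}(\mu_i)\subset\overline{W^u({\rm Orb}(p_{j(i)},f))}$. You claim that, because $f^n(D_i')$ converges in the $C^1$ topology to the local unstable manifold, the forward orbit of a fixed point $z\in D_i'\cap B(\mu_i,f)$ is eventually contained in every neighbourhood of $\overline{W^u({\rm Orb}(p_{j(i)},f))}$. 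The inclination lemma does not give this: it produces, for each large $n$, a \emph{sub-disk} of $f^n(D_i')$ that is $C^1$-close to a compact piece of the unstable manifold, but it says nothing about where the rest of $f^n(D_i')$ goes, and in particular nothing about the long-term location of the orbit of an individual point $z$ off $W^s({\rm Orb}(p_{j(i)},f))$. Since $\overline{W^u({\rm Orb}(p_{j(i)},f))}$ is merely invariant (not an attractor with a trapping neighbourhood), the distance of $f^n(z)$ to this set can grow by a Lipschitz factor at each step, and in general the orbit of such a $z$ need not stay near the closure of the unstable set at all (think of an unstable manifold falling non-transversally into the stable manifold of another saddle: nearby points are later ejected along the second saddle's unstable manifold, far from the closure). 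The paper avoids this by producing, via absolute continuity of the stable lamination over a positive-${\rm Leb}$ set of Pesin-block points on a partition element $D_{p_{j(i)}}$ near $p_{j(i)}$, a point of $B(\mu_i,f)$ lying \emph{on} $W^u_{\delta_\ell}(p_{j(i)},f)$ itself; then invariance of $W^u({\rm Orb}(p_{j(i)},f))$ and the Birkhoff property of that point give ${\rm supp}(\mu_i)\subset\overline{W^u({\rm Orb}(p_{j(i)},f))}$.

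A second, related omission: the paper first reduces, via Lemma \ref{ske1}, to the particular skeleton constructed in Theorem \ref{es}, whose points sit in balls charged by Gibbs $cu$-states and carry the cylinder/typical-partition structure ($C_{j}$, $\P_{j}$, stable and unstable manifolds of uniform size $\delta_\ell$ crossing the cylinder). This structure is exactly what legitimises the absolute-continuity holonomy you invoke both in the (corrected) support argument and in the injectivity step: one needs a reference unstable plaque near $p_{j(i)}$ carrying a positive-${\rm Leb}$ set of block points whose stable leaves cross the same cylinder as the iterated disks. Working with an arbitrary skeleton, as you do, such a plaque near $p_{j(i)}$ is not available without further argument (positive $\mu_i$-measure of $\Lambda_\ell$ globally does not give block points with large stable manifolds near $p_{j(i)}$), so the "bookkeeping" you defer is not merely technical: either perform the reduction to the specially constructed skeleton or build the local partition structure before running the holonomy arguments.
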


\begin{proof} 
By Lemma \ref{ske1}, it is enough to check this result for some skeleton of $f$.
Consider the skeleton $\mathcal{S}(f)=\{p_1,\cdots p_{k_f}\}$ of $f\in PH^{1+}_{C}(M)$ that constructed as in the proof of Theorem \ref{es}, thus each $p_i$, $1\le i \le k_f$ is a hyperbolic periodic point contained in $B(x_i,\rho_{f})\in \mathscr{A}_{\mu}(f,\ell,\alpha,\rho_{f})$ for some $x_i\in {\rm supp}(\mu|\Lambda_{\ell}(f,\alpha))$, $\mu\in G^{cu}(f)$ and fixed $\alpha>0$, $\rho_f>0$ and $\ell\in \NN$. 
Note that by the choice of $\rho_f$, we may assume $\rho_f\ll \min\{\rho_0,\delta_{\ell}\}$. Recalling the construction of cylinders and measurable partitions in Subsection \ref{subty}, there is $r_{\ell}\in (\rho_f,\min\{\rho_0,\delta_{\ell}\})$ such that one can take disks $\gamma_i$ tangent to $\mathcal{C}_a^{wu}$ of radius $r_{\ell}$ centered at $x_i$ to build the cylinders and measurable partitions as follows
$$
C_i=C(x_i,\gamma_i,r_{\ell}),\quad\P_i=\P(x_i,\gamma_i,\alpha,\ell,\rho_f),\quad i=1,\cdots k_f.
$$
We observe that both $W^s_{\delta_{\ell}}(p_i,f)$ and $W^u_{\delta_{\ell}}(p_i,f)$ cross the cylinder $C_i$ for every $1\le i \le k_f$.
%Therefore, following subsection \ref{sub3}, one can construct a measurable partition $\P_i$ attached to $B(x_i,\rho_f)$, which is consisted of unstable manifolds for points in $\mathcal{H}_{yp}(f,\ell,\alpha)$ with uniform sizes.

\smallskip
For every $\mu_i\in \P_{e}(f)$, since it is an ergodic Gibbs $cu$-state guaranteed by Lemma \ref{kes}, using Lemma \ref{ggh} there is a $C^1$ unstable disk $D_{i}\subset {\rm supp}(\mu_i)$ so that ${\rm Leb}_{D_i}$-almost every point of $D_i$ is contained in $B(\mu_i,f)$. Moreover, it follows from condition (S\ref{S1}) of Definition \ref{S} that there exists $p_{j(i)}\in \mathcal{S}(f)$ such that 
\begin{equation}\label{in}
D_{i}\pitchfork W^s({\rm Orb}(p_{j(i)},f))\neq \emptyset.
\end{equation}
Hence, we have found $D_i$ and $p_{j(i)}$ w.r.t. each ergodic physical measure $\mu_i$ to satisfy properties (\ref{p1}) and (\ref{p2}).
 
Now we show $\mu_i \overset{f}\longleftrightarrow p_{j(i)}$ for every $1\le i \le k_f$. Since $D_{i}\subset {\rm supp}(\mu_i)$, and ${\rm supp}(\mu_i)$ is compact and invariant, by applying the inclination lemma, one gets 
\begin{equation}\label{subse}
W^u({\rm Orb}(p_{j(i)},f))\subset {\rm supp}(\mu_i)
\end{equation}
immediately. Observe that by definition each $\P_i$, $1\le i \le k_f$ is a typical cylinder associated to some Gibbs $cu$-state, which together with Lemma \ref{ggh} implies that one can choose $D_{p_{j(i)}}\in \P_{j(i)}$ such that 
\begin{equation}\label{absd}
{\rm Leb}_{D_{p_{j(i)}}}(\Lambda_{\ell}(f,\alpha))>0.
\end{equation}
Note that all the points in $\Lambda_{\ell}(f,\alpha)$ admit local stable manifolds of size larger than $\delta_{\ell}$.  Using inclination lemma to (\ref{in}), we can take $n\in \NN$ large enough such that there is a disk $\widetilde{D}_i\subset f^n(D_{i})$ that is $C^1$-close enough to $W_{\delta_{\ell}}^u(p_{j(i)},f)$ and crosses the cylinder $C_{j(i)}$. 
%This together with (\ref{absd}) and the absolute continuity of stable foliation yields
%\begin{equation}\label{ssg}
%{\rm Leb}_{\gamma}\left(\gamma\cap \bigcup_{x\in D_{p_{j(i)}}\cap \Lambda_{\ell}(f,\alpha) } W^s_{\delta_{\ell}}(x, f)\right)>0,
%\end{equation}
%whenever $\gamma$ is $C^1$ and intersects $W^s_{\delta_{\ell}}(x, f)$ for every $x\in D_{p_{j(i)}}\cap \Lambda_{\ell}(f,\alpha)$. In particular, (\ref{ssg}) holds true for both $\gamma=D_{\mu_i}$ and $\gamma=W^u_{\delta_{\ell}}(p_{j(i)},f)$.
As Lebesgue almost every point of $D_{i}$ is contained in $B(\mu_i,f)$, Lebesgue almost every point of $\widetilde{D}_i$ is contained in $B(\mu_i,f)$ as well. Observe that any stable manifold can only lie in the same basin of some invariant measure, using (\ref{absd}) and the absolute continuity of stable lamination (see e.g. \cite[$\S$8.6]{bp07}), we get
$$
{\rm Leb}_{W^u_{\delta_{\ell}}(p_{j(i)},f)}(B(\mu_i,f))>0.
$$
This implies that one can pick a point $x$ in $W^u_{\delta_{\ell}}(p_{j(i)},f)\cap B(\mu_i,f)$, thus
$$
\frac{1}{n}\sum_{k=0}^{n-1}\delta_{f^k(x)}\xrightarrow{weak^*} \mu_i,\quad \textrm{as}~ n\to +\infty.
$$
Hence, for any $y\in {\rm supp}(\mu_i)$ and any open neighborhood $U$ of $y$, there exist infinitely many times $k$ such that $f^k(x)\in U$, and thus $U\cap W^u({\rm Orb}(p_{j(i)},g))\neq \emptyset$. Therefore, ${\rm supp}(\mu_i) \subset \overline{W^u({\rm Orb}(p_{j(i)},f))}$. This combines with (\ref{subse}) and Lemma \ref{ske2} yields $\mu_i \overset{f}\longleftrightarrow p_{j(i)}$.

\smallskip
Now we show that the map $i\mapsto j(i)$ is injective. Otherwise, there are two different ergodic physical measures $\nu_1$ and $\nu_2$, $p_{j}\in \mathcal{S}(f)$ for some $1\le j \le k_f$ and $C^1$ unstable disks $\{D_i\}_{i=1, 2}$ satisfying properties (\ref{p1})-(\ref{p2}) as follows:
\begin{itemize}
\smallskip
\item[--] $D_i\subset {\rm supp}(\nu_i)$, and ${\rm Leb}_{D_i}$-almost every point belongs to $B(\nu_i,f)$;
\smallskip
\item[--] $D_i\pitchfork W^s({\rm Orb}(p_j,f))\neq \emptyset$.
\end{itemize}
The second property guarantees that, there exist $n_1, n_2\in \NN$ and the disks $D_{n_i}\subset f^{n_i}(D_i)$, $i=1,2$ that are $C^1$-close enough to $W^u_{\delta_{\ell}}(p_j,f)$ and cross the cylinder $C_j$. Then the first property ensures that Lebesgue almost every point of $D_{n_i}$ is contained in $B(\nu_i,f)$, $i=1,2$. Note that $p_j\in B(x_{j},\rho_f)$, for the measurable partition $\mathcal{P}_{j}$, one can take $D_{p_j}\in \mathcal{P}_{j}$ such that $
{\rm Leb}_{D_{p_j}}(\Lambda_{\ell}(f,\alpha))>0$ by Lemma \ref{ggh}. Then using the absolute continuity of stable lamination again, similar to above argument we conclude that $B(\nu_1,f)\cap B(\nu_2,f)\neq \emptyset$, which is a contradiction to our assumption. Thus, we know that the map $i\mapsto j(i)$ is injective. As a direct result, we get $\# \P_e(f)\le k_f$.
\end{proof}

%As a direct result, we have the following corollary.

%\begin{corollary}\label{no}
%We have $\# \P_e(f)\le \kappa_f$ for every $f\in \mathcal{PH}^{1+}_{\mathcal{C}}(M)$.
%\end{corollary}

\subsection{Ergodicity and basin covering property of physical measures}\label{fixedone}
%A sequence of measures $\mu_1,\cdots,\mu_k$ has the \emph{basin covering property} if the union of the basins $B(\mu_i,f)$, $1\le i \le k$ has full Lebesgue measure on $M$.
Recall that in the previous work \cite{MCY17}, for diffeomorphisms in $PH^{1+}_{EC}(M)$, the authors have proved the finiteness and basin covering property of physical measures. In this subsection, by making more accurate analysis on Gibbs $cu$-states, we provide a different proof of these results.
In consideration of Theorem \ref{utv}, it suffices to prove them for diffeomorphisms in $PH^{1+}_{C}(M)$.

\begin{theorem}\label{ky}
For every $f\in PH^{1+}_{C}(M)$, we have 
\begin{enumerate}
\smallskip
\item\label{ps1} all physical measures are ergodic and coincide with the extreme elements of $G^{cu}(f)$;
\smallskip
\item\label{ps2} Lebesgue almost every point of $M$ is contained in some basin of physical measures.
\end{enumerate}
\end{theorem}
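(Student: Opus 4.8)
The plan is to deduce both statements from the structure results already assembled, chiefly Theorem \ref{fc} (compactness/convexity of $G^{cu}(f)$, its extreme points being ergodic physical measures for $C^{1+}$ maps), Proposition \ref{G} (the injective correspondence $\mu_i\mapsto p_{j(i)}$ between ergodic physical measures and a skeleton), Theorem \ref{ttd} (for ${\rm Leb}$-a.e.\ $x$, every measure in $\omega_{\M}(x,f)\subset G^{cu}(f)$), and Lemma \ref{kes} ($\mu\in\P_e(f)\iff$ ergodic Gibbs $cu$-state). The key new input needed is that the number of ergodic Gibbs $cu$-states is \emph{finite}; once that is known, both (\ref{ps1}) and (\ref{ps2}) follow quickly.

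First I would establish finiteness of $\P_e(f)$. By Lemma \ref{kes}, $\P_e(f)$ coincides with the set of ergodic elements of $G^{cu}(f)$, i.e.\ with its extreme points by Theorem \ref{fc}(1) together with ergodic decomposition. By Proposition \ref{G}, for any fixed skeleton $\mathcal{S}(f)=\{p_1,\dots,p_{k_f}\}$ (which exists by Theorem \ref{es}) there is an \emph{injective} map $\P_e(f)\ni\mu_i\mapsto p_{j(i)}\in\mathcal{S}(f)$; hence $\#\P_e(f)\le k_f<\infty$. Write $\P_e(f)=\{\mu_1,\dots,\mu_m\}$ with $m\le k_f$. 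Since $G^{cu}(f)$ is compact convex with finitely many extreme points, Choquet/Krein--Milman gives $G^{cu}(f)={\rm conv}\{\mu_1,\dots,\mu_m\}$, a finite-dimensional simplex. This proves that every physical measure lying in $G^{cu}(f)$ is a convex combination of the $\mu_i$; but a physical measure has a basin of positive Lebesgue measure on which Birkhoff averages converge to a \emph{single} measure, and a nontrivial convex combination of distinct ergodic measures has empty basin (its would-be generic points must be generic for some extreme component instead). Hence every physical measure in $G^{cu}(f)$ equals some $\mu_i$, and conversely each $\mu_i$ is a physical measure by Theorem \ref{fc}(2). Combined with the fact (Corollary \ref{TheoA'}/Theorem \ref{ttd}) that \emph{all} physical measures arise as elements of $\omega_{\M}(x,f)\subset G^{cu}(f)$ for a positive-measure set of $x$, this gives (\ref{ps1}): the set of physical measures is exactly $\{\mu_1,\dots,\mu_m\}=$ the extreme points of $G^{cu}(f)$, all ergodic.

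For (\ref{ps2}), apply Theorem \ref{ttd}: since $M$ is covered by finitely many charts foliated by $C^1$ disks transverse to $E^{cs}$, Fubini gives that $\B_f$ has full Lebesgue measure, i.e.\ for ${\rm Leb}$-a.e.\ $x\in M$ every $\mu\in\omega_{\M}(x,f)$ lies in $G^{cu}(f)={\rm conv}\{\mu_1,\dots,\mu_m\}$. I would then upgrade this to: for ${\rm Leb}$-a.e.\ $x$, $\omega_{\M}(x,f)$ is a \emph{singleton} $\{\mu_{i(x)}\}$, so $x\in B(\mu_{i(x)},f)$. The standard way to do this is via the typical measurable partitions of Subsection \ref{subty} and the absolute continuity of the conditional measures of Gibbs $cu$-states along unstable plaques (as in the proof of Proposition \ref{G}): for ${\rm Leb}$-a.e.\ $x$ the forward orbit enters some $\mathscr{A}_{\mu}(f,\ell,\alpha,\rho_f)$ at infinitely many hyperbolic times (Theorem \ref{n11}(\ref{f1})), so a forward iterate of a small transverse disk through $x$ contains an unstable disk crossing a cylinder $C_{j}$ on which ${\rm Leb}_{D_{p_j}}(\Lambda_\ell(f,\alpha))>0$; invoking absolute continuity of the stable lamination of the periodic point $p_j\in\mathcal{S}(f)$ forces a definite proportion of points near $x$ to lie in $B(\mu_{i})$ for the unique $\mu_i$ with $\mu_i\overset{f}\longleftrightarrow p_j$, and a Lebesgue density argument then shows $x$ itself lies in some $B(\mu_i,f)$ for a.e.\ $x$. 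Therefore $\bigcup_{i=1}^m B(\mu_i,f)$ has full Lebesgue measure.

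\textbf{Main obstacle.} The delicate point is the last upgrade in (\ref{ps2}): passing from ``a.e.\ limit measure is a Gibbs $cu$-state'' to ``a.e.\ point actually lies in the basin of one ergodic physical measure''. This requires gluing together the hyperbolic-time/holonomy machinery (Theorem \ref{n11}, Lemma \ref{bcd}, the absolute continuity in Subsection \ref{subty}) with a Lebesgue density / Vitali-type argument, much in the spirit of the proof of Proposition \ref{G} but now run at ${\rm Leb}$-a.e.\ point of $M$ rather than along one unstable disk. Everything else — finiteness of $\P_e(f)$, the simplex structure of $G^{cu}(f)$, and the identification of physical measures with extreme points — is essentially formal once Proposition \ref{G} and Theorem \ref{fc} are in hand.
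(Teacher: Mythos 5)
Your finiteness step (Lemma \ref{kes}, Theorem \ref{fc}, Proposition \ref{G}, hence at most $k_f$ extreme points and $G^{cu}(f)={\rm conv}\{\mu_1,\dots,\mu_m\}$) matches the paper. The genuine gap is in your item (\ref{ps1}): the claim that \emph{a nontrivial convex combination of distinct ergodic measures has empty basin} is false in general, and your parenthetical justification is wrong --- a point whose empirical measures converge to $\sum_i\xi_i\mu_i$ need not be generic for any ergodic component, and non-ergodic physical measures do exist (Bowen--eye type examples, where interior orbits have empirical measures converging to a fixed convex combination of the Dirac masses at two saddles). That no such measure occurs for $f\in PH^{1+}_{C}(M)$ is precisely part of what Theorem \ref{ky} asserts, so item (\ref{ps1}) is not ``essentially formal'': the paper proves it with the same geometric machinery you reserve for item (\ref{ps2}). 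Concretely, for an arbitrary physical measure $\mu$ one first gets $\mu\in G^{cu}(f)$ and $B(\mu,f)\subset \B_f$, then picks a density point of $B(\mu,f)$, a $C^2$ disk $D$ transverse to $E^{cs}$ with ${\rm Leb}_D(B(\mu,f))>0$, pushes forward at a hyperbolic time entering the finite covering $\BB$ (Lemma \ref{gtg}) to get a ball of radius $\delta_{\ell}/4$ in $f^n(D)$ in which $B(\mu,f)$ occupies proportion $>1-\eta$, and invokes the uniform lower bound (\ref{mmmm}) (proportion at least $L_0$ of $B(\mu_i,f)$ in any such ball, coming from absolute continuity of the stable lamination over the fixed typical unstable plaques of the finitely many cylinders); choosing $\eta<L_0$ forces $B(\mu,f)\cap B(\mu_i,f)\neq\emptyset$, hence $\mu=\mu_i$.

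For item (\ref{ps2}) your plan is in the same spirit as the paper, but the step you yourself flag as the ``main obstacle'' is where all the content lies, and your closing phrase ``a Lebesgue density argument then shows $x$ itself lies in some $B(\mu_i,f)$'' is not a proof: knowing that a definite proportion of points in an iterated disk near the orbit of $x$ lies in some basin does not place $x$ in a basin. The paper's execution is by contradiction: set $\mathcal{N}=\B_f\setminus\bigcup_{i}B(\mu_i,f)$, observe it is $f$-invariant, and if ${\rm Leb}(\mathcal{N})>0$ apply Lemma \ref{gtg} with $A:=\mathcal{N}$ and $\eta<L_0$, obtaining a ball in an iterated transverse disk where $\mathcal{N}$ has proportion $>1-\eta$ while some $B(\mu_i,f)$ has proportion $\ge L_0$, so $\mathcal{N}\cap B(\mu_i,f)\neq\emptyset$, a contradiction. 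Note that running this argument with $A:=B(\mu,f)$ is exactly what repairs your item (\ref{ps1}); so the correct structure is the reverse of your plan: the hyperbolic-time/absolute-continuity density argument proves both items, and the abstract convexity shortcut for (\ref{ps1}) should be discarded.
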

%
%
%Let $f\in \U(M)$ of class $C^{1+}$, given $\xi\in (0,1)$ and a $C^1$ disk $D$ tangent to $\mathcal{C}_a^F$, define the Holder curvature as 
%$$
%\mathscr{K}_{\xi}(D)=\inf\{C>0: TD~ \textrm{is}~ (C,\xi)-\textrm{H\"{o}lder continuous}\}, 
%$$
%where we say $TD$ is $(C,\xi)$-H\"{o}lder if $dist(T_xD,T_yD)\le Cd(x,y)$ for all $x,y\in B(x,\delta_M)\cap D$ and $x\in D$. According to \cite[Corollary 4.2]{ABV00}, we know that there exists $\xi>0$ such that for any $C^1$-disk $D$ transverse to $E^{cs}$, all its iterates $f^n(D)$ have the bounded H\"{o}lder curvature w.r.t. $\xi$, for all $n$ sufficiently large. Due to this fact, without loss of generality in the discussion below we will always assume that the $C^1$-disk transverse to $E^{cs}$ having the bounded H\"{o}lder curvature. 

\paragraph{\bf{Fixed constants}}
Given $f\in PH^{1+}_{C}(M)$ with partially hyperbolic splitting $TM=E^u\oplus_{\succ} E^{cu}\oplus_{\succ} E^{cs}$. 
Let us fix $H_f, D_f$ as in Proposition \ref{uv}, and choose $\varepsilon<D_f/2$. For this $\varepsilon>0$, one chooses $\alpha>0$, $\ell\in \NN$, $\rho_{\ell}>0$, $\delta_{\ell}>0$ as in Theorem \ref{covering}. 
%Thus, we have
%$$
%\mu\left(\bigcup_{B\in \mathscr{A}_{\mu}( f,\ell, \alpha, \rho_{\ell})}B\right)>1-\varepsilon\quad \textrm{for every}~\mu \in \G^u(f).
%$$
Furthermore, by Lemma \ref{bcd}, up to shrinking $\delta_{\ell}$, there exists $a>0$ such that we have the following \emph{backward contracting property}:
if $n$ is a $(E^{wu}, {\rm e}^{-H_f/2})$-hyperbolic time of $x$, and $\gamma$ is a $C^1$ disk tangent to $\mathcal{C}_a^{wu}$, then
\begin{equation}\label{bcdd}
d_{f^{n-i}\gamma}(f^{n-i}(y), f^{n-i}(z))\le {\rm e}^{-iH_f/4}d_{f^n\gamma}\left(f^n(y),f^n(z)\right)
\end{equation}
whenever $y,z\in \gamma$ within $f^n(x), f^n(y)$ contained in $B_{f^n(\gamma)}(f^n(x),\delta_{\ell})$.
Let us mention that from the choices of $\rho_{\ell}$ and $\delta_{\ell}$, we can and will assume $\rho_{\ell}\ll \delta_{\ell}$.

%By Theorem \ref{fc}, 
%all the extreme elements of $\G^{cu}(f)$ are ergodic, they are also ergodic physical measures guaranteed by Lemma \ref{kes}. Therefore, applying Corollary \ref{no} we know that there are finitely many extreme element of $\G^{cu}(f)$. Denote these extreme elements as $\mu_1,\cdots,\mu_k$ for some $k\le k_f$.
%Denote
%$$
%\mathbb{B}=\bigcup_{1\le i \le k}\bigcup_{B\in \mathscr{A}_{\mu_i}( f,\ell, \alpha, \rho_{\ell})}B.
%$$
%For $x\in \B_f$, denote the family of hyperbolic times that enters $\BB$ by
%$$
%\mathcal{HT}_{\mathbb{B}}(x, {\rm e}^{-H_f/2},f)=\left\{n: n\in \mathcal{HT}(x, {\rm e}^{-H_f/2},f)~ \textrm{and}~ f^n(x)\in \mathbb{B}\right\}.
%$$

%We will prove the following result. 
%
%\begin{theorem}\label{ky}
%For every $C^{1+}$ diffeomorphism $f\in \V(M)$, we have 
%\begin{enumerate}
%\smallskip
%\item\label{pp1} there are finitely many physical measures, which are ergodic and coincide with the extreme elements of $\G^{cu}(f)$;
%\smallskip
%\item\label{pp2} Lebesgue almost every point of $M$ is contained in some basin of physical measures.
%\end{enumerate}
%\end{theorem}

%\begin{proposition}\label{ky}
%Every physical measure of $C^{1+\alpha}$ diffeomorphism $f\in \U(M)$ is an extreme point of $\G^{cu}(f)$. As a result, we know $\P_e(f)=\P(f)$. 
%\end{proposition}

\bigskip
Following \cite[Corollary 4.2]{ABV00}, for any $C^2$ disk $D$ transverse to $E^{cs}$, the iterate $f^n(D)$ admits the \emph{bounded H\"{o}lder curvature} for all $n$ sufficiently large, see \cite[$\S$ 2.1]{ABV00} for more details. Therefore, without loss of generality, we assume that any $C^2$ disk transverse to $E^{cs}$ considered in the rest of this subsection exhibits the bounded H\"{o}lder curvature.

%By applying the backward contracting property, we have the bounded distortion property at hyperbolic times.
%
%\begin{lemma}\label{bdo}
%There exist constants $\mathcal{C}_0>0, \mathcal{C}_1>0$ such that if $D$ is a $C^1$ disk tangent to $\mathcal{C}_a^u$,
%%and all its iterates have the bounded H\"{o}lder curvature, 
%then for every $n\in \mathcal{HT}(x, {\rm e}^{-H_f/2},f)$, we have
%$$
%\frac{1}{\mathcal{C}_0}\le \frac{\left|{\rm det}Df^n|T_yD\right|}{\left|{\rm det}Df^n|T_zD\right|}\le \mathcal{C}_0,
%$$
%whenever $y,z\in D$ satisfying $f^n(y), f^n(z)\in B_{f^n(D)}(f^n(x),\delta_{\ell})$. Moreover, for any measurable subsets $A, B \subset f^{-n}\left(B_{f^n(D)}(f^n(x), \delta_{\ell})\right)$, 
%$$
%\frac{1}{\mathcal{C}_1}\frac{{\rm Leb}_D(A)}{{\rm Leb}_D(B)}\le \frac{{\rm Leb}_{f^n(D)}(f^n(A))}{{\rm Leb}_{f^n(D)}(f^n(B))}\le \mathcal{C}_1 \frac{{\rm Leb}_D(A)}{{\rm Leb}_D(B)}
%$$
%\end{lemma}

By Theorem \ref{fc}, 
all the extreme elements of $G^{cu}(f)$ are ergodic, they are also ergodic physical measures. Hence, applying Proposition \ref{G} we know that there are at most $k_f$ extreme elements in $G^{cu}(f)$. Denote them by $\mu_1,\cdots,\mu_k$ for some $k\le k_f$.
Rewriting 
$$
\mathscr{A}_i:=\mathscr{A}_{\mu_i}(f,\ell,\alpha,\rho_{\ell}), \quad i=1,\cdots, k,
$$
and 
$$
\BB:=\bigcup_{1\le i \le k}\bigcup_{B\in \mathscr{A}_{i}}B.
$$
For $x\in \B_f$, let us denote 
\begin{equation}\label{htb}
HT_{\BB}(x, {\rm e}^{-H_f/2},f)=\left\{n: n\in HT(x, {\rm e}^{-H_f/2},f)~ \textrm{and}~ f^n(x)\in \mathbb{B}\right\}.
\end{equation}

\begin{lemma}\label{density}
For every $x\in \B_f$, we have $\mathcal{D}_{U}(HT_{\BB}(x, {\rm e}^{-H_f/2},f)) \ge D_f/2$.
\end{lemma}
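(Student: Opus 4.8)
The plan is to realize $HT_{\BB}(x,{\rm e}^{-H_f/2},f)$ as the intersection of two subsets of $\NN$, each carrying a definite density, and then apply an elementary inclusion--exclusion bound. The first set is $HT(x,{\rm e}^{-H_f/2},f)$ itself, whose lower density is at least $D_f$ by Proposition \ref{uv} applied with $g=f\in\U_f$ (recall $x\in\B_f$). The second is $V:=\{j\ge 1:f^j(x)\in\BB\}$, the set of iterates at which the orbit of $x$ lands in the open set $\BB$; I will show $\mathcal{D}_{L}(V)\ge 1-\varepsilon$. Since $\varepsilon<D_f/2$ was fixed at the beginning of the subsection, combining the two bounds finishes the proof.

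To estimate $\mathcal{D}_{L}(V)$, I would first argue that every measure in $\omega_{\M}(x,f)$ gives mass strictly larger than $1-\varepsilon$ to $\BB$. For the finitely many extreme elements $\mu_1,\dots,\mu_k$ of $G^{cu}(f)$ this is exactly Theorem \ref{covering}, which yields $\mu_i\big(\bigcup_{B\in\mathscr{A}_i}(B\cap\Lambda_\ell(f,\alpha))\big)>1-\varepsilon$ and hence $\mu_i(\BB)>1-\varepsilon$. Since $G^{cu}(f)$ is compact and convex with $\mu_1,\dots,\mu_k$ as its only extreme elements (Theorem \ref{fc}, together with Proposition \ref{G} and Lemma \ref{kes}), the Krein--Milman theorem writes any $\mu\in G^{cu}(f)$ as a finite convex combination $\sum_{i}a_i\mu_i$, so $\mu(\BB)=\sum_i a_i\mu_i(\BB)>1-\varepsilon$; and $x\in\B_f$ means precisely $\omega_{\M}(x,f)\subset G^{cu}(f)$.

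Next I would convert this into the density statement. Writing $\nu_n:=\frac1n\sum_{j=0}^{n-1}\delta_{f^j(x)}$ one has $\nu_n(\BB)=\frac1n\#(V\cap[0,n))$, so $\mathcal{D}_{L}(V)=\liminf_n\nu_n(\BB)$. If this lower density were $\le 1-\varepsilon$, I would pick a subsequence realizing it and, by compactness of $\M$, a further subsequence along which $\nu_{n_i}$ converges to some $\mu\in\omega_{\M}(x,f)$; since $\BB$ is open, $\mu(\BB)\le\liminf_i\nu_{n_i}(\BB)\le 1-\varepsilon$, contradicting the previous paragraph. Finally, the subsets $HT\cap[1,n]$ and $V\cap[1,n]$ of $[1,n]$ satisfy $\#\big((HT\cap V)\cap[1,n]\big)\ge\#(HT\cap[1,n])+\#(V\cap[1,n])-n$, where $HT:=HT(x,{\rm e}^{-H_f/2},f)$; dividing by $n$ and using $\liminf(a_n+b_n)\ge\liminf a_n+\liminf b_n$ together with $\mathcal{D}_{U}\ge\mathcal{D}_{L}$ gives
$$
\mathcal{D}_{U}\big(HT_{\BB}(x,{\rm e}^{-H_f/2},f)\big)\ \ge\ \mathcal{D}_{L}(HT)+\mathcal{D}_{L}(V)-1\ \ge\ D_f+(1-\varepsilon)-1\ >\ \frac{D_f}{2}.
$$

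Almost every step here is routine given the results already proved; the one point I would flag as the crux of the argument, rather than a genuine obstacle, is the passage in the third paragraph from ``every limit measure of the empirical averages along the orbit of $x$ charges the open set $\BB$ by more than $1-\varepsilon$'' to the uniform lower-density bound $\mathcal{D}_{L}(V)\ge 1-\varepsilon$ on the iterates landing in $\BB$. (In fact the same reasoning yields the stronger conclusion $\mathcal{D}_{L}\big(HT_{\BB}(x,{\rm e}^{-H_f/2},f)\big)\ge D_f-\varepsilon$.)
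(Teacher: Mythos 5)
Your argument is correct and follows essentially the same route as the paper: write any limit measure in $\omega_{\M}(x,f)\subset G^{cu}(f)$ as a convex combination of the finitely many extreme elements, use Theorem \ref{covering} to see each of these charges $\BB$ by more than $1-\varepsilon$, and intersect the resulting visit times to $\BB$ with the hyperbolic times supplied by Proposition \ref{uv}, using $\varepsilon<D_f/2$. The only (harmless) difference is that by quantifying over all limit measures and using compactness of $\M$ you obtain the stronger lower-density bound $\mathcal{D}_{L}\big(HT_{\BB}(x,{\rm e}^{-H_f/2},f)\big)\ge D_f-\varepsilon$, whereas the paper fixes a single $\mu\in\omega_{\M}(x,f)$ and a subsequence realizing it, which yields only the upper-density estimate actually stated.
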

\begin{proof}
Take $\mu\in \omega_{\M}(x,f)$, which belongs to $G^{cu}(f)$ by definition. Therefore, it can be written as the convex combination of the extreme elements of $G^{cu}(f)$, thus there exists a sequence of real numbers $\xi_i\in [0,1]$ for $1\le i \le k$ satisfying $\xi_1+\cdots +\xi_k=1$ so that
$
\mu=\xi_1 \mu_1+\cdots +\xi_k \mu_k.
$
Due to the construction of $\mathscr{A}_{i}$ in Theorem \ref{covering} we know 
$
\mu_i\left(\cup_{B\in \mathscr{A}_{i}}B\right)>1-\varepsilon
$
for every $1\le i \le k$.
Consequently, 
\begin{equation}\label{fd}
\mu(\BB)>1-\varepsilon.
\end{equation}
As $\mu\in \omega_{\M}(x,f)$, we can choose subsequence $\{n_l\}$ tends to infinity such that 
$$
\frac{1}{n_l}\sum_{i=0}^{n_l-1}\delta_{f^i(x)}\xrightarrow{weak^*} \mu,\quad \textrm{as}~l\to +\infty.
$$
Combined with (\ref{fd}) it follows that the times $n$ for which $f^n(x)$ enters to $\mathbb{B}$ admits upper density estimate as follows:
\begin{equation*}\label{e2}
\mathcal{D}_{U}\left(\{n \in \NN: f^n(x)\in \BB \}\right) \ge 1-\varepsilon.
\end{equation*}
This together with the choices of $\varepsilon$ and $D_f$ yields
$$
\mathcal{D}_{U}\left(HT_{\BB}(x, {\rm e}^{-H_f/2},f)\right) \ge D_f/2.
$$
\end{proof}

By Lemma \ref{density}, we can deduce the following result which is a variation of \cite[Lemma 5.4]{AP08}, thus we omit the proof.
%We have following result for $C^1$ disk that intersects $\B_f$, we prove it by borrowing some ideas from \cite[Lemma 5.4]{AP08} 

\begin{lemma}\label{gtg}
Assume that $D$ is a $C^2$ disk transverse to $E^{cs}$. Let $A$ be a subset of $\B_f$ and $U$ an open subset in $D$ satisfying ${\rm Leb}_D(A\cap U)>0$. For any $\eta>0$, there exists $n\in HT_{\mathbb{B}}(x, {\rm e}^{-H_f/2},f)$ for some $x\in A$, such that $f^n(D)$ contains a ball $B_n$ of radius $\delta_{\ell}/4$ centered at $f^n(x)$, and we have 
$$
\frac{{\rm Leb}_{f^n(D)}\left(f^n(A)\cap B_n\right)}{{\rm Leb}_{f^n(D)}(B_n)}>1-\eta.
$$
\end{lemma}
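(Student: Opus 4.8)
The plan is to run the standard Lebesgue‑density argument along hyperbolic pre‑balls, in the spirit of \cite[Lemma 5.4]{AP08} and \cite{ABV00}, now in the $C^{1+}$ partially hyperbolic setting of $PH^{1+}_{C}(M)$. Fix $\eta>0$. First recall from Lemma \ref{density} that every $x\in A\subset\B_f$ satisfies $\mathcal{D}_{U}\big(HT_{\BB}(x,{\rm e}^{-H_f/2},f)\big)\ge D_f/2$; in particular $HT_{\BB}(x,{\rm e}^{-H_f/2},f)$ is infinite, so there are hyperbolic times of $x$ landing in $\BB$ at arbitrarily large moments. Since ${\rm Leb}_D(A\cap U)>0$ and $\partial D$ is ${\rm Leb}_D$‑null, we may fix a Lebesgue density point $x_0\in A\cap U$ of $A$ in $D$ lying in the interior of $D$, and work inside a small ball $B_D(x_0,r_0)\subset U$.

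Second, I would set up the hyperbolic pre‑balls. By domination the forward iterates of $D$ are eventually tangent to $\mathcal{C}_a^{wu}$ and, by \cite[Corollary 4.2]{ABV00}, have bounded H\"older curvature; and a $(E^{wu},{\rm e}^{-H_f/2})$‑hyperbolic time $n$ of $x$ for $f$ restricts (the relevant products for $k\le n-m_1$ are inherited) to such a hyperbolic time $n-m_1$ of $f^{m_1}(x)$, where $m_1=m_1(D)$ is an integer after which the disk lies in $\mathcal{C}_a^{wu}$. Applying the backward contraction (\ref{bcdd}) (that is, Lemma \ref{bcd}) to the orbit segment $f^{m_1}(x),\dots,f^n(x)$ and composing with the first $m_1$ steps (which contribute only a fixed, $D$‑dependent distortion), one obtains, for $x$ near $x_0$ and $n\in HT_{\BB}(x,{\rm e}^{-H_f/2},f)$ large, a \emph{pre‑ball} $V_n(x)\ni x$ in $D$, contained in $B_D(x_0,r_0)$, with ${\rm diam}\,V_n(x)\le C{\rm e}^{-nH_f/4}\to 0$, that $f^n$ maps diffeomorphically onto $B_{f^n(D)}(f^n(x),\delta_{\ell})$. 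Because $f\in PH^{1+}_{C}(M)$ (so $Df$ is H\"older) and the iterates of $D$ have bounded H\"older curvature, $\log|{\rm det}Df|$ is H\"older on the tangent planes encountered; summing this bound along the exponentially backward‑contracted segment yields a \emph{uniform} Jacobian distortion constant $\kappa>1$ (absorbing the $m_1$ steps) so that, for all measurable $S,S'\subset V_n(x)$,
\[
\frac{{\rm Leb}_{f^n(D)}(f^n(S))}{{\rm Leb}_{f^n(D)}(f^n(S'))}\ \le\ \kappa\,\frac{{\rm Leb}_D(S)}{{\rm Leb}_D(S')}.
\]
In particular, with $W_n(x)=f^{-n}\big(B_{f^n(D)}(f^n(x),\delta_{\ell}/4)\big)\subset V_n(x)$ one has ${\rm Leb}_D(W_n(x))\ge \kappa^{-2}c_1\,{\rm Leb}_D(V_n(x))$, where $c_1>0$ is the least possible ratio of volumes of concentric geodesic balls of radii $\delta_{\ell}/4$ and $\delta_{\ell}$ on the (bounded‑geometry) iterates of $D$.

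Third, the proof reduces to the following selection: \emph{there are $x\in A\cap B_D(x_0,r_0)$ and $n\in HT_{\BB}(x,{\rm e}^{-H_f/2},f)$ with ${\rm Leb}_D\big((D\setminus A)\cap V_n(x)\big)<\varepsilon\,{\rm Leb}_D(V_n(x))$}, where $\varepsilon=\varepsilon(\eta,\kappa,c_1)>0$ is fixed small in advance. Granting this, put $B_n=B_{f^n(D)}(f^n(x),\delta_{\ell}/4)\subset f^n(V_n(x))\subset f^n(D)$. Since $f^n(W_n(x))=B_n$ and $f^n(A)\cap B_n\supset f^n\big(A\cap W_n(x)\big)=B_n\setminus f^n\big((D\setminus A)\cap W_n(x)\big)$, the displayed distortion estimate (with $S=(D\setminus A)\cap W_n(x)$, $S'=W_n(x)$), together with ${\rm Leb}_D(W_n(x))\ge\kappa^{-2}c_1{\rm Leb}_D(V_n(x))$, gives ${\rm Leb}_{f^n(D)}\big(f^n(D\setminus A)\cap B_n\big)\le \kappa^{3}c_1^{-1}\varepsilon\,{\rm Leb}_{f^n(D)}(B_n)<\eta\,{\rm Leb}_{f^n(D)}(B_n)$, whence ${\rm Leb}_{f^n(D)}(f^n(A)\cap B_n)>(1-\eta){\rm Leb}_{f^n(D)}(B_n)$, which is the assertion for this choice of $x$ and $n$.

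The selection statement is the one genuine obstacle. One cannot simply push forward the pre‑balls of the single density point $x_0$: because $Df|_{E^{wu}}$ is only nonuniformly and non‑conformally expanding, the pre‑balls $V_n(x)$, while shrinking to their basepoints, may have eccentricity growing along the hyperbolic times, so the plain Lebesgue density theorem at $x_0$ does not control $A$ inside $V_n(x_0)$. The remedy is the Vitali‑type density argument at hyperbolic times that underlies \cite[Lemma 5.4]{AP08}: choose $r_0$ so that ${\rm Leb}_D(A\cap B_D(x_0,r_0))>(1-\varepsilon')\,{\rm Leb}_D(B_D(x_0,r_0))$ for a suitably small $\varepsilon'=\varepsilon'(\varepsilon)$; using that $HT_{\BB}(x,{\rm e}^{-H_f/2},f)$ has upper density $\ge D_f/2$ for every $x\in A$ (Lemma \ref{density}), one produces, inside $B_D(x_0,r_0)$, a disjoint subfamily of pre‑balls $V_{n_j}(x_j)$ ($x_j\in A$) whose total ${\rm Leb}_D$‑measure is a definite fraction of ${\rm Leb}_D(A\cap B_D(x_0,r_0))$; if every one of them had ${\rm Leb}_D\big((D\setminus A)\cap V_{n_j}(x_j)\big)\ge\varepsilon\,{\rm Leb}_D(V_{n_j}(x_j))$, summing over $j$ (disjointness, all inside $B_D(x_0,r_0)$) would force ${\rm Leb}_D\big(B_D(x_0,r_0)\setminus A\big)$ to exceed $\varepsilon'\,{\rm Leb}_D(B_D(x_0,r_0))$, a contradiction; hence some $V_{n_j}(x_j)$ satisfies the required inequality, and we take it as the sought $(x,n)$. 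Implementing this covering step — in particular the extraction of the efficient disjoint subfamily of the (non‑round) pre‑balls, which is where the combinatorics of hyperbolic times is really used — is the technical heart of the argument, and I would carry it out exactly as in \cite[Lemma 5.4]{AP08}.
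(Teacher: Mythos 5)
Your proposal is correct and follows exactly the route the paper itself indicates: the paper omits the proof of Lemma \ref{gtg}, pointing to Lemma \ref{density} together with the hyperbolic-time machinery of \cite[Lemma 5.4]{AP08} (pre-balls at hyperbolic times, backward contraction (\ref{bcdd}), bounded distortion from the $C^{1+}$ hypothesis and bounded H\"older curvature, then a density-point plus covering argument), which is what you reconstruct. The only refinement worth noting is that the selection step you defer to \cite{AP08} is most easily carried out not by disjointifying the eccentric pre-balls across different times, but by first using the lower bound $\mathcal{D}_U(HT_{\BB}(x,{\rm e}^{-H_f/2},f))\ge D_f/2$ and a Fatou/pigeonhole argument to fix a single large $n$ for which the set of points of $A$ having $n$ as a hyperbolic time with image in $\BB$ has a definite fraction of the measure, and then performing the Vitali-type extraction among the uniform-radius balls $B_{f^n(D)}(f^n(x),\delta_{\ell}/4)$ in the image of the single injective map $f^n|_D$, pulling back; this bypasses the eccentricity issue you rightly flagged.
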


Now we can give the proof of Theorem \ref{ky}.

\begin{proof}[Proof of Theorem \ref{ky}]
Let $\mu_1,\cdots,\mu_k $ be the extreme elements of $G^{cu}(f)$, and take constants $\alpha>0$, $\ell\in \NN$, $\rho_{\ell}>0$, $a>0$ as fixed after the statement of Theorem \ref{ky}. 
%Rewriting 
%$$
%\mathscr{A}_i:=\mathscr{A}_{\mu_i}(f,\ell,\alpha,\rho_{\ell}), \quad i=1,\cdots, k.
%$$
Recall that $\mathscr{A}_i$ is the collection of finitely many $\rho_{\ell}$-balls that covers ${\rm supp}(\mu_i|\Lambda_{\ell}(f,\alpha))$, and the union of $\rho_{\ell}$-balls from $\mathscr{A}_i$, $1\le i \le k$ is denoted by $\BB$. Note also $HT_{\BB}(x, {\rm e}^{-H_f/2},f)$ is given by (\ref{htb}).
%Let $\mathscr{A}$ be the collection of all the balls from $\mathscr{A}_i:=\mathscr{A}_{\mu_i}(f,\ell,\alpha,\rho_{\ell})$, $1\le i \le k$. 
For this $\rho_{\ell}$ and $\delta_{\ell}$, by domination there exists $r_{\ell}\in (\rho_{\ell},\delta_{\ell})$ such that for each $B:=B(x,\rho_{\ell})\in \mathscr{A}_i$, $1\le i \le k$, 
\begin{enumerate}[(I)]
\smallskip
\item one can take a $C^1$ disk $D$ tangent to $\mathcal{C}_a^{wu}$ centered at $x$ of radius $r_{\ell}$, with which we build cylinder and measurable partition as follows:
$$
C_B:=C(x,D,r_{\ell}),\quad \mathcal{P}_B=\mathcal{P}(x,D,\alpha,\ell,\rho_{\ell});
$$
Recalling their constructions in Subsection \ref{subty}.
\smallskip
\item\label{cro} If $\gamma$ is a $C^1$ sub-manifold tangent to $\mathcal{C}_a^{wu}$ which contains a disk of radius $\delta_{\ell}/4$ centered at a point of $B$, then this disk must cross $C_{B}$.
\end{enumerate}
For each $B\in \mathscr{A}_i$,
since we have $\mu_i(B\cap \Lambda_{\ell}(f,\alpha))>0$, $\mathcal{P}_B$ is a typical measurable partition associated to $\mu_i$, so there exists $\gamma_{B}\in \mathcal{P}_B$ such that 
$$
{\rm Leb}_{\gamma_B}(B(\mu_i,f)\cap\Lambda_{\ell}(f,\alpha))>0.
$$
Note that all points of $\Lambda_{\ell}(f,\alpha)$ admit local stable manifolds of size larger than $\delta_{\ell}$, and the   points of a stable manifold can only lie in the same basin. Then, in view of (\ref{cro}), by using the absolute continuity of stable lamination, we can take $L_0>0$ so that 
%$$
%\frac{{\rm Leb}_{\gamma}(B_{\gamma}(x,\delta_{\ell}/4)\cap B(\mu_i,f))}{{\rm Leb}_{\gamma}(B_{\gamma}(x,\delta_{\ell}/4))}\ge \mathscr{L}_B.
%$$
%for every $C^2$ sub-manifold $\gamma$ that contains the ball of radius $\delta_{\ell}/4$ centered at $x\in B$.
%Consequently, by taking $\mathscr{L}=\max\{\mathscr{L}_B: B\in \mathscr{A}\}$ we have
\begin{equation}\label{mmmm}
\frac{{\rm Leb}_{\gamma}(B_{\gamma}(x,\delta_{\ell}/4)\cap B(\mu_i,f))}{{\rm Leb}_{\gamma}(B_{\gamma}(x,\delta_{\ell}/4))}\ge L_0
\end{equation}
whenever $x\in B \in \mathscr{A}_i$ and $\gamma$ is a $C^2$ sub-manifold containing the ball of radius $\delta_{\ell}/4$ centered at $x$. Here we use the finiteness of balls in $\mathscr{A}_i$, $1\le i \le k$.

\medskip
We are ready to give the proof of Item (\ref{pp1}) firstly.
Let $\mu$ be a physical measure of $f$, then $B(\mu,f)$ exhibits positive Lebesgue measure, and thus $\B_f\cap B(\mu,f)$ has positive Lebesgue measure. Therefore, we know $\mu\in G^{cu}(f)$, which also implies $B(\mu,f)\subset \B_f$ by definition. Next we want to prove $\mu=\mu_i$ for some $1\le i \le k$. 

\smallskip
Pick a density point of $B(\mu,f)$ and foliate the small neighborhood of it with smooth sub-manifolds transverse to $E^{cs}$; then using the Fubini's theorem, one can find a $C^2$ disk $D$ lie in some sub-manifold such that ${\rm Leb}_D(O\cap B(\mu,f))>0$ for some open set $O$ in $D$.
By taking $\eta<L_0$ and $A:=B(\mu,f)$, Lemma \ref{gtg} implies that there exist $x\in B(\mu,f)$ and $n\in HT_{\BB}(x, {\rm e}^{-H_f/2},f)$ sufficiently large such that 
$$
\frac{{\rm Leb}_{f^n(D)}\left(B_{f^n(D)}(f^n(x),\delta_{\ell}/4)\cap B(\mu ,f)\right)}{{\rm Leb}_{f^n(D)}(B_{f^n(D)}(f^n(x),\delta_{\ell}/4))}>1-\eta,
$$
where we use the fact that $B(\mu,f)$ is $f$-invariant.
Note also that $f^n(x)$ is contained in some ball of $\mathscr{A}_{i}$ for some $1\le i \le k$, we have the estimate (\ref{mmmm}) by taking $x:=f^n(x)$ and $\gamma:=f^n(D)$.
%we get
%$$
%\frac{{\rm Leb}_{f^n(D)}\left(B_{f^n(D)}(f^n(x),\delta_{\ell}/4)\cap B(\mu_i ,f)\right)}{{\rm Leb}_{f^n(D)}(B_{f^n(D)}(f^n(x),\delta_{\ell}/4))}\ge C_0.
%$$
By the choices of $\eta$ and $L_0$, one must have
$$
{\rm Leb}_{f^n(D)}\left(B_{f^n(D)}(f^n(x),\delta_{\ell}/4)\cap B(\mu_i ,f)\cap B(\mu,f)\right)>0,
$$ 
which implies $B(\mu,f)\cap B(\mu_i,f)\neq \emptyset$, and thus $\mu=\mu_i$. This shows that $\mu$ is ergodic and is an extreme element of $G^{cu}(f)$, which is an ergodic physical measure following from Theorem \ref{fc}. This completes the proof of Item (\ref{ps1}).

\medskip
From Item (\ref{ps1}), we know that $\mu_1,\cdots \mu_k$ are all the physical measures of $f$. Because $\B_f$ has full Lebesgue measure by Theorem \ref{ttd}, to see Item (\ref{pp2}), it suffices to prove that Lebesgue almost every point of $\B_f$ is contained in $B(\mu_i,f)$ for some $1\le i \le k$. Assume by contradiction that $\mathcal{N}=\B_f \setminus \cup_{1\le i \le k}B(\mu_i,f)$ exhibits positive Lebesgue measure, note that $\mathcal{N}$ is an $f$-invariant subset of $\B_f$. Applying Lemma \ref{gtg} and (\ref{mmmm}) to $A:=\mathcal{N}$ and $\eta<L_0$, with the same argument as in the proof of Item (\ref{ps1}), one obtains 
%that, there is $x\in N$ and some $n\in \mathcal{HT}_{\BB}(x, {\rm e}^{-H_f/2},f)$ such that 
%$$
%\frac{{\rm Leb}_{f^n(D)}\left(B_{f^n(D)}(f^n(x),\delta_{\ell}/4)\cap N\right)}{{\rm Leb}_{f^n(D)}(B_{f^n(D)}(f^n(x),\delta_{\ell}/4))}>1-\eta.
%$$
%On the other hand, since $f^n(x)$ is contained in some $\mathscr{A}_i$ for some $1\le i \le k$, one has
%$$
%\frac{{\rm Leb}_{f^n(D)}\left(B_{f^n(D)}(f^n(x),\delta_{\ell}/4)\cap B(\mu_i,f)\right)}{{\rm Leb}_{f^n(D)}(B_{f^n(D)}(f^n(x),\delta_{\ell}/4))}\ge \mathscr{L}
%$$
%following from (\ref{mmmm}). Therefore, in view of the choice of $\mathscr{L}$ and $\eta$, we obtain 
$B(\mu_i,f)\cap \mathcal{N}\neq \emptyset$ for some $1\le i \le k$. This is a contradiction to the definition of $\mathcal{N}$, thus we complete the proof of Item (\ref{ps2}).
\end{proof}

\subsection{Skeleton generated by physical measures: Proof of Theorem \ref{TheoF}}\label{proofD}
We have given the existence of skeletons for diffeomorphisms in $PH^{1}_{C}(M)$ by Theorem \ref{es}. Moreover, among $PH^{1+}_{C}(M)$, Proposition \ref{G} and Theorem \ref{ky} tell us that each physical measure $\mu_i$ is attached with a periodic point $p_{j(i)}$ in the skeleton, and $i\mapsto j(i)$ is injective. Thus, for proving Theorem \ref{TheoF}, it remains to show the map $i\mapsto j(i)$ is indeed bijective. To this end, we will establish the skeleton in a finer way than those in Theorem \ref{es}.

\begin{proof}[Proof of Theorem \ref{TheoF}]
By Theorem \ref{ky}, consider $\mu_1,\cdots,\mu_k $ for some $k\le \kappa_f$ as the family of physical measures of $f$, which are also the extreme elements of $G^{cu}(f)$. Fixing $H_f>0$, $\alpha>0$, $0<\varepsilon<D_f/2$, $\ell\in \NN$, $0<\rho_{\ell}\ll \delta_{\ell}$ as in Subsection \ref{fixedone}. As before, for each $1\le i \le k$, let $\mathscr{A}_i$ be the family of finitely many $\rho_{\ell}$-balls associated to $\mu_i$. Let $\BB$ be the union of $\rho_{\ell}$-balls from $\mathscr{A}_i$, $1\le i \le k$, and the notion $HT_{\BB}(x, {\rm e}^{-H_f/2},f)$ is given by (\ref{htb}). For the sake of more detailed argument, putting
\begin{itemize}
\smallskip
\item $\#(\mathscr{A}_i)=L_i$;
\smallskip
\item $\mathscr{A}_i=\{B_{i,j}: 1\le j \le L_{i}\}, \quad i=1,\cdots,k$;
\smallskip
\item Let $p_{i,j}:=p_{B_{i,j}}$ be the hyperbolic periodic point in $B_{i,j}$ given by Theorem \ref{covering}.
\end{itemize}
As established in the proof of Theorem \ref{ky}, consider $C_{i,j}:=C_{B_{i,j}}$ and $\mathcal{P}_{i,j}:=\mathcal{P}_{B_{i,j}}$ as the cylinder and measurable partition around $B_{i,j}$, respectively.
%We will prove the following crucial steps:
%\begin{enumerate}[\it{Step} 1:]
%\smallskip
%\item\label{st1} $\mathcal{S}_0=\{p_{i,j}: 1\le i \le k, 1\le j \le L_i\}$ is a pre-skeleton of $f$;
%\smallskip
%\item \label{st2} If $\mathcal{S}_f\subset \mathcal{S}_0$ is a skeleton of $f$, then for each $1\le i \le k$, there exists at least one $1\le j \le L_i$ such that $p_{i,j}\in \mathcal{S}_f$;
%\smallskip
%\item \label{st3} Furthermore, we show for each $1\le i \le k$, there exists only one $1\le j \le L_i$ such that $p_{i,j}\in \mathcal{S}_f$.
%\end{enumerate}

\begin{lemma}\label{st1}
$\mathcal{S}_0=\{p_{i,j}: 1\le i \le k, 1\le j \le L_i\}$ is a pre-skeleton of $f$.
\end{lemma}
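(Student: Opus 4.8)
The plan is to check directly that $\mathcal{S}_0$ meets the definition of a pre-skeleton. That it is a finite collection of hyperbolic periodic points of stable index ${\rm dim}E^{cs}$ is immediate: each $p_{i,j}=p_{B_{i,j}}$ is the periodic point furnished by Theorem \ref{covering} (applied with $E:=E^{wu}$, $F:=E^{cs}$) for the ball $B_{i,j}\in\mathscr{A}_i$, hence it has stable index ${\rm dim}E^{cs}$ and satisfies $R(W^s_{loc}(p_{i,j},f))\ge\delta_\ell$, $R(W^u_{loc}(p_{i,j},f))\ge\delta_\ell$, and the collection has cardinality $\sum_{i=1}^{k}L_i<\infty$. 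So everything reduces to verifying condition (S\ref{S1}).

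To that end I would fix a $C^1$ disk $\gamma$ transverse to $E^{cs}$ and run the proof of Theorem \ref{n11}(\ref{f32}), but with Lemma \ref{density} replacing the single-measure density estimate used there. Concretely: by Theorem \ref{ttd}(\ref{fde}) the set $\B_f\cap\gamma$ has full ${\rm Leb}_\gamma$-measure, so I can pick $x\in\B_f$ in the interior of $\gamma$ together with a subdisk $\gamma_0\subset\gamma$ of some radius $r_0>0$ centered at $x$; after replacing $\gamma_0$ by a forward iterate and $x$ by its image (which still lies in $\B_f$, the latter being $f$-invariant since $\omega_{\M}(f(x),f)=\omega_{\M}(x,f)$), I may assume by domination that $f^n(\gamma_0)$ is tangent to $\mathcal{C}_a^{wu}$ for every $n\ge1$. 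Since $x\in\B_f$, Proposition \ref{uv} gives that $x$ is $H_f$-nonuniformly expanding along $E^{wu}$, and Lemma \ref{density} then yields $\mathcal{D}_{U}(HT_{\BB}(x,{\rm e}^{-H_f/2},f))\ge D_f/2>0$; in particular one can choose $n\in HT_{\BB}(x,{\rm e}^{-H_f/2},f)$ large enough that ${\rm e}^{-nH_f/4}\delta_\ell<r_0$. For such $n$, the backward contracting estimate (\ref{bcdd}) shows that every $z\in\gamma_0$ with $d_{f^n(\gamma_0)}(f^n(x),f^n(z))\le\delta_\ell$ satisfies $d_{\gamma_0}(x,z)\le{\rm e}^{-nH_f/4}\delta_\ell<r_0$, hence lies in the interior of $\gamma_0$; consequently $f^n(\gamma_0)$ contains the full disk $B_{f^n(\gamma_0)}(f^n(x),\delta_\ell)$, which is tangent to $\mathcal{C}_a^{wu}$. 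Now $f^n(x)\in\BB$, say $f^n(x)\in B_{i,j}$, so $d(f^n(x),p_{i,j})<2\rho_\ell\ll\delta_\ell$; recalling that $W^s_{loc}(p_{i,j},f)$ is a disk of radius $\ge\delta_\ell$ tangent to $E^{cs}\subset\mathcal{C}_a^{cs}$, the transversality property (\ref{tde}) of a $\mathcal{C}_a^{wu}$-disk and a nearby $\mathcal{C}_a^{cs}$-disk (with $\rho_\ell\ll\delta_\ell$ in the roles of the constants there) gives $f^n(\gamma_0)\pitchfork W^s_{loc}(p_{i,j},f)\neq\emptyset$. Pulling back by $f^{-n}$, using $f^{-n}(W^s_{loc}(p_{i,j},f))\subset W^s({\rm Orb}(p_{i,j},f))$ and the $f$-invariance of the right-hand side, gives $\gamma\pitchfork W^s({\rm Orb}(p_{i,j},f))\neq\emptyset$ with $p_{i,j}\in\mathcal{S}_0$, which is exactly (S\ref{S1}).

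I do not anticipate a genuine obstacle: this is the proof of Theorem \ref{n11}(\ref{f32}) carried out simultaneously for all physical measures, with the single ``typical'' covering $\mathscr{A}_\mu$ replaced by the fixed union $\BB=\bigcup_{1\le i\le k}\bigcup_{B\in\mathscr{A}_i}B$ and the single-measure density estimate upgraded to Lemma \ref{density}. The only points that need care are bookkeeping: that the cone width $a$ and the gap $\rho_\ell\ll\delta_\ell$ fixed in this subsection are precisely those for which the backward contraction (\ref{bcdd}) and the transversality property (\ref{tde}) are available, and that the ``by domination'' reduction really puts $f^n(\gamma_0)$ into $\mathcal{C}_a^{wu}$ while leaving transversality of the eventual intersection with the stable manifold (which is preserved under $f^{-n}$) untouched.
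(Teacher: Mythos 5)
Your proposal is correct and follows essentially the same route as the paper's own proof: pick $x\in\B_f\cap\gamma$ via Theorem \ref{ttd}(\ref{fde}), use Lemma \ref{density} to find a hyperbolic time $n$ with $f^n(x)\in\BB$, apply the backward contraction (\ref{bcdd}) to obtain a $\delta_\ell$-disk in $f^n(\gamma)$ tangent to $\mathcal{C}_a^{wu}$, and conclude the transversal intersection with $W^s_{\delta_\ell}(p_{i,j},f)$ before pulling back. The only cosmetic difference is that the paper phrases the final intersection step as $f^n(\gamma)$ crossing the cylinder $C_{i,j}$ (which $W^s_{\delta_\ell}(p_{i,j},f)$ also crosses), whereas you invoke the cone-transversality of nearby $\mathcal{C}_a^{wu}$- and $\mathcal{C}_a^{cs}$-disks directly; these are the same geometric fact.
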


\begin{proof}
Let us take any $C^1$ disk $D$ transverse to $E^{cs}$, we need to find $p_{i,j}\in \mathcal{S}_0$ such that $D\pitchfork W^s({\rm Orb}(p_{i,j},f))\neq \emptyset$. From (\ref{fde}) of Theorem \ref{ttd}, one can pick $x\in D\cap \B_f$, using Lemma \ref{density} we have 
%and thus any fixed $\mu\in \omega_{\M}(x,f)$ is a Gibbs $cu$-state, by definition of $\B_f$. Since $\mu_1,\cdots,\mu_k$ are extreme elements of $\G^{cu}(f)$, $\mu$ is a convex combination of them, so there exist $\xi_i\in [0,1]$, $1\le i\le k$, within $\sum_{i=1}^k\xi_i=1$ such that
%\begin{equation}\label{sum}
%\mu=\sum_{i=1}^k\xi_i\mu_i.
%\end{equation}
%Recall the choice of $\mathscr{A}_i$, $1\le i \le k$, we know that the union of $\rho_{\ell}$-balls from $\mathscr{A}_i$ has $\mu_i$-measure no less than $1-\varepsilon$. In view of representation (\ref{sum}) and $\varepsilon<D_f/2$, as in the proof of Theorem \ref{gtg}, one obtains 
$$
\mathcal{D}_{U}\left(HT_{\BB}(x, {\rm e}^{-H_f/2},f)\right)\ge D_f/2.
$$
According to (\ref{bcdd}), there is $n\in HT_{\BB}(x, {\rm e}^{-H_f/2},f)$ such that $f^n(D)$ contains a disk of radius $\delta_{\ell}$ centered at $f^n(x)\in \BB$. Therefore, there is some $B_{i,j}\in \mathscr{A}_i$ such that $f^n(D)$ crosses the cylinder $C_{i,j}$. In particular, we have 
$$
f^n(D)\pitchfork W^s_{\delta_{\ell}}(p_{i,j},f)\neq \emptyset.
$$
Hence, we get $D\pitchfork W^s({\rm Orb}(p_{i,j},f))\neq \emptyset$ and complete the proof of Lemma \ref{st1}.
\end{proof}

Lemma \ref{ske3} guarantees that $\mathcal{S}_0$ contains a subset to be a skeleton. Furthermore, we have:
\begin{lemma}\label{st2}
If $\mathcal{S}_f\subset \mathcal{S}_0$ is a skeleton of $f$, then for each $1\le i \le k$, there exists a unique $1\le j_i \le L_i$ such that $p_{i,j_i}\in \mathcal{S}_f$.
\end{lemma}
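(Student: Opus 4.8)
The plan is to show that, for every fixed $i$, the periodic points $p_{i,1},\dots,p_{i,L_i}$ sitting over the single physical measure $\mu_i$ are pairwise homoclinically related; granting this, the statement will follow by a short counting argument together with Proposition \ref{G} and Lemma \ref{ske1}.

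The homoclinic-relatedness is the heart of the matter. Fix $i$ and indices $j\neq j'$, and let $x_{i,j}$, $x_{i,j'}$ be the centres of $B_{i,j}$, $B_{i,j'}$, so that $x_{i,j},x_{i,j'}\in{\rm supp}(\mu_i|\Lambda_{\ell}(f,\alpha))$. Every neighbourhood of $x_{i,j}$ has positive $\mu_i(\,\cdot\cap\Lambda_{\ell}(f,\alpha))$-measure, hence $\mu_i(B_{i,j}\cap\Lambda_{\ell}(f,\alpha))>0$, and likewise for $x_{i,j'}$. Since $\mu_i$ is ergodic (Theorem \ref{ky}), I would apply Lemma \ref{sp1} to $\mu_i$ with $x:=x_{i,j}$, $y:=x_{i,j'}$ and $r:=\rho_{\ell}$ — legitimate after shrinking $\rho_{\ell}$, which is harmless because $\rho_{\ell}$ is fixed only after $\alpha$ and $\ell$ — to obtain a hyperbolic periodic point $p^{*}$ with $p^{*}\in B(x_{i,j},L\rho_{\ell})$ and $f^{m}(p^{*})\in B(x_{i,j'},L\rho_{\ell})$ for some $m\in\NN$, both carrying local stable and unstable manifolds of a uniform size $\delta>0$. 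Then $p^{*}$ and $p_{i,j}$ lie within $(L+1)\rho_{\ell}$ of one another and both have local invariant manifolds of uniform size, one family tangent to $\mathcal{C}_a^{wu}$ and the other to $\mathcal{C}_a^{cs}$; the transversality argument used to establish property (A) in the proof of Theorem \ref{n11} then gives $W^u_{loc}(p^{*})\pitchfork W^s_{loc}(p_{i,j})\neq\emptyset$ and $W^u_{loc}(p_{i,j})\pitchfork W^s_{loc}(p^{*})\neq\emptyset$, that is $p^{*}\overset{f}\sim p_{i,j}$. The same reasoning yields $f^{m}(p^{*})\overset{f}\sim p_{i,j'}$, while $f^{m}(p^{*})\overset{f}\sim p^{*}$ is immediate since they share an orbit. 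Transitivity of $\overset{f}\sim$ through the inclination lemma, exactly as in the proof of Lemma \ref{ske1}, now gives $p_{i,j}\overset{f}\sim p_{i,j'}$.

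To conclude, recall that a skeleton cannot contain two distinct homoclinically related periodic points, since condition (S2) of Definition \ref{S} forbids the heteroclinic intersection such a pair would exhibit. Hence $\mathcal{S}_f$ meets $\{p_{i,j}:1\le j\le L_i\}$ in at most one point for each $i$, so $|\mathcal{S}_f|\le k$. On the other hand $|\mathcal{S}_f|=k_f$ by Lemma \ref{ske1} (every skeleton of $f$ has the cardinality of the one produced in Theorem \ref{es}), and Proposition \ref{G} gives an injection $i\mapsto j(i)$ of $\{1,\dots,k\}$ into $\mathcal{S}_f$, whence $k\le k_f$. Therefore $k=k_f=|\mathcal{S}_f|$, and since $\mathcal{S}_f$ contributes at most one element from each of the $k$ classes yet has exactly $k$ elements, each class contributes exactly one; this is the desired unique $j_i$, and in particular the $p_{i,j_i}$, $1\le i\le k$, are pairwise distinct.

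I expect the delicate step to be the comparison of size scales in the homoclinic-relatedness argument: one must fix $\rho_{\ell}$ small enough, relative to the constants $\delta$ and $L$ furnished by Lemma \ref{sp1} (which depend only on $\alpha$ and $\ell$) and to the cone width $a$, that two uniform-size local invariant manifolds of complementary dimension based at points $(L+1)\rho_{\ell}$ apart still intersect transversally. Since $(L+1)\rho_{\ell}$ can exceed the threshold $2\rho_f$ appearing in Theorem \ref{n11}, this closeness/transversality statement must be invoked in its own right; it is, however, an elementary consequence of the domination, after which the remaining steps are mechanical.
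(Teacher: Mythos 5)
Your proposal is correct, and its first half coincides with the paper's: the paper also proves that, for fixed $i$, the points $p_{i,j}$ are pairwise homoclinically related by applying Lemma \ref{sp1} to the ergodic measure $\mu_i$ and the centers of $B_{i,j}$, $B_{i,j'}$, and then using the uniform size $\delta_{\ell}$ of the invariant manifolds together with $\rho_{\ell}\ll\delta_{\ell}$. On the scale issue you flag: the paper sidesteps it by feeding Lemma \ref{sp1} an arbitrarily small radius $r$ around the centers (legitimate since they lie in ${\rm supp}(\mu_i|\Lambda_{\ell}(f,\alpha))$, so any small ball has positive measure), which places the shadowing periodic point inside $B_{i,j}$ itself and keeps all distances below $2\rho_{\ell}$; this is slightly cleaner than re-fixing $\rho_{\ell}$, though your fix is also legitimate because the constants $L,\delta$ of Lemma \ref{sp1} depend only on $(\alpha,\ell)$, which are chosen before $\rho_{\ell}$. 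Where you genuinely diverge is the existence half ("at least one $p_{i,j_i}\in\mathcal{S}_f$ per $i$"): the paper argues by contradiction, using Proposition \ref{G} to attach $\mu_{i_0}$ to some $p_{i,j}\in\mathcal{S}_f$ with $i\neq i_0$, then Lemma \ref{ggh}, the inclination lemma and absolute continuity of the stable lamination to force an intersection of basins, contradicting disjointness. You instead run a counting argument: at most one skeleton element per class by condition (S\ref{S2}) gives $|\mathcal{S}_f|\le k$, while Lemma \ref{ske1} gives $|\mathcal{S}_f|=k_f$ and the injectivity statement of Proposition \ref{G} (with Theorem \ref{ky} identifying $\P_e(f)=\{\mu_1,\dots,\mu_k\}$) gives $k\le k_f$; hence $|\mathcal{S}_f|=k$ and the pigeonhole forces exactly one element per class. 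This is sound and non-circular (Lemma \ref{ske1}, Proposition \ref{G} and Theorem \ref{ky} are all established before this point), and it buys a shorter proof that avoids a second absolute-continuity argument; the paper's direct route buys a proof that does not lean on the global cardinality rigidity of skeletons and exhibits concretely why an "orphaned" class is impossible.
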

\begin{proof}
Let $\mathcal{S}_f\subset\mathcal{S}_0$ be a skeleton of $f$. Now we show firstly that for each $1\le i \le k$, different elements of $\{p_{i,j}: 1\le j \le L_i\}$ are homoclinically related to each other. Given $1\le i \le k$, let us take $1\le m,n\le L_i$ with $m\neq n$. 
Recalling the construction of balls in $\mathscr{A}_i$, 
suppose
$$
B_{i,m}:=B(x_m, \rho_{\ell}),\quad B_{i,n}:=B(x_n, \rho_{\ell})
$$
for some $x_m, x_n\in {\rm supp}(\mu_i|\Lambda_{\ell}(f,\alpha))$.
Therefore, for any $\rho>0$, we have 
$$
\mu_i(B(x_m, \rho)\cap \Lambda_{\ell}(f,\alpha))>0,\quad \mu_i(B(x_n, \rho)\cap \Lambda_{\ell}(f,\alpha))>0.
$$
By Lemma \ref{sp1}, one can find integers $K<N$ and a hyperbolic periodic point $p$ of periodic $n$ with properties as follows:
\begin{itemize}
\smallskip
\item[--] $p\in B_{i,m}$ and $f^K(p)\in B_{i,n}$;
\smallskip
\item[--] both $p$ and $f^K(p)$ have stable and unstable manifolds of size larger than $\delta_{\ell}$.
\end{itemize}
Since $\rho_{\ell}\ll \delta_{\ell}$, the above properties imply $p\sim p_{i,m}$ and $f^K(p)\sim p_{i,n}$, thus $p_{i,n}\sim p_{i,m}$. 
Now, to complete the proof it suffices to show that for each $1\le i\le k$, there exists at least one $1\le j \le L_i$ such that $p_{i,j}\in \mathcal{S}_f$.
Suppose by contradiction that there is $1\le i_0 \le k$ such that $p_{i_0,j}\notin \mathcal{S}_f$ for any $1\le j \le L_{i_0}$. From Proposition \ref{G}, we know that for $\mu_{i_0}$, there exists some $p_{i,j}\in \mathcal{S}_f$ with $i\neq i_0$ and an unstable disk $D_{i_0}$ satisfying
\begin{enumerate}
\smallskip
\item\label{pp1} Lebesgue almost every point of $D_{i_0}$ belongs to $B(\mu_{i_0},f)$;
\smallskip
\item\label{pp2} $D_{i_0}$ intersects $W^s({\rm Orb}(p_{i,j},f))$ transversally. 
\end{enumerate}
On the other hand, by Lemma \ref{ggh}, there exists $D_{i,j}\in \mathcal{P}_{i,j}$ such that ${\rm Leb}_{D_{i,j}}(\Lambda_{\ell}(f,\alpha)\cap B(\mu_i,f))>0$.
Applying inclination lemma to (\ref{pp2}), there is $n\in \NN$ so that $f^n(D_{i_0})$ contains a disk $\widetilde{D}_{i_0}$ which is $C^1$-close to $W_{\delta_{\ell}}^s(p_{i,j},f)$ and crosses $C_{i,j}$. Thus, in view of the choice of $D_{i,j}$, using the absolute continuity of stable lamination one must have 
$
{\rm Leb}_{D_{i_0}}(B(\mu_i, f))>0,
$ 
which is a contradiction to (\ref{pp1}).
\end{proof}

By Lemma \ref{st2}, we know that for each $\mu_i$, there exists a unique $p_{i,j_i}\in \mathcal{S}_f$ and an unstable disk $D_{i}$ satisfying
\begin{itemize}
\smallskip
\item[--] $D_i$ is contained in the support of $\mu_i$, and Lebesgue almost every point of $D_i$ belongs to $B(\mu_i,f)$;
\smallskip
\item[--] $D_i$ intersects $W^s({\rm Orb}(p_{i,j_i},f))$ transversally. 
\end{itemize}
Therefore, we have $\mu_i\overset{f}\longleftrightarrow p_{i,j_i}$, as showed in Proposition \ref{G}. Then, using Lemma \ref{ske1} to get the desired result.
\end{proof}
\section{Proofs of the main results}\label{sec7}
In this section we will provide the proofs of Theorems \ref{TheoB}, \ref{TheC} and \ref{sta}.
Now we show firstly the existence of skeletons and the upper semi-continuity of the cardinality of skeleton w.r.t. diffeomorphisms in $PH^{1}_{EC}(M)$.

\begin{theorem}\label{uppers}
Every $f\in PH^{1}_{EC}(M)$ admits a skeleton. If $\mathcal{S}(f)=\{p_i: 1\le i \le k_f\}$ is a skeleton of $f$, then there exists a $C^1$ neighborhood $\U$ of $f$ such that for every $g\in \U$, the continuation $\{p_i(g): 1\le i \le k_f\}$ of $\mathcal{S}(f)$ is a pre-skeleton of $g$. Consequently, we have $k_g \le k_f$ for every $g\in \U$.
\end{theorem}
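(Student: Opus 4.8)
The plan is to reduce the statement to the subclass $PH^{1}_{C}(M)$, where the existence of skeletons (Theorem \ref{es}) and their $C^1$-persistence (Theorem \ref{ffff}) are already at our disposal. Given $f\in PH^{1}_{EC}(M)$, Theorem \ref{utv} provides $N\in\NN$ with $f^N\in PH^{1}_{C}(M)$. By Theorem \ref{es} the map $f^N$ admits a skeleton $\mathcal{S}$; since $\mathcal{S}$ satisfies (S\ref{S1}) for $f^N$, Lemma \ref{sc} shows it is a pre-skeleton of $f$ (the periodic points and their stable index are the same for $f$ and $f^N$), and Lemma \ref{ske3} then yields a subset of $\mathcal{S}$ which is a skeleton of $f$. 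Applying this to every $g$ in the $C^1$-open set $PH^{1}_{EC}(M)$ shows, in particular, that every $g$ near $f$ has a skeleton, so the quantity $k_g$ is well defined near $f$ (and, by Lemma \ref{ske1}, independent of the chosen skeleton of $g$).

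For the perturbation part, fix a skeleton $\mathcal{S}(f)=\{p_i:1\le i\le k_f\}$ of $f$. Each $p_i$ is also a hyperbolic periodic point of $f^N$ of stable index ${\rm dim}\,E^{cs}$, but the $f$-orbit of $p_i$ may decompose into several $f^N$-orbits. Enlarge $\mathcal{S}(f)$ to the finite set $\widehat{\mathcal{S}}$ obtained by selecting one representative from each $f^N$-orbit contained in $\bigcup_i{\rm Orb}(p_i,f)$. Since $W^s(x,f)=W^s(x,f^N)$ for every $x$ (forward convergence along the full orbit is equivalent to convergence along the subsequence $\{Nk\}_{k\ge0}$), one has $\bigcup_i W^s({\rm Orb}(p_i,f))=\bigcup_{q\in\widehat{\mathcal{S}}}W^s({\rm Orb}(q,f^N))$; hence condition (S\ref{S1}) for $\mathcal{S}(f)$ relative to $f$ becomes condition (S\ref{S1}) for $\widehat{\mathcal{S}}$ relative to $f^N$, i.e. $\widehat{\mathcal{S}}$ is a pre-skeleton of $f^N$, and by Lemma \ref{ske3} it contains a skeleton $\mathcal{T}$ of $f^N$. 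Now apply Theorem \ref{ffff} to $(f^N,\mathcal{T})$: there is a $C^1$ neighborhood $\mathcal{W}$ of $f^N$ such that the continuation $\mathcal{T}(h)$ is a pre-skeleton of $h$ for every $h\in\mathcal{W}$. Using Lemma \ref{upp} (upper semi-continuity of $h\mapsto G^u(h)$), the compactness of $G^u$, and the joint continuity of $(\mu,h)\mapsto\chi_\mu(h^{-1},E^{cu})$ and $(\mu,h)\mapsto\chi_\mu(h,E^{cs})$, the defining condition of $PH^{1}_{C}(M)$ is $C^1$-open; since $g\mapsto g^N$ is $C^1$-continuous we obtain a $C^1$ neighborhood $\U$ of $f$ in $PH^{1}_{EC}(M)$ with $g^N\in\mathcal{W}$ for all $g\in\U$. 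For such $g$, $\mathcal{T}(g^N)$ is a pre-skeleton of $g^N$, hence of $g$ by Lemma \ref{sc}. Finally, the elements of $\mathcal{T}(g^N)\subseteq\widehat{\mathcal{S}}(g^N)$ are the continuations of points of the form $f^j(p_i)$, which are $g^j(p_i(g))$ (continuations of hyperbolic periodic points vary continuously, with constant period, under $C^1$-small perturbations); since $W^s({\rm Orb}(g^j(p_i(g)),g))=W^s({\rm Orb}(p_i(g),g))$, every $C^1$ disk transverse to $E^{cs}$ meeting $\bigcup_{q\in\mathcal{T}(g^N)}W^s({\rm Orb}(q,g))$ also meets $\bigcup_i W^s({\rm Orb}(p_i(g),g))$. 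Thus $\{p_i(g):1\le i\le k_f\}$ satisfies (S\ref{S1}) for $g$, i.e. it is a pre-skeleton of $g$.

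The remaining claim is then immediate: $\{p_i(g):1\le i\le k_f\}$ is a pre-skeleton of $g$ with at most $k_f$ elements, so by Lemma \ref{ske3} it contains a skeleton of $g$, and by Lemma \ref{ske1} every skeleton of $g$ has cardinality $k_g$; hence $k_g\le k_f$. The main obstacle I anticipate is the combinatorial bookkeeping in the second paragraph: keeping track of how an $f$-periodic orbit breaks up into $f^N$-orbits, identifying the hyperbolic continuations of $\mathcal{T}$ under $g^N$ with the continuations $p_i(g)$ under $g$, and verifying that $PH^{1}_{C}(M)$ is $C^1$-open so that Theorem \ref{ffff} genuinely applies to $g^N$. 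These are routine but must be handled with care.
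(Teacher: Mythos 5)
Your argument is correct, and the existence half (reduce to $f^N\in PH^{1}_{C}(M)$ via Theorem \ref{utv}, then use Theorem \ref{es} together with Lemmas \ref{sc} and \ref{ske3}) is exactly what the paper does. Where you diverge is the perturbation half: the paper does not pass the given skeleton $\mathcal{S}(f)$ up to $f^{N}$ at all; instead it reruns the argument of Theorem \ref{ffff} directly at the level of $f$ -- covering $M$ by finitely many neighborhoods in which disks of a definite radius transverse to $E^{cs}$ must hit compact pieces $W^s_{\rho}({\rm Orb}(p_i,f))$, using the $C^1$-continuity of these compact pieces under perturbation, and using that every nearby $g$ admits a skeleton whose points carry unstable manifolds of uniform size (obtained through $g^{n_0}$ and Lemmas \ref{sc}, \ref{ske3}) to conclude via the inclination lemma. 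You instead treat Theorem \ref{ffff} as a black box for $f^{N}$: you enlarge $\mathcal{S}(f)$ to one representative per $f^{N}$-orbit, observe that $W^s(x,f)=W^s(x,f^{N})$ turns condition (S\ref{S1}) for $f$ into (S\ref{S1}) for $f^{N}$, extract a skeleton $\mathcal{T}$ of $f^{N}$ by Lemma \ref{ske3}, apply Theorem \ref{ffff} to $(f^{N},\mathcal{T})$, and transfer back down using Lemma \ref{sc} together with the identification of the $g^{N}$-continuation of $f^{j}(p_i)$ with $g^{j}(p_i(g))$, so that the stable manifolds of ${\rm Orb}(q,g)$, $q\in\mathcal{T}(g^{N})$, are contained in those of ${\rm Orb}(p_i(g),g)$. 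Your route buys a genuinely self-contained deduction from the stated Theorem \ref{ffff} (the paper's ``similar argument'' is left to the reader), at the price of the orbit-decomposition bookkeeping and the continuation identification, both of which you handle correctly; the paper's route avoids that bookkeeping but repeats the geometric covering/continuity argument, and it also needs the uniform-size skeletons for nearby $g$, which your version sidesteps because that uniformity is already packaged inside Theorem \ref{ffff}. The final step ($k_g\le k_f$ via Lemmas \ref{ske3} and \ref{ske1}) coincides with the paper's.
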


\begin{proof}
By Theorem \ref{utv}, there is $n_0\in \NN$ such that $f^{n_0}\in PH^{1}_{C}(M)$. Then by the construction of skeletons in Theorem \ref{es}, there exists a $C^1$ neighborhood $\U_0$ of $f^{n_0}$ such that any diffeomorphism in it admits a skeleton, whose periodic points exhibit local stable and unstable manifolds with uniform size. Take a $C^1$ neighborhood $\U$ of $f$ such that $g^{n_0}\in \U_0$ whenever $g\in \U$.
By applying Lemmas \ref{sc} and \ref{ske3}, we obtain that every diffeomorphism in $\U$ admits a skeleton that formed by periodic points admitting local stable and unstable manifolds with uniform size. Using the similar argument as in the proof of Theorem \ref{ffff}, mainly the continuity of the compact part of stable manifolds of periodic points w.r.t. diffeomorphisms, one can conclude the desired result.
\end{proof}

\paragraph{\bf{Notation}}
Let $f$ be a homeomorphism on $M$ and  $\mu$ a probability on $M$, define 
$$
\mu^{(n)}=\frac{1}{n}\sum_{i=0}^{n-1}f_{\ast}^i\mu,\quad n\in \NN.
$$

By definition, if $\mu$ is $f^n$-invariant, then $\mu^{(n)}$ is $f$-invariant. Moreover, one has the next result.

%
%\begin{definition}
%Let $f$ be a homeomorphism on $M$ and  $\mu$ a probability on $M$, define  
%$$
%\mu^{<n>}=\frac{1}{n}\sum_{i=0}^{n-1}f_{\ast}^i\mu,\quad n\in \NN.
%$$
%\end{definition}

\begin{lemma}\label{ic}
Let $f$ be a homeomorphism on $M$, assume that $\mu$ and $\nu$ are $f^n$-ergodic measures. Then we have
that $\mu^{(n)}=\nu^{(n)}$ if and only if $\mu=f_{\ast}^i\nu$ for some $i\in \{0, 1\cdots,n-1\}$.
\end{lemma}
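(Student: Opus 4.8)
The plan is to prove the two implications separately; both rest only on elementary manipulations together with the uniqueness of the ergodic decomposition for the homeomorphism $f^n$ of the compact metric (hence standard Borel) space $M$.

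\textbf{The ``if'' direction.} Suppose $\mu=f_\ast^i\nu$ for some $i\in\{0,1,\dots,n-1\}$. Since $\nu$ is $f^n$-invariant we have $f_\ast^n\nu=\nu$, so $f_\ast^{i+j}\nu$ depends only on $i+j$ modulo $n$, and as $j$ runs over $\{0,\dots,n-1\}$ the residues $(i+j)\bmod n$ run over the same set. Hence
\[
\mu^{(n)}=\frac1n\sum_{j=0}^{n-1}f_\ast^j\mu=\frac1n\sum_{j=0}^{n-1}f_\ast^{i+j}\nu=\frac1n\sum_{j=0}^{n-1}f_\ast^{\,(i+j)\bmod n}\nu=\frac1n\sum_{j=0}^{n-1}f_\ast^{j}\nu=\nu^{(n)}.
\]

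\textbf{The ``only if'' direction.} First I would record two observations. (a) For each $i$ the measure $f_\ast^i\mu$ is again $f^n$-ergodic: it is $f^n$-invariant because $f^n$ commutes with $f^i$, and if $A$ is $f^n$-invariant then so is $f^{-i}A$, whence $f_\ast^i\mu(A)=\mu(f^{-i}A)\in\{0,1\}$. (b) $\mu^{(n)}$ is $f$-invariant, since $f_\ast\mu^{(n)}=\mu^{(n)}+\tfrac1n(f_\ast^n\mu-\mu)=\mu^{(n)}$ using $f_\ast^n\mu=\mu$; likewise for $\nu^{(n)}$. Now, grouping equal terms, write $\mu^{(n)}=\sum_{k}c_k m_k$ as a convex combination of the \emph{distinct} measures among $\{f_\ast^i\mu:0\le i\le n-1\}$, each $m_k$ being $f^n$-ergodic with weight $c_k>0$; since distinct ergodic measures are mutually singular, this is exactly the ergodic decomposition of the $f^n$-invariant measure $\mu^{(n)}$ with respect to $f^n$. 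By uniqueness of the ergodic decomposition, the set of ergodic measures occurring with positive weight is determined by $\mu^{(n)}$ alone. Applying the same reasoning to $\nu^{(n)}=\mu^{(n)}$ gives
\[
\{\,f_\ast^i\mu:0\le i\le n-1\,\}=\{\,f_\ast^i\nu:0\le i\le n-1\,\}.
\]
In particular $\nu=f_\ast^0\nu$ lies in the left-hand set, so $\nu=f_\ast^i\mu$ for some $i$, i.e. $\mu=f_\ast^{-i}\nu=f_\ast^{\,(n-i)\bmod n}\nu$, using once more that $\nu$ is $f^n$-invariant to bring the exponent into $\{0,\dots,n-1\}$.

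\textbf{Where the work lies.} There is no deep difficulty; the only point needing care is the bookkeeping caused by possible coincidences among $f_\ast^0\mu,\dots,f_\ast^{n-1}\mu$ (the $f_\ast$-orbit of $\mu$ may have period a proper divisor of $n$). This is why one passes to the distinct terms before invoking uniqueness, and why the conclusion is $\mu=f_\ast^j\nu$ for \emph{some} $j$ rather than a prescribed one; the auxiliary identity $f_\ast^{-1}\nu=f_\ast^{\,n-1}\nu$ for $f^n$-invariant $\nu$ is used throughout to keep all exponents in $\{0,\dots,n-1\}$. Everything else is the standard ergodic decomposition theorem for $f^n$.
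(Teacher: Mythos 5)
Your proof is correct. The paper states Lemma \ref{ic} without any proof at all (it is offered as a routine observation right after the definition of $\mu^{(n)}$), and your argument --- averaging the push-forwards for the ``if'' direction, and for the ``only if'' direction recognizing $\frac1n\sum_{i}f_\ast^i\mu$ as the ergodic decomposition of $\mu^{(n)}$ with respect to $f^n$ (after grouping coincident terms, since distinct $f^n$-ergodic measures are mutually singular) and invoking its uniqueness --- is precisely the standard verification the authors left implicit, including the correct handling of the case where the $f_\ast$-orbit of $\mu$ has period a proper divisor of $n$.
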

%
%\begin{proof}
%One can verify by construction that if $\mu=f^k_{\ast}\nu$ for some $k\in \NN$, then $\mu^{<n>}=\nu^{<n>}$. Thus it remains to show the other direction.
%Let $\mu$ be an $f^n$-ergodic measure. By construction, we get $B(\mu, f^n)\cup \cdots \cup B(f_{\ast}^{n-1}\mu,f^n)$ has full $\mu^{<n>}$ measure. Indeed, since $\mu$ is $f^n$-ergodic, for each $1\le i \le n-1$, $f_{\ast}^i\mu$ is $f^n$-ergodic as well, which yields $(f_{\ast}^i\mu) B(f_{\ast}^i\mu, f^n)=1$. Therefore, we have
%\begin{eqnarray*}
%\mu^{<n>}\left(\bigcup_{i=0}^{n-1}B(f_{\ast}^{i}\mu,f^n)\right) &=& \frac{1}{n}\sum_{i=0}^{n-1} f_{\ast}^i\mu\left(\bigcup_{i=0}^{n-1}B(f_{\ast}^{i}\mu,f^n)\right)\\
%&=&  \frac{1}{n}\sum_{i=0}^{n-1} f_{\ast}^i\mu(B(f_{\ast}^{i}\mu,f^n))\\
%&=& 1.
%\end{eqnarray*}
%Let $\nu$ be the another $f^n$-ergodic measure such that $\mu^{<n>}=\nu^{<n>}$. By the same argument as above, we deduce that $\mu^{<n>}(\bigcup_{i=0}^{n-1}B(f_{\ast}^{i}\nu,f^n))=1$. As a consequence, there exist $k,\ell \in \{1,\cdots,n-1\}$ such that $B(f_{\ast}^k\mu,f^n)\cap B(f_{\ast}^{\ell}\nu,f^n)\neq \emptyset$, and thus $f_{\ast}^k\mu=f_{\ast}^{\ell}\nu$, which implies the desired result immediately.
%\end{proof}

%By definition of pre-skeletons, one gets the following result directly.
%
%\begin{lemma}\label{sc}
%Let $f$ be a $C^1$ diffeomorphism, and $n\in \NN$. If $\mathcal{S}=\{p_1,\cdots, p_k\}$ is a skeleton of $f^n$, then it is a pre-skeleton for $f$.
%\end{lemma}

\begin{proposition}\label{kyp}
Given $N\in \NN$ and a $C^{1+}$ diffeomorphism $f$ on $M$ with $f^N\in PH^{1+}_{C}(M)$. Let $\mu_1,\cdots,\mu_{k}$ be physical measures of $f^N$. Assume that $\mathcal{S}(f^N)=\{p_1,\cdots,p_{k}\}$ is a skeleton of $f^N$ satisfying $\mu_i \overset{f^N}\longleftrightarrow p_i$ for every $1\le i \le k$. Then the following properties hold:
\begin{enumerate}
\smallskip
\item \label{f1} $\mu^{(n)}_i=\mu^{(n)}_j$ implies $p_i\overset{f}\sim p_j$.
\smallskip
\item \label{f2} $p_i \overset{f}\prec p_j$ implies $\mu^{(n)}_i=\mu^{(n)}_j$.
\end{enumerate}
\end{proposition}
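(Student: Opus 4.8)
The plan is to reduce both statements, via Lemma~\ref{ic}, to the single assertion that $\mu_j=f_{\ast}^{s}\mu_i$ for some integer $s$, and then to pass freely between the $f$-dynamics and the $f^{N}$-dynamics (here and below I write $n=N$, the integer for which $f^{N}\in PH^{1+}_{C}(M)$, so that $\mu_i^{(N)}$ is $f$-invariant). Since by Theorem~\ref{ky} the measures $\mu_1,\dots,\mu_k$ are exactly the physical measures of $f^{N}$ and are $f^{N}$-ergodic, Lemma~\ref{ic} (with $n=N$) gives, for all $i,j$,
$$
\mu_i^{(N)}=\mu_j^{(N)}\quad\Longleftrightarrow\quad \mu_j=f_{\ast}^{s}\mu_i\ \ \text{for some }s\in\{0,\dots,N-1\}.
$$

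I would use three elementary remarks. (a) At every point $x$ one has $W^{u}(x,f)=W^{u}(x,f^{N})$ and $W^{s}(x,f)=W^{s}(x,f^{N})$, and for a periodic point $p$ one has ${\rm Orb}(p,f)=\bigcup_{r=0}^{N-1}f^{r}({\rm Orb}(p,f^{N}))$; hence $W^{u}({\rm Orb}(p,f))=\bigcup_{r=0}^{N-1}f^{r}\big(W^{u}({\rm Orb}(p,f^{N}))\big)$ and likewise for $W^{s}$. Applying $f^{-r'}$ to a transverse point of $f^{r}(W^{u}({\rm Orb}(p_i,f^{N})))\cap f^{r'}(W^{s}({\rm Orb}(p_j,f^{N})))$ and using $f^{r-r'}({\rm Orb}(p_i,f^{N}))={\rm Orb}(f^{r-r'}p_i,f^{N})$, this shows that $p_i\overset{f}\prec p_j$ holds if and only if $f^{s}(p_i)\overset{f^{N}}\prec p_j$ for some $s\in\{0,\dots,N-1\}$. (b) For each $s$ the measure $f^{s}_{\ast}\mu_i$ is again a physical measure of $f^{N}$: it is $f^{N}$-ergodic (as $f^{s}$ commutes with $f^{N}$) and $B(f^{s}_{\ast}\mu_i,f^{N})=f^{s}\big(B(\mu_i,f^{N})\big)$ has positive Lebesgue measure, so $f^{s}_{\ast}\mu_i\in\{\mu_1,\dots,\mu_k\}$; moreover, from $\mu_i\overset{f^{N}}\longleftrightarrow p_i$ its support equals $f^{s}\big(\overline{W^{u}({\rm Orb}(p_i,f^{N}))}\big)=\overline{W^{u}({\rm Orb}(f^{s}p_i,f^{N}))}$. (c) By the inclination lemma (cf. the proof of Lemma~\ref{ske2}), $a\overset{f^{N}}\prec q$ implies $\overline{W^{u}({\rm Orb}(q,f^{N}))}\subseteq\overline{W^{u}({\rm Orb}(a,f^{N}))}$.

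The one genuinely geometric ingredient, which I would isolate as a lemma, is a converse to (c): \emph{if $a$ and $q$ are hyperbolic periodic points of $f^{N}$ of stable index $\dim E^{cs}$ and $q\in\overline{W^{u}({\rm Orb}(a,f^{N}))}$, then $a\overset{f^{N}}\prec q$.} To prove it I would use that $W^{u}({\rm Orb}(a,f^{N}))$ is a union of finitely many forward iterates of local unstable disks of the periodic orbit, so that there is $\delta>0$ for which every $z\in W^{u}({\rm Orb}(a,f^{N}))$ carries an embedded disk $B_{W^{u}}(z,\delta)\subset W^{u}({\rm Orb}(a,f^{N}))$, with all these disks uniformly $C^{1+}$ and, by domination, tangent to a fixed centre-unstable cone field $\mathcal{C}^{wu}_{\theta}$. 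Picking $y_n\to q$ in $W^{u}({\rm Orb}(a,f^{N}))$ and extracting a $C^{1}$-limit of the disks $B_{W^{u}}(y_n,\delta)$ yields a disk through $q$ tangent to $\overline{\mathcal{C}^{wu}_{\theta}(q)}$, hence transverse at $q$ to $W^{s}_{loc}(q,f^{N})$ (which is tangent to $E^{cs}(q)$); since transversality is $C^{1}$-open, $B_{W^{u}}(y_n,\delta)\pitchfork W^{s}_{loc}(q,f^{N})\neq\emptyset$ for large $n$, and as $B_{W^{u}}(y_n,\delta)\subset W^{u}({\rm Orb}(a,f^{N}))$ this gives $a\overset{f^{N}}\prec q$.

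Granting this lemma, the proposition follows quickly. For (1): if $\mu_i^{(N)}=\mu_j^{(N)}$ then $\mu_j=f^{s}_{\ast}\mu_i$ for some $s$, so by (b) $p_j\in{\rm supp}(\mu_j)=\overline{W^{u}({\rm Orb}(f^{s}p_i,f^{N}))}$, whence the lemma gives $f^{s}p_i\overset{f^{N}}\prec p_j$ and then $p_i\overset{f}\prec p_j$ by (a); since $\mu_j=f^{s}_{\ast}\mu_i$ is equivalent to $\mu_i=f^{\,N-s}_{\ast}\mu_j$, the same argument with $i$ and $j$ interchanged gives $p_j\overset{f}\prec p_i$, so $p_i\overset{f}\sim p_j$. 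For (2): if $p_i\overset{f}\prec p_j$ then by (a) $f^{s}p_i\overset{f^{N}}\prec p_j$ for some $s$, so by (c) ${\rm supp}(\mu_j)=\overline{W^{u}({\rm Orb}(p_j,f^{N}))}\subseteq\overline{W^{u}({\rm Orb}(f^{s}p_i,f^{N}))}={\rm supp}(f^{s}_{\ast}\mu_i)$; writing $f^{s}_{\ast}\mu_i=\mu_c$ for some $c$ by (b), whence $p_c\in\mathcal{S}(f^{N})$, we get $p_j\in{\rm supp}(\mu_c)=\overline{W^{u}({\rm Orb}(p_c,f^{N}))}$, so the lemma gives $p_c\overset{f^{N}}\prec p_j$; as $p_c$ and $p_j$ both belong to the skeleton $\mathcal{S}(f^{N})$, condition (S\ref{S2}) forces $c=j$, i.e. $f^{s}_{\ast}\mu_i=\mu_j$, and hence $\mu_i^{(N)}=\mu_j^{(N)}$. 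The step I expect to demand the most care is the geometric lemma — establishing the uniform size and $C^{1+}$ control of the local unstable disks inside $W^{u}({\rm Orb}(a,f^{N}))$ and extracting the limit disk through $q$; the remainder is bookkeeping relating $f$ to $f^{N}$ together with property (S\ref{S2}).
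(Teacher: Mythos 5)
Your reduction via Lemma~\ref{ic}, the bookkeeping (a)--(b) relating $f$ and $f^{N}$, and the use of (S\ref{S2}) at the end are all fine, and they would indeed give a cleaner, purely geometric proof than the paper's. But everything hinges on your ``converse of (c)'': \emph{$q\in\overline{W^{u}({\rm Orb}(a,f^{N}))}$ implies $a\overset{f^{N}}\prec q$}, and the proof you sketch for it has a genuine gap. You justify the existence of a uniform $\delta>0$ such that every $z\in W^{u}({\rm Orb}(a,f^{N}))$ carries an embedded $\delta$-disk inside $W^{u}({\rm Orb}(a,f^{N}))$, uniformly $C^{1}$ and tangent to a centre-unstable cone, by saying the unstable manifold is ``a union of finitely many forward iterates of local unstable disks''. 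It is an increasing union of \emph{infinitely} many forward iterates, and the uniformity is exactly what fails (or at least is completely unproved) here: the unstable manifold of a skeleton point is tangent to $E^{u}\oplus E^{cu}$, and $E^{cu}$ is only \emph{mostly} expanding, so along the (non-typical) orbits of points of $W^{u}({\rm Orb}(a,f^{N}))$ the derivative can contract the $E^{cu}$-directions for arbitrarily long stretches; pieces of the manifold far from the periodic orbit can therefore be arbitrarily thin in the centre directions, and no uniform lower bound on the size of intrinsic disks (measured in $M$, in the $E^{wu}$-directions, which is what you need to force a crossing of $W^{s}_{loc}(q,f^{N})$) is available. This is precisely the difficulty the paper points out in contrast with the mostly contracting case, where $E^{wu}=E^{u}$ is uniformly expanded and your lemma would be standard. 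As it stands your key lemma is neither proved nor obviously true in this setting.

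The paper's own proof of Proposition~\ref{kyp} is built to avoid exactly this: for item (\ref{f1}) it works with Pesin blocks of positive measure and uses ergodicity together with Lemma~\ref{sp1} (Liao--Gan shadowing) to manufacture an auxiliary periodic orbit, with stable and unstable manifolds of \emph{a priori} uniform size, passing near both $f^{\kappa}(P_j)$ and $P_i$, whence the homoclinic relation; for item (\ref{f2}) it uses the typical partitions/cylinders of Lemma~\ref{ggh}, the inclination lemma, and the absolute continuity of the stable lamination to identify $\mu_j=f^{L}_{\ast}\mu_i$ at the level of basins. If you want to salvage your route, you would have to prove your geometric lemma by some such mechanism (e.g.\ recovering uniform-size disks only at hyperbolic times, as in Theorem~\ref{n11}, and then arguing why the resulting intersections can be brought back to $W^{s}({\rm Orb}(q,f^{N}))$ specifically), which is essentially rebuilding the paper's measure-theoretic argument; a soft compactness/limit-of-disks argument will not do.
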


\begin{proof}
We prove Item (\ref{f1}) firstly. By Lemma \ref{ic}, $\mu^{(n)}_i=\mu^{(n)}_j$ implies $\mu_i=f_{\ast}^{\kappa}\mu_j$ for some $\kappa \in \{0, 1\cdots,N-1\}$. Let us fix $\ell$, $\alpha$, $\delta_{\ell}$, $\rho_{\ell}$ for the diffeomorphism $f^N$ as in Subsection \ref{fixedone}. By our definition of Pesin blocks, one can verify that there exist 
$\ell_0:=\ell_0(\ell, \alpha,\kappa)$, $\alpha_0:=\alpha_0(\ell,\alpha,\kappa)$ 
such that $f^{\kappa}(x)\in \Lambda_{\ell_0}(f^N,\alpha_0)$ whenever $x\in \Lambda_{\ell}(f^N,\alpha)$, that is,
\begin{equation}\label{inc}
f^{\kappa}(\Lambda_{\ell}(f^N,\alpha))\subset \Lambda_{\ell_0}(f^N,\alpha_0).
\end{equation}
For any small $\rho\le \rho_{\ell}$, one can create skeleton $\mathcal{S}_{f^N}=\{P_1,\cdots,P_{k}\}$ as in Lemmas \ref{st1} and \ref{st2}, with properties as follows:
\begin{itemize}
\smallskip
\item[--] $\mu_i\overset{f^N}\longleftrightarrow P_i$ for every $1\le i \le k$;
\smallskip
\item[--] up to shrinking $\delta_{\ell}$, we assume each $f^{j}(P_i)$, $1\le i \le k$, $0\le j\le \kappa$ has stable and unstable manifolds of size larger than $\delta_{\ell}$.
\end{itemize}
Moreover, we assume that each $P_i$ is contained in $B(x_i,\rho)\in \mathscr{A}_{\mu_i}( f^N,\ell, \alpha, \rho)$ for some $x_i\in {\rm supp}(\mu_i|\Lambda_{\ell}(f^N,\alpha))$. As before, let $C_{i}$ be the cylinder around $B(x_i,\rho)$ for every $1\le i \le k$.
By continuity of $f^{\kappa}$, one can take $\delta:=\delta(\rho)>0$ so that
\begin{equation}\label{ninc}
f^{\kappa}B(x,\rho)\subset B(f^{\kappa}(x),\delta)\quad \textrm{for every}~x\in M.
\end{equation}
Using (\ref{inc}), (\ref{ninc}) together with the fact $\mu_i=f_{\ast}^{\kappa}\mu_j$, we obtain
\begin{align*}
&\mu_i\left(B(f^{\kappa}(x_j),\delta)\cap \Lambda_{\ell_0}(f^N,\alpha_0)\right)\\
& \quad\quad \quad \quad\ge \mu_i\left(f^{\kappa}(B(x_j,\rho))\cap f^{\kappa}(\Lambda_{\ell}(f^N,\alpha))\right)\\
& \quad\quad \quad \quad= \mu_j\left(B(x_j,\rho)\cap \Lambda_{\ell}(f^N,\alpha)\right)\\
& \quad\quad \quad \quad> 0.
\end{align*}
Note that one can make $\delta$ small by choosing $\rho$ sufficiently small. 

\smallskip
Recall that we have $\mu_i(B(x_i,\rho)\cap \Lambda_{\ell}(f^N,\alpha))>0$ by construction. With the ergodicity of $\mu_i$, by applying Lemma \ref{sp1}, we know that up to reducing $\delta_{\ell}$ if necessary, by taking $\rho$ small enough, there exist $r\ll \delta_{\ell}$ and $m\in \NN$ such that
\begin{itemize}
\smallskip
\item[--] there exists a hyperbolic periodic point $P$, which is $r$-close to $f^{\kappa}(P_i)$ and exhibits stable and unstable manifolds of size larger than $\delta_{\ell}$.
\smallskip
\item[--] $f^m(P)$ is $r$-close to $P_i$, with local stable and unstable manifolds of size larger than $\delta_{\ell}$.
\end{itemize} 
%
%$$
%\mu_i(B(f^{\kappa}(x_j),\delta)\cap \mathcal{H}_{yp}(f^N,\ell_0,\alpha_0)>0.
%$$
%This implies that $\mu_i$-almost every point of $B(f^{\kappa}(x_j),\delta)\cap \mathcal{H}_{yp}(f^N,\ell_0,\alpha_0)$ is recurrent, by Poincare recurrent theorem. 
%The ergodicity of $\mu_i$ w.r.t. $f^N$ implies that ${\rm Orb}(x)$ enters to $B(x_i,\rho)\cap \mathcal{H}_{yp}(f^N,\ell,\alpha)$ for infinitely many times for $\mu_i$-almost every $x$. Combining these fact with Lemma \ref{Gan}, similar to the proof of Lemma \ref{st3}, we know that by choosing $\rho$ sufficiently small, there exist $r\ll \delta_{\ell}$ and $K\in \NN$ such that 
%\begin{itemize}
%\smallskip
%\item there exists a hyperbolic periodic point $P$, which is $r$-close to $f^{\kappa}(P_j)$ and exhibits stable and unstable manifolds of size larger than $\delta_{\ell}$.
%\smallskip
%\item $f^K(P)$ is $r$-close to $P_i$, with stable and unstable manifolds of size lager than $\delta_{\ell}$.
%\end{itemize}
These two facts imply that $f^{\kappa}(P_{j})\overset{f^N}\sim P_i$, so $f^{\kappa}(P_j)\overset{f}\sim P_i$. Therefore, we get
$P_j\overset{f}\sim P_i$ immediately.
On the other hand, following from Lemma \ref{ske1} and Proposition \ref{G}, we have $p_i\overset{f^N}\sim P_i$ and $p_j\overset{f^N}\sim P_j$, thus $p_i\overset{f}\sim P_i$ and $p_j\overset{f}\sim P_j$. Altogether, we obtain
$p_i\overset{f}\sim p_j$.

\smallskip
To prove Item (\ref{f2}), consider skeleton $\mathcal{S}_{f^N}$ as above, by Proposition \ref{G},
we can choose $C^1$ unstable disks $D_i, D_j$ such that for every $\ast \in \{i,j\}$
\begin{itemize}
\item[--] $D_{\ast}$ intersects $W^s({\rm Orb}(P_{\ast},f^N))$ transversely;
\smallskip
\item[--] Lebesgue almost every point of $D_{\ast}$ is contained in $B(\mu_{\ast},f^N)$.
\end{itemize}
Following from $p_i \overset{f}\prec p_j$, one gets $P_i \overset{f}\prec P_j$ immediately. Therefore, there is $L\in \NN$ such that $W^u(f^L(P_i), f)$ intersects $W^s(P_j,f)$ transversely. Note that Lebesgue almost every point of $f^{L}(D_i)$ is contained in $B(f^L\mu_i, f^N)$. By using inclination lemma, there exists $m$ as a multiple of $N$ and a disk $D_1$ inside $f^{L+m}(D)$ and a disk $D_2$ inside $f^m(D_j)$ such that, both $D_1$ and $D_2$ cross the cylinder $C_j$ around $P_j$. Then Lemma \ref{ggh} gives a disk $D_0$ crossing $C_j$ that admits a subset with positive Lebesgue measure, which possess stable manifolds of size larger than $\delta_{\ell}$. With the absolute continuity of stable lamination, one deduce that $\mu_j=f_{\ast}^L\mu_i$. Hence, we get $\mu^{(n)}_i=\mu^{(n)}_j$ by Lemma \ref{ic}. 
This completes the proof of Proposition \ref{kyp}.
\end{proof}

Now we can give the proof of Theorem of \ref{TheoB} by applying Proposition \ref{kyp} and the previous results.

\begin{proof}[Proof of Theorem \ref{TheoB}]
Let $f\in PH^{1+}_{EC}(M)$ with partially hyperbolic splitting $TM=E^u\oplus_{\succ} E^{cu} \oplus_{\succ} E^{cs}$. By Theorem \ref{utv}, we have $f^N\in PH^{1+}_{C}(M)$ for some $N\in \NN$. By Theorem \ref{ky} and Theorem \ref{TheoF}, there exists $s\in \NN$ with physical measures $\mu_1,\cdots,\mu_{s}$ for $f^N$ and skeleton $\mathcal{S}(f^N)=\{p_1,\cdots, p_{s}\}$ for $f^N$ such that
\begin{enumerate}
\smallskip
\item \label{f11} $\mu_i\overset{f^N}\longleftrightarrow p_i$ for every $1\le i \le s$;
\smallskip
\item \label{f22} the union of $B(\mu_1,f^N),\cdots B(\mu_{s},f^N)$ has full Lebesgue measure.
\end{enumerate}

\begin{claim}\label{claim}
Rearrange if necessary, there is $1\le k\le s$ such that $\{p_1,\cdots,p_k\}$ is a skeleton of $f$, and $\mu^{(N)}_i$, $1\le i \le k$ are (ergodic) physical measures with basin covering property. In addition, $\mu^{(N)}_i\overset{f}\longleftrightarrow p_i$ for every $1\le i \le k$.
%\begin{itemize}
%\item $\{p_1,\cdots,p_k\}$ is a skeleton of $f$;
%\item $\mu^{<N>}_i$, $1\le i \le k$ are (ergodic) physical measures with basin covering property;
%\item $\mu^{<N>}_i\overset{f}\longleftrightarrow p_i$ for every $1\le i \le k$.
%\end{itemize}
\end{claim}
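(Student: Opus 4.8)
The plan is to reorganize the $f^N$--data along $f$--orbits and then let Proposition \ref{kyp} carry the combinatorial bookkeeping. Since $\mu_1,\dots,\mu_s$ are $f^N$--ergodic, Lemma \ref{ic} says $\mu_i^{(N)}=\mu_j^{(N)}$ precisely when $\mu_i=f_\ast^\ell\mu_j$ for some $\ell$, which is an equivalence relation on $\{1,\dots,s\}$. After rearranging the pairs $(\mu_i,p_i)$ I would let $\mu_1^{(N)},\dots,\mu_k^{(N)}$ be the distinct values among the $\mu_i^{(N)}$ (one representative per class), so that $\mu_{k+1},\dots,\mu_s$ each lie in the $f$--orbit of some $\mu_i$ with $i\le k$. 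In this language Proposition \ref{kyp} reads: if $i,j$ lie in the same class then $p_i\overset f\sim p_j$; and if $p_i\overset f\prec p_j$ then $i,j$ lie in the same class. Note also that $f^N\in PH^{1+}_C(M)$ is partially hyperbolic with respect to the same splitting $TM=E^u\oplus_\succ E^{cu}\oplus_\succ E^{cs}$ as $f$, so ``transverse to $E^{cs}$'' means the same thing for $f$ and for $f^N$.

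First I would show $\mathcal S=\{p_1,\dots,p_k\}$ is a skeleton of $f$. For (S\ref{S2}): if $p_i\neq p_j$ lay in $\mathcal S$ with $W^s({\rm Orb}(p_i,f))\pitchfork W^u({\rm Orb}(p_j,f))\neq\emptyset$, i.e.\ $p_j\overset f\prec p_i$, then $\mu_j^{(N)}=\mu_i^{(N)}$ by Proposition \ref{kyp}, contradicting that $i\neq j$ index distinct classes; hence (S\ref{S2}) holds. For (S\ref{S1}): the full family $\{p_1,\dots,p_s\}=\mathcal S(f^N)$ is a skeleton of $f^N$, hence a pre-skeleton of $f^N$, hence a pre-skeleton of $f$ by Lemma \ref{sc}, so any $C^1$ disk $\gamma$ transverse to $E^{cs}$ satisfies $\gamma\pitchfork W^s({\rm Orb}(p_j,f))\neq\emptyset$ for some $1\le j\le s$. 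If $j\le k$ we are done; otherwise $p_j$ lies in the class of some $p_i$, $i\le k$, so $p_j\overset f\sim p_i$ by Proposition \ref{kyp}, which in particular gives $W^u({\rm Orb}(p_j,f))\pitchfork W^s({\rm Orb}(p_i,f))\neq\emptyset$; combining this with $\gamma\pitchfork W^s({\rm Orb}(p_j,f))\neq\emptyset$ and running the inclination--lemma argument from the proof of Lemma \ref{ske1} yields $\gamma\pitchfork W^s({\rm Orb}(p_i,f))\neq\emptyset$. Thus (S\ref{S1}) holds and $\mathcal S$ is a skeleton of $f$.

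Next I would deal with the measures. Each $\mu_i^{(N)}=\frac1N\sum_{\ell=0}^{N-1}f_\ast^\ell\mu_i$ is $f$--invariant, and it is $f$--ergodic: for $f$--invariant Borel $A$ one has $f_\ast^\ell\mu_i(A)=\mu_i(A)$, so $\mu_i^{(N)}(A)=\mu_i(A)\in\{0,1\}$ by $f^N$--ergodicity of $\mu_i$. A short computation (write $m=Nn+r$ and group the $f$--orbit of a point by residues modulo $N$) shows $\frac1m\sum_{j=0}^{m-1}\delta_{f^j x}\to\mu_i^{(N)}$ whenever $\frac1n\sum_{j=0}^{n-1}\delta_{(f^N)^j x}\to\mu_i$, i.e.\ $B(\mu_i,f^N)\subset B(\mu_i^{(N)},f)$; since $B(\mu_i,f^N)$ has positive Lebesgue measure, $\mu_i^{(N)}$ is an ergodic physical measure of $f$. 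The $\mu_1^{(N)},\dots,\mu_k^{(N)}$ are pairwise distinct by construction and $\bigcup_{i\le k}B(\mu_i^{(N)},f)\supset\bigcup_{i\le s}B(\mu_i,f^N)$, which has full Lebesgue measure (Theorem \ref{ky}), so they have the basin covering property. Finally, $\mathrm{supp}(\mu_i^{(N)})=\overline{\bigcup_{\ell=0}^{N-1}f^\ell(\mathrm{supp}\,\mu_i)}$, and since $\mathrm{supp}(\mu_i)=\overline{W^u({\rm Orb}(p_i,f^N))}$ by hypothesis while $\bigcup_{\ell=0}^{N-1}f^\ell\big(W^u({\rm Orb}(p_i,f^N))\big)=W^u({\rm Orb}(p_i,f))$ (unfolding the definitions, using $f^\ell W^u({\rm Orb}(q,f^N))=W^u({\rm Orb}(f^\ell q,f^N))$ and $\bigcup_\ell{\rm Orb}(f^\ell p_i,f^N)={\rm Orb}(p_i,f)$), we get $\mathrm{supp}(\mu_i^{(N)})=\overline{W^u({\rm Orb}(p_i,f))}$; as $\mathcal S$ is now known to be a skeleton of $f$, Lemma \ref{ske2} gives $H(p_i,f)=\overline{W^u({\rm Orb}(p_i,f))}$, so $\mathrm{supp}(\mu_i^{(N)})=\overline{W^u({\rm Orb}(p_i,f))}=H(p_i,f)$, that is $\mu_i^{(N)}\overset f\longleftrightarrow p_i$.

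I expect the main obstacle to be the verification of (S\ref{S1}) for $\mathcal S$: one must ensure that discarding the periodic points $p_{k+1},\dots,p_s$---which are homoclinically related to retained ones with respect to $f$ but possibly not with respect to $f^N$---does not destroy the transversality property of Definition \ref{S}. This is exactly where Proposition \ref{kyp}(1) (to replace a discarded $p_j$ by the retained $p_i$ in its $f$--class) and the inclination--lemma argument of Lemma \ref{ske1} (to carry the intersection over from $p_j$ to $p_i$) must be combined; the remaining steps are routine, modulo the bookkeeping that transversality to $E^{cs}$, the manifolds $W^{s},W^{u}$ of periodic orbits, and homoclinic classes all behave compatibly when passing between $f$ and its iterate $f^N$.
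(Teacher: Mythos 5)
Your proposal is correct and follows essentially the same route as the paper: pass to representatives of the distinct values among the $\mu_i^{(N)}$, use Proposition \ref{kyp}(2) for condition (S\ref{S2}), use the pre-skeleton property of $\{p_1,\dots,p_s\}$ (Lemma \ref{sc}) together with Proposition \ref{kyp}(1) and the inclination-lemma argument of Lemma \ref{ske1} for (S\ref{S1}), and get ergodicity, physicality, basin covering and the support identity from the $f^N$-data plus Lemma \ref{ske2}. The only difference is that you spell out explicitly (via Lemma \ref{ic}, the ergodicity check, and the support computation) what the paper asserts ``by construction,'' which is harmless.
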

By assuming Claim \ref{claim} and using Lemma \ref{ske1}, we complete the proof of Theorem \ref{TheoB}. 
\end{proof}

Now it remains to give the proof of the claim.

\begin{proof}[Proof of Claim \ref{claim}]
We obtain the ergodicity of $\mu^{(N)}_i$, $1\le i\le s$ by construction. Moreover, we have $B(\mu_i,f^N)\subset B(\mu^{(N)}_i, f)$ for every $1\le i \le s$, hence ${\rm Leb}(B(\mu^{(N)}_i,f))\ge {\rm Leb}(B(\mu_i,f^N))>0$, which means that $\{\mu^{(N)}_i: 1\le i \le s\}$ are physical measures for $f$;
from (\ref{f2}) one gets that the union of basins of $\{\mu^{(N)}_i : 1\le i \le s\}$ w.r.t. $f$ covers a full Lebesgue measure subset. Note that it may be happened that $\mu^{(N)}_i=\mu^{(N)}_j$ for some $i,j\in \{1,\cdots,s\}$ with $i\neq j$.

\smallskip
It follows from (\ref{f11}) and the construction that we have
%By definition, one can verify that for every $1\le i \le k$,
%\begin{equation}\label{ee1}
%{\rm supp}\mu^{<N>}_i=\bigcup_{j=0}^{N-1}{\rm supp}(f_{\ast}^j\mu_i)
%\end{equation}
%and
%\begin{equation}\label{ee2}
%\bigcup_{j=0}^{N-1}W^u({\rm Orb}(f^j(p_i), f^N))=W^u({\rm Orb}(p_i, f)).
%\end{equation}
%It follows from (\ref{f1}) that for every $1\le i \le k$, one has
%\begin{equation}\label{ee3}
%{\rm supp} (f_{\ast}^j\mu_i)=W^u({\rm Orb}(f^j(p_i), f^N))\quad \textrm{for every}~ j\in \NN. 
%\end{equation}
%Combining (\ref{ee1}), (\ref{ee2}) and (\ref{ee3}) yields 
\begin{equation}\label{t}
\mu^{(N)}_i\overset{f}\longleftrightarrow p_i \quad \textrm{for every}~ 1\le i \le s.
\end{equation}
Since $\mathcal{S}(f^N)=\{p_1,\cdots, p_{s}\}$ is a skeleton for $f^N$, by applying Lemma \ref{sc} we know that
$\{p_1,\cdots,p_s\}$ is a pre-skeleton for $f$. Taking $\mathcal{P}\subset \{ \mu^{(N)}_i: 1\le i \le s\}$ such that elements of $\mathcal{P}$ are different to each other and exhibits the basin covering property. By reordering if necessary, we may assume 
$$
\mathcal{P}=\{ \mu^{(N)}_1,\cdots, \mu^{(N)}_k\}\quad \textrm{for some}~1\le k \le s.
$$

\smallskip
It suffices to prove that $\mathcal{S}=\{ p_1,\cdots p_k\}$ is a skeleton of $f$. 
%To see this, we need to show $\mathcal{S}$ admits properties (S\ref{S1}) and (S\ref{S2}). 
We show that $\mathcal{S}$ satisfies (S\ref{S1}) firstly. For every $C^1$ disk $D$ transverse to $E^{cs}$, since $\mathcal{S}(f^N)$ is a pre-skeleton of $f$, there is $p_i\in \mathcal{S}(f^N)$ such that $D$ intersects $W^s({\rm Orb}(p_i,f))$ transversely. In addition, one can make $i$ to satisfy $i\le k$. Indeed, if $k<i\le s$, then by the choice of $\mathcal{P}$, there is $j\le k$ so that $\mu^{(N)}_j=\mu^{(N)}_i$. By using (\ref{f1}) of Proposition \ref{kyp}, we get $p_i\overset{f}\sim p_j$. This together with inclination lemma implies that $D$ must intersect $W^s({\rm Orb}(p_j,f))$ transversely also. This says that $\mathcal{S}$ satisfies condition (S\ref{S1}). Now we show that $\mathcal{S}$ satisfies condition (S\ref{S2}). By absurd we have $p_i \overset{f}\prec p_j$ for some $i, j\in \{1,\cdots, k \}$, then by applying (\ref{f2}) of Proposition \ref{kyp} we get $\mu^{(N)}_i=\mu^{(N)}_j$, this is a contradiction. Now, we can use (\ref{t}) and Lemma \ref{ske2} to complete the proof.
\end{proof}

We are in ready to give the proof of Theorem \ref{TheC}.

\begin{proof}[Proof of Theorem \ref{TheC}]
By Theorem \ref{uppers} and Theorem \ref{TheoB}, we get the upper semi-continuity of the number of physical measures and the cardinality of skeleton in $C^1$-topology among $PH^{1+}_{EC}(M)$. Indeed, as showed in Theorem \ref{uppers}, for each diffeomorphism $f\in PH^{1+}_{EC}(M)$, 
there exists a $C^1$ neighborhood $\V$ of $f$ such that $k_g \le k_f$ for every $g\in \V$. Furthermore, there exists a $C^1$ open and dense subset $\V_0$ of $\V$ on which the cardinality of skeleton is locally constant. To this end, we consider a sequence of subsets of $\V$ as follows:
$$
\U_0=\emptyset; \quad \U_i=\{g\in \V: k_g\le i\}\quad \textrm{when}\quad 1\le i \le k_f.
$$
By definition, it suffices to take $\V_0=\bigcup_{1\le i \le k_f}(\U_i\setminus \overline{\U_{i-1}})$.
Using Theorem \ref{TheoB}, we know that the number of the physical measures for $C^{1+}$ diffeomorphisms of $\V_0$ is also locally constant.

\smallskip
Now we verify the continuity of the physical measures w.r.t. $C^{1+}$ diffeomorphisms of $\V_0$ in following precise way:
%To this end, let us fix any $g\in {\rm Diff}^1(M)\cap\V_0$, assume $g_n$ is a sequence of diffeomorphism in ${\rm Diff}^1(M)\cap\V_0$ that converges to $g$ in $C^1$-topology. By the property of $\V_0$, we can assume further that $k_{g_n}=k_g$ for every $n\in \NN$ for simplicity.
%According to Proposition \ref{ffff} and Theorem \ref{TheoB}, one can take a family of skeletons for $g_n$ and $g$ with properties as follows:
%\begin{itemize}
%\smallskip
%\item[--] Take skeleton $\mathcal{S}(g)=\{p_1,\cdots p_{k_g}\}$ of $g$ such that its continuation $\mathcal{S}(g_n)=\{p_1(g_n),\cdots,p_{k_f}(g_n)\}$ is a skeleton of $g_n$ for every $n\in \NN$.
%\smallskip
%\item[--] Let $\{\mu_i: 1\le i \le k_g\}$ be a sequence of physical measures of $g$ satisfying $\mu_i \leftrightarrow p_i$ for each $1\le i \le k_g$.
%\smallskip
%\item[--] For every $n\in \NN$, denote by $\{\mu_i^{(n)}: 1\le i \le k_g\}$ the physical measures of $g_n$ satisfying $\mu_i^{(n)} \leftrightarrow p_i(g_n)$ for each $1\le i \le k_g$.
%\end{itemize}

\begin{lemma}\label{klm}
Let $g$ be a $C^{1+}$ diffeomorphism in $\V_0$. Assume that $\mathcal{S}(g)=\{p_1,\cdots,p_{k_g}\}$ is a skeleton of $g$ and $\{\mu_i: 1\le i \le k_g\}$ are physical measures of $g$ satisfying $\mu_i \overset{g}\longleftrightarrow p_i$ for each $1\le i \le k_g$. If $g_n$ is a sequence of $C^{1+}$ diffeomorphisms in $\V_0$ that converges to $g$ in $C^1$-topology, then for every $n\in \NN$ sufficiently large, we have
\begin{itemize}
\item the continuation of $\mathcal{S}(g)$, denoted by $\mathcal{S}(g_n)=\{p_1(g_n),\cdots,p_{k_g}(g_n)\}$, is a skeleton of $g_n$. 
\item if we let $\{\mu_{i,n}: 1\le i \le k_g\}$ be the set of physical measures of $g_n$ satisfying $\mu_{i,n}\overset{g_n} \longleftrightarrow p_i(g_n)$ for each $1\le i \le k_g$, then $\mu_{i,n}\xrightarrow{weak^*}\mu_i$ as $n\to+\infty$ whenever $i\in\{1,\cdots, k_g\}$.
\end{itemize}
\end{lemma}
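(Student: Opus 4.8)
The plan is to prove the two assertions in turn; the first is soft, and the second is where the skeleton does the work.

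\emph{Continuation of the skeleton.} By Theorem~\ref{uppers} there is a $C^1$ neighborhood $\mathcal{U}$ of $g$ such that for every $g'\in\mathcal{U}$ the continuation $\{p_1(g'),\dots,p_{k_g}(g')\}$ is a pre-skeleton of $g'$ and $k_{g'}\le k_g$. The set $\mathcal{V}_0$ was built in the proof of Theorem~\ref{TheC} so that $g'\mapsto k_{g'}$ is locally constant on it; as $g\in\mathcal{V}_0$, for all large $n$ we have $g_n\in\mathcal{U}$ and $k_{g_n}=k_g$. Now a pre-skeleton always contains a skeleton (Lemma~\ref{ske3}), any two skeletons of $g_n$ have the same cardinality $k_{g_n}=k_g$ (Lemma~\ref{ske1}), and $\{p_i(g_n)\}$ has exactly $k_g$ elements; hence this pre-skeleton coincides with the skeleton it contains, i.e.\ it is itself a skeleton of $g_n$. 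Since $g_n\in PH^{1+}_{EC}(M)$, Theorem~\ref{TheoB} then provides the bijection between $\mathcal{S}(g_n)$ and the physical measures of $g_n$, so the measures $\mu_{i,n}$ with $\mu_{i,n}\overset{g_n}\longleftrightarrow p_i(g_n)$ are well defined and there are exactly $k_g$ of them.

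\emph{Reduction of the convergence statement.} It suffices to show that every weak$^*$ accumulation point of $(\mu_{i,n})_n$ equals $\mu_i$; so pass to a subsequence along which $\mu_{i,n}\to\nu_i$ for all $i$ simultaneously. Each $\mu_{i,n}$ is a physical measure of $g_n$, hence an extreme element of $G^{cu}(g_n)$ (Theorem~\ref{fc}, Theorem~\ref{ky}, Lemma~\ref{kes}); by the upper semicontinuity of $g'\mapsto G^{cu}(g')$ (Theorem~\ref{fc}) we get $\nu_i\in G^{cu}(g)$. Since $G^{cu}(g)$ is a simplex whose extreme points are exactly the mutually singular ergodic physical measures $\mu_1,\dots,\mu_{k_g}$ of $g$ (Theorem~\ref{fc}, Theorem~\ref{TheoB}), we may write uniquely
$$
\nu_i=\sum_{m=1}^{k_g}c_{im}\,\mu_m,\qquad c_{im}\ge0,\quad\textstyle\sum_m c_{im}=1 .
$$
The target is $c_{im}=\delta_{im}$. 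Note that ${\rm supp}(\nu_i)=\overline{\bigcup_{m:\,c_{im}>0}{\rm supp}(\mu_m)}$, and that $\mu_{i,n}\to\nu_i$ in weak$^*$ forces ${\rm supp}(\nu_i)\subset\liminf_n{\rm supp}(\mu_{i,n})$, i.e.\ every point of ${\rm supp}(\nu_i)$ is a limit of points of $\overline{W^u({\rm Orb}(p_i(g_n),g_n))}$.

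\emph{Rigidity from condition (S\ref{S2}).} Suppose, for contradiction, that $c_{im}>0$ for some $m\neq i$. Then $p_m\in{\rm supp}(\mu_m)\subset{\rm supp}(\nu_i)$, so there are $q_n\in W^u({\rm Orb}(p_i(g_n),g_n))$ with $q_n\to p_m$; since $p_i(g_n)\to p_i$ and $p_m\notin{\rm Orb}(p_i,g)$, the $q_n$ stay a definite distance from ${\rm Orb}(p_i(g_n),g_n)$. Because $\mu_{i,n}$ is a Gibbs $cu$-state whose support is $\overline{W^u({\rm Orb}(p_i(g_n),g_n))}$ and $p_i(g_n)$ lies in a Pesin block with parameters uniform in $n$, one can produce inside $W^u({\rm Orb}(p_i(g_n),g_n))$, near $q_n$, a $C^1$ disk $\Delta_n$ tangent to $\mathcal{C}^{wu}_a$ of a radius bounded below uniformly in $n$ — using the uniform size of local unstable manifolds near Pesin blocks (Lemma~\ref{size}, Theorem~\ref{covering}) and the fact that forward iterates of such uniform plaques keep definite-size pieces. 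A subsequential $C^1$ limit $\Delta$ of $\{\Delta_n\}$ is then a disk through $p_m$ tangent to $\mathcal{C}^{wu}_a$, hence transverse to $E^{cs}$, so it crosses $W^s_{loc}(p_m,g)$ transversally; by persistence of transversal intersections and continuity of the compact part of the stable manifold, $\Delta_n\pitchfork W^s_{loc}(p_m(g_n),g_n)\neq\emptyset$ for $n$ large, whence $W^u({\rm Orb}(p_i(g_n),g_n))\pitchfork W^s({\rm Orb}(p_m(g_n),g_n))\neq\emptyset$. But $p_i(g_n)$ and $p_m(g_n)$ are distinct elements of the skeleton $\mathcal{S}(g_n)$ (first part), so this contradicts condition (S\ref{S2}). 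Hence $c_{im}=0$ for every $m\neq i$, and together with $\sum_m c_{im}=1$ this gives $\nu_i=\mu_i$; as this holds along every subsequence, $\mu_{i,n}\to\mu_i$.

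\emph{Where the difficulty lies.} The conceptual point — that ${\rm supp}(\nu_i)$ is squeezed inside $\liminf_n{\rm supp}(\mu_{i,n})$ and that condition (S\ref{S2}), now available for each $\mathcal{S}(g_n)$, prevents the supports from spilling over onto the other skeleton points in the limit — is cheap. The technical core is the construction in the third paragraph: producing inside $W^u({\rm Orb}(p_i(g_n),g_n))$ and near $q_n$ a disk transverse to $E^{cs}$ whose size does not degenerate as $n\to\infty$, and then transporting the transversal crossing with $W^s_{loc}(p_m(g_n),g_n)$ uniformly. This is exactly the place where one must invoke the uniform-over-the-$C^1$-neighborhood control of the sizes of invariant manifolds near Pesin blocks established in Sections~\ref{sec3} and~\ref{sec6}, the continuity of compact pieces of invariant manifolds in the dynamics, and the inclination lemma; after that the argument is as sketched.
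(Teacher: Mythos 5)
Your first bullet and your overall reduction (pass to a weak$^*$ limit $\nu_i\in G^{cu}(g)$, decompose it over the physical measures of $g$, and derive a contradiction with condition (S\ref{S2}) for $\mathcal{S}(g_n)$ if some off-diagonal coefficient is positive) are in the same spirit as the paper. The gap sits exactly at the step you yourself flag as the technical core, and it is not a removable technicality: you need, around the points $q_n\in W^u({\rm Orb}(p_i(g_n),g_n))$ accumulating on $p_m$, disks tangent to $\mathcal{C}^{wu}_a$ whose size is bounded below uniformly in $n$. Nothing provides this in the present setting: these unstable manifolds are tangent to $E^u\oplus E^{cu}$ with $E^{cu}$ only \emph{mostly} expanding, so forward iterates of a definite-size unstable plaque need not keep definite size (this is precisely the non-uniformity the whole paper is fighting), and the intrinsic geometry of $W^u({\rm Orb}(p_i(g_n),g_n))$ can degenerate far from the periodic orbit --- already for a single diffeomorphism a branch of an unstable manifold can have finite intrinsic length near the set on which it accumulates, so there need be no disk of uniform radius centred at $q_n$ inside it. Lemma \ref{size} and Theorem \ref{covering} give uniform plaques only at points of the Pesin blocks $\Lambda_{\ell}(g_n,\alpha)$, and you have no information placing $q_n$ (or a convenient point of its orbit) in those blocks: $q_n$ was chosen merely as a point of ${\rm supp}(\mu_{i,n})$ close to $p_m$, and lying in the support of a Gibbs $cu$-state is far from being a Pesin-regular point. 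Likewise, the assertion that ``$p_i(g_n)$ lies in a Pesin block with parameters uniform in $n$'' is neither established in the paper nor would it transfer any size control to the faraway points $q_n$.

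The paper's proof of Lemma \ref{klm} avoids this issue by staying measure-theoretic. Choosing $\varepsilon<\xi_{\iota}/(1+\xi_{\iota})$, Proposition \ref{t11} gives $\mu_{\varkappa,n}(\Lambda_{\ell}(g_n,\alpha))>1-\varepsilon$, while weak$^*$ convergence gives $\mu_{\varkappa,n}(A_{\rho})>\xi_{\iota}(1-\varepsilon)$, where $A_{\rho}$ is the union of the balls of $\mathscr{A}_{\mu_{\iota}}(g,\ell,\alpha,\rho)$; hence $\mu_{\varkappa,n}$ charges $\Lambda_{\ell}(g_n,\alpha)\cap B$ for a fixed ball $B$ of that covering. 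Then the ergodicity of $\mu_{\varkappa,n}$ combined with the shadowing Lemma \ref{sp1} produces hyperbolic periodic points with uniform-size invariant manifolds that are homoclinically related both to $p_{\varkappa}(g_n)$ and, via the continuation of the auxiliary point $P_{B,\iota}\overset{g}\sim p_{\iota}$, to $p_{\iota}(g_n)$, contradicting (S\ref{S2}). If you wish to keep your support-based set-up, the repair is to replace your geometric disk construction by exactly this mechanism: exploit the weight that $\mu_{i,n}$ must give to $\Lambda_{\ell}(g_n,\alpha)\cap A_{\rho}$ and invoke Lemma \ref{sp1}, rather than trying to extract uniform disks inside the unstable manifolds near arbitrary points of their closures.
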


\begin{proof}
As a direct consequence of Theorem \ref{uppers} and the definition of $\V_0$, one knows that the first item is true. Now we make effort to show the second item. By Theorem \ref{utv}, there is $N\in \NN$ such that $f^N\in PH^{1+}_{C}(M)$. Consequently, shrinking $\V$ if necessary, we may assume $g^N\in PH^{1+}_{C}(M)$ with dominated splitting $TM=E^u\oplus_{\succ} E^{cu}\oplus_{\succ} E^{cs}$ for every $g\in \V$. In view of Claim \ref{claim}, without loss of generality we can assume $\V\subset PH^{1}_{C}(M)$.

\smallskip
If the conclusion is not true, then there exists $\varkappa\in \{1,\cdots,k_g\}$ such that $\mu_{\varkappa,n}$ does not converge to $\mu_{\varkappa}$ when $n$ goes to infinite. Note that all physical measures are Gibbs $cu$-states, by the upper semi-continuity of Gibbs $cu$-space w.r.t. diffeomorphisms as stated in Theorem \ref{fc}, up to considering subsequence we assume $\mu_{\varkappa,n}$ converges to $\mu\in G^{cu}(g)$ different to $\mu_{\varkappa}$. It follows from (\ref{ps1}) of Theorem \ref{ky} that $\mu$ is a convex combination of physical measures of $g$, thus there exists $\xi_i\in [0,1]$ such that 
$$
\mu=\sum_{i=1}^{k_g}\xi_{i} \mu_i, \quad \textrm{where}~\sum_{i=1}^{k_g}\xi_i=1.
$$
Since $\mu\neq \mu_{\varkappa}$, there exists $\iota \in \{1,\cdots, k_g\}$ with $\iota\neq\varkappa$ such that $\xi_{\iota}>0$.

\smallskip
Choosing $\varepsilon<\xi_{\iota}/(1+\xi_{\iota})$, Proposition \ref{t11} applies to $g$ and $\varepsilon$, we know that there exist $\alpha>0$, $\ell\in \NN$ and a $C^1$ neighborhood $\W\subset \V$ of $g$ such that for every $h\in \W$ we have
\begin{equation}\label{one}
\mu(\Lambda_{\ell}(h,\alpha))>1-\varepsilon,\quad \forall \mu \in G^{u}(h).
\end{equation}
%Meanwhile, for every $n$ large enough we have
%\begin{equation}\label{two}
%\mu(\mathcal{H}_{yp}(g_n,\ell,\alpha))>1-\varepsilon,\quad \forall \mu \in \G^{u}(g_n).
%\end{equation}
By Lemma \ref{sp}, there exist $\rho_{\ell}>0$ and $\delta_{\ell}>0$ such that
for every $h\in \W$, for every $\mu\in G^u(h)$ and $x\in {\rm supp}(\mu|\Lambda_{\ell}(h,\alpha))$, any ball $B(x,\rho)$($\rho\le \rho_{\ell}$) contains hyperbolic periodic points with stable and unstable manifolds of size larger than $\delta_{\ell}$. 

\smallskip
For fixed $\mu_{\iota}$, which is a Gibbs $u$-state of $g$, take $\rho\le \rho_{\ell}$ within $\rho\ll \delta_{\ell}$, recalling the definition of $\mathscr{A}_{\mu_{\iota}}(g,\ell,\alpha,\rho)$ in Theorem \ref{covering}, and put
$$
A_{\rho}=\bigcup_{B\in \mathscr{A}_{\mu_{\iota}}(g,\ell,\alpha,\rho)}B.
$$
For every $B\in \mathscr{A}_{\mu_{\iota}}(g,\ell,\alpha,\rho)$, let $P_{B,\iota}$ be a fixed hyperbolic periodic point in it which admits stable and unstable manifolds of size larger than $\delta_{\ell}$. Moreover, In view of the proof of Lemma \ref{st2}, we know 
\begin{equation}\label{hom2}
P_{B,\iota}\overset{g}\sim p_{\iota}.
\end{equation}
Using (\ref{one}) to $g$ and $\mu_{\iota}$, we have $\mu_{\iota}(A_{\rho})>1-\varepsilon$, which implies $\mu(A_{\rho})>\xi_{\iota}(1-\varepsilon)$ by representation of $\mu$.
Since $\mu_{\varkappa,n}\xrightarrow{weak^*}\mu$ as $n\to +\infty$, and $A_{\rho}$ is open, we get 
\begin{equation}\label{three}
\mu_{\varkappa,n}(A_{\rho})>\xi_{\iota}(1-\varepsilon)
\end{equation}
for any $n$ large enough. On the other hand, note that $\mu_{\varkappa,n}$ is a Gibbs $u$-state of $g_n$ for every $n$, use (\ref{one}) again we obtain 
$
\mu_{\varkappa,n}(\Lambda_{\ell}(g_n,\alpha))>1-\varepsilon
$
for every $n$ sufficiently large. Combining this with (\ref{three}) and our choice of $\varepsilon$, we see that for every large $n$, 
\begin{equation}\label{four}
\mu_{\varkappa,n}\left(\Lambda_{\ell}(g_n,\alpha)\cap A_{\rho}\right)>0.
\end{equation}

\smallskip
Observe that $\mathscr{A}_{\mu_{\iota}}(g,\ell,\alpha,\rho)$ is consisted of finitely many $\rho$-balls. By (\ref{four}), up to taking subsequence, one may fix $B\in \mathscr{A}_{\mu_{\iota}}(g,\ell,\alpha,\rho)$ such that 
$$
\mu_{\varkappa,n}\left(\Lambda_{\ell}(g_n,\alpha)\cap B\right)>0\quad \textrm{for every large}~n.
$$
This implies that for every $n$ large enough, one can take some point $x_n\in {\rm supp}(\mu_{\varkappa,n}|\Lambda_{\ell}(g_n,\alpha))\cap B$. From the choices of $\rho$ and $\delta_{\ell}$, there exists a hyperbolic periodic point $p_{\varkappa,n}\in B(x_n,\rho)$,
which has stable and unstable manifolds of size larger than $\delta_{\ell}$. 
According Lemma \ref{st2}, for each $n$, one can take a hyperbolic periodic point $p_{\varkappa}^{(n)}$ in a ball $B_{\varkappa,n}\in \mathscr{A}_{\mu_{\varkappa,n}}(g_n,\ell,\alpha,\rho)$ such that $\mu_{\varkappa,n}\overset{g_n} \longleftrightarrow p_{\varkappa}^{(n)}$. Since we have also $\mu_{\varkappa,n}\overset{g_n} \longleftrightarrow p_{\varkappa}(g_n)$ from assumption, we get 
\begin{equation}\label{ss1}
p_{\varkappa}^{(n)}\overset{g_n}\sim p_{\varkappa}(g_n).
\end{equation}
By construction, we have
$$
\mu_{\varkappa,n}(B_{\varkappa,n}\cap\Lambda_{\ell}(g_n,\alpha))>0, \quad \mu_{\varkappa,n}(B(x_n,\rho)\cap\Lambda_{\ell}(g_n,\alpha))>0.
$$
As $\mu_{\varkappa,n}$ is $g_n$-ergodic,
by Lemma \ref{sp1}, similar to the argument in the proof of Lemma \ref{st2}, we get $p_{\varkappa}^{(n)}\overset{g_n} \sim p_{\varkappa,n}$, choosing $\rho$ smaller if necessary. 
%
%that up to shrinking $\delta_{\ell}$ and $\rho$, there is $L>0$ within $L\rho\ll \delta_{\ell}$ such that
%\begin{itemize}
%\item[--] there is a hyperbolic periodic point $P$ with its iterate $g_n^m(P)$ having local stable and unstable manifolds of size larger than $\delta_{\ell}$.
%\item[--] $P$ is $L\rho$-close to $p_{\varkappa}^{(n)}$ and $g_n^m(P)$ is $L\rho$-close to $p_{\varkappa,n}$.
%\end{itemize}
%Note also that both $p_{\varkappa}^{(n)}$ and $p_{\varkappa,n}$ has local stable and unstable manifolds of radius larger than $\delta_{\ell}$.
%These fact implies $p_{\varkappa}^{(n)}\overset{g_n} \sim p_{\varkappa,n}$.
%
% Note that both $p_{\varkappa}^{(n)}$ and $p_{\varkappa,n}$ are contained in some ball which intersect $\mathcal{H}_{yp}(g_n,\ell,\alpha)$ with positive $\mu_{\varkappa}$ measure, by using the ergodicity one can deduce that , similar to the proof of Lemma \ref{st3}.
This together with (\ref{ss1}) yields
\begin{equation}\label{hom1}
p_{\varkappa,n}\overset{g_n}\sim p_{\varkappa}(g_n).
\end{equation}
%Recall that $\mu_{\iota}\overset{g} \longleftrightarrow P_{B,\iota}$, by the assumption $\mu_{\iota}\overset{g} \longleftrightarrow p_{\iota}$, we get
%\begin{equation}\label{hom2}
%P_{B,\iota}\overset{g}\sim p_{\iota}.
%\end{equation}
%In view of Lemma \ref{st3} and the proof of Theorem \ref{TheoB}, we have
%\begin{enumerate}[(H1)]
%\item \label{J}the periodic point $P_{B,\iota}$ is homoclinic related to $p_{\iota}(g)$.
%\smallskip 
%\item \label{H}$p_{\varkappa,n}$ is homoclinic related to $p_{\varkappa}(g_n)$.
%\end{enumerate}

\smallskip
For every $n$ large enough, denote by $P_{B,\iota,n}$ the continuation of $P_{B,\iota}$ w.r.t. $g_n$. Using (\ref{hom2}) and the fact $d(P_{B,\iota}, p_{\kappa,n})<2\rho$, $\rho\ll \delta_{\ell}$, it follows that one can fix $n_0$ large enough such that both $p_{\iota}(g_{n_0})$ and $p_{\varkappa,n_0}$ are homoclinically related to $P_{B,\iota,n_0}$, thus 
$p_{\iota}(g_{n_0})\overset{g_{n_0}} \sim p_{\varkappa,n_0}$. 
This together with (\ref{hom1}) by taking $n=n_0$ yields $p_{\varkappa}(g_{n_0})\overset{g_{n_0}} \sim p_{\iota}(g_{n_0})$, this is a contradiction to the definition of skeletons.

\smallskip
The proof of Lemma \ref{klm} is complete now.
\end{proof}
 
Observe that for each fixed $L>0$, by applying the unstable manifold theorem of hyperbolic periodic point, the compact part $W_L^u({\rm Orb}(p_i(g),g))$ depends continuously on diffeomorphism $g$, which implies that the closure of $W^u({\rm Orb}(p_i(g),g))$ vary lower semi-continuously on $g$ with the Hausdorff topology. By applying Theorem \ref{TheoB}, we have the lower semi-continuity of the supports of physical measures.
Now we have complete the proof of Theorem \ref{TheC}.
\end{proof}

\bigskip
We close this section by finishing the proof of Theorem \ref{sta}.

\begin{proof}[Proof of Theorem \ref{sta}]
We show the weak statistical stability firstly. We start by taking a sequence of diffeomorphisms $f_n\in PH^{1+}_{EC}(M)$ that converges to $f\in PH^{1+}_{EC}(M)$ in $C^1$-topology, and let $\mu_n$ be the physical measure of $f_n$ respectively. Up to considering subsequences, we assume that $\mu_n$ converges to some $\mu\in \M(f)$. Since all ergodic Gibbs $cu$-states are physical measures of diffeomorphisms in $PH^{1+}_{EC}(M)$, by applying Theorem \ref{fc} we know that $\mu$ is a Gibbs $cu$-state of $f$, which is a convex combination of the physical measures of $f$. 

\smallskip
Assume that $f\in PH^{1+}_{EC}(M)$ admits a unique physical measure $\mu_f$. By Theorem \ref{TheC} there exists a $C^1$ neighborhood $\V$ of $f$ such that every $g\in \V\cap PH^{1+}_{EC}(M)$ has a unique physical measure $\mu_g$ also, and one must have $\mu_{f_n}\mapsto \mu_f$ whenever $f_n\in \V\cap PH^{1+}_{EC}(M)$ converges to $f$ in $C^1$-topology.
\end{proof}

\appendix
\section{Proof of Proposition \ref{t11}}\label{appendix}

The following lemma given by \cite[Lemma 5.8]{CMY18} is a variation version of the Pliss-Like Lemma founded by Andersson-Vasquez \cite[Lemma A]{AV17}.
\begin{lemma}\label{pliss}
Given constants $C_1<C_2\le \max\{0,C_2\}<C$, for any $\varepsilon>0$, there is $\rho=\rho(C_1,C_2,C,\varepsilon)>0$ such that for any any sequence $\{a_n\}_{n\in\NN}\subset \mathbb{R}$ satisfying:

\begin{itemize}

\item $|a_n|\le C$, $\forall n\in \NN$,
\smallskip
\item there is a subset $P\subset \NN$ such that $\mathcal{D}_{L}(P)>1-\rho$ and $a_n\le C_1$ for any $n\in P$,
\end{itemize}
then there is a subset $Q \subset\NN$ with $\mathcal{D}_{U}(Q)>1-\varepsilon$ such that for any $j\in Q$, one has that
$$\sum_{i=0}^{n-1}a_{i+j}\le nC_2,~~~\forall n\in\NN.$$
\end{lemma}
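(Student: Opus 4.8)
The plan is to translate the statement into one about the partial sums
$$
S_0:=0,\qquad S_n:=\sum_{i=1}^{n}(a_i-C_2)\quad(n\ge 1),
$$
and to realize $Q$ as a translate of the set of right maxima of $(S_n)$. Writing $b_i:=a_i-C_2$, the condition ``$\sum_{i=0}^{n-1}a_{i+j}\le nC_2$ for all $n\in\NN$'' is equivalent to ``$S_{j+n-1}\le S_{j-1}$ for all $n\ge 1$'', i.e.\ to $j-1$ belonging to
$$
G:=\big\{m\ge 0:\ S_m\ge S_k\ \text{for all }k\ge m\big\}.
$$
Hence it suffices to choose $\rho$ so that $\mathcal{D}_{U}(G)>1-\varepsilon$ and then put $Q:=G+1$; translating by $1$ does not change the upper density, so $\mathcal{D}_{U}(Q)=\mathcal{D}_{U}(G)$.

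I would set $\rho:=\dfrac{\varepsilon\,(C_2-C_1)}{2\,(C-C_1)}$ (this is positive, since $C_1<C_2<C$; and $\rho<1$ after harmlessly assuming $\varepsilon<1$). Fix a sequence meeting the hypotheses for this $\rho$. From $\mathcal{D}_{L}(P)>1-\rho$ we get $\#(P\cap[1,n])\ge(1-\rho)n$ for all large $n$, so, using $a_i\le C$ always and $a_i\le C_1$ on $P$,
$$
\sum_{i=1}^{n}a_i\ \le\ nC+\#(P\cap[1,n])(C_1-C)\ \le\ n\big(\rho C+(1-\rho)C_1\big),
$$
giving $S_n\le n\big(\rho C+(1-\rho)C_1-C_2\big)$ for large $n$. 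The choice of $\rho$ makes $\rho(C-C_1)<C_2-C_1$, so the coefficient is negative and $S_n\to-\infty$; in particular $\sup_{k\ge m}S_k$ is attained for each $m$, whence $G$ is nonempty and cofinal, hence infinite.

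The combinatorial heart is a discrete ``rising sun'' argument. Write $E:=\NN_0\setminus G=\{m:\ \exists\,k>m,\ S_k>S_m\}$ and decompose it into its maximal runs of consecutive integers $I_j=\{u_j,\dots,v_j\}$; since $S_n\to-\infty$, each run is finite. Iterating the map $m\mapsto\min\{k>m:S_k>S_m\}$ inside a run shows that $v_j+1\notin E$ and $S_m<S_{v_j+1}$ for every $m\in I_j$; hence, on the block $J_j:=\{u_j+1,\dots,v_j+1\}$ (which has $|I_j|$ elements, and distinct $J_j$ are disjoint), $\sum_{k\in J_j}b_k=S_{v_j+1}-S_{u_j}>0$. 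Because $b_k\le C-C_2$ always and $b_k\le C_1-C_2=-(C_2-C_1)$ for $k\in P$, this positivity forces $(C_2-C_1)\#(P\cap J_j)<(C-C_2)\#(P^c\cap J_j)$, hence $|I_j|<\tfrac{C-C_1}{C_2-C_1}\#(P^c\cap J_j)$. Fixing $N\in G$, so that $N\notin E$ and no run straddles $N$, every run meeting $[0,N]$ has $v_j\le N-1$ and $J_j\subset[1,N]$; summing the last inequality over these runs and using disjointness of the $J_j$,
$$
\#\big(G^c\cap[1,N]\big)\ \le\ \frac{C-C_1}{\,C_2-C_1\,}\,\#\big(P^c\cap[1,N]\big).
$$
Dividing by $N$, letting $N\to\infty$ through the infinite set $G$, and using $\mathcal{D}_{U}(P^c)=1-\mathcal{D}_{L}(P)<\rho$, we get $\frac1N\#(G^c\cap[1,N])<\varepsilon/2$ (so $\frac1N\#(G\cap[1,N])>1-\varepsilon/2$) for all large $N\in G$, whence $\mathcal{D}_{U}(G)\ge 1-\varepsilon/2>1-\varepsilon$, and $Q=G+1$ works.

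The step I expect to be delicate is the rising-sun bookkeeping: verifying that a maximal run $I_j$ of the shadow $E$ lies entirely below the level $S_{v_j+1}$, that the auxiliary blocks $J_j$ are genuinely pairwise disjoint, and --- the reason one must take $N$ inside the infinite set $G$ before passing to the limit --- that no run straddles the right endpoint $N$. The degenerate case $C_1<-C$ should be dismissed at the outset: then $a_n\le C_1$ and $|a_n|\le C$ are incompatible, so $P=\emptyset$, and for $\rho<1$ no sequence satisfies the hypotheses, so there is nothing to prove.
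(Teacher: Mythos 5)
Your argument is correct, but note that there is nothing in this paper to compare it against: the lemma is quoted from \cite[Lemma 5.8]{CMY18} (itself a variant of the Pliss-like lemma of Andersson--Vasquez \cite{AV17}) and no proof is given here. On its own merits your proof is complete. The reduction of the conclusion to ``$j-1$ is a right maximum of the partial sums $S_n=\sum_{i\le n}(a_i-C_2)$'' is exactly the right reformulation, and it is the classical Pliss/rising-sun mechanism; the three delicate points you flagged are all handled correctly: iterating $m\mapsto\min\{k>m: S_k>S_m\}$ does stay inside the run and terminates at $v_j+1$ because each application lands in $[m+1,v_j+1]$ (minimality of $k$ forces all intermediate indices into $E$), the blocks $J_j=I_j+1$ are pairwise disjoint because consecutive runs are separated by at least one element of $G$, and evaluating the counting inequality only along $N\in G$ (which is infinite since $S_n\to-\infty$) is what prevents a run from straddling $N$ and is precisely why one obtains an \emph{upper} density bound for $Q$ rather than a lower one --- consistent with the statement, which only asks for $\mathcal{D}_{U}(Q)>1-\varepsilon$. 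The quantitative bookkeeping also checks out: with $\rho=\varepsilon(C_2-C_1)/(2(C-C_1))$ (and the harmless reduction to $\varepsilon<1$) the drift coefficient $\rho(C-C_1)+C_1-C_2$ is negative, the block inequality $(C_2-C_1)\,\#(P\cap J_j)<(C-C_2)\,\#(P^c\cap J_j)$ gives $|I_j|<\frac{C-C_1}{C_2-C_1}\#(P^c\cap J_j)$, and summing over runs yields $\mathcal{D}_U(Q)\ge 1-\varepsilon/2$. The only cosmetic remark is that the hypothesis $C_1<C_2\le\max\{0,C_2\}<C$ only enters through $C_1<C_2<C$ and $C>0$, and your degenerate-case remark about $C_1<-C$ is not actually needed, since the argument never uses $C_1\ge -C$; it is harmless to include.
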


The following lemma plays a key role in the proof of Proposition \ref{t11}. 
\begin{lemma}\label{plg}
Let $f$ be a $C^1$ diffeomorphism with dominated splitting $TM=E\oplus F$. Given $0<\alpha<\alpha_0$, if $\mu$ is an $f$-invariant measure such that
\begin{equation}\label{formu}
\lim_{n\to +\infty}\frac{1}{n}\log \|Df^{-n}|_{E(x)}\|<-\alpha_0, \quad \mu-a.e.  ~x\in M.
\end{equation}
\begin{equation}\label{forms}
\lim_{n\to +\infty}\frac{1}{n}\log \|Df^{n}|_{F(x)}\|<-\alpha_0, \quad \mu-a.e.  ~x\in M.
\end{equation}
Then, for every $\varepsilon>0$ there exists $\ell\in \mathbb{N}$ and a neighborhood $\U$ of $f$ such that for every $g\in \U$ and $g$-invariant measure $\nu$, we have
$$
\nu\left(\Lambda_{\ell}(g,\alpha,E,F)\right)>1-\varepsilon.
$$
\end{lemma}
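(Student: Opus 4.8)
The plan is to reduce the statement to one scale-$\ell$ estimate, governed by Kingman's subadditive ergodic theorem and the maximal ergodic theorem (the Pliss-type Lemma \ref{pliss} can replace the latter). Since $\Lambda_\ell(f,\alpha,E,F)$ is defined pointwise and (\ref{formu})--(\ref{forms}) pass to almost every ergodic component, it is harmless — and in fact unnecessary, since every tool below works for a general invariant measure — to assume $\mu$ ergodic. For $\ell\in\NN$ put $\Xi_\ell(x)=\frac1\ell\log\|Df^{-\ell}|_{E(x)}\|$ and $\Upsilon_\ell(x)=\frac1\ell\log\|Df^{\ell}|_{F(x)}\|$. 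These are continuous and bounded uniformly in $\ell$ (compactness of $M$), and applying Kingman's theorem to the subadditive cocycles $\log\|Df^{-\ell}|_{E(\cdot)}\|$ over $f^{-1}$ and $\log\|Df^{\ell}|_{F(\cdot)}\|$ over $f$ gives $\Xi_\ell\to\bar\phi_E$ and $\Upsilon_\ell\to\bar\psi_F$ both $\mu$-a.e.\ and in $L^1(\mu)$, where $\bar\phi_E=\lim_n\frac1n\log\|Df^{-n}|_E\|<-\alpha_0$ and $\bar\psi_F=\lim_n\frac1n\log\|Df^{n}|_F\|<-\alpha_0$ $\mu$-a.e.\ by (\ref{formu})--(\ref{forms}).

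Now write $\Lambda_\ell(f,\alpha,E,F)=A_\ell^-\cap A_\ell^+$ with $A_\ell^-=\{x:\frac1n\sum_{i=0}^{n-1}\Xi_\ell(f^{-i\ell}x)\le-\alpha\text{ for all }n\ge1\}$ and $A_\ell^+=\{x:\frac1n\sum_{i=0}^{n-1}\Upsilon_\ell(f^{i\ell}x)\le-\alpha\text{ for all }n\ge1\}$, so that $A_\ell^-$ is exactly the set where the maximal ergodic average of $\Xi_\ell$ along the $\mu$-preserving map $f^{-\ell}$ is at most $-\alpha$. The maximal ergodic theorem applied to $f^{-\ell}$ and $\Xi_\ell+\alpha$ gives $\int_{(A_\ell^-)^c}(\Xi_\ell+\alpha)\,d\mu\ge0$; together with $\int_{(A_\ell^-)^c}\Xi_\ell\,d\mu\le\|\Xi_\ell-\bar\phi_E\|_{L^1(\mu)}-\alpha_0\,\mu((A_\ell^-)^c)$ (from $\bar\phi_E<-\alpha_0$) this forces
$$\mu((A_\ell^-)^c)\le\frac{\|\Xi_\ell-\bar\phi_E\|_{L^1(\mu)}}{\alpha_0-\alpha},$$
which tends to $0$ as $\ell\to\infty$; symmetrically $\mu((A_\ell^+)^c)\le\|\Upsilon_\ell-\bar\psi_F\|_{L^1(\mu)}/(\alpha_0-\alpha)\to0$. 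Choosing $\ell$ so large that both complements fall below $\varepsilon/2$ gives $\mu(\Lambda_\ell(f,\alpha,E,F))>1-\varepsilon$, and running the same computation with any $\beta\in(\alpha,\alpha_0)$ in place of $\alpha$ yields $\mu(\Lambda_\ell(f,\beta,E,F))>1-\varepsilon$, which provides a little slack. (If one prefers to stay within the machinery already set up: for $\ell$ large $\mu(\Xi_\ell>-\alpha_0+\eta)$ is small, so by Birkhoff for $f^{-\ell}$ the set $\{m:\Xi_\ell(f^{-m\ell}x)\le-\alpha_0+\eta\}$ has lower density $>1-\rho$ for $x$ in a set of nearly full measure; feeding this into Lemma \ref{pliss} with $C_1=-\alpha_0+\eta$, $C_2=-\alpha$ produces, for such $x$, a set of $j$'s of upper density $>1-\varepsilon'$ with $f^{-j\ell}x\in A_\ell^-$, and since $\frac1n\sum_{j=0}^{n-1}\mathbf 1_{A_\ell^-}(f^{-j\ell}x)$ converges a.e., this upper density equals that limit, whose $\mu$-integral is $\mu(A_\ell^-)$; hence $\mu(A_\ell^-)>1-\varepsilon'$, and likewise for $A_\ell^+$.)

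The remaining task is to pass to $g$ in a $C^1$-neighborhood of $f$. With $\ell$ now fixed, the scale-$\ell$ data vary continuously: $\Xi_\ell^g,\Upsilon_\ell^g$ converge uniformly in $x$ to $\Xi_\ell,\Upsilon_\ell$ as $g\to f$, because $E_g,F_g$ and $Dg^{\pm\ell}$ do. One then re-runs the estimate of the previous paragraph \emph{intrinsically for $g$} and for the relevant $g$-invariant measure — applying the maximal ergodic theorem (or Lemma \ref{pliss}) to $g^{\mp\ell}$ — the only thing to verify being that $\|\Xi_\ell^g-\bar\phi_E^g\|_{L^1}$ and the density of $\{\Xi_\ell^g>-\alpha_0+\eta\}$ stay uniformly small over the neighborhood and over the class of invariant measures at hand. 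I expect this uniformity to be the main obstacle: in particular the Lyapunov bound must persist with a \emph{single} $\alpha_0$, which in the application to Proposition \ref{t11} is precisely what the upper semicontinuity of the Gibbs $u$-states (Lemma \ref{upp}) is invoked for. One cannot shortcut this by comparing the Pesin blocks $\Lambda_\ell(g,\cdot)$ and $\Lambda_\ell(f,\cdot)$ as subsets of $M$, since the $g$- and $f$-orbits of a point diverge and the ``for all $n$'' condition is not robust under a $C^0$ change of the dynamics; the estimate has to be redone from scratch for $g$, exploiting that all of the bounds above depend only on $\alpha_0-\alpha$ and on the scale-$\ell$ $L^1$-defect, both of which are controlled uniformly once $\ell$ is fixed and the relevant family of measures is compact.
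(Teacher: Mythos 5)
Your treatment of the fixed pair $(f,\mu)$ is correct, and in fact cleaner than the paper's: Kingman's $L^1$ convergence of $\Xi_\ell$ to $\bar\phi_E<-\alpha_0$ together with the maximal ergodic theorem for $f^{-\ell}$ gives $(\alpha_0-\alpha)\,\mu\left((A_\ell^-)^c\right)\le\|\Xi_\ell-\bar\phi_E\|_{L^1(\mu)}\to0$, while the paper reaches the same conclusion through its Pliss-type Lemma \ref{pliss} (your parenthetical variant is essentially the paper's route). The gap is in the second half, which is the actual content of the lemma: producing one $\ell$ and one $C^1$-neighborhood that work for the perturbed pairs $(g,\nu)$. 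You stop at an acknowledged ``obstacle'', and the quantity you propose to control there is the wrong one: uniform smallness of $\|\Xi_\ell^g-\bar\phi_E^g\|_{L^1(\nu)}$ presupposes that the Kingman limits (Lyapunov exponents) of the measures $\nu$ of nearby $g$ stay below $-\alpha_0$, which the hypotheses of the lemma do not provide for any measure other than $\mu$; deferring this to Lemma \ref{upp} moves the burden outside the lemma (Lemma \ref{upp} is what Proposition \ref{t11} uses to decide \emph{which} measures $\nu$ the lemma gets applied to, not an ingredient of the lemma's proof).

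The paper's proof avoids any Lyapunov information on $\nu$. The hypotheses (\ref{formu})--(\ref{forms}) are used exactly once, to choose $\ell$ with $\mu(\Lambda_{f,\ell})>1-\rho\varepsilon'$, where $\Lambda_{g,\ell}=\{\Xi^g_\ell<-\alpha_0\}\cap\{\Upsilon^g_\ell<-\alpha_0\}$ is a \emph{single-scale} set; then $(g,\nu)\mapsto\nu(\Lambda_{g,\ell})$ is controlled by the joint $C^1\times$weak$^*$ continuity statement of \cite[Lemma 3.2]{AV17} for $(g,\nu)$ in a neighborhood $\UU$ of the pair $(f,\mu)$, and Lemma \ref{pliss}, applied along the $g^{-\ell}$- (resp. $g^{\ell}$-) orbit via the Birkhoff frequency of visits to $\Lambda^E_{g,\ell}$ (resp. $\Lambda^F_{g,\ell}$), converts ``a set of $\nu$-measure $>1-\rho\varepsilon'$ of good $\ell$-blocks'' into the all-$n$ condition defining $\Lambda^{\mp}_\ell(g,\alpha)$, with no further input about $\nu$. (Strictly speaking the conclusion thus obtained is for $\nu$ weak$^*$-close to $\mu$, which is exactly how the lemma is used in Proposition \ref{t11} through the product neighborhoods $\mathcal{U}_{\mu}\times\mathcal{V}_{\mu}$ and compactness of $G^u(f)$.) Your maximal-ergodic argument can be repaired in the same spirit: replace the $L^1$-defect by the single-scale quantity $\nu(\{\Xi^g_\ell\ge-\alpha_0\})$, via $\int_{(A_\ell^-)^c}\Xi^g_\ell\,d\nu\le-\alpha_0\,\nu\left((A_\ell^-)^c\right)+(C+\alpha_0)\,\nu(\{\Xi^g_\ell\ge-\alpha_0\})$, which yields $(\alpha_0-\alpha)\,\nu\left((A_\ell^-)^c\right)\le(C+\alpha_0)\,\nu(\{\Xi^g_\ell\ge-\alpha_0\})$; the right-hand side is uniformly small for $(g,\nu)$ near $(f,\mu)$ because $\Xi^g_\ell\to\Xi_\ell$ uniformly and $\mu(\{\Xi_\ell<-\alpha_0\})$ is already close to $1$ — a weak$^*$-continuous, fixed-scale condition, no exponents of $\nu$ required. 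As written, however, the perturbation step is missing, so the proposal does not yet prove the lemma.
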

\begin{proof}
For simplicity, for every $g$ that is $C^1$-close to $f$, we introduce the two blocks $\Lambda_{\ell}^{-}(g,\alpha)$ and $\Lambda_{\ell}^{+}(g, \alpha)$ defined as follows:
$$
\Lambda_{\ell}^{-}(g, \alpha)=\left\{x: \frac{1}{n\ell}\sum_{i=0}^{n-1}\log \|Dg^{-\ell}|_{E(f^{-i\ell}(x))}\|\le -\alpha, \quad \forall n\in \NN\right\};
$$
$$
\Lambda_{\ell}^{+}(g,\alpha)=\left\{x: \frac{1}{n\ell}\sum_{i=0}^{n-1}\log \|Dg^{\ell}|_{F(f^{i\ell}(x))}\|\le -\alpha,\quad \forall n\in \NN\right\}.
$$
Hence, $\Lambda_{\ell}(g,\alpha)=\Lambda_{\ell}^{-}(g, \alpha)\cap \Lambda_{\ell}^{+}(g,\alpha)$.
%Since domination is a $C^1$ open property, any $g$ $C^1$ close to $f$ admits the dominated slitting 
%$TM=E_g\oplus F_g$. Note that $E_g, F_g$ depend continuously on $g$, and thus in particular $E_f=E, F_f=F$. 
Given $n\in \NN$, let us define 
$$
\Lambda^E_{g,n}=\big\{x: \frac{1}{n}\log\|Dg^{-n}|_{E(x)}\|<-\alpha_0 \big\};
$$
$$
\Lambda^F_{g,n}=\big\{x: \frac{1}{n}\log\|Dg^{n}|_{F(x)}\|<-\alpha_0 \big\}.
$$
Take $\Lambda_{g,n}=\Lambda^E_{g,n}\cap \Lambda^F_{g,n}$.
%Define
%$$
%B^{E}(g,\ell, \alpha)=\big\{x: \frac{1}{n\ell}\sum_{i=0}^{n-1}\log \|Dg^{-\ell}|E_g(g^{-i\ell}(x))\|\le -\alpha\};
%$$
%$$
%B^{F}(g,\ell, \alpha)=\big\{x: \frac{1}{n\ell}\sum_{i=0}^{n-1}\log \|Dg^{\ell}|F_g(g^{i\ell}(x))\|\le -\alpha\}.
%$$
%Clearly, we have
%$
%B(g,\ell,\alpha,E,F)=B^{E}(g,\ell, \alpha)\cap B^{F}(g,\ell, \alpha).
%$

\smallskip
Choose $\varepsilon'>0$ satisfying $(1-\varepsilon')^2>1-\varepsilon/2$ and fix constants
$$
C_1=-\alpha_0, \quad C_2=-\alpha, \quad C=\max_{x\in M}|\log \|Df^{\pm 1}|_{E(x)}\||+1.
$$
Then take $\rho=\rho(C_1,C_2,C,\varepsilon)$ as in Lemma \ref{pliss}.
It follows from (\ref{formu}) and (\ref{forms}) that for $0<\rho \varepsilon'<1$, there exists $\ell \in \mathbb{N}$ such that $\mu(\Lambda_{f,\ell})>1-\rho\varepsilon'$.
By the continuity of diffeomorphisms and \cite[Lemma 3.2]{AV17}, there exists an open neighborhood $\UU$ of $(f,\mu)$ such that  
\begin{equation}\label{forB}
\nu(\Lambda_{g,\ell})>1-\rho \varepsilon'
\end{equation}
and $\max_{x\in M}|\log \|Dg^{\pm 1}|_{E(x)}\||\le C$
for every $(g,\nu)\in \UU$. 
Consequently, we have $\nu(\Lambda^E_{g,\ell})>1-\rho \varepsilon'$, $\nu(\Lambda^F_{g,\ell})>1-\rho \varepsilon'$ for every $(g,\nu)\in \UU$.
This implies that there exists a $C^1$ open neighborhood $\U$ of $f$ satisfying $(g,\nu)\in \UU$ for every $g\in \U$ and $g$-invariant measure $\nu$. 

\smallskip
Since $\nu$ is $g^{\ell}$-invariant, by Birkhoff's ergodic theorem we get that for $\nu$-almost every $x\in M$, there exists the limit
$$
\phi(x)=\lim_{n\to +\infty}\frac{1}{n}\# \left \{ i: 0 \le i \le n-1, g^{-\ell i}(x) \in \Lambda^E_{g,\ell} \right\}.
$$
Furthermore, applying (\ref{forB})  we have
\begin{equation}\label{forG}
\int \phi(x)d\nu= \nu (\Lambda^E_{g,\ell})>1-\rho \varepsilon'.
\end{equation}
Let
$
G=\left\{x: 1-\rho<\phi(x)\le 1\right\}
$, we claim that 
$\nu(G)>1-\varepsilon'$.
Indeed, observe that
$
M\setminus G = \{x: \rho \le 1-\phi(x) \le 1\},
$
this together with (\ref{forG}) imply
$$
\rho \varepsilon'> \int (1-\phi)d\nu \ge \int (1-\phi)\chi_ {M\setminus G}d\nu \ge \rho \nu(M\setminus G).
$$
Thus, $\nu(G)>1-\varepsilon'$.
For any $x\in G$, we define
$$
a_i=\frac{1}{\ell}\log \|Dg^{-\ell}|_{E(g^{-i\ell}x)}\|,\quad \forall i\ge 0,
$$
and
$$
P=\{i\ge 0: g^{-i\ell}(x)\in \Lambda^E_{g,\ell}\}.
$$
Then, by properties of  $\Lambda^E_{g,\ell}$ and $G$, one has that $a_i<-\alpha$ for every $i\in P$, and 
$\mathcal{D}_{L}(P)>1-\rho$.
Thus, by applying Lemma \ref{pliss}, there exists a subset $Q\subset \mathbb{N}$ with density estimate
\begin{equation}\label{d}
\mathcal{D}_{U}(Q)>1-\varepsilon'.
\end{equation}
Moreover, for every $j\in Q$, we have
$$
\sum_{i=0}^{n-1}a_{i+j}\le -n\alpha, \quad  \forall n\in \NN,
$$
which can be rewrite as
$$
\prod_{i=0}^{n-1}\|Dg^{-\ell}|_{E(g^{-{i+j}\ell}(x))}\|\le {\rm e}^{-n\ell \alpha}, \quad \forall n\in\mathbb{N}.
$$
Therefore, by combining (\ref{d}) with Birkhoff's ergodic theorem we know that for $\nu$-almost every $x\in G$,
$$
\lim_{n\to +\infty}\frac{1}{n}\#\left\{i\in [0,n-1]:  g^{-i\ell}(x)\in \Lambda_{\ell}^{-}(g,\alpha)\right\}>1-\varepsilon'.
$$
Therefore, there exists $n_0\in \NN$ and a subset
$$
G_{0}=\left\{x\in G: \frac{1}{n_0}\#\left\{i\in [0, n_0-1]: g^{-i\ell}(x)\in \Lambda_{\ell}^{-}(g,\alpha)\right\}>1-\varepsilon' \right\}
$$
satisfying $\nu(G_0)>1-\varepsilon'$.
Then, we have
\begin{eqnarray*}
\nu(\Lambda_{\ell}^{-}(g,\alpha))&=&\int \frac{1}{n_0}\sum_{i=0}^{n_0-1}\chi_{\Lambda_{\ell}^{-}(g,\alpha)}(g^{-i\ell}x)d\nu\\
&\ge &\int_{G_0} \frac{1}{n_0}\sum_{i=0}^{n_0-1}\chi_{\Lambda_{\ell}^{-}(g,\alpha)}(g^{-i\ell}x)d\nu\\
&=&\int_{G_0} \frac{1}{n_0}\#\left\{i\in [0, n_0-1]: g^{-i\ell}(x)\in \Lambda_{\ell}^{-}(g,\alpha)\right\}d\nu\\
&\ge & (1-\varepsilon')\nu(G_0)\\
&>&(1-\varepsilon')^2>1-\varepsilon/2.
\end{eqnarray*}
Similarly, for $F$ direction one can get $\nu(\Lambda_{\ell}^{-}(g,\alpha))>1-\varepsilon/2$. Therefore, 
$$
\nu(\Lambda_{\ell}(g,\alpha))>1-\varepsilon.
$$
Now the proof of Lemma \ref{plg} is completed.
\end{proof}
%The following result asserts that for diffeomorphims in $\U(M)$, the central Lyapunov exponents for Gibbs $u$-sates can be estimated uniformly.
%\begin{lemma}\label{p33}
%If $f\in \U(M)$, then there exists $N_0\in \NN$ and $\alpha_0>0$ such that for every $\mu \in \G^u(f)$, one has
%$$
%\chi_{\mu}(f^{N_0},E^{cs})<-\alpha_0, \quad \chi_{\mu}(f^{-N_0},E^{cu})<-\alpha_0.
%$$
%Furthermore, there is $N_{u}\in \NN$ as a
%multiple of $N_0$ and $\alpha_u\in (0, \alpha_0/N_0)$ such that
%$$
%\chi_{\mu}(f^{N_u},E^{cs})<-\alpha_u,\quad \chi_{\mu}(f^{-N_u},E^{cu})<-\alpha_u$$
%for every $\mu \in \G^u(f^{N_u})$.
%\end{lemma}
%
%This can be concluded by Lemma \ref{gibbsp}, it is an adapted version of Lemma 3.4 and Proposition 3.3 of \cite{MCY17}, though the definition of Gibbs $u$-states are different. 
%As a result, we have the following corollary.
%
%\begin{corollary}\label{utv}
%If $f\in \U(M)$, then there exists $N\in \NN$ such that $f^N\in \V(M)$.
%\end{corollary}
%
By Lemma \ref{gibbsp}, one can conclude the following result. It can be seen as a combination of \cite[Lemma 3.4 ]{MCY17} and \cite[Lemma 3.5]{MCY17}, though the definition of Gibbs $u$-state is different from that in \cite{MCY17}. 

\begin{lemma}\label{p33}
If $f\in PH^1_{EC}(M)$, then there exist $N\in \NN$ and $\alpha>0$ such that for every $\mu \in G^u(f)$, one has
$$
\lim_{n\to +\infty} \frac{1}{n}\sum_{i=0}^{n-1}\log \|Df^{-N}|_{E^{cu}(f^{-iN}(x))}\|<-\alpha
$$
for $\mu$-almost every $x$.
\end{lemma}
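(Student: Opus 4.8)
The plan is to convert the mostly-expanding hypothesis into a single uniform expansion estimate along $E^{cu}$ over the whole compact set $G^u(f)$, and then to recover the required \emph{pointwise a.e.} Birkhoff-average bound by passing to the $f^{N}$-ergodic components of $\mu$. For $n\ge 1$ set $b_n(x)=\log\mathbf m\big(Df^n|_{E^{cu}(x)}\big)$, where $\mathbf m(T)=\|T^{-1}\|^{-1}$ is the conorm. Since $E^{cu}$ is continuous, each $b_n$ is a bounded continuous function on $M$, and the inequality $\mathbf m(AB)\ge\mathbf m(A)\mathbf m(B)$ gives $b_{n+m}(x)\ge b_m(x)+b_n(f^m x)$, so $(b_n)_{n\ge1}$ is superadditive over $(M,f)$. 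Put $\beta(\mu)=\sup_n\tfrac1n\int b_n\,d\mu=\lim_n\tfrac1n\int b_n\,d\mu$ for $\mu\in\M(f)$. By Kingman's ergodic theorem applied to $-b_n$ and by Oseledets' theorem, $\tfrac1n b_n\to\chi^{-}$ $\mu$-a.e., where $\chi^{-}(x)$ is the smallest Lyapunov exponent of $Df$ along $E^{cu}$ at $x$, and $\int\chi^{-}\,d\mu=\beta(\mu)$. For $\mu\in G^u(f)$ the mostly-expanding hypothesis says $\chi^{-}>0$ $\mu$-a.e., hence $\beta(\mu)>0$. As $\beta$ is a supremum of weak$^*$-continuous affine functionals it is lower semi-continuous, so on the compact set $G^u(f)$ (Lemma~\ref{gibbsp}) it attains a positive minimum; fix $\alpha>0$ with $\beta(\mu)>4\alpha$ for every $\mu\in G^u(f)$.

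Next I would extract one integer $N$ valid for all of $G^u(f)$. For each $\mu\in G^u(f)$ choose $N_\mu$ with $\tfrac1{N_\mu}\int b_{N_\mu}\,d\mu>3\alpha$, which is possible since $\beta(\mu)=\sup_n\tfrac1n\int b_n\,d\mu>4\alpha$. By weak$^*$-continuity of $\nu\mapsto\tfrac1{N_\mu}\int b_{N_\mu}\,d\nu$ this inequality persists, with constant $2\alpha$, on a weak$^*$-open neighbourhood $W_\mu$ of $\mu$; compactness of $G^u(f)$ gives a finite subcover $W_{\mu_1},\dots,W_{\mu_k}$. Let $N$ be a common multiple of $N_{\mu_1},\dots,N_{\mu_k}$. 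Iterating the superadditive estimate $\int b_{mn}\,d\nu\ge m\int b_n\,d\nu$ upgrades each local bound to $\tfrac1N\int b_N\,d\nu>\alpha$ for \emph{every} $\nu\in G^u(f)$. Using that $\nu$ is $f$-invariant together with $\log\|Df^{-N}|_{E^{cu}(x)}\|=-b_N(f^{-N}x)$, this says
$$
\int\log\|Df^{-N}|_{E^{cu}}\|\,d\nu\;<\;-N\alpha\qquad\text{for every }\nu\in G^u(f).
$$

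Finally, fix this $N$ and an arbitrary $\mu\in G^u(f)$. Because the strong unstable foliation is unchanged under iteration and both $h_{\cdot}(\,\cdot\,,\mathcal F^u)$ and $\int\log|\det D\,\cdot\,|_{E^u}|\,d\mu$ scale linearly under iterates, one has $G^u(f)=G^u(f^N)$; hence by Lemma~\ref{gibbsp}(2) applied to $f^N$, every $f^N$-ergodic component $\mu_x$ of $\mu$ again lies in $G^u(f)$, and the displayed inequality applies to it. Now $\tfrac1n\sum_{i=0}^{n-1}\log\|Df^{-N}|_{E^{cu}(f^{-iN}(x))}\|$ is a Birkhoff average of the function $y\mapsto\log\|Df^{-N}|_{E^{cu}(y)}\|$ for the transformation $f^{-N}$, so by the Birkhoff ergodic theorem and the ergodic decomposition it converges, for $\mu$-a.e.\ $x$, to $\int\log\|Df^{-N}|_{E^{cu}}\|\,d\mu_x$, which is $<-N\alpha<-\alpha$ by the previous step. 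This is exactly the assertion of Lemma~\ref{p33}.

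The main obstacle is precisely the gap between the integral bound $\int\log\|Df^{-N}|_{E^{cu}}\|\,d\mu<-N\alpha$ and the required pointwise a.e.\ bound: for no choice of $N$ need the continuous function $\log\|Df^{-N}|_{E^{cu}}\|$ be negative everywhere (the time-$N$ map may momentarily contract some $E^{cu}$ directions on a set of positive measure), so one genuinely needs the ergodic-decomposition step. Making that step legitimate forces the two technical points one must get right: the identity $G^u(f)=G^u(f^N)$ (linear scaling of partial entropy and of $\log|\det|$ under iteration, so Lemma~\ref{gibbsp}(2) may be applied to $f^N$), and the lower semi-continuity plus compactness package for $\beta$ on $G^u(f)$ that produces uniform $N$ and $\alpha$.
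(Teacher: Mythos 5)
Your argument up to the uniform integral estimate is correct and is essentially the expected route: superadditivity of $b_n$, Kingman/Oseledets to identify $\beta(\mu)=\int\chi^-\,d\mu>0$ for every $\mu\in G^u(f)$, lower semicontinuity of $\beta$ together with compactness of $G^u(f)$ (Lemma \ref{gibbsp}) to produce a uniform $\alpha$, and the finite-cover/common-multiple trick to produce one $N$ with $\int\log\|Df^{-N}|_{E^{cu}}\|\,d\nu<-N\alpha$ for every $\nu\in G^u(f)$. (The paper gives no self-contained proof of this lemma -- it is imported from \cite{MCY17} -- so the comparison can only be with what such a proof must contain.)

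The last step, however, has a genuine gap, precisely at the point you yourself flag as the main obstacle. By Birkhoff's theorem for $f^{-N}$, the a.e.\ limit in the lemma equals $\int\varphi_N\,d\mu_x$, where $\varphi_N=\log\|Df^{-N}|_{E^{cu}}\|$ and $\mu_x$ is the $f^N$-ergodic component of $\mu$ at $x$. These components are $f^N$-invariant but in general \emph{not} $f$-invariant, so Lemma \ref{gibbsp}(2) applied to $f^N$ only places them in $G^u(f^N)$; the asserted identity $G^u(f)=G^u(f^N)$ is false (what is true is that an $f$-invariant measure is a Gibbs $u$-state for $f$ if and only if it is one for $f^N$, i.e.\ $G^u(f)\subset G^u(f^N)$), and hence the conclusion ``$\mu_x\in G^u(f)$'' does not follow. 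Since your uniform inequality was proved only for measures in $G^u(f)$, it cannot be applied to $\mu_x$, and nothing in your choice of $N$ (a common multiple produced by the finite cover of $G^u(f)$) guarantees the time-$N$ bound for measures that are merely in $G^u(f^N)$: for such a $\nu$ the $f$-average $\tilde\nu=\frac1N\sum_{j=0}^{N-1}f^j_*\nu$ does lie in $G^u(f)$, but that only controls the average $\frac1N\sum_{j}\int\varphi_N\circ f^{j}\,d\nu$, while the individual terms can differ from one another by quantities of order $N\sup_x|\log\|Df^{\pm1}|_{E^{cu}(x)}\||$, which is not dominated by $N\alpha$. The standard repair is to keep your uniform estimate at some time $n_0$ over $G^u(f)$ and then take $N=kn_0$ with $k$ large: decomposing $\varphi_N$ into blocks of length $n_0$, averaging over the $n_0$ phases and integrating against $\nu$, one gets $\int\varphi_N\,d\nu\le\frac{N}{n_0}\int\varphi_{n_0}\,d\tilde\nu+C_0 n_0$ with $C_0$ independent of $k$, and since $\tilde\nu\in G^u(f)$ the right-hand side is $<-N\alpha'$ once $k$ is large; this passage-to-iterates argument (it is exactly what the cited lemmas of \cite{MCY17} supply) is the missing ingredient, and without it your proof does not establish the statement.
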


Now we can finish the proof of Proposition \ref{t11}.

\begin{proof}[Proof of Proposition \ref{t11}] 
By Lemma \ref{p33}, there exist $N\in \NN$ and $\alpha>0$ such that for every $\mu\in G^u(f)$ and $\mu$-almost every $x$ we have
$$
\lim_{n\to +\infty} \frac{1}{n}\sum_{i=0}^{n-1}\log \|Df^{-N}|_{E^{cu}(f^{-iN}(x))}\|<-\alpha,
$$
by taking $\alpha_0=\alpha/N$, we get that for $\mu$-almost every $x$,
$$
\lim_{n\to +\infty}\frac{1}{n}\log \|Df^{-n}|_{E^{cu}(x)}\|<-\alpha_0.
$$
Similarly, we have that
$$
\lim_{n\to +\infty}\frac{1}{n}\log \|Df^{n}|_{E^{cs}(x)}\|<-\alpha_0.
$$
for $\mu$-almost every $x$.
Now fix $\alpha\in (0,\alpha_0)$. By Lemma \ref{plg}, for every $\varepsilon>0$, for any $\mu \in G^u(f)$, there exists $\ell_{\mu}\in \NN$ and neighborhood $\mathcal{U}_{\mu}\times \mathcal{V}_{\mu}$ of $(f,\mu)$ in $G^u(PH^{1}_{EC}(M))$\footnote{$G^u(PH^{1}_{EC}(M))=\left\{(f,\mu): \mu\in G^u(f), f\in PH^{1}_{EC}(M)\right\}$.} such that
\begin{equation}\label{fg}
\nu(\Lambda_{\ell_{\mu}}(g, \alpha))>1-\varepsilon
\end{equation}
for every $(g,\nu)\in \mathcal{U}_{\mu}\times \mathcal{V}_{\mu}$.
By compactness of $G^u(f)$ as stated in Item (1) of Lemma \ref{gibbsp}, there exist finitely many Gibbs $u$-states $\mu_1,\cdots,\mu_j$ such that the corresponding neighborhoods $\mathcal{V}_i:=\mathcal{V}_{\mu_i}, 1\le i \le j$ form a covering of $G^u(f)$. Writting $\mathcal{U}_i:=\mathcal{U}_{\mu_i}$ and $\ell_i:=\ell_{\mu_i}$ for every $1\le i \le j$.
Let
$$
\mathcal{U}=\bigcap_{1\le i \le j} \mathcal{U}_i; \quad \mathcal{V}=\bigcup_{1\le i\le j}\mathcal{V}_i;\quad \ell=\prod_{i=1}^j \ell_j.
$$
Note that by definition of $\ell$, we have $\Lambda_{\ell_i}(g,\alpha)\subset \Lambda_{\ell}(g,\alpha)$ for every $1\le i \le j$.
By Lemma \ref{upp}, up to shrinking $\mathcal{U}$ we assume $G^u(g)\subset \mathcal{V}$ for every $g\in \mathcal{U}$. Therefore, for every $(g,\nu)\in \mathcal{U}\times \mathcal{V}$, there is some $1\le i\le j$ such that $(g,\nu)\in \mathcal{U}_i\times \mathcal{V}_i$, and thus (\ref{fg}) implies $\nu(\Lambda_{\ell}(g,\alpha))>1-\varepsilon$.
\end{proof}

\end{document}